\newcommand{\sboxtimes}{\, {\mathop{\scriptstyle \boxtimes}}\,}
\newcommand{\wsboxtimes}{\mathop{\widetilde{\sboxtimes}}}
\newcommand{\wtimes}{\mathop{\widetilde{\times}}}
\newcommand{\uwtimes}{\mathbb I_{\wtimes}}
\newcommand{\uAtimes}{\mathbb I_{\times_{_A}}}
\newcommand{\uwsboxtimes}{\mathbb I_{\wsboxtimes}}
\newcommand{\uAotimes}{\mathbb I_{\otimes_A}}
\newcommand\skyk{\ensuremath{\operatorname{skysc}_0(\Bbbk)}}
\newcommand{\sO}{{\mathcal O}}
\newcommand{\sXYmod}{(X \times_S Y)\operatorname{-mod}}
\newcommand{\sAAkmod}{(A \times_\Bbbk  A)\operatorname{-mod}}
\newcommand{\sAalg}{A\operatorname{-alg}}
\newcommand{\hsAalg}{A_{\mathrm {h}}\operatorname{-alg}}
\newcommand{\hsAalggr}{A_{\mathrm {h,gr}}\operatorname{-alg}}
\newcommand{\sSalg}{S\operatorname{-alg}}
\newcommand{\sXmod}{X\operatorname{-mod}}
\newcommand{\sTmod}{T\operatorname{-mod}}
\newcommand{\sYmod}{Y\operatorname{-mod}}
\newcommand{\sATmod}{{A_T\operatorname{-mod}}}
\newcommand{\sAmod}{{A\operatorname{-mod}}}
\newcommand{\sAmodgr}{{A_{\operatorname{gr}}\operatorname{-mod}}}
\newcommand{\sAmodz}{{A_{0}\operatorname{-mod}}}
\newcommand{\sAalggr}{{A_{\operatorname{gr}}\operatorname{-alg}}}
\newcommand{\sAalgz}{{A_{0}\operatorname{-alg}}}
\newcommand{\hsAmod}{{A_{\mathrm h}\operatorname{-mod}}}
\newcommand{\hsAmodgr}{{A_{\mathrm{h,gr}}\operatorname{-mod}}}
\newcommand{\fsAalg}{{A_{\mathrm{f}}\operatorname{-alg}}}
\newcommand{\AQcoalg}[1]{\Comod{#1}_{QA\operatorname{-alg}}}
\newcommand{\AQcoalgsf}[1]{\Comodsf{#1}_{QA\operatorname{-alg}}}
\newcommand{\RQAcoalgsf}[1]{\Rcomod{#1}_{QA\operatorname{-alg}}}
\newcommand{\RatAcomodO}[1]{\Comod{#1}_{rA\operatorname{-mod}}}
\newcommand{\RatAcomodgr}[1]{\Comod{#1}_{rA_{\operatorname{gr}}\operatorname{-mod}}}
\newcommand{\Acomodgr}[1]{\Comod{#1}_{A_{\operatorname{gr}}\operatorname{-mod}}}
\newcommand{\AQcomod}[1]{\Comod{#1}_{QA\operatorname{-mod}}}
\newcommand{\AQpcomod}[1]{\Comod{#1}_{QA_p\operatorname{-mod}}}
\newcommand{\Acomodsf}[1]{\Comodsf{#1}_{A\operatorname{-mod}}}
\newcommand{\sSmod}{S\operatorname{-mod}}
\newcommand\st{\operatorname{st}}
\newcommand\End{\operatorname{End}}
\newcommand\EndO{\operatorname{End_{0}}}
\newcommand\Endgr{\operatorname{End}_{\operatorname{gr}}}
\newcommand\Hom{\operatorname{Hom}}
\newcommand\HomO{\operatorname{Hom_0}}
\newcommand\Homgr{\operatorname{Hom_{gr}}}
\newcommand\aff{\operatorname{aff}}
\newcommand\red{\operatorname{red}}
\newcommand\ant{\operatorname{ant}}
\newcommand\Ker{\operatorname{Ker}}
\newcommand{\Rep}{\operatorname{Rep}}
\newcommand{\RepO}{\operatorname{Rep}_0}
\newcommand{\Sets}{\operatorname{Sets}}
\newcommand\Spec{\operatorname{Spec}}
\newcommand\Speck{\operatorname{Spec}(\Bbbk)}
\newcommand\Sch{\operatorname{Sch}}
\newcommand\Schk{\operatorname{Sch}|\Bbbk}
\newcommand\Schs{\operatorname{Sch}|S}
\newcommand\GextaffA{\ensuremath{\operatorname{GE}|_{_{\operatorname {aff}}}A}}
\newcommand\GextqcA{\ensuremath{\operatorname{GE}|_{_{\operatorname {qc}}}A}}
\newcommand{\MmoraffA}{\operatorname{MM}|_{_{\aff}}A }
\newcommand{\GmoraffA}{\operatorname{GM}|_{_{\aff}}A }
\newcommand{\GmorqcA}{\operatorname{GM}|_{_{\operatorname{qc}}}A }
\newcommand\Schaqc{\ensuremath{\operatorname{Sch}|_{_{\operatorname{qc}}}A}}
\newcommand\Schasqc{\ensuremath{\operatorname{Sch}|_{_{\operatorname{sqc}}}A}}
\newcommand\Schsqc{\ensuremath{\operatorname{Sch}|_{_{\operatorname{sqc}}}S}}
\newcommand\Schpqc{\ensuremath{\operatorname{Sch}|_{_{\operatorname{pqc}}}S}}
\newcommand\Schapsqc{\ensuremath{\operatorname{Sch}|_{_{\operatorname{psqc}}}A}}
\newcommand\Schpsqc{\ensuremath{\operatorname{Sch}|_{_{\operatorname{psqc}}}S}}
\newcommand\Schfpqc{\ensuremath{\operatorname{Sch}|_{_{\operatorname{fpqc}}}S}}
\newcommand\Schfpsqc{\ensuremath{\operatorname{Sch}|_{_{\operatorname{fpsqc}}}S}}
\newcommand\Schkaff{\operatorname{Sch}|_{_{\operatorname{aff}}}\Bbbk}
\newcommand\Schaaff{\operatorname{Sch}|_{_{\operatorname{aff}}}A}
\newcommand\Schaff{\operatorname{Sch}|_{_{\operatorname{aff}}}S}
\newcommand\Schkqc{\operatorname{Sch}|_{_{\operatorname{qc}}}\Bbbk}
\newcommand\Schqc{\operatorname{Sch}|_{_{\operatorname{qc}}}S}
\newcommand{\Lmodmon}[1]{{}_{#1}{\mathsf M}}
\newcommand{\Rmodmon}[1]{\mathsf M_{#1}}
\newcommand{\Rcomod}[1]{\mathsf M^{#1}}
\newcommand{\Lcomod}[1]{{}^{#1}\mathsf M}
\newcommand{\Comodsf}[1]{{}^{#1}\mathsf M}
\newcommand{\Comod}[1]{{}^{#1}\mathsf M}
\newcommand{\Comodfin}[1]{{}^{#1}\mathsf M_{\operatorname{fin}}}
\newcommand{\ShRepO}[1]{\operatorname{{#1}-}QA_{\operatorname{p}}\operatorname{-mod}} 
\newcommand{\ShRepgr}[1]{\operatorname{{#1}-}QA_{\operatorname{p,gr}}\operatorname{-mod}} 
\newcommand{\ShRepgrfin}[1]{\operatorname{{#1}-}C_{\operatorname{lf}}A_{\operatorname{gr}}\operatorname{-mod}} 
\newcommand{\ShRepOfin}[1]{\operatorname{{#1}-}C_{\operatorname{lf}}A\operatorname{-mod}} 
\newcommand{\HBA}{\operatorname{HVB_{gr}}(A)}
\newcommand{\HVB}{\operatorname{HVB}}
\newcommand{\VB}{\operatorname{VB}}
\newcommand{\VBG}{\operatorname{VB}_{\operatorname{gr}}}
\newcommand{\HVBG}{\operatorname{HVB}_{\operatorname{gr}}}
\newcommand{\SimA}{\operatorname{SA-alg}}
\newcommand{\omegagr}{{\omega_{{\operatorname{gr}}}}}
\newcommand{\omegaO}{{\omega_{0}}}
\newcommand{\Autw}{\operatorname{Aut}^\otimes(\omega)}
\newcommand{\AutwO}{\operatorname{Aut}^\otimes(\omega_0)}
\newcommand{\AutwgrO}{\operatorname{Aut}^\otimes_0(\omegagr)}
\newcommand{\Autwgr}{\operatorname{Aut}^\otimes(\omegagr)}
\newcommand{\Endwgr}{\operatorname{End}^{\otimes}(\omegagr)}
\newcommand{\EndwO}{\operatorname{End}^{\otimes}(\omega_0)}
\newcommand{\EndwgrO}{\operatorname{End}^{\otimes}_0(\omegagr)}
\newcommand{\Autwe}{\operatorname{Aut}^\otimes\bigl(\omegagr|_{_{\Rep(\mathcal
      S)_E}}\bigr)}
 \newcommand{\AutwgrOe}{\operatorname{Aut}^\otimes_0\bigl(\omegagr|_{_{\Rep(\mathcal S)_E}}\bigr)}
\newcommand{\AutweO}{\operatorname{Aut}^\otimes\bigl(\omega_0|_{_{\Rep(\mathcal S)_E}}\bigr)}
\newcommand\Aut{\operatorname{Aut}}
\newcommand\Autgr{\operatorname{Aut}_{\operatorname{gr}}}
\newcommand\AutO{\operatorname{Aut}_{\operatorname{0}}}
\newcommand\Authbext{\operatorname{{\mathit{Aut}}}_{\operatorname{gr}}}
\newcommand{\ate}{{\xymatrixcolsep{1.5pc} \xymatrix{1\ar[r]&H\ar[r]&G\ar[r]^(.45){q}&\ar[r] A&0}}}
\newcommand\PP{\operatorname{\mathcal P}}
\newcommand\VBSpec{\operatorname{\mathbb V\mathbb B}}
\newcommand\VBSpecgr{\operatorname{\mathbb V\mathbb B}_{\operatorname{gr}}}
 \newcommand\id{\operatorname{id}}
\newcommand\smallcirc{{\scriptstyle{\circ}\,}}
 \newcommand\op{\operatorname{op}}
 \newcommand{\Alb}{\operatorname{\mathcal A{\it lb}}}
\newcommand\im{\operatorname{Im}}
\theoremstyle{plain}
\newtheorem{thm}{Theorem}
\newtheorem*{thmsans}{Theorem}
\newtheorem{lem}[thm]{Lemma}
\newtheorem{cor}[thm]{Corollary}
\newtheorem{prop}[thm]{Proposition}
\numberwithin{thm}{section}
\numberwithin{equation}{section}
\def\pf{\noindent{\sc Proof. }}
\theoremstyle{definition}
\newtheorem{defn}[thm]{Definition}
\newtheorem{ej}[thm]{Example}
\newtheorem{ejs}[thm]{Examples}
\newtheorem{rem}[thm]{Remark}
\newtheorem{nota}[thm]{Notation}
\newtheorem{summary}[thm]{Summary}
\begin{document}

\title[A representation theory for quasi-compact group schemes]{Quasi-compact group schemes, Hopf sheaves, and their representations}

\author{Alvaro
  Rittatore}
\address{Alvaro
  Rittatore:
   \scriptsize {\rm Centro de Matemática, Facultad de Ciencias,  Universidad de la Rep\'ublica,
   Igu\'a 4225,  11400 Montevideo,  Uruguay.} \emph{e-mail:} {\tt alvaro@cmat.edu.uy}}

\author{Pedro Luis del \'Angel}
\address{Pedro Luis del \'Angel: 
\scriptsize  {\rm  CIMAT,  Jalisco S/N
    Mineral de Valenciana,
      3624 Guanajuato, Guanajuato, M\'exico.}, 
\emph{e-mail:} {\tt luis@cimat.mx}}
 
\author{Walter Ferrer Santos}
\address{Walter Ferrer Santos: 
  \scriptsize  {\rm Departamento de
matemática y aplicaciones,  Centro Universitario Regional Este, Tacuaremb\'o s/n, 2000 Maldonado, Uruguay},  \emph{e-mail:} {\tt wrferrer@cure.edu.uy}}



\begin{abstract}
  We explore the notion of  \emph{representation of an affine
  extension of an abelian variety} --- such an extension is a faithfully flat affine morphism of
  $\Bbbk$--group schemes $q:G\to A$, where  $A$ is an abelian variety.  We characterize
  the categories that arise as the category of representations of  an affine extension
  $q:G\to A$, generalizing the classical results of Tannaka Duality
  established for affine $\Bbbk$--group schemes (that is, when $A=\Speck$). We
  also prove the existence of a contravariant equivalence between
  the category of affine extensions of a given $A$ and the category of
  \emph{faithful commutative Hopf sheaves} on $A$, generalizing in this
  manner the well-known $\op$--equivalence between affine group
  schemes and commutative Hopf algebras.  If  $\mathcal H_q$ is the
  Hopf sheaf on $A$ associated to $ q$,
  the category of representations of $q$ is equivalent to the category
  of \emph{$\mathcal H_q$}--comodules.
\end{abstract}

\maketitle

\begin{small}
 \noindent \emph{Keywords:} quasi-compact group schemes, homogeneous vector bundles, Hopf sheaves.\\ \emph{AMS MSC2020:} 14L15, 14M17, 18M25.
\end{small}

\tableofcontents

\section{Introduction}

Roughly speaking, given a certain family of objects $\mathcal R$ (the
``representable objects'') and a fixed basic monoidal category
$\mathcal C$, a ``representation theory'' consists in the association
to an element $r \in \mathcal R$, of a pair
$\bigl(\operatorname{Rep}(r),\, U:\operatorname{Rep}(r) \to \mathcal
C\bigr)$, where $\operatorname{Rep}(r)$ is a monoidal category --- the
\emph{category of representations} of $r$ ---, and $U$ a monoidal functor
(the \emph{forgetful functor}) --- eventually with certain additional
properties depending on the situation under consideration. One aspires
to ``reconstruct'' each $r \in \mathcal R$ in terms of the
corresponding pair $\bigl(\operatorname{Rep}(r), U\bigr)$, and also to
describe intrinsically all the pairs $(\mathcal D, U:\mathcal D \to
\mathcal C)$ that are equivalent to pairs of the above form for some
$r$.  For example, such a platform has been developed within the
following frameworks: categories of groups (abstract, topological,
Lie, affine algebraic, differential); of general algebras; of Lie
algebras, Hopf algebras. In this context the notion of \emph{Tannaka Duality}
is generally presented as an answer to the following questions (see
\cite{kn:joyalstreet}, \cite{kn:formaltan} or \cite{kn:streetbook} for
a precise formulation):

\emph{The Reconstruction Problem:}  can a representable object be
described in terms of its category of representations?

\emph{The Recognition Problem:} can  a category of
representations be described intrinsically?

It is worth mentioning that the theory
     of Tannaka Duality was generalized
     to a categorical context: the relevant concept of ``tannakian
     adjunction'' was developed and some of the classical results were
     generalized and clarified (see \cite{kn:streetbook} and
     \cite{kn:joyalstreet}).

In the case of affine algebraic group schemes over a field (even over
a commutative ring) the theory of \emph{rational representations} has
achieved a considerable degree or maturity and many of its main problems have
been solved and important advances have been done in the theory of its
actions on general schemes.  Examples of significant accomplishments
in the area are: the completion of the structure theory of reductive
affine group schemes (see \cite{kn:bible}), the development of the
geometric methods in invariant theory (see \cite{kn:GIT}), or more
generally the theory of transformation groups (see
\cite{kn:transgrp}).  In particular both the reconstruction and
recognition questions were answered positively: Saavedra first
presented a proof in \cite{kn:saav} which was later observed to have
some mistakes.  A correct proof of the result was produced afterwards
by Deligne and Milne in \cite{kn:delmil}; see also
\cite{kn:delignetannaka90}.

Temporarily, call $\Rep(G)$, the category of finite dimensional
rational representations of an affine group scheme; it is well known that $\Rep(G)$  is monoidal, 
rigid, abelian and $\Bbbk$-linear. Denote as $\omega:\Rep(G)\to
\operatorname{Vect}_\Bbbk$ the corresponding monoidal forgetful
  functor. In this context Saavedra-Deligne-Milne's result can be
stated as follows (see also \cite{kn:joyalstreet}, \cite{kn:formaltan}
or \cite{kn:streetbook} for other perspectives of the same problem):
 
\begin{thmsans}[Tannaka Duality for affine group schemes,
  {\cite[Prop.~2.8,  Thm.~2.11]{kn:delmil}}]
  
\noindent\\ \emph{(1) Reconstruction Theorem.} Let $G$ be an affine
  group scheme. Consider the pair $(\Rep(G),\omega)$ and the group
  scheme of tensor automorphisms of $\omega$, denoted as $\Autw$ --- see
  \cite[page 20]{kn:delmil} for a definition of this group and  compare with Definition
  \ref{defn:endotimes} below. Then  $\Autw$ is an affine  group scheme isomorphic to
  $G$.

  In particular, if $G$
  and $G'$ are affine group schemes 
  such that $(\Rep(G),\omega)$ and $(\Rep(G'),\omega')$ are equivalent
  as monoidal $\Bbbk$--linear categories with a forgetful
  functor, then $G$ and $G'$ are isomorphic group schemes.

\noindent \emph{(2) Recognition Theorem.} Let $\mathcal C$ be a
monoidal, abelian, rigid $\Bbbk$--linear category such that
$\Bbbk=\End(\mathbb I)$, together with an exact, faithful,
$\Bbbk$--linear monoidal functor $\omega: \mathcal C\to
\operatorname{Vect}_{f,\Bbbk}$. Then $(\mathcal C, \omega)$ is
equivalent (as a monoidal category with forgetful functor) to the
category of rational representations of the affine group scheme
$\Autw$.
\end{thmsans}

Let $G$ be a group scheme of finite type (see Definition
\ref{defn:groupscheme}); it is easy to see that the naive attempt to
define the category of representations of $G$ as a direct
generalization of the affine situation, yields a category which does
not fulfill our needs, as it is too small to determine $G$ --- for
example, if $G$ is an anti-affine group (see 
Definition \ref{defi:anti-aff}), then the only morphism of group
schemes  $G\to  \operatorname{GL}(V)$ is the trivial morphism.
 
Motivated by previous work of Brion, Rittatore and others on the
structure of group and monoid schemes (see for example
\cite{kn:ritbr}, \cite{kn:briantaff}, \cite{kn:brisamum},
\cite{kn:brionchev}) and on their actions (\cite{kn:bprit},
\cite{kn:brionSumihiro}), and taking into account the mentioned
obstruction to the naive approach, we propose a representation theory
\emph{not} for isolated group schemes, but for what we call
\emph{affine extensions} of   abelian varieties.   Roughly speaking, an affine extension
 is a generalization of the so-called Chevalley
decomposition for algebraic groups (see Theorem
\ref{thm:chevftgs}):  an affine extension $\mathcal S$ is an exact
sequence of group 
schemes $\ate$, where $A$ is an abelian variety and $H$ is an affine
group scheme --- equivalently, $q$ is an affine, faithfully flat
morphism of quasi-compact group schemes and $H=\Ker(q)$ (see
definitions \ref{defi:exactseq} and \ref{defi:affqcext}). It follows
that $q:G\to A$ is an $H$--torsor (see definitions
\ref{defi:torsor}--\ref{defi:affqcext}). A \emph{representation} for
$\mathcal S$ is built on an homogeneous vector bundle over $A$ ---
by homogeneous we mean that vector bundle $E\to A$ is such that $E_{\overline{\Bbbk}}\cong
t^*_aE_{\overline{\Bbbk}}$ for any translation $t_a:A\to A$ by a
geometric point $a\in A(\overline{\Bbbk})$ (see \cite{kn:brionrepbund}
and Definition \ref{defn:hmogvecbunbrion} for a more conceptual and
precise definition).

The basic nomenclature of the paper as well as the necessary
properties of the category of affine extensions of a fixed abelian
variety $A$ are presented in Chapter \ref{sect:actgrsch}. Therein, we
also present minor indispensable complements to some of the classical
results on the theory of group schemes.

In order to capture (i.e.~\emph{to reconstruct}) the complete structure of the affine extension
$\mathcal S$ with a representation theory supported on a category with
\emph{homogeneous vector bundles as objects}, we need to consider ``more
morphisms'' than the usual ones between vector bundles. This new
category $\HVBG(A)$ is an \emph{enriched category} over the monoidal
category $\bigl(\Schk, \times,
\{*\}=\Spec(\Bbbk)\bigr)$. Moreover, the (scheme of) morphisms
between two objects, denoted as  $\Homgr(E,E')$, is also a homogeneous vector 
bundle over $A$;  we call its structure morphism $d: \Homgr(E,E') \to
A$ the \emph{degree map}.  

The \emph{automorphism group} $\Autgr(E)$ of a general vector bundle
(see Definition \ref{defn:vbgraded}) is a smooth group scheme of
finite type, and the \emph{degree map} $d:\Autgr(E)\to A$ is an affine
morphism of group schemes, with kernel $\AutO(E)$ --- the group of
``classical'' automorphisms of $E$. This result is well known for
algebraically closed fields; more recently it was generalized for
arbitrary fields (see \cite[Lemma 2.8]{kn:brionrepbund}).  We say that
the bundle $E$ is homogeneous when the sequence
\[ 
  \xymatrix{\Authbext(E): & 1\ar[r] &
    \AutO(E)\ar[r]&\Autgr(E)\ar[r]^-d& A \ar[r]& 0,}\]
is exact  --- and hence $\Authbext(E)$ is an affine extension, see \ref{defn:hmogvecbunbrion} for a precise definition.

A \emph{representation} of an affine extension $\mathcal S$: $\ate$
or an \emph{$\mathcal S$--module}, is a morphism of group schemes
$\rho: G\to \Autgr(E)$, where $E$ is a homogeneous vector bundle over
$A$, such that $\rho$ induces the identity on
$A$ (see Definition \ref{defn:repaffext}). 
The \emph{category $\Rep(\mathcal S)$ of $\mathcal
  S$--modules} is the category enriched over $\Schk$  that has as objects the
representation of $\mathcal S$ and  as hom-objects the scheme (in fact
a homogeneous vector bundle) of
$G$--equivariant graded morphisms of homogeneous
vector bundles (see Definition \ref{defn:catrepaffext} and Lemma \ref{lem:Smorfarebundle}). 

If $G$ is an affine group scheme $G$ (i.e.~when $A=\Speck$) then the
above definition corresponds to the category of finite dimensional
rational $G$--modules and the $G$--equivariant morphisms.

The category of representations of an affine extension is monoidal
``in degree zero'', see Remark \ref{rem:hvbnotmonoidal} and Lemma
\ref{lem:tensordualhvb}; and this \emph{weaker} monoidality condition
becomes a decisive ingredient in the proof of the Tannaka Duality
Theorem in our context. We establish a) a version of Tannaka's
reconstruction Theorem \ref{thm:reconstruction}) proving that from the
category of representations of an extension $\mathcal S$ we can define
$\mathcal S$ itself; b) we prove that the mentioned reconstruction is
accomplished in the best possible way when using the category of
representations (Recognition Theorem \ref{thm:recognition}).

Moreover, as expected in view of the results and methods of the affine
case, in order to establish a version of the Tannaka Duality in our
context one should deal with affine extensions as  limits of
affine extensions of finite type (that is, such that the corresponding
groups $H$ and $G$ are group schemes of finite type); in Theorem
\ref{thm:affextisproalg} we prove that \emph{any} affine extension is
such a limit.  On the other hand, by a result of D.~Perrin, if $G$ is
a connected group scheme then $G$ is quasi-compact and it is a limit
of a family $\{ G_\alpha\}_{\alpha\in I}$ of group schemes of finite
type (see \cite[th\'eor\`emes II.2.4 and IV.3.2]{kn:dperrin1}).
Moreover, $G$ fits into an affine extension of an abelian
variety --- this result, stated without a complete proof in
\cite[Corollary V.4.3.1]{kn:dperrin1}, is proved in Corollary
\ref{cor:qcisate} below.

The op-equivalence between the category of affine group schemes and
commutative Hopf algebras --- that to a given  affine group
scheme $G$ associates the algebra of global section $\mathcal O_G(G)$ with
a structure of Hopf algebra induced by the multiplication and inversion morphisms in $G$ ---  is not only an important viewpoint but also a powerful tool in the study of the
representation theory of affine group schemes. Thus, once we have
constructed an adequate representation theory for affine extensions
--- in the sense that the representation theory satisfies a full
Tannaka Duality Theorem ---, we undertake the generalization of this
well-known equivalence to the context of affine extensions, by
developing the notion of \emph{Hopf sheaf over an abelian variety}.

As we are working with quasi-compact morphisms of group schemes $q: G \to A$, we consider
the categories $\Schaqc$ (of quasi-compact schemes over $A$), and
$Q\sAalg$ (of quasi-coherent sheaves of $\mathcal O_A$--algebras) and
the well known covariant equivalence associated to the functors $\PP :
\Schaqc\to Q\sAalg^{\op}$, $\PP(x:X\to A)=x_*(\mathcal O_X)$ and
$\Spec: Q\sAalg^{\op}\to \Schaqc$, where $\Spec(\mathcal F)$ is the
  affine scheme over $A$ associated to the sheaf of $\mathcal
  O_A$--algebras $\mathcal F$. In general these functors are adjoint
  (see Remark \ref{rem:affingen} and \cite[\S 1.2,\S 1.3]{kn:EGAII}),
  but they establish a covariant equivalence when restricted to the
  situation that the objects $x:X \to A$ are affine
  morphisms. Moreover, if $A=\Spec(\Bbbk)$, then we obtain the usual
  equivalence mentioned above.

In order to obtain the necessary generalization of the notion of Hopf
algebra to this context, one needs to observe that $\Schaqc$ and
$Q\sAalg$ are in fact duoidal categories (see Definition
\ref{defi:duoidalcat}) and that the adjoint functors $\PP $ and
$\Spec$ are (strong, colax) monoidal for the given monoidal structures (see
Theorem \ref{thm:mainspec} for a precise statement). The
construction of such duoidal structures is known in the setting of
slice categories, but we take an explicit approach in order to
identify the affine extensions as a certain type  of bimonoids in the
category (see  Lemma \ref{lem:duoschemes}, Proposition
\ref{prop:catextcar} and Theorem \ref{thm:antipoduoidal}). It its
worth noticing that the construction of the duoidal structure on
$\Schaqc$ relies heavily in the fact that $A$ supports a commutative
sum (see Definition \ref{defn:othermonoidal1}).

In the general setting of arbitrary duoidal categories there is no
canonical way to define an antipode, or more generally the notion of
group (Hopf) object.  However, in our particular category $\Schaqc$ we
have an obvious candidate for a group  type object, namely the
quasi-compact morphisms of group schemes $q:G\to A$. Thus, the affine
morphisms of group schemes $q:G\to A$ are the affine group type
objects for the duoidal structure on $\Schaqc$; under the functor
$\PP$, these group objects are in bijection with the (faithful,
commutative) group type objects for the duoidal structure on
$Q\sAalg$, that we call \emph{faithful commutative Hopf sheaves} (see
Definition \ref{defn:hsh} and Theorem \ref{thm:hopssheaf=affext}).

A drawback of the proposed definition of $\mathcal S$--module is that
 it  only  contemplates the finite dimensional objects
  ---   for  affine group schemes,  the
notion of \emph{rational $G$--module} allows to take into account the
infinite dimensional case  (see for example
\cite[Definition 5.3.7]{fer-ritt}).  Indeed, whereas an infinite
dimensional $\Bbbk$--space is a colimit of finite dimensional
sub-spaces --- a directed union of finite dimensional sub-spaces ---,
we need an adequate notion of ``rational infinite dimensional vector
bundle'', convenient for our purposes. The op-equivalence of the
category of affine extensions and the category of flat commutative
Hopf sheaves allows the lifting of this obstruction, by considering
the $\mathcal S$--modules as sheaves.

 More precisely, we proceed as follows:  given an affine scheme  $X=\Spec(B)$, J.-P.~Serre proposed in \cite{kn:serrefvid} the
category of projective $B$-modules as a generalization of the notion
of vector bundle. In
\cite{kn:drinfeldfvid}, V.~Drinfeld generalizes in turn Serre's
proposal, by considering  quasi-coherent, flat sheaves on an scheme
$X$ --- recall that if $X$ is a n\oe therian scheme, then  the category of coherent flat
$\mathcal O_A$--modules, being  the category  locally free of finite rank
$\mathcal O_A$--modules, is equivalent to the category of vector
bundles (see 
\cite[Proposition 2]{kn:serrefvid},  Remark \ref{rem:comodhts1} and
Proposition\ref{prop:equicomodrep0}).  Hence, we  establish the notion of
comodule of a Hopf sheaf taking as support the quasi-coherent, flat
sheaves on $A$. However, in order to exploit the well known
equivalence between the category of vector
bundles and the category of coherent flat
$\mathcal O_A$--modules and to  establish a notion of
\emph{$\mathcal S$--linearized sheaves} for an affine extension
$\mathcal S$, we need to develop the notions of \emph{graded morphisms of
  sheaves} and \emph{homogeneous sheaf} (see definitions
\ref{defn:hsgraded1} and \ref{defn:homogsheaf}); we define in this way
the  \emph{category of
  homogeneous sheaves on $A$ with graded morphisms} as a $\Schk$--category ---
in Lemma \ref{lem:vbspecgr}, we prove that the 
category of homogeneous vector bundles with graded morphism is
equivalent to the category of homogeneous quasi-coherent, flat sheaves
with graded morphisms.

Once the categorical framework above is established, we can consider
  the categories of \emph{$\mathcal S$--linearized sheaves with
    graded morphisms} and of \emph{$\mathcal H_{\mathcal
      S}$--comodules with graded morphisms} (here $\mathcal H_{\mathcal
      S}$ denotes the Hopf sheaf associated to $\mathcal S$ --- this is
    done in sections \ref{subsect:linerarishaeves} and
    \ref{subsection:Hcomod} respectively ---, and
    prove the
equivalence between the category $\Rep(\mathcal S)$ and  the categories of
coherent flat $\mathcal S$--linearized sheaves and $\mathcal H_{\mathcal
      S}$--comodules with graded morphisms (see Theorem
    \ref{thm:ratalg1}  and Proposition \ref{prop:ratshafpobre}). 
Finally, we also
  propose a notion of \emph{rational sheaf} that could be useful in 
  the study of these categories (see definitions
  \ref{defi:rationalsheafrep} and \ref{defi:rationalcomod}).

\bigskip

\noindent {\sc Acknowledgments:} The authors would like to thank ANII (Uruguay), CIMAT
(Mexico), CONACyT (Mexico)  and CSIC (Udelar, Uruguay) 
for partial financial support. We  also thank Michel Brion for several
useful discussions, in particular for pointing us to previous work on
quasi-compact group schemes by D.~Perrin (\cite{kn:dperrin1},
\cite{kn:dperrin2}), and Ignacio L\'opez for his helpful insight  in subjects in
category theory specially in relation to the duoidal situation.

\section{Extensions of abelian varieties by affine group schemes}
\label{sect:actgrsch}

\subsection{Group schemes and their actions}\ %
 \label{subsec:gsact}

 In this section we present some basic definitions and known results
 on quasi-compact group schemes.

 \begin{defn}
\label{defn:groupscheme}
\noindent (1) A \emph{$\Bbbk$--monoid scheme} $M$ --- or \emph{monoid scheme over}
$\Bbbk$ --- is a $\Bbbk$--scheme together with two
$\Bbbk$--morphisms 
 $m_M:M\times M\to M$ and  $e_M:\Spec(\Bbbk)=\{*\} \to M$ (called  the
\emph{multiplication},  and the \emph{unit}
respectively), satisfying
the usual commutative diagrams (of associativity of $m_M$ and unitality
of $e_M$).

\noindent (2) A \emph{$\Bbbk$--group scheme} $G$ --- or \emph{group scheme over}
$\Bbbk$ --- is a $\Bbbk$--monoid scheme together with an
\emph{inversion morphism} $\iota_G:G\to G$ (defined over $\Bbbk$ and satisfying
the corresponding commutative diagrams).

\noindent (3) 
A \emph{morphism of monoid schemes} between $M$ and $M'$
is a morphism of $\Bbbk$--schemes $f: M\to M' $ satisfying the
usual commutative  diagrams:
\[
\xymatrix{
M\times M\ar[r]^-{m_M}\ar[d]_{f\times f}& M\ar[d]^f & 
\Spec(\Bbbk)\ar[r]^(0.7){e_M}\ar[rd]_{e_{M'}} & M\ar[d]^f \\
M'\times M'\ar[r]_-{m_{M'}}& M' & &  M' 
}
\]

If both $M$ and $M'$ are group schemes, we say that $f$ is a
\emph{morphism of group schemes} (in this case, $f\smallcirc
i_G=i_{G'}\smallcirc f$).

\noindent (4) A monoid scheme $M$ is affine (resp.~of finite type,
resp.~smooth) if $M$ is so as scheme.

\noindent (5) An \emph{abelian variety} is a smooth, connected, proper
$\Bbbk$--group scheme of finite type --- an abelian variety is
necessarily a commutative group. An \emph{isogeny} of abelian
varieties is a group homomorphism that is surjective and has finite
kernel.

\end{defn}

Most of the time we abbreviate and omit the mention to the base field
e.g. a $\Bbbk$--scheme is called simply a scheme, and a $\Bbbk$--group
scheme of finite type is referred as a \emph{group scheme of finite
  type}.

 Whenever it is convenient or necessary, we will interpret a group
 scheme $G$ as a representable functor $G:(\Schk)^{\op}\to
 \mathrm{Groups}$ --- where $\Schk$ is the category of $\Bbbk$--schemes and
 $\mathrm{Groups}$ the category of abstract groups.  If $T$ is a
 $\Bbbk$--scheme, then $G(T)$ together with $m(T),i_G(T),e_G(T)$ is
 called the group of the $T$--points of the scheme $G$.

\begin{rem}
  \label{rem:alggrps}
 Traditionally, group schemes of finite type were called ``algebraic
 groups" (cf.~\cite{kn:demgab, kn:waterhouse}), but currently this
 nomenclature does not seem to have a unique connotation (e.g.~in
 \cite{kn:GIT} an algebraic group is a \emph{smooth} group scheme of
 finite type). In order to avoid confusion we prefer to use a more
 explicit, unambiguous, name.
 \end{rem}

\begin{defn} \noindent (1) An \emph{action of a $\Bbbk$--group scheme $G$ on a
    $\Bbbk$-scheme $X$} is a morphism of schemes $a :G\times
  X\to X$, satisfying the usual commutative diagrams. In this
  situation the scheme $X$ is said to be a \emph{$G$--scheme}.

  \noindent (2)   Given two $G$--schemes $X$ and $Y$, a morphism $f:X\to Y$ is \emph{$G$--equivariant} (or a \emph{morphism of $G$--schemes}) if the
following diagram is commutative, where the horizontal arrows are the
corresponding $G$--actions:
\[
\xymatrix{ G\times X \ar[r]\ar[d]_{\operatorname{id}\times
    f}&X\ar[d]^f\\ G\times Y \ar[r]&Y }\]
  \end{defn}

  \begin{rem}\label{rem:morfmonsch}
(1)  It is well known (see for example \cite{kn:matsumuraoort}) that to
  give an action of $G$ on $X$  is equivalent to give a morphism of functors
  (that is, a natural transformation) $\phi: G\to
  \Aut_X$, where
  $\Aut_X:(\Schk)^{\operatorname{op}} \to \mathrm{Groups}$ is the so
  called \emph{automorphism group functor}. Recall 
  that given
  a scheme $T$,  the group $\Aut_X(T) \subseteq
  \text{Aut}_{\Sch|T}(X \times T)$, where  $\Sch|T$ denotes the
  category of schemes over $T$ and we consider $X\times T$ as an
  $T$--scheme given by the projection on the second coordinate,  is
  defined as follows: 
\[
\Aut_X(T)=\bigl\{f: X \times T \to X \times T \text{ isomorphism }\mathrel{:}
  f(x,t)=\bigl(\widetilde{f}(x,t),t\bigr)\,,\ \widetilde{f} : X \times T \to X
  \bigr\}.
\] 

Equivalently, $f \in \Aut_X(T)$ if $f:X\times T\to X\times T $ is an isomorphism and the following diagram
commutes
\[
{\xymatrixcolsep{3pc}
 \xymatrix{
   X\times T \ar[r]^-{f}_{\cong}\ar[d]_{p_2} & X\times T \ar[d]^{p_2} \cr
   T \ar[r]^-{\id_T} & T}}\] 
in which case $\widetilde{f} = p_1 \smallcirc f$ (see for example \cite{kn:matsumuraoort} or \cite{kn:brionchev}).

\noindent  (2) In particular, when $\Aut_X$ is a group scheme, we have
a canonical action $a:\Aut_X\times X\to X$, induced by
$\id_{\Aut_X}:\Aut_X\to \Aut_X$: if $f=(\widetilde{f},p_2) \in \Aut_X(T)$ and $x\in
X(T)$, then $a(f, x)=\widetilde{f}\smallcirc (x,\id_T)$.
\end{rem}

\begin{rem}
\label{rem:scheima}
 (1) If $G$ is a connected group scheme, then $G$ is
  quasi-compact, as follows  from  \cite[Th\'eor\`eme IV.3.2]{kn:dperrin1}).

\noindent (2) Let $f:G\to G'$ be a morphism of group schemes. Then the
\emph{scheme theoretic image} of $f$, denoted as $f(G)$, is the smallest closed subgroup scheme of
$G'$ containing the image of $f$.

\noindent (3) Any group scheme is  separated (because
$e_G:\Spec(\Bbbk)\to G$ is a  closed immersion, see for example
\cite[Tag 045W]{kn:stackproj}).

\noindent (4) Since any  morphism of separated schemes is separated,
any  morphism of group schemes  $f:G\to G'$ is
separated.
\end{rem}

We finish this section by recalling some fundamental results on the structure
of quasi-compact group schemes due to D.~Perrin (see
\cite{kn:dperrin1,kn:dperrin2}).

\begin{thm}[Perrin, {\cite[Th\'eor\`eme II.2.4 and
Th\'eor\`eme V.1.1]{kn:dperrin1}}]
  \label{thm:perrinG0}
Let  $G$ be  a quasi-compact group scheme. Then 

\noindent (1) There  is a unique  irreducible component of $G$ passing trough
$e_G$ --- this component is called the \emph{neutral component of $G$} and denoted as
$G^0$ ---; moreover $G^0$ is geometrically irreducible;

\noindent (2) the inclusion $i:G^0\to G$ is a flat closed immersion;

\noindent (3) $G^0$ is a normal (quasi-compact) subgroup of $G$;

\noindent (4) the quotient $G/G^0$ exists and is an affine group
scheme, with fields as local rings. Moreover, $G/G^0$ is compact,
totally discontinuous, and limit of \'etale finite groups (see Section \ref{section:invlim}).\qed
\end{thm}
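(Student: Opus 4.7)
The plan is to reduce everything to the known statements for group schemes of finite type by writing $G$ as an inverse limit. By Perrin's structure theorem (cited in the introduction as \cite[théorèmes II.2.4 and IV.3.2]{kn:dperrin1}), any quasi-compact group scheme fits into a cofiltered inverse system $G=\varprojlim_{\alpha} G_{\alpha}$ with $G_{\alpha}$ of finite type and affine, faithfully flat transition morphisms $u_{\beta\alpha}:G_{\beta}\to G_{\alpha}$. For each $G_{\alpha}$ the classical theory gives a neutral component $G_{\alpha}^{0}$ which is a geometrically irreducible, open and closed, normal subgroup scheme, with quotient $G_{\alpha}/G_{\alpha}^{0}$ a finite étale group scheme. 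First I would verify that the images $u_{\beta\alpha}(G_{\beta}^{0})$ land in $G_{\alpha}^{0}$ (by connectedness, and the fact that morphisms of group schemes send identity to identity), so that the $G_{\alpha}^{0}$ form a subsystem; then define $G^{0}=\varprojlim_{\alpha} G_{\alpha}^{0}$ and $G/G^{0}=\varprojlim_{\alpha} G_{\alpha}/G_{\alpha}^{0}$.

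For (1), the uniqueness of the component through $1_G$ would follow because an irreducible component of $G$ maps into some irreducible component of each $G_\alpha$, and the one containing $1_{G_\alpha}$ is $G_\alpha^0$, which is unique; so the pullback $G^0$ is the unique candidate. Geometric irreducibility follows from the fact that the inverse limit of a cofiltered system of geometrically irreducible schemes along affine, dominant (here faithfully flat) maps is geometrically irreducible; concretely one base changes to $\overline{\Bbbk}$, which commutes with inverse limits in the affine/quasi-compact setting, and checks that the underlying space of $\varprojlim G_{\alpha,\overline{\Bbbk}}^{0}$ is irreducible because all its truncations are. For (2), each inclusion $G_{\alpha}^{0}\hookrightarrow G_{\alpha}$ is an open immersion, hence flat; flatness is preserved under cofiltered inverse limits along flat affine transitions, so $G^{0}\to G$ is flat. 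It becomes a closed immersion (rather than open) in the limit because $G/G^{0}$ is only profinite, not discrete: this should be extracted from the fact that each $G_{\alpha}^{0}$ is the preimage of $\{1\}$ in $G_{\alpha}/G_{\alpha}^{0}$, and pulling back the closed point $\{1\}$ along the limit quotient map $G\to G/G^{0}$ exhibits $G^{0}$ as a closed subscheme.

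For (3), normality descends from the level-wise normal subgroup scheme structure: the conjugation morphism $G\times G^{0}\to G$ factors through $G^{0}$ because at every finite level it factors through $G_{\alpha}^{0}$, and passing to the limit preserves this factorization. For (4), the quotient $G/G^{0}=\varprojlim_{\alpha} G_{\alpha}/G_{\alpha}^{0}$ is a cofiltered inverse limit of finite étale group schemes along affine transition morphisms; hence it is affine (inverse limits of affine schemes along affine maps are affine), its local rings are the stalks of a sheaf of $\Bbbk$-algebras which at each level are fields, so the limit stalks are directed colimits of fields, hence fields; topologically it is a compact, totally disconnected (Hausdorff) profinite group scheme.

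The main obstacle I expect is the passage from the finite-type statements to the limit, specifically justifying that $G^{0}$ as defined is indeed the unique irreducible component through $1_G$ inside $G$ itself (not merely a subscheme with nice formal properties), and that the inclusion is a closed immersion rather than merely a monomorphism. Both points require carefully using Perrin's compatibility results on the behavior of irreducible components under affine faithfully flat inverse limits, and the compatibility of the operation $G\mapsto G^{0}$ with such limits, which is the technical heart of \cite{kn:dperrin1}.
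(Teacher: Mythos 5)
The paper does not prove this theorem at all: it is imported verbatim from Perrin, the ``proof'' being a citation of Th\'eor\`emes II.2.4 and V.1.1 of \cite{kn:dperrin1} together with the observation that the last clause of (4) follows from the others. Your proposal instead tries to derive it from Perrin's Approximation Theorem ($G=\varprojlim G_\alpha$ with $G_\alpha$ of finite type and affine faithfully flat transitions), and this is where the genuine gap lies: the argument is circular relative to the source it invokes. In Perrin's development the structure of the neutral component is Th\'eor\`eme II.2.4 --- one of the first results of the memoir --- while the approximation theorem is Th\'eor\`eme V.3.1, proved three chapters later on top of the structure theory; the paper reproduces this ordering, stating the approximation theorem only in Section \ref{subsect:quasicomp} as Theorem \ref{thm:perrinquasicomp}, and its proof there explicitly invokes Th\'eor\`eme II.2.4 to arrange the kernels $K_\alpha$ to be connected. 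So you cannot take the pro-algebraic presentation of $G$ as a free starting point for proving (1)--(4) without first giving an independent proof of that presentation, which is at least as hard as the statement you are trying to establish.

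Even granting the approximation theorem, two steps need more than you give them. First, identifying $\varprojlim G_\alpha^0$ with \emph{the} irreducible component of $G$ through $1_G$: your argument shows that any component through $1_G$ maps into each $G_\alpha^0$ and hence is contained in $L:=\varprojlim G_\alpha^0$, and that $L$ is irreducible; you should then close the loop by noting that $L$, being irreducible and containing $1_G$, lies in some component $Z$ through $1_G$, whence $Z\subseteq L\subseteq Z$ --- this is recoverable but is not in your text. Second, the identification $G/G^0=\varprojlim\,(G_\alpha/G_\alpha^0)$ presupposes that a cofiltered limit of $G_\alpha^0$--torsors $G_\alpha\to G_\alpha/G_\alpha^0$ is again a torsor under the limit group, i.e.\ exactness of the limit of short exact sequences of group schemes; the paper only establishes statements of this kind later (Proposition \ref{prop:flatness}), again on top of Perrin's machinery, so it cannot be used silently here.
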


\begin{thm}[Perrin, {\cite[Corollaire V.3.2]{kn:dperrin1}}]
Let $G$ be a quasi-compact $\Bbbk$--group scheme, and $K\subset G$ a
closed subgroup scheme. Then, in the following two situations the
quotient $G/K$ exists in the category of $\Bbbk$--schemes:

\noindent (1) $K$ is defined by a sheaf of finitely generated ideals,
in which case $G/K$ is of finite type;

\noindent (2) $K$ is a normal subgroup of $G$, in which case $G/K$ is
a group scheme.\qed
\end{thm}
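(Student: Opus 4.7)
The plan is to lean on Perrin's structure theory for quasi-compact group schemes (already invoked in Theorem~\ref{thm:perrinG0}) and treat the two cases quite differently, since they require genuinely different input.

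For part~(1), the hypothesis that $K$ is defined by a sheaf of finitely generated ideals is exactly what is needed to make $K$ ``descend'' to a closed subgroup scheme of finite type at some finite stage of a pro-algebraic presentation of $G$. Concretely, I would first write $G$ as a filtered inverse limit $G = \varprojlim_\alpha G_\alpha$ of group schemes of finite type (which is possible by the results of Perrin mentioned in the introduction, applied to $G^0$ together with Theorem~\ref{thm:perrinG0}(4) for $G/G^0$). Because the defining ideal sheaf of $K$ is finitely generated and $G$ is quasi-compact, this ideal is already pulled back from some $G_{\alpha_0}$, so $K = p_{\alpha_0}^{-1}(K_{\alpha_0})$ for a closed subgroup scheme $K_{\alpha_0} \subset G_{\alpha_0}$ of finite type. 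Then the quotient $G_{\alpha_0}/K_{\alpha_0}$ exists as a scheme of finite type by classical results (SGA~3 / Demazure--Gabriel), and one checks that the pullback along $G \to G_{\alpha_0}$ yields the desired $G/K$, which is therefore of finite type.

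For part~(2), where $K$ is only assumed normal, I would proceed by inverse limits but in a more delicate way, since we no longer have finite generation of the defining ideal. Pick again $G = \varprojlim_\alpha G_\alpha$ with $G_\alpha$ of finite type and transition maps faithfully flat, and let $K_\alpha \subset G_\alpha$ be the scheme-theoretic image of $K$, which is a closed normal subgroup of $G_\alpha$ (normality being preserved by surjective morphisms on the level of functors of points). The finite-type quotient $G_\alpha/K_\alpha$ exists as a group scheme of finite type, and the transition maps of the $G_\alpha$ induce compatible faithfully flat morphisms $G_\alpha/K_\alpha \to G_\beta/K_\beta$. One then sets $G/K := \varprojlim_\alpha G_\alpha/K_\alpha$, which is a quasi-compact group scheme, and verifies the universal property of the fppf (equivalently, categorical) quotient in the category of $\Bbbk$-schemes: any morphism $G \to Z$ that is trivial on $K$ must factor through $G_\alpha/K_\alpha$ for some $\alpha$ (this is where quasi-compactness of $G$ and the fact that $K = \varprojlim K_\alpha$ both enter).

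The main obstacle will be in part~(2): showing that $K$ really does equal the inverse limit of its images $K_\alpha$ (and not something strictly larger), and correspondingly that the universal property at the limit level is equivalent to the universal property at each finite level. This is a sheaf-theoretic compatibility statement that rests on the faithful flatness of all the transition maps and the quasi-compactness hypothesis; it is exactly the step proved in \cite[Th\'eor\`eme III.3.3 and its corollaries]{kn:dperrin1}, and I would simply invoke that result to close the argument rather than redo it here.
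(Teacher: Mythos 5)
The paper does not actually prove this statement: its ``proof'' is the single citation \cite[Corollaire V.3.2]{kn:dperrin1}. So there is no argument in the paper to compare yours against; what you have written is essentially a reconstruction of Perrin's own derivation of that corollary from his approximation theorem (Th\'eor\`eme V.3.1, quoted in the paper as Theorem \ref{thm:perrinquasicomp}). As such, your overall strategy is the right one, and invoking the approximation theorem as a black box is no more circular than what the paper itself does --- though it is worth being aware that the existence of the particular quotients $G/K_\alpha$ appearing in the approximation theorem is part of what Perrin must establish inside the proof of V.3.1, not a consequence of the statement you are proving. For part (1) your reduction is sound, provided you record two standard but non-trivial limit facts: the finitely generated ideal descends to some noetherian stage $G_{\alpha_0}$ (EGA IV, \S 8, using that the transition maps are affine), and after enlarging $\alpha_0$ the descended closed subscheme is stable under multiplication and inversion, hence a subgroup scheme; then $G\to G_{\alpha_0}/K_{\alpha_0}$ is faithfully flat with fibres the $K$--cosets, so $G_{\alpha_0}/K_{\alpha_0}$ itself represents $G/K$ and is of finite type.

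The one step I would push back on is the closing move in part (2). The assertion that any morphism $G\to Z$ killing $K$ factors through some finite level $G_\alpha/K_\alpha$ is not something you can expect for an arbitrary scheme $Z$, and you do not need it. The cleaner finish, using only tools already in the paper, is: the compatible faithfully flat morphisms $G\to G_\alpha/K_\alpha$ induce, by Lemma \ref{lem:surjectivity}(4), a faithfully flat morphism $G\to L:=\varprojlim_\alpha G_\alpha/K_\alpha$ whose kernel is $\bigcap_\alpha \rho_\alpha^{-1}\bigl(\rho_\alpha(K)\bigr)$; once you know this intersection equals $K$ (the genuine content, which you correctly isolate, and which again follows from writing the ideal of $K$ as the filtered colimit of the ideals of its schematic images), $G\to L$ is a faithfully flat quasi-compact morphism of group schemes with kernel $K$, hence an fppf --- and a fortiori categorical --- quotient, exactly as in Remark \ref{rem:torsoraffine2}. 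Two smaller points to make explicit: normality of the schematic image $\rho_\alpha(K)$ in $G_\alpha$ requires the fppf-local lifting of $T$--points along the faithfully flat $\rho_\alpha$ (the argument of Lemma \ref{lem:chevcase}), and the transition morphisms $G_\alpha/K_\alpha\to G_\beta/K_\beta$ are affine because they are faithfully flat with affine kernel, which is what licenses forming the limit $L$ in the first place.
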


\begin{prop}
  \label{prop:fmonisflat}
  Let $M$ be a  monoid scheme, $G$ a reduced group scheme
and $f:M\to G$ a quasi-compact dominant morphism of monoid schemes. Then $f$
is flat.
  \end{prop}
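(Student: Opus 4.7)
My plan would combine the generic flatness theorem with a homogeneity argument exploiting the group structure of $G$. First, since $f$ is quasi-compact and flatness is local on both source and target, I would reduce to the affine situation: fix an affine open $V \subseteq G$, observe that $f^{-1}(V)$ is quasi-compact and hence covered by finitely many affine opens, and work in such charts, so we may assume $G = \Spec A$ (with $A$ reduced) and $M = \Spec B$. The theorem of generic flatness applied to the reduced base $G$ then yields a dense open $U \subseteq G$ over which $f^{-1}(U) \to U$ is flat; together with openness of the flat locus, the set
\[
W := \{g \in G : f \text{ is flat at every point of } f^{-1}(g)\}
\]
is open in $G$, contains $U$, and in particular is non-empty.

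The heart of the proof would be to show $W = G$ by a translation argument. The key observation is that the action morphism
\[
\sigma : M \times G \longrightarrow G, \qquad (m, g) \longmapsto f(m) \cdot g,
\]
is faithfully flat: it factors as the automorphism $(m, g) \mapsto (m, f(m) g)$ of $M \times G$ (whose inverse is $(m, h) \mapsto (m, f(m)^{-1} h)$) followed by the second projection, hence is flat; and $\sigma(e_M, g) = g$ shows it is surjective. Furthermore, for each $m \in M$ the monoid-homomorphism identity $f(m' m) = f(m') f(m)$ gives the commutative square
\[
\xymatrix{
M \ar[r]^-{r_m} \ar[d]_-{f} & M \ar[d]^-{f} \\
G \ar[r]^-{r_{f(m)}}_-{\sim} & G
}
\]
whose bottom arrow is an isomorphism (right translation on the group $G$), while the top arrow is only a morphism. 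Combining the faithful flatness of $\sigma$, this translation-equivariance, and the density of $f(M) \subseteq G$, the goal is to deduce that $W$ is stable under multiplication by $f(M)$; then $W \supseteq f(M) \cdot W$ is a dense open in $G$, which by the group structure of $G$ forces $W = G$.

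The main obstacle will be making the transport of flatness from $v \in W$ to $f(m) \cdot v$ fully rigorous, since in the monoid setting $r_m$ on $M$ is not an isomorphism as it would be in the purely group-theoretic case. I expect the cleanest resolution to proceed via faithfully flat descent along $\sigma$: the flat locus of $f$ can be realized as a $\sigma$-saturated open of $G$, which automatically builds in the desired $f(M)$-invariance. Alternatively, one could argue directly that the pullback of $f$ along the faithfully flat $\sigma$ is, up to the shear isomorphism of $M \times G$, a base change of $f$ itself, and then use that flatness of this pullback over the dense open $\sigma^{-1}(U)$, combined with the group structure on $G$, forces flatness everywhere.
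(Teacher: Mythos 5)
You have put your finger on exactly the right obstruction, but neither of your proposed resolutions closes it. The shear $(m,g)\mapsto (m,f(m)g)$ of $M\times G$ fixes the $M$--coordinate, so the pullback of $f$ along $\sigma$ is, up to that isomorphism of the target, \emph{identical} to the pullback of $f$ along the flat projection $p_2:M\times G\to G$; descent along $\sigma$ therefore only recovers the statement that the flat locus of the pullback is $V\times M$ (with $V\subseteq M$ the flat locus of $f$), which you already knew, and produces no invariance of $W$ under multiplication by $f(M)$. The underlying problem is that a point of $f^{-1}\bigl(f(m)\cdot w\bigr)$ need not be of the form $m\cdot x'$ with $x'\in f^{-1}(w)$ --- one cannot divide by $m$ in $M$ --- so there is no morphism comparing the two fibers. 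Worse, the gap cannot be closed, because the statement is false in the generality in which it is posed. Let $S=\{u^n:n\in\mathbb Z\}\cup\{s\}$ be the commutative monoid with $u^n\cdot u^m=u^{n+m}$ and $u^n\cdot s=s=s\cdot s$, and let $M=\Spec\bigl(\Bbbk[S]\bigr)$; concretely $M\cong \mathbb G_m\sqcup\Spec(\Bbbk)$ is the semigroup $\mathbb G_m$ with a disjoint unit $e$ adjoined. The monoid homomorphism $\mathbb Z\to S$, $n\mapsto u^n$, induces an affine (hence quasi-compact), surjective morphism of monoid schemes $f:M\to \mathbb G_m$ which is the identity on the $\mathbb G_m$--component and sends $e$ to $1$. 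As a $\Bbbk[t^{\pm1}]$--module, $\Bbbk[S]\cong\Bbbk[t^{\pm1}]\oplus\Bbbk[t^{\pm1}]/(t-1)$, so $f$ is not flat at $e$, although $\mathbb G_m$ is a reduced group scheme. (Two secondary issues: generic flatness and openness of the flat locus require finite type/finite presentation hypotheses unavailable for a merely quasi-compact $f$ --- over a reduced target one should instead use that $\mathcal O_{G,\eta}$ is a field at a generic point $\eta$, so $f$ is automatically flat along $f^{-1}(\eta)$ --- and openness of your set $W$ would need the image of the non-flat locus to be closed, which is not automatic.)

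For comparison, the paper offers no argument at all: it cites Perrin's proof of the group-scheme case and asserts that a ``careful inspection'' shows it remains valid for a monoid source. Perrin's argument is essentially the one you are trying to reconstruct --- flatness along the fiber over a generic point of the reduced target, propagated to an arbitrary point of the source by translating by a (generic) point of the \emph{source} --- and the step that fails for a monoid is precisely the one you flagged: translation $r_m$ on $M$ is no longer an isomorphism. The counterexample above shows that this failure is not a matter of presentation; some supplementary hypothesis on $M$ is genuinely required, the natural one being that the unit group scheme of $M$ is open and schematically dense in $M$ (as happens for $\Autgr(E)\subseteq\Endgr(E)$, the case the paper actually needs), in which case one first applies the group-scheme result to $f$ restricted to the unit group and then translates by its generic point, which \emph{is} an automorphism of $M$. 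So your proof cannot be completed as written, but the defect lies in the proposition (and in the paper's one-line justification), not in your identification of where the difficulty sits.
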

  \pf If $M$ is a group scheme, this result is proved in
  \cite[Proposition II.1.3]{kn:dperrin1}. An inspection of the
  proof presented therein, shows that it is still valid for $M$ a
  monoid scheme.\qed

  \begin{thm}[Perrin, {\cite[Proposition II.1.5,  Lemme V.3.3.1 and Corollaire V.3.3]{kn:dperrin1}}]
  \label{thm:perrinigame}\ \\
  Let $f:G\to K$ be a quasi-compact morphism of group schemes; let
  $f(G)$ be the scheme theoretic image of $f$. Then $f(G)\cong
  G/\operatorname{Ker}f$ and the induced morphism 
  $\widetilde{f}:G\to f(G)$ is faithfully flat. In particular, 
    the
  induced morphism $G/\operatorname{Ker}f\to K$ is a closed immersion.\qed
\end{thm}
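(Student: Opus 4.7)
The plan is to reduce the theorem to two ingredients already established in the excerpt: the existence theorem for quotients of a quasi-compact group scheme by a normal closed subgroup, and Proposition \ref{prop:fmonisflat}. Let $N:=\operatorname{Ker}(f)=f^{-1}(e_K)$, which is automatically a closed normal subgroup scheme of $G$ (closed because $e_K$ is a closed immersion by Remark \ref{rem:scheima}(3), and normal because kernels of morphisms of group schemes are normal). Applying the quotient theorem (part (2)) produces the group scheme $G/N$ together with a canonical projection $p\colon G\to G/N$, which is faithfully flat and quasi-compact. By the universal property of $p$, the morphism $f$ factors uniquely as $f=\bar f\circ p$ with $\bar f\colon G/N\to K$ a morphism of group schemes whose kernel is trivial; in particular $\bar f$ is a monomorphism of group schemes, and since both $f$ and $p$ are quasi-compact, so is $\bar f$.

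Next, I would observe that once $\bar f$ is shown to be a closed immersion, the full statement follows at once. Indeed, because $p$ is faithfully flat quasi-compact, it is scheme-theoretically surjective, so $f$ and $\bar f$ have the same scheme-theoretic image in $K$ (a closed subscheme $Z\subseteq K$ receives $f$ iff $\bar f^{-1}(Z)=G/N$, by schematic dominance of $p$). Since a closed immersion coincides with its own scheme-theoretic image, $\bar f$ then induces an isomorphism $G/N\xrightarrow{\ \sim\ } f(G)$; under this identification $\widetilde f$ becomes $p$, hence faithfully flat, and the induced map $G/\operatorname{Ker}(f)\to K$ is the closed immersion $\bar f$.

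The crux is therefore to prove that the quasi-compact monomorphism of group schemes $\bar f\colon G/N\to K$ is a closed immersion. My plan here is to exploit equivariance: left translations on $G/N$ and on $\bar f(G/N)\subseteq K$ intertwine $\bar f$, so being a closed immersion is a translation-invariant property and it suffices to check it on an open neighbourhood of $e_{G/N}$. By construction of $N$, the fibre $\bar f^{-1}(e_K)$ at the identity reduces to the single point $e_{G/N}$. I would then apply Proposition \ref{prop:fmonisflat} to the dominant quasi-compact morphism $G/N\to \bar f(G/N)$ --- after reducing to the reduced locus of the target if needed --- to deduce flatness of $\bar f$, and finally use the trivial-kernel condition at the identity to promote a flat, quasi-compact, group-scheme monomorphism to a closed immersion.

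The main obstacle is precisely this last step: promoting a flat, quasi-compact group-scheme monomorphism with trivial kernel to a closed immersion, without any finite-type hypothesis on $G$ or $K$. This is where Perrin's detailed structure theory of quasi-compact group schemes in \cite{kn:dperrin1} would have to be invoked in a fully rigorous argument, since in that generality the usual finite-type arguments (closed orbits, fibre-dimension, Zariski's main theorem in its naive form) are not directly available, and one needs the pro-algebraic description of $G/N$ together with compatibility of the quotient with passage to the limit.
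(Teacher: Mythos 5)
There is a genuine gap, and you have in fact named it yourself: the entire difficulty of the theorem is concentrated in the step you leave open, namely that a quasi-compact monomorphism of group schemes (equivalently, a morphism with trivial kernel) is a closed immersion without any finite-type hypothesis. Your reduction to that statement is not progress: it \emph{is} the statement, since once $f$ is factored as $\bar f\circ p$ through the faithfully flat projection $p:G\to G/\operatorname{Ker}(f)$, the assertion ``$G/\operatorname{Ker}(f)\to K$ is a closed immersion'' is exactly the conclusion of the theorem. The paper does not prove this either; it quotes it directly from Perrin (\cite[Proposition II.1.5, Lemme V.3.3.1 and Corollaire V.3.3]{kn:dperrin1}), so the honest form of your argument is ``reduce to Perrin's Corollaire V.3.3 and cite it'', which is what the paper does in one line.

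Two of your intermediate steps are also not sound as written. First, Proposition \ref{prop:fmonisflat} requires the target to be a \emph{reduced} group scheme, whereas the schematic image $\bar f(G/N)\subseteq K$ need not be reduced in positive characteristic; and you cannot simply ``reduce to the reduced locus of the target'', because over a non-perfect field the reduction of a group scheme need not be a group scheme (see the remark following Lemma \ref{lem:descchevred}), so the proposition is not applicable there either. Second, even granting flatness of $G/N\to \bar f(G/N)$, upgrading ``flat and dominant'' to ``faithfully flat'' (surjective) is not automatic in this generality, since flat quasi-compact morphisms need not be open without finite-presentation hypotheses; and it is precisely the combination ``faithfully flat $+$ monomorphism $\Rightarrow$ isomorphism'' that would close the argument. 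The translation-equivariance observation is fine but does not help with any of these points. In short: either supply the pro-algebraic approximation argument from \cite{kn:dperrin1} in full, or cite Perrin's results as the paper does; the proposal as it stands does neither.
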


\begin{cor}
  \label{cor:surimpliesflat}
Let $f:G\to K$ be a quasi-compact morphism of group schemes, with $K$
reduced. Then the following three assertions are equivalent: (a) $f$ is faithfully flat; (b) the map associated to $f$ at the level of sets is surjective; (c) the map $f(\overline{\Bbbk}): G(\overline{\Bbbk})\to
K(\overline{\Bbbk})$ is surjective.

Moreover if $f$ is as above and $K$ is connected, then the restriction
$f|_{_{G^0}}:G^0\to K$ is faithfully flat.
\end{cor}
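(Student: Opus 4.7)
The plan is to reduce everything to Theorem \ref{thm:perrinigame}, which for any quasi-compact morphism $f\colon G\to K$ of group schemes provides the canonical factorization $G \xrightarrow{\tilde f} f(G) \hookrightarrow K$, with $\tilde f$ faithfully flat and the second arrow a closed immersion. Since $\tilde f$ is already faithfully flat, $f$ is faithfully flat exactly when the closed immersion $\iota\colon f(G)\hookrightarrow K$ is, and a closed immersion into a reduced scheme is faithfully flat iff it is an isomorphism iff it is surjective on underlying sets. Moreover, since $\tilde f$ is surjective on points, the set-image of $f$ coincides with the underlying set of $f(G)$. This already yields (a) $\Leftrightarrow$ (b).

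For (b) $\Leftrightarrow$ (c), I would note that every $\overline{\Bbbk}$-point of $K$ in the set-theoretic image of $f$ lifts to an $\overline{\Bbbk}$-point of $f(G)$, because a morphism $\Spec(\overline{\Bbbk})\to K$ factoring through the set-image of $f$ automatically factors (as a morphism of schemes) through the closed subscheme $f(G)$. Hence (c) implies $f(G)(\overline{\Bbbk}) = K(\overline{\Bbbk})$, and in a reduced target geometric points are (topologically) dense, forcing $f(G)=K$, i.e.\ (a). Conversely, given (a), faithful flatness produces non-empty geometric fibers over every $\overline{\Bbbk}$-point of $K$, and these fibers carry $\overline{\Bbbk}$-points. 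The cleanest way to handle the possibly non-finite-type situation is via Perrin's description of quasi-compact group schemes as filtered inverse limits of group schemes of finite type, where density of $\overline{\Bbbk}$-points in reduced schemes and existence of $\overline{\Bbbk}$-points in non-empty geometric fibers are classical.

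For the moreover part, by Theorem \ref{thm:perrinG0}(2) the inclusion $i\colon G^0\hookrightarrow G$ is a flat closed immersion, so $f|_{G^0} = f\circ i$ is flat; it remains to show set-surjectivity. Let $f(G^0)\subseteq K$ denote the schematic image, which is a closed subgroup scheme (normal in $K$, since $G^0$ is normal in $G$ and $f$ is surjective). The composition $G\to K\to K/f(G^0)$ is surjective and kills $G^0$, hence factors as a surjective morphism $G/G^0 \twoheadrightarrow K/f(G^0)$, which is faithfully flat by the first part of the statement applied to the source $G/G^0$. By Theorem \ref{thm:perrinG0}(4), $G/G^0$ is a filtered limit of finite \'etale group schemes, and a faithfully flat quotient of such a group scheme is itself profinite \'etale. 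On the other hand, $K/f(G^0)$ is connected, being the image of the connected $K$ under the faithfully flat map $K\to K/f(G^0)$. But a connected profinite \'etale $\Bbbk$-group scheme is necessarily $\Spec(L)$ for some separable algebraic extension $L/\Bbbk$, and the existence of the unit morphism $\Spec(\Bbbk)\to\Spec(L)$ forces $L=\Bbbk$. Hence $K/f(G^0)$ is trivial, so $f(G^0)=K$, and $f|_{G^0}$ is faithfully flat by the already-proved equivalence.

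The main obstacle is the careful handling of geometric points for non-finite-type quasi-compact group schemes in the (b) $\Leftrightarrow$ (c) step, together with showing that the quotient $K/f(G^0)$ really is profinite \'etale (for which the cleanest tool is again Perrin's inverse-limit representation combined with the compatibility of quotients and limits in the category of group schemes).
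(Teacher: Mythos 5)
Your proof of the three equivalences follows essentially the paper's (very terse) route: factor $f$ through its schematic image via Theorem \ref{thm:perrinigame}, and use that $K(\overline{\Bbbk})$ is dense in the reduced scheme $K$ so that a closed subscheme meeting every geometric point must be all of $K$; the paper's entire proof of this part is the one-line density assertion, and your elaboration (including the caveat about non-finite-type schemes, handled by Perrin's inverse limits) is a faithful expansion of it. One small caution: your intermediate claim that a faithfully flat closed immersion is automatically an isomorphism is delicate without finiteness hypotheses ($I=I^2$ does not force $I=0$ for non-finitely-generated ideals), but your actual logical cycle (a)$\Rightarrow$(b)$\Rightarrow$(a) never uses that direction, only ``surjective closed immersion into a reduced scheme is an isomorphism,'' which is fine. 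For the ``moreover'' part your route is genuinely different from the paper's. The paper invokes going-down for flat morphisms (\cite[Proposition IV.2.3.4]{kn:EGAIV2}): since $K$ is connected and reduced it is irreducible by Theorem \ref{thm:perrinG0}, its generic point is a generization of $1_K=f(1_G)$ and hence lifts along the flat map $f$ to a generization of $1_G$, which necessarily lies in $G^0$; thus $G^0$ dominates $K$, its schematic image is all of $K$, and Theorem \ref{thm:perrinigame} concludes. You instead form the quotient $K/f(G^0)$ and show it is simultaneously connected and a faithfully flat quotient of the profinite \'etale group $G/G^0$, hence trivial. This works, but it buys the same conclusion at a higher price: you must justify that the schematic image $f(G^0)$ is normal in $K$ (an fppf-descent argument on $T$-points), that the quotient exists and is reduced (so that the first part applies to $G/G^0\to K/f(G^0)$), and that quotients of profinite \'etale group schemes remain profinite \'etale (compatibility of Perrin's limits with quotients) with connected such groups being trivial. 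Each step is correct but none is free, whereas the going-down argument is two lines. Your approach does have the minor virtue of avoiding any appeal to irreducibility of $K$, using only connectedness directly.
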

\pf Indeed, under the hypothesis of this corollary, $K(\overline{\Bbbk})$
is dense in the base space of  $K$.

If $K$ is connected, since $f$ is faithfully flat, it follows that
$G^0$ dominates $K$ (see for example  \cite[Proposition
IV.2.3.4]{kn:EGAIV2}). Since $f|_{_{G^0}}$ factors through a closed
immersion, the result follows. 
\qed

\subsection{Extensions of abelian varieties by affine group
  schemes}\ %
\label{subsect:affext}

\begin{defn}
\label{defi:torsor}
Let $H$ be a $\Bbbk$--group scheme, $X$ an $H$--scheme with action
$a$ and $f:X\to Y$ an $H$--invariant morphism of schemes, $f$ is
an \emph{$(H,a)$--torsor} if:

\noindent (1)  $f$ is quasi-compact and faithfully flat;

\noindent (2) The morphism $H\times X\to X\times_YX$ induced by $a$ and
  the projection over the second coordinate, is an isomorphism; in
  other words, the commutative diagram below is cartesian:
\[
\xymatrix{
H\times X\ar[r]^-a\ar[d]_{p_2}& X\ar[d]^f\\
X\ar[r]^f& {Y}
}
\]

When no confusion arises, we will say that $f$ is an \emph{$H$--torsor} of a \emph{torsor under $H$}.
\end{defn}

\begin{defn}
\label{defi:exactseq}
Let $j:N\to G$ and $q:G\to Q$ be two morphisms of group schemes. The
sequence
 \[
\xymatrix{
\mathcal S: & 1\ar[r] & N\ar[r]^j & G\ar[r]^q & Q\ar[r] & 1
}
\]
is a \emph{short exact sequence of group schemes} if and only if the
following two conditions are satisfied:

\noindent (1) The sequence $\mathcal S$ is left exact; that is, the
sequence $1\to N(T)\to G(T)\to Q(T)$ is exact for every $\Bbbk$-scheme
$T$ --- equivalently, $\operatorname{Ker} j$ is trivial and $j$
induces an isomorphism $\operatorname{Ker} q \cong N$.

  \noindent (2) If $T$ is a scheme and $y\in Q(T)$,
  then there exists a faithfully flat, quasi-compact morphism $f:T'\to
  T $ and $x\in G(T')$ such that $q_{T'}(x)=Q(f)(y)\in Q(T')$.
\end{defn}

\begin{rem}
\label{rem:torsoraffine2}
(1) Notice that   condition (2) of Definition \ref{defi:exactseq} holds
whenever $q:G\to Q$ is an fpqc (i.e.~faithfully flat 
quasi-compact) morphism.

\noindent (2) Moreover, if $q:G\to Q$ is an fpqc morphism, then
clearly $q$ is
an $N$--torsor --- the second condition of Definition
\ref{defi:torsor} is easily proved due to the fact that
all the schemes involved are group schemes.
In particular,   since $q$ is an $N$--torsor, $q$ is a categorical
quotient (see for example \cite[\S 2.6]{kn:brionchev}). 
\end{rem}

    \begin{ej}
\label{ej:exactandtorsor}
  Let $G$ be a connected group scheme and $H\subset G$ a normal closed
  subgroup scheme. Then it follows  from \cite[Corollaire
  IV.3.3]{kn:dperrin1}  that  $G/H$ is a group scheme and the quotient
  map $q: G\to G/H$ is a faithfully flat quasi-compact morphism.  In
  particular, the sequence ${\xymatrixcolsep{1.5pc}
    \xymatrix{1\ar[r]&H\ar[r]&G\ar[r]^-{q}&\ar[r] G/H&0}}$ is
  exact. 
  \end{ej}

\begin{defn}
\label{defi:affqcext}
Let $A$ be an abelian variety. A \emph{group extension of $A$} is a
short exact sequence $
\mathcal S$: $\ate$. If moreover $q:G\to A$ is a faithfully flat
quasi-compact morphism we say that $\mathcal S$ is a
\emph{quasi-compact group extension of $A$}; if $q$ is a faithfully
flat affine morphism, we say that $\mathcal S$ is an 
 \emph{an affine
   group extension of $A$}.
\end{defn}

\begin{rem}
  \label{rem:forequiv}
  Let  $A$ be an abelian variety. If 
   $q:G\to A$ is a 
  surjective quasi-compact  morphism of group schemes, then $G$ is a
  quasi-compact group scheme and $q$ is a faithfully flat morphism by
  \cite[Proposition II.1.3]{kn:dperrin1}, since $A$ is a reduced group scheme 
  (see Corollary \ref{cor:surimpliesflat}). It follows that
  \[
    \mathcal S_q :
   \xymatrix{1\ar[r]&\operatorname{Ker}(q)\ar[r]&G\ar[r]^(.45){q}&\ar[r]
     A&0}
   \]  
is a quasi-compact extension of $A$.

On the other hand,  if $G$ is a quasi-compact group scheme and
$H\subset G$ is a 
  normal subgroup scheme such that $A=G/H$ 
  is an abelian variety, then the canonical 
projection $q:G\to A$ is an $H$--torsor, and the corresponding exact
sequence is  a quasi-compact  extension (see \cite[Corollaire IV.3.3
and Proposition II.1.3]{kn:dperrin1}).
\end{rem}

\begin{rem}\label{rem:torsoraffine1}
  \noindent (1) Let 
  $\mathcal S$: $\ate$ be a quasi-compact extension. Then $\mathcal S$
  is an  
  affine extension  if and only if $H$ is an affine group scheme. See
  \cite[III, \S\ 3,2.5/6]{kn:demgab}, or  \cite[\S\ I.5.7]{kn:jantzen2} for a similar result for $H$--torsors.

  \noindent (2) By definition,  if a group scheme $G$ fits into an 
  affine extension then $G$ is quasi-compact; see Corollary
  \ref{cor:qcisate} below 
  for a partial converse due to D.~Perrin
  (\cite{kn:dperrin1}).

\noindent (3) It is well known that if $\ate$ is an affine
extension then $G$ is of finite type  if and only if $H$
is of finite type,  see for example \cite[Proposition 2.6.5]{kn:brionchev}; if this is the case, we say that the extension
is of \emph{finite type}.

\noindent (4) Let $A$ be an abelian variety and  $\mathcal S$: $\ate$
a short exact sequence. It follows from Remark \ref{rem:forequiv} that  a
$\mathcal S$ is  a  quasi-compact 
(resp.~affine)
extension if and only if $\mathcal S$ is left exact and  $q$ a
surjective quasi-compact 
(resp.~affine) morphism.
\end{rem}

We complete the definition of the category of quasi-compact
 (resp.~affine) extensions of an affine variety $A$ by 
 defining its morphisms (see also Section \ref{subsect:affextschoverA}).

\begin{defn}\label{defn:catextensions}
  Let $A$ be an abelian variety.

  \noindent (1) The \emph{category $\GextqcA$ of quasi-compact group extensions of $A$}   has as objects the
  quasi-compact extensions of $A$ and as \emph{morphisms}
  $\phi:\mathcal S \to \mathcal S'$ between two quasi-compact
  extensions of $A$, the commutative diagrams of the form:
\begin{equation}
\label{eq:morfext}
\raisebox{5ex}{\xymatrix{
\mathcal S:\ar[d]^-\phi &
1\ar[r] & N\ar[r]^j\ar[d]_-{f_N} & G \ar[r]^q\ar[d]_-f & A\ar[r]\ar@{=}[d] & 0\\
\mathcal S': &
1\ar[r] & N'\ar[r]^{j'} & G'\ar[r]^{q'} & A\ar[r] & 0
} }
\end{equation}
where 
$f_N$ and $f$ are morphisms of group schemes.

\noindent (2) The \emph{category $\GextaffA$ of affine extensions of $A$} is
defined as the full subcategory of $\GextqcA$ with objects the affine extensions.

\noindent (3) If ${\mathbf P}$ is a class of morphisms of schemes
(affine, quasi-compact, finitely presented, etc.) we say that the
morphism $\phi: \mathcal S \to \mathcal S'$ is of class ${\mathbf
  P}$ if and only if $f$ is of class ${\mathbf P}$.
\end{defn}

\begin{nota}\label{nota:central} In the situation of a diagram such as \eqref{eq:morfext}, the morphism  $f: G \to G'$ will be called, the \emph{mid morphism} of $\mathrm{\phi}$. 
  \end{nota}

\begin{rem}\label{rem:fisaffine}
   It is evident that it is equivalent to give a diagram as
  \eqref{eq:morfext} or a commutative triangle of  morphisms of group
  schemes as below, with $q$ and 
  $q'$ affine morphisms. 
\[
\xymatrix@=10pt{
G \ar[rr]^{f}\ar[dr]_(.4){q} & & G'\ar[ld]^(.4){q'} \cr
 & A &}
\]

Indeed, if $f:G\to G'$ is as above, then $\Ker(f) \subset \Ker(q)$ and the
restriction $f|_{\Ker(q)}:\Ker(q)\to \Ker(q')$ makes sense as
$f\bigl(\Ker(q)\bigr) \subset \Ker(q')$. 

In particular, notice that if $\phi: \mathcal S\to \mathcal S'$ is a
morphism of affine extensions then the mid morphism $f$ is an
affine morphism, since it is a morphism in the category $\Schaaff$.
\end{rem}

  \begin{rem}\label{rem:isomorphism}
 \noindent (1) The composition of morphisms in $\GextqcA$ and the
 identity morphism are 
  defined in the obvious manner.

  \noindent (2) Clearly $\mathcal S$ and $\mathcal S'$ are
  \emph{isomorphic} if and only if the maps $f_N$ and $f$ are
  isomorphisms --- this last assertion is equivalent the assertion
  that $f$ is an isomorphism (compare with Theorem
  \ref{thm:perrinigame}, Remark \ref{rem:forequiv} and \S~\ref{subsect:affextassch}). 
\end{rem}

\begin{defn}
  \label{defn:smoothext}
 If in Definition \ref{defi:affqcext} the  group scheme $H$
  is smooth, then the canonical projection $q:G\to A$ is a smooth
  morphism; in this situation we say that the  extension is
  \emph{smooth}.

\end{defn}

\begin{ejs}
\label{exam:trivials}

(1) If $G$ is an affine group scheme, then $G$ can be viewed in a
canonical way as an affine extension of the trivial abelian variety
$\Spec{\Bbbk}=\{*\}$ by means of  the   short exact sequence
$\xymatrixcolsep{1.5pc} \xymatrix{1\ar[r] & G\ar[r]^(.43){\id}
  &G\ar[r] &\Spec(\Bbbk)\ar[r] &0}$.

\noindent (2) An abelian variety $A$ can be thought as an affine
extension in a natural way as: $\xymatrixcolsep{1.5pc} \xymatrix{
  0\ar[r] & 0\ar[r]& A\ar[r]^(.45){\id} &A\ar[r] &0}$.

\noindent (3) If $\mathcal S$: $\ate$ is an affine extension and
$f:A\to A$ is an isogeny (i.e.~a surjective morphism of abelian
varieties with finite kernel), then ${\xymatrixcolsep{1.5pc}
  \xymatrix{1\ar[r]&\operatorname{Ker}(f\smallcirc
    q)=q^{-1}\bigl(\operatorname{Ker}(f)\bigr)\ar[r]&G\ar[r]^(.45){f\smallcirc
      q}&\ar[r] A&0}}$ is an affine extension of $A$ (see Remark
\ref{rem:torsoraffine1}).

In particular if $f:A\to A$ is an isogeny, 
then $\xymatrixcolsep{1.5pc} \xymatrix{ 0\ar[r] &
  \operatorname{Ker}(f)\ar[r]& A\ar[r]^(.45){f} &A\ar[r] &0}$ is an
affine extension.
\end{ejs}

\begin{rem}
\label{rem:pushes}
Let $\mathcal S$: $\ate$ be an affine extension and $\ell:H\to H'$ be
a morphism of affine group schemes. Assume moreover that $H\subset G$
is central in $G$ and that $\ell(H)\subset H'$ is central in
$H'$. Then $\Gamma_{\ell} (H)$, the scheme theoretic image of the ``graph''
morphism $\Gamma_\ell=(\operatorname{inc},
\ell\smallcirc \iota_H)=(\operatorname{inc}\times \ell)\smallcirc \Delta_H : H\to G\times H'$, is
a central subgroup scheme of $G \times H'$ --- here $\Delta_H: H\to
H\times H$ denotes the diagonal embedding, $\Delta_H(h)=(h,h^{-1})$. Therefore, the quotient
$G\times^HH'=(G\times H')/\Gamma_\ell(H)$ is a quasi-compact group
scheme and fits into an affine extension, that we denote
$\ell_*\mathcal S$ --- it is also possible to deduce the existence of
$\ell_* \mathcal S$ from the properties of the \emph{induced space}
(see Theorem \ref{thm:indesp1}). Moreover, $\ell$ yields a morphism
$\lambda: \mathcal S\to \ell_* \mathcal S$ of affine extensions:
\[
\xymatrix{
\mathcal S:\ar[d]^-\lambda & 1\ar[r]& H\ar[r]\ar[d]_-\ell & G\ar[r]^q \ar[d]^-j
& A\ar[r]\ar@ {=}[d]& 0 \\
\ell_*\mathcal S: & 1\ar[r]& H'\ar[r]& G\times^H H'\ar[r]_-{\pi_{H'}} & A\ar[r]& 0 
}
\]
where $j:G\to G\times^HH'$ is given by $j(g)=[g,1]:=\pi_{G\times H'}(g,1)$, with
$\pi_{G\times H'}:G\times H'\to G\times^H H'$  the canonical
projection, and $\pi_{H'}: G\times ^HH'\to A$ is given by
$\pi_{H'}\bigl([g,h']\bigr)= q(g)$. Indeed, note that $\pi_{H'}$ is
well defined and that $\pi_{H'}\bigl([g,h']\bigr)=0$ if and only if
$g\in H$, therefore $\Ker(\pi_{H'})= \bigl\{ [1,h']\mathrel{:} h'\in
H'\bigr\}=H'$. 

Note that if $H'$ is smooth, then  $\ell_*\mathcal S$ is a smooth
extension.
\end{rem}

\begin{defn}
A \emph{closed immersion} of the affine extension
  $\mathcal T$ into the affine extension $\mathcal S$ (both
  extensions of $A$) is a morphism $\phi: \mathcal T\to \mathcal S$ of
  affine extensions
\[
\xymatrix{\mathcal T:\ar[d]_-{\phi}&1\ar[r]&
  H'\ar[r]\ar@{^{(}->}[d]_-{f|_{_{H'}}} & G'\ar[r]^{q'}
  \ar@{^{(}->}[d]^-{f} & A\ar[r]\ar@{=}[d]& 0 \\ \mathcal S :&
  1\ar[r]& H\ar[r]& G\ar[r]_{q} & A\ar[r]& 0}
\]
such that the vertical morphism $f:G'\to G$ (and therefore
$f|_{_{H'}}:H'\to H$) is a closed
immersion.

In particular, if   $G'\subset G$ is a closed subgroup scheme such
that $q(G')=A$ and $H'=\operatorname{Ker}(q|_{_{G'}})$, then $\mathcal
T$: ${\xymatrixcolsep{1.5pc}
  \xymatrix{1\ar[r]&H'\ar[r]&G'\ar[r]^(.45){q|_{_{G'}}}&\ar[r] A&0}}$ is an
affine extension and  the inclusion $\mathcal T\hookrightarrow \mathcal S$ is a
closed immersion, we say that $\mathcal T$ is a \emph{(closed, affine)
  sub-extension} of $\mathcal S$.
\end{defn}

  The following theorem was first announced by C.~Chevalley in the 1950s
and published in 1960 in \cite{kn:chetheo}. We present here a slightly
more general version, due to M.~Brion (see
\cite[Theorem 2]{kn:brionchev} and Corollary
\ref{cor:qcisate} below).

\begin{thm}[Chevalley, Raynaud, Brion]
\label{thm:chevftgs}
  Every $\Bbbk$--group scheme of finite type $G$ has a smallest normal
  subgroup scheme $G_{\aff}$ such that the quotient $G/G_{\aff}$ is
  proper. Moreover, $G_{\aff}$ is affine and connected, and the associated short sequence of
  group schemes over $\Bbbk$ is exact (see Definition
  \ref{defi:exactseq})       
\begin{equation}
\label{eqn:chev}
\xymatrix{
1\ar[r]& G_{\aff}\ar[r]& G\ar[r]^(0.36){q}&G/G_{\aff}\ar[r]&0.
}
\end{equation}

If 
$\Bbbk$ is perfect and $G$ is smooth, then $G_{\aff}$ is smooth as
well, and its formation commutes with field extensions --- that is,
if $\Bbbk\subseteq \mathbb K$, then $G(\mathbb K)_{\aff}=G_{\aff}(\mathbb K)$.
In particular, if $G$ is a  connected   group scheme of finite type
over a perfect field, then  $G$ fits in  a (smooth) affine  extension
of the abelian variety $A=G/G_{\aff}$.\qed
\end{thm}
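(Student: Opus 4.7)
\medskip
\noindent\textbf{Proof proposal.} The plan is to follow the classical Chevalley--Rosenlicht argument, as refined in \cite{kn:brionchev}. The proof naturally splits into the existence/affineness of $G_{\aff}$, the properness of the quotient, and the refinements under the smoothness/perfectness hypotheses.

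First I would reduce to the algebraically closed case. Since the subgroup we are after is characterized intrinsically --- as the smallest normal closed subgroup with proper quotient --- any such subgroup of $G_{\overline{\Bbbk}}$ is stable under the natural $\operatorname{Gal}(\overline{\Bbbk}/\Bbbk)$--action and therefore descends to a closed normal subgroup of $G$, while properness and affineness descend as well. Over $\overline{\Bbbk}$, let $\mathcal N$ denote the family of closed normal subgroup schemes $N \trianglelefteq G$ with $G/N$ proper. The key closure property is that $\mathcal N$ is stable under finite intersections: the canonical morphism $G/(N_1\cap N_2)\hookrightarrow G/N_1\times G/N_2$ is a closed immersion into a proper scheme. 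Combined with noetherianity of $G$ (as $G$ is of finite type), this produces a unique minimal element $G_{\aff}\in\mathcal N$, and $G/G_{\aff}$ is proper by construction.

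The technical heart is showing that $G_{\aff}$ is in fact affine. By the intersection stability just noted, it suffices to exhibit a single affine $N_0\in\mathcal N$. Following Rosenlicht, I would produce $N_0$ by constructing the Albanese morphism $\alpha: G\to \Alb(G)$ attached to a smooth projective compactification of $G$; rigidity of morphisms to abelian varieties forces $\alpha$ to be a homomorphism up to translation, and its kernel is affine because the Stein factorization of $\alpha$ exhibits $\alpha^{-1}(0)$ as a group scheme whose ring of global functions separates points. (Brion's argument instead constructs enough $G$--semi-invariants in the sections of an ample line bundle on a normal equivariant completion of $G$.) Connectedness of $G_{\aff}$ then follows from minimality: the identity component $G_{\aff}^0$ is normal in $G$ and $G/G_{\aff}^0$ is a finite cover of the proper scheme $G/G_{\aff}$, so $G_{\aff}^0\in\mathcal N$ and hence $G_{\aff}^0=G_{\aff}$.

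For the remaining assertions, if $\Bbbk$ is perfect and $G$ is smooth then $G_{\aff}$ is smooth: over $\overline{\Bbbk}$ one checks $(G_{\aff})_{\overline{\Bbbk}}=(G_{\overline{\Bbbk}})_{\aff}$ by minimality, reducedness passes through the descent, and a reduced closed subgroup scheme of a smooth group scheme over a perfect field is smooth. The same uniqueness argument yields compatibility with arbitrary field extensions. The final claim is then immediate: if $G$ is connected, smooth and of finite type over a perfect field, $G/G_{\aff}$ is connected, smooth, proper and of finite type, hence an abelian variety $A$, and $1\to G_{\aff}\to G\to A\to 0$ is a smooth affine extension in the sense of Definition \ref{defi:torsext}. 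The main obstacle throughout is producing a single affine normal subgroup with proper quotient; once that building block is in place, the remaining steps are essentially formal.
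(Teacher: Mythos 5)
The paper does not actually prove this theorem: it is quoted, with a reference, from Brion \cite[Theorem 2]{kn:brionchev}, so the only meaningful comparison is with Brion's argument. Your outline reproduces the classical skeleton correctly in its middle portion: stability of the family $\mathcal N$ of normal closed subgroup schemes with proper quotient under finite intersections (via the closed immersion $G/(N_1\cap N_2)\hookrightarrow G/N_1\times G/N_2$), noetherianity to extract the unique smallest element, the Albanese or semi-invariant construction of a single affine member of $\mathcal N$, and the identity-component trick for connectedness. Those steps are sound and are essentially what Brion does.

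The genuine gap is your very first reduction. Galois descent along $\overline{\Bbbk}/\Bbbk$ is available only when that extension is Galois, i.e.\ when $\Bbbk$ is perfect; over an imperfect field the purely inseparable part of $\overline{\Bbbk}/\Bbbk$ carries no Galois action and hence no descent datum. Moreover the reduction cannot be repaired: if $(G_{\overline{\Bbbk}})_{\aff}$ always descended to $\Bbbk$, then the uniqueness argument you yourself give would force $(G_{\aff})_{\overline{\Bbbk}}=(G_{\overline{\Bbbk}})_{\aff}$ for every $G$ and every $\Bbbk$ --- but the theorem deliberately restricts base-change compatibility to the case of $\Bbbk$ perfect and $G$ smooth, and this compatibility genuinely fails in general (Totaro's pseudo-abelian varieties over imperfect fields are the standard counterexamples). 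Brion's proof handles arbitrary fields by a separate descent through finite purely inseparable extensions (constructing the smallest $\Bbbk$-subgroup whose base change contains a given subgroup defined over such an extension); that is where the real work for general $\Bbbk$ lies, and it is absent from your sketch. A second, smaller omission: the Albanese construction of an affine $N_0\in\mathcal N$ requires $G$ smooth; for non-reduced $G$ one must first pass to $G_{\red}$ (or use that $G$ is an extension of a smooth group by an infinitesimal one, \cite[Proposition 2.9.2]{kn:brionchev}) and then check that the affine subgroup so obtained is still normal in $G$. As written, your argument establishes the theorem for smooth groups over perfect fields --- which is the case the present paper actually uses later --- but not the statement as given, which is asserted over an arbitrary field.
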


Since every  group scheme of finite type is an extension of a smooth
group scheme by an
infinitesimal group scheme (see \cite[Proposition 2.9.2]{kn:brionchev}),
Theorem \ref{thm:chevftgs}  implies that any connected  group scheme
of finite type 
fits in an affine extension.

\begin{cor}
  \label{cor:qcisate}
Let $G$ be a  connected  group scheme of finite type. Then there exists an
affine extension of an abelian variety  $\mathcal S$: $\ate$, with $H$ a connected affine
group scheme. 
\end{cor}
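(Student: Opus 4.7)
The plan is to reduce to the smooth case by quotienting $G$ by an infinitesimal normal subgroup, apply Chevalley's theorem in its smooth form, and then pull back to obtain an affine extension of $G$ itself.

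First, I would invoke the fact (recalled in the paragraph preceding the corollary, namely \cite[Proposition 2.9.2]{kn:brionchev}) that any group scheme of finite type $G$ fits into a short exact sequence
\[
1 \to I \to G \to G_{\mathrm{sm}} \to 1
\]
where $I$ is an infinitesimal normal subgroup scheme and $G_{\mathrm{sm}}$ is smooth. Since the underlying topological space of $I$ consists of a single point, $I$ is connected and affine. Moreover, as $G$ is connected of finite type and the quotient map is faithfully flat (hence surjective on underlying sets), $G_{\mathrm{sm}}$ is a connected smooth group scheme of finite type.

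Next, I would apply Chevalley's theorem to $G_{\mathrm{sm}}$. Although the statement of Theorem \ref{thm:chevftgs} highlights perfectness of $\Bbbk$ as the hypothesis needed to preserve smoothness of the affine part, the conclusion that the quotient is an \emph{abelian variety} is the classical Chevalley decomposition for smooth connected group schemes and holds over any base field. This yields an affine extension
\[
1 \to (G_{\mathrm{sm}})_{\aff} \to G_{\mathrm{sm}} \to A \to 0
\]
with $A$ an abelian variety and $(G_{\mathrm{sm}})_{\aff}$ a connected affine group scheme.

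Let $q\colon G \to A$ be the composition $G \to G_{\mathrm{sm}} \to A$ and set $H = \Ker(q)$. By construction $H$ fits into
\[
1 \to I \to H \to (G_{\mathrm{sm}})_{\aff} \to 1,
\]
so $H$ is affine, as an extension of an affine group scheme by an affine group scheme is affine (for instance by faithfully flat descent applied to the $I$-torsor $H \to (G_{\mathrm{sm}})_{\aff}$, since $I$ is affine and the base is affine). To see that $H$ is connected, note that the neutral component $H^0$ contains the connected subgroup $I$, and its image in $(G_{\mathrm{sm}})_{\aff}$ is a closed subgroup with finite quotient; since $(G_{\mathrm{sm}})_{\aff}$ is connected, this image must equal all of $(G_{\mathrm{sm}})_{\aff}$, forcing $H = H^0 \cdot I = H^0$. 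Therefore $1 \to H \to G \to A \to 0$ is the desired affine extension.

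The main obstacle is ensuring that the intermediate application of Chevalley genuinely gives an abelian variety quotient, as Theorem \ref{thm:chevftgs} as stated only guarantees properness of $G/G_{\aff}$ in full generality. The infinitesimal reduction above bypasses this subtlety by replacing $G$ with the smooth group $G_{\mathrm{sm}}$, where the classical form of Chevalley's theorem (valid over arbitrary fields) applies directly. The remaining verifications—affineness and connectedness of the kernel $H$—are routine once the two-step decomposition is in place.
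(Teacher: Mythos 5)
Your proof is correct and follows precisely the route the paper itself sketches in the paragraph preceding the corollary (and then delegates to Brion's Corollary 4.3.7): kill an infinitesimal normal subgroup to reach a smooth connected quotient $G_{\mathrm{sm}}$, apply Chevalley there, and pull the resulting affine kernel back to $G$, checking affineness and connectedness of the preimage as you do. The one assertion worth making explicit rather than labelling ``classical'' is why $G_{\mathrm{sm}}/(G_{\mathrm{sm}})_{\aff}$ is an abelian variety and not merely proper as Theorem~\ref{thm:chevftgs} guarantees over an imperfect field: a faithfully flat quotient of a smooth group scheme is geometrically reduced, hence smooth, so the proper connected quotient of $G_{\mathrm{sm}}$ is automatically an abelian variety over any base field --- this one-line observation is exactly what your appeal to ``the classical form over arbitrary fields'' is standing in for, and with it your argument is complete.
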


\proof
See \cite[Corollary 4.3.7]{kn:brionchev}.\qed

\begin{rem}\label{rem:perrinimprov}
  It follows from Perrin's Approximation
  Theorem (\cite[Th\'eor\`eme V.3.1]{kn:dperrin1}, see Theorem \ref{thm:perrinquasicomp}
  below) that Theorem \ref{thm:chevftgs} and its Corollary
  \ref{cor:qcisate} imply that   \emph{any connected} 
  group scheme fits into an affine extension (see  \cite[Corollary
  V.4.3.1]{kn:dperrin1}).
 \end{rem}
 
\begin{nota}\label{nota:chevdecomp} If $G$ is a smooth group scheme of finite type, then the
  sequence \eqref{eqn:chev} is known as the \emph{Chevalley
    decomposition} of $G$.
\end{nota}

\begin{rem}
\label{rem:extnounica}
  If  $G$ is a smooth group scheme of finite type over a perfect field
  $\Bbbk$,  then $G_{\aff}$ is the largest normal, affine, connected,  
  smooth, subgroup scheme of $G$ (see for example
  \cite{kn:brionchev}).

The following uniqueness result (for
  $\Bbbk$ a perfect field) follows easily.  Assume that a given smooth
  group scheme $G$ fits in an exact sequence
\[
\xymatrix{
1\ar[r]& H\ar[r]& G\ar[r]&G/H\ar[r]&0
,}
\]
with $H$ affine connected and $G/H$ an abelian variety. Then there are isomorphisms $f_1:H \cong G_{\aff}$
and $f_2: G/H \cong A$, such that the diagram of short exact sequences is commutative:
\[
\xymatrix{ 1\ar[r]& H\ar[r]\ar[d]_{f_1}&
  G\ar[r]\ar[d]_{\id}&G/H\ar[r]\ar[d]_{f_2}&0\\ 1\ar[r]& G_{\aff}\ar[r]&
  G\ar[r]&A\ar[r]&0. }
\]

It follows that $G_{\aff}$ is the \emph{unique} normal,  affine,
connected, smooth, subgroup scheme $H$ such that $G/H$ is proper.
Indeed, if $H$ is such a group, then $G_{\aff}\subset H$ by the
Chevalley decomposition theorem \ref{thm:chevftgs}, and $H\subset
G_{\aff}$ by the preceding remark.
\end{rem}

\begin{lem}
  \label{lem:descchevred}
Let $G$ be a group scheme of finite type over a perfect field
  $\Bbbk$ and assume that $G$ fits
in an exact sequence of group schemes
\[
\xymatrix{
1\ar[r]& H\ar[r]& G\ar[r]^-{q_G}&G/H\ar[r]&0
}
\]
with $H$ an affine connected normal subgroup scheme and $G/H$ proper.
Then 
\[
\xymatrix{
1\ar[r]& H_{\red}\ar[r]& G_{\red}\ar[r]&(G/H)_{\red}\ar[r]&0
}
\]
is the Chevalley decomposition of $G_{\red}$.
\end{lem}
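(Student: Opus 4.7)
The plan is to apply the reduction functor to the given sequence, verify the result is an exact sequence of smooth group schemes, and then identify it as the Chevalley decomposition of $G_{\red}$ using the minimality property of Theorem \ref{thm:chevftgs} together with a standard affine-versus-proper argument.

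Since $\Bbbk$ is perfect and everything is of finite type, $G_{\red}$, $H_{\red}$ and $(G/H)_{\red}$ are smooth closed subgroup schemes of $G$, $H$ and $G/H$ respectively. Because $G_{\red}\times H_{\red}$ is reduced over the perfect field $\Bbbk$, the restriction of the conjugation morphism $G\times H\to H$ to $G_{\red}\times H_{\red}$ factors through $H_{\red}$, so $H_{\red}$ is normal in $G_{\red}$. Likewise, since $G_{\red}$ is reduced, the composite $G_{\red}\hookrightarrow G\twoheadrightarrow G/H$ factors as $G_{\red}\to(G/H)_{\red}\hookrightarrow G/H$. The kernel of the induced map $G_{\red}\to(G/H)_{\red}$ is the scheme-theoretic intersection $G_{\red}\cap H$ in $G$, which is a reduced closed subscheme of $H$ whose geometric points coincide with those of $H$, hence equals $H_{\red}$. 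Moreover, $G_{\red}(\overline{\Bbbk})=G(\overline{\Bbbk})$ surjects onto $(G/H)(\overline{\Bbbk})=(G/H)_{\red}(\overline{\Bbbk})$, so by Corollary \ref{cor:surimpliesflat} the map $G_{\red}\to(G/H)_{\red}$ is faithfully flat. This produces the desired short exact sequence of smooth group schemes.

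To identify it with the Chevalley decomposition of $G_{\red}$, observe that $H_{\red}$ is affine (closed subscheme of the affine $H$) and connected (sharing its underlying space with the connected $H$), while $G_{\red}/H_{\red}\cong(G/H)_{\red}$ is proper (closed subscheme of the proper $G/H$). The minimality property in Theorem \ref{thm:chevftgs} therefore gives $(G_{\red})_{\aff}\subseteq H_{\red}$.

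The main obstacle is the reverse inclusion. For this, I consider the image of $H_{\red}$ under the Chevalley quotient $\pi\colon G_{\red}\to G_{\red}/(G_{\red})_{\aff}$. By Theorem \ref{thm:perrinigame}, this image is the quotient of the affine smooth group $H_{\red}$ by a closed normal subgroup scheme, hence affine and smooth; at the same time it is a closed subscheme of the proper $G_{\red}/(G_{\red})_{\aff}$, hence proper. An affine proper $\Bbbk$-scheme of finite type is finite, and a smooth connected finite $\Bbbk$-scheme is $\Spec(\Bbbk)$; since $\pi(H_{\red})$ is the continuous image of the connected $H_{\red}$, it is connected, and therefore trivial. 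Consequently $H_{\red}\subseteq(G_{\red})_{\aff}$, which combined with the opposite inclusion yields $H_{\red}=(G_{\red})_{\aff}$ and concludes the proof.
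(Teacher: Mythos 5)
Your identification $H_{\red}=(G_{\red})_{\aff}$ is correct and runs essentially parallel to the paper's own proof: one inclusion comes from the minimality in Theorem \ref{thm:chevftgs}, and the other from the fact that a connected affine group scheme maps trivially to a proper one. For the latter you argue that the image of $H_{\red}$ in $G_{\red}/(G_{\red})_{\aff}$ is affine and proper, hence finite, and smooth and connected, hence trivial; the paper instead invokes the triviality of the Albanese variety of $H_{\red}$. The two arguments are interchangeable.

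The gap is in your exactness step (which the paper's proof simply does not address). You assert that $\Ker\bigl(G_{\red}\to(G/H)_{\red}\bigr)=G_{\red}\cap H$ is a \emph{reduced} closed subscheme of $H$, and deduce that it equals $H_{\red}$ because it has the same geometric points. But a closed subscheme of a reduced scheme need not be reduced, and here the claim genuinely fails: take $G=E$ an elliptic curve over a perfect field of characteristic $p$ and $H$ the kernel of the relative Frobenius $E\to E^{(p)}$ (isomorphic to $\alpha_p$ or $\mu_p$). Then $H$ is affine, connected and normal, $G/H\cong E^{(p)}$ is proper, yet $G_{\red}\cap H=H$ is not reduced; the kernel of $G_{\red}=E\to (G/H)_{\red}=E^{(p)}$ is $H\neq H_{\red}=\Spec(\Bbbk)$, so the displayed reduced sequence is not left exact, and $(G/H)_{\red}$ need not even be isomorphic to $G_{\red}/H_{\red}$. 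So this step cannot be repaired as written: it is exactly the point where the statement is delicate, and a complete argument must either impose a hypothesis guaranteeing that $G_{\red}\cap H$ is reduced (e.g.\ $H$ smooth) or weaken the conclusion to the identity $H_{\red}=(G_{\red})_{\aff}$, which is the part that both you and the paper actually establish.
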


\pf
By construction $G_{\aff}\subset H$ and it follows that
$(G_{\aff})_{\red}\subset H_{\red}$. Now, since $H_{\red}$ is affine
and connected, then its Albanese variety is $\Alb(H_{red})=\Spec(\Bbbk)$
and so $q_G(H_{red})=\{0\}\subset A$; therefore, $H_{\red}\subset G_{\aff}$. 
\qed

  \begin{rem}
    In Lemma \ref{lem:descchevred} the condition that $\Bbbk$ is
    a perfect field cannot be omitted. Indeed, if $\Bbbk$ is not perfect, then
    $G_{\red}$ is not necessarily a group scheme, as it is shown in \cite[Example
    2.5.5]{kn:brionchev}.
  \end{rem}

The Chevalley decomposition of smooth group schemes
has the 
  following functorial property.
  
\begin{lem}
  \label{lem:chevcase}
  If $f:G\to G'$ is a morphism of smooth group schemes of finite
  type, then
  their Chevalley decompositions fit in 
  the following commutative diagram:
  \begin{equation}
    \label{eqn:chevcase}
\raisebox{5ex}{\xymatrix{
\mathcal G:\ar[d]^\phi &1\ar[r] & G_{\aff}\ar[r]\ar@{.>}[d]_{f|_{_{G_{\aff}}}}&
G\ar[r]^-{q}\ar[d]^{f}&
Q\ar[r]\ar@{.>}[d]^{\widetilde{f}}&
0\\
\mathcal G': &1\ar[r] & G'_{\aff}\ar[r]&
G'\ar_-{q'}[r]& Q' \ar[r]& 0
}}
\end{equation}

If $f$ is a faithfully flat morphism, then the vertical arrows of the
diagram above are faithfully flat morphisms. Moreover, if $f$ is
affine and faithfully flat, then $\widetilde{f}$ is an isogeny.

\end{lem}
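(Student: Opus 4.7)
\textbf{Proof plan for Lemma \ref{lem:chevcase}.}

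First, I would construct the vertical maps in the diagram. The crucial input is that $G_{\aff}$ is affine and connected, so any morphism from $G_{\aff}$ to an abelian variety has trivial schematic image (its image would be a connected affine closed subgroup of an abelian variety, hence a point). Applied to $q'\circ\varphi|_{G_{\aff}}\colon G_{\aff}\to Q'$, this gives $\varphi(G_{\aff})\subseteq \Ker(q')=G'_{\aff}$, so $\varphi$ restricts to the desired $\varphi|_{G_{\aff}}\colon G_{\aff}\to G'_{\aff}$; the induced arrow $\widetilde{\varphi}\colon Q\to Q'$ then comes from the universal property of the quotient $q\colon G\to G/G_{\aff}=Q$, since $q'\circ\varphi$ kills $G_{\aff}$.

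Now suppose $\varphi$ is faithfully flat. The composition $\widetilde{\varphi}\circ q=q'\circ\varphi$ is surjective (both factors are), so $\widetilde{\varphi}$ is surjective on underlying spaces; as $Q'$ is reduced, Corollary \ref{cor:surimpliesflat} yields that $\widetilde{\varphi}$ is faithfully flat. For $\varphi|_{G_{\aff}}$ I would invoke the minimality clause of Theorem \ref{thm:chevftgs}: by Theorem \ref{thm:perrinigame}, $\varphi$ is surjective, so $\varphi(G_{\aff})$ is normal in $G'$; moreover $G'/\varphi(G_{\aff})$ is a surjective image of the proper scheme $Q$, hence proper. Minimality of $G'_{\aff}$ then forces $G'_{\aff}\subseteq \varphi(G_{\aff})$, and combined with the reverse inclusion already established we obtain $\varphi(G_{\aff})=G'_{\aff}$. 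Thus $\varphi|_{G_{\aff}}\colon G_{\aff}\to G'_{\aff}$ has schematic image $G'_{\aff}$, and one more application of Corollary \ref{cor:surimpliesflat} (using reducedness of the smooth $G'_{\aff}$) delivers faithful flatness.

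Finally, assume $\varphi$ is affine and faithfully flat, and set $N=\varphi^{-1}(G'_{\aff})$, a normal closed subgroup scheme of $G$ containing $G_{\aff}$. Because $\varphi$ is affine, its base change along the closed immersion $G'_{\aff}\hookrightarrow G'$ is the affine morphism $N\to G'_{\aff}$; as $G'_{\aff}$ is affine, $N$ is therefore an affine group scheme of finite type. Since $G_{\aff}$ is normal in $N$, the quotient $N/G_{\aff}$ is again affine (quotients of affine group schemes of finite type by normal closed subgroup schemes are affine). But $N/G_{\aff}\cong \Ker(\widetilde{\varphi})$ sits as a closed subgroup scheme of the abelian variety $Q$, hence is proper. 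An affine proper $\Bbbk$-scheme is finite, so $\Ker(\widetilde{\varphi})$ is finite and $\widetilde{\varphi}$ is an isogeny.

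The main obstacle is the upgrade $\varphi(G_{\aff})\subseteq G'_{\aff}\leadsto\varphi(G_{\aff})=G'_{\aff}$ in the second step, which is the only point where the direction of the minimality property in Chevalley's theorem is really used; the remaining verifications are routine consequences of Corollary \ref{cor:surimpliesflat}, of standard descent for affineness under affine morphisms, and of the fact that affine proper $\Bbbk$-schemes are finite.
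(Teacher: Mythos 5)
Your overall route coincides with the paper's: construct the vertical arrows from $G_{\aff}\subseteq\Ker(q'\circ\varphi)$, get faithful flatness of $\widetilde{\varphi}$ from surjectivity, upgrade $\varphi(G_{\aff})\subseteq G'_{\aff}$ to equality via minimality of $G'_{\aff}$, and in the affine case show $\Ker(\widetilde{\varphi})\cong\Ker(q'\circ\varphi)/G_{\aff}$ is affine, proper, hence finite. That skeleton is correct and is exactly what the paper does.

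There is, however, a genuine gap in your very first step, in the generality the paper works in (arbitrary base field). You assert that a connected affine closed subgroup scheme of an abelian variety is trivial, hence that any homomorphism $G_{\aff}\to Q'$ has trivial schematic image. This is false in characteristic $p$ without a reducedness hypothesis: a supersingular abelian variety contains $\alpha_p$ as a connected affine closed subgroup scheme, so the image of $G_{\aff}$ could a priori be a nontrivial infinitesimal subgroup. Your argument only closes this loophole when $G_{\aff}$ is reduced, and Theorem \ref{thm:chevftgs} guarantees smoothness of $G_{\aff}$ only when $\Bbbk$ is perfect --- the lemma assumes $G$ smooth, not $G_{\aff}$. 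The paper sidesteps this entirely: the schematic image of $q'\circ\varphi$ is a closed subgroup of the proper $Q'$, hence proper, so $G/\Ker(q'\circ\varphi)$ is proper and the \emph{minimality} of $G_{\aff}$ (the defining property in Theorem \ref{thm:chevftgs}) forces $G_{\aff}\subseteq\Ker(q'\circ\varphi)$; no reducedness is needed. The same caveat affects your final appeal in the second step to Corollary \ref{cor:surimpliesflat} ``using reducedness of the smooth $G'_{\aff}$'': over an imperfect field $G'_{\aff}$ need not be reduced. This is also unnecessary --- once $\varphi(G_{\aff})=G'_{\aff}$ as schematic image, Theorem \ref{thm:perrinigame} already says the induced morphism onto the schematic image is faithfully flat. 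With these two substitutions your proof becomes the paper's.
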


\pf
 Since $G$ is a  group scheme  of finite type the image of
  $q'\smallcirc f$ is closed in $Q'$; thus $q'\smallcirc f(G) \subset Q'$ is a proper group and
  it follows that $G_{\aff}\subset \Ker(q'\smallcirc f)$. By the
  universal property of the quotient, it follows  that $q'\smallcirc f$
  induces a morphism $\widetilde{f}:Q\to Q'$ that fits in
  Diagram \eqref{eqn:chevcase}.

Assume now that $f$ is faithfully flat. Then
  $\widetilde{f}\smallcirc q= q'\smallcirc f$ is faithfully flat, $q
  $ being a faithfully flat morphism it follows that
  $\widetilde{f}$ is faithfully flat. Since 
  $f(G_{\aff})\subset G'_{\aff}$ is a closed (therefore affine)
  subscheme and  $f$ is  faithfully flat, it follows that 
$f(G_{\aff})$ is an affine  normal 
subgroup scheme of $G'$ --- recall that if  $g'\in G'(T)$ then there exists a
faithfully flat quasi-compact morphism $f:T'\to T$ and a point $g\in
G(T')$ such that $f(T')(g)=g'$. The  faithfully flat  morphism 
$G\to G'/f(G_{\aff})$ factors through $Q$ and so
$G'/f(G_{\aff})$ is a proper group scheme. The minimality of $G'_{\aff}$ then
implies that $f(G_{\aff})=G'_{\aff}$; that is,
$f|_{_{G_{\aff}}}$ is faithfully flat.

If $f$ is an affine morphism, then  $\Ker (q'\smallcirc f)$
is an affine closed subgroup scheme of $G$. It follows that
$\Ker(\widetilde{f})=q\bigl(\Ker (q'\smallcirc
f)\bigr)\cong \Ker(q'\smallcirc f) /G_{\aff}$ is a closed affine
subgroup scheme of an abelian variety, and therefore is an affine  (and
hence a finite) subgroup scheme of $Q$.
 \qed

 \subsection{Quasi-compact extensions as schemes over an abelian variety}\ %
 \label{subsect:affextassch}
 
 As follows from Remark \ref{rem:forequiv},  to give a quasi-compact
 (resp.~affine) 
 extension over an abelian variety $A$  is equivalent to give a
 surjective, quasi-compact (resp.~affine), 
 morphism of group schemes $q:G\to A$.

 On the other hand and concerning the arrows, given two  surjective
 quasi-compact (resp.~affine) morphisms of
 group schemes $q:G\to A$ and $q':G'\to A$, and a 
 morphism of group schemes $f:G\to G'$ such that $q'\smallcirc f=q$.  It
 easily follows that $f\bigl(\operatorname{Ker}(q)\bigr)\subset 
\operatorname{Ker}(q')$. Hence, $f$ induces a morphism of extensions
\[
  \xymatrix{
    \mathcal S:\ar[d]^{\widetilde{f}} &
1\ar[r] & \operatorname{Ker}(q)\ar[r]\ar[d]_{f|_{_{\operatorname{Ker}(q)}}} & G \ar[r]^q\ar[d]_f & A\ar[r]\ar@{=}[d] & 0\\
\mathcal S': &
1\ar[r] & \operatorname{Ker}(q')\ar[r] & G'\ar[r]_{q'} & A\ar[r] & 0
} 
\]

Therefore, the category $\GextqcA$ of quasi-compact (resp.~$\GextaffA$
of affine) extensions is
equivalent to a 
subcategory of $\Schaqc$ (resp.~$\Schaaff$), that has as objects the
separated (see Remark \ref{rem:scheima}) quasi-compact
(resp.~affine) surjective 
(and therefore faithfully flat) morphisms of group schemes $q:G\to A$
and as morphisms $f: (q:G\to A)\to (q':G\to A)$, the morphisms in $\Schaqc$ (resp.
~$\Schaaff$) that are also morphisms of group schemes $f:G\to
G'$. 

\begin{rem}
The reader should be aware that under the above equivalence, affine
extensions \emph{do not} correspond to affine \emph{group schemes over
  the 
  scheme $A$} --- recall that the product of a group scheme over
$A$ is a morphism $m:G\times_AG\to G$.

In Section \ref{subsect:affextschoverA} we will introduce a 
structure of duoidal category  on $\Schaqc$ (see Definition
\ref{defi:duoidalcat} and 
Lemma \ref{lem:duoschemes}), such that  the  quasi-compact (resp.~affine)
extensions correspond to the group type objects for this category
(resp.~the group type objects that are affine over $A$).  See Proposition
\ref{prop:catextcar} and Theorem \ref{thm:antipoduoidal}. 
\end{rem}

\begin{nota}
  \label{nota:affA}
In view of the above equivalence, we will abuse of notation and say that
a surjective, quasi-compact (resp.~affine) morphism of group schemes $q:G\to A$ is a
\emph{quasi-compact} (resp.~\emph{affine}) \emph{extension} of $A$.

In what follows, we will freely use both points of view (affine
extensions as short exact sequences or as surjective affine morphism
of group schemes) depending on  which one is better adapted to the
particular result or definition.

\end{nota}

\subsection{Rosenlicht decomposition for affine extensions}\ %
\label{subsect:antiaff}

\begin{defn}
\label{defi:anti-aff}
\noindent (1) A group scheme $G$ defined over a field $\Bbbk$ is called
\emph{anti-affine} if $\mathcal O_G (G)=\Bbbk$.

\noindent (2) An affine extension $\mathcal S$: $\ate$, with $G$ anti-affine, is
said to be of \emph{anti-affine type}. Equivalently, an affine extension of
anti-affine type is a surjective affine morphism $q:G\to A$, with $G$
an anti-affine group scheme.
\end{defn}

Whereas the notion of anti-affine group scheme already appeared
(implicitly) in the work of Rosenlicht (\cite{kn:rosdesc}) and Serre
(\cite{kn:serrealb}) in the late 50s, it was not regularly studied
until about 10 years ago.  In \cite{kn:briantaff}, Brion began a
thorough study of anti-affine group schemes, generalizing earlier
results by Rosenlicht on the decomposition of a group scheme of finite
type (see \cite{kn:rosdesc} and \cite{kn:rosextab} and Theorem
\ref{thm:rosenftgs2} below) --- the classification of anti-affine
groups was obtained simultaneously  by Brion (\emph{op.~cit.}) and C.~Sancho de Salas and F.~Sancho de Salas
(\cite{kn:lossancho}, see also \cite{kn:sanchoesp}).

In this section (see Theorem \ref{thm:rosenftgs2}), we present a
generalization of the Rosenlicht decomposition to the setting of
affine extensions of an abelian variety as well as some related
properties. We begin by recalling the results on anti-affine group
schemes that will be used in what follows; for other properties, in
particular for a complete classification theorem, see
\cite{kn:briantaff}, \cite{kn:brisamum} and \cite{kn:brionchev}.

\begin{rem} 
(1) It is well known (see for example \cite[Chapters 2 and 5]{kn:brisamum}), that an anti-affine
group scheme is connected and commutative.

\noindent (2) If $G$ is an anti-affine  group scheme  of finite type, then $G$
is smooth (see for example \cite[Lemma 3.3.2]{kn:brionchev}).

\noindent (3) In particular, if $G$ is an anti-affine  smooth group scheme of finite type, using the Chevalley decomposition (Theorem \ref{thm:chevftgs})
we deduce that $G$ is the 
extension of   a proper group scheme $A$ by a commutative affine
group scheme  of finite type. This result was much improved by Brion in \cite[Section
5.5]{kn:brionchev} (see also \cite{kn:briantaff}): the affine subgroup $G_{\aff}$  and the group
scheme  $A$ appearing therein are  smooth (i.e.~$A$ is an
abelian variety).
\end{rem}

\begin{defn}
  \label{def:affinization}
The \emph{affinization functor} $\operatorname{Aff}: \Schkqc \to
\Schkaff$ (the codomain is the category of affine $\Bbbk$--schemes)
defined at the level of objects as
$\operatorname{Aff}(X)=\Spec\bigl(\mathcal O_X(X)\bigr) $, and
$\operatorname{Aff}(f:X\to Y): \operatorname{Aff}(X)\to
\operatorname{Aff}(Y)$ is defined as the morphism
$\Spec(f^\#_Y):\Spec\bigl(f_*(\mathcal O_X)(Y)\bigr)=\Spec\bigl(\mathcal O_X(X)\bigr) \to
\Spec\bigl(\mathcal O_Y(Y)\bigr)$. In this situation $\operatorname{Aff}(X)$ is
called the affinization of $X$.
\end{defn}

\begin{rem} \label{rem:gantqc} We list some of the properties of this functor for immediate use, see
\cite[III.3.8]{kn:demgab}, \cite[\S\ V.4.2]{kn:dperrin1}, and
\cite[\S\ 3.2]{kn:brionchev}) for general references.
  Later in \ref{prop:affschqcs} we deal with the properties of this
  functor in the more general context of schemes over a fixed scheme
  $S$.

\noindent (1)  There is an adjunction $\operatorname{Aff} \dashv
  \operatorname{inc}$ as:
  ${\xymatrixcolsep{3em}\xymatrix{ \Schkqc \ar@<5pt>[r]^-{\operatorname{Aff}}
      \ar@<-5pt>@{<-}[r]_-{\operatorname{inc}}\ar@{}[r]|-\bot& \Schkaff}}$.

\noindent  (2) The counit of this adjunction is an isomorphism and the
      unit is given by a family of morphisms of schemes $\eta_X:X \to
      \operatorname{Aff}(X): \Schk \to \Schkaff$ that satisfies the
      following universal property.

      For any morphism $f:X \to Y$ with $X \in \Sch, Y \in \Schkaff$ there
      is a unique morphism $\widehat{f}$ that makes commutative the
      diagram below: \[\xymatrix@=15pt{X \ar[rr]^-{\eta_X}\ar[rd]_{f}&&
        \operatorname{Aff}(X)\ar@{.>}[ld]^{\widehat{f}}\\&Y&}\]

The morphism $\eta_X:X\to \operatorname{Aff}(X)$ is called the
\emph{affinization morphism} of $X$; if $U=\Spec\bigl(\mathcal O_X(U)\bigr)\subset X$ is an affine open
subset, then $\eta_X|_{_U}$ is the morphism induced by the
restriction $\mathcal O_X(X)\to \mathcal O_X(U)$.

  \noindent (3) In the case that $G$ is a quasi-compact group scheme
  so is $\operatorname{Aff}(G)$, and the adjunction restricts to the
  category of group schemes. In particular the unit $\eta_G$ is a
  morphism of group schemes and its kernel is a closed subgroup scheme
  of $G$, that we  call $G_{\ant}$.
\end{rem}

In \cite{kn:dperrin1}, Perrin considered the affinization morphism in
the situation of quasi-compact group schemes:

\begin{prop}
\label{prop:keretag}
  Let $G$ be a quasi-compact group scheme. Then

  \noindent (1) The affinization morphism is a faithfully flat
  morphism (of group schemes).

  \noindent (2)  $\Ker(\eta_G)$ is
a geometrically reduced, connected anti-affine 
group scheme, contained in the center of $G^0$.

\noindent (3) $\Ker(\eta_G)$ is the smallest normal
subgroup scheme $K$ such that the quotient $G/K$ is
affine.

\noindent (4)  $\Ker(\eta_G)$ is the
largest anti-affine subgroup scheme of $G$.

\end{prop}
\pf
Assertions (1) and (2) are the content of \cite[Th\'eor\`eme
V.4.2]{kn:dperrin1}. The remaining assertions  can be proved easily
using the universal 
property of the affinization morphism $\eta$.
Indeed,   let $K\subset G$ be a
normal subgroup scheme such that $G/K$ is an affine group; let
$q_K:G\to G/K$ be the canonical projection. Then there
exists  a morphism of group 
schemes $f:\operatorname{Aff}(G)\to G/K$ such that $f\smallcirc \eta_G=
q_K$. It follows that $G_{\ant}=\operatorname{Ker}(\eta_G)\subset
\operatorname{Ker}(q_K)=K $.

On the other hand, it  is clear that if we consider  a subgroup $L
\subseteq G$ that is anti-affine, then the inclusion $L\subset G$
induces a closed immersion $\operatorname{inc}: \operatorname{Aff}(L)\hookrightarrow
\operatorname{Aff}(G)$, such that  $\eta_G|_L=
\operatorname{inc}\smallcirc \eta_L$. Since $L$ is anti-affine, it
follows that  $\eta_L$ is the  trivial morphism; thus, $L
\subseteq \Ker(\eta_G)$.
\qed

As noted in the introduction of this section, Rosenlicht
decomposition was generalized by Brion (for smooth group schemes of
finite  type, see   \cite[theorems 1 and
5.1.1, Proposition 3.3.5]{kn:brionchev}).
In Theorem  \ref{thm:rosenftgs2}  we produce a  Rosenlicht decomposition for
 affine extensions --- that will be improved afterwards in Theorem
 \ref{thm:rossmoorhnew}. Before proving the existence of such
 decomposition, we mention 
  an easy technical result on faithfully flat morphisms for which we write
  the proof for the lack of an adequate reference.

\begin{lem}
\label{lem:ffandpoints}
Let $i:X\to Y$ be a closed immersion of $\Bbbk$--schemes and $f:T'\to T$ a faithfully flat morphism. If
$y:T\to Y$ is a $T$--point of $Y$, and $x:T'\to X$ a $T'$ point of $X$ such that $i \smallcirc x= y\smallcirc f$, then
there exists  a $T$--point $ \widetilde{x}:T\to X$  such that the
following diagram is commutative
\[
\xymatrix{
T'\ar[d]_x \ar[r]^f& T\ar[d]^y\\
X\ar@{<-}[ur]^{\widetilde{x}}\ar@{^(->}[r]_i& Y
}
\]
\end{lem}
\pf 
By the universal property of the fibred product we obtain a
commutative diagram:
\[
\xymatrix@C=12pt{T'\ar@/_1pc/[rdd]_{x}\ar@/^1pc/[rrrd]^f\ar[rd]|-\ell&&&\\
&X \times_Y T \ar[d]_{p_1}\ar@{^(->}[rr]^(.60){p_2} && T\ar[d]^y\\&
X\ar@{^(->}[rr]_i&& Y.
}
\]
where $p_2$ is again a closed immersion. Since $f$ is faithfully flat, then
$p_2$ is an isomorphism (since $p_2$ is also surjective with
$p_2^{\#}$  injective) and the result follows. \qed

\begin{thm}[Rosenlicht decomposition of  affine 
  extensions]\ 
\label{thm:rosenftgs2}\\
Let $\mathcal S$: $\ate$ be an affine extension  and
$G_{\ant}=\operatorname{Ker}(\eta_G)$. Then: 

\noindent (1) 
 The restriction $m|_{_{G_{\ant}\times H}}:
G_{\ant}\times H  \to G$ is a faithfully flat morphism of group schemes,
with  kernel
$\Delta(G_{\ant}\cap H)$, the image of the diagonal embedding 
$\Delta: G_{\ant}\cap H\to G_{\ant}\times H$, $\Delta(z)=(z,z^{-1})$:
\[
  \xymatrix{
    1\ar[r]& G_{\ant}\cap H\ar[r] & G_{\ant}\times H\ar[rr]^-{
      m|_{_{G_{\ant}\times H}}} &&  G\ar[r]& 1.
  }
\]

In particular, $G=G_{\ant}H=HG_{\ant}\cong 
(G_{\ant}\times H)/(G_{\ant}\cap H)$.

\noindent (2) The restriction  $q|_{_{G_{\ant}}}:G_{\ant}\to A$ is
faithfully flat and  induces a closed sub-extension  of anti-affine
type of $\mathcal S$:  
\[\xymatrix{
 \mathcal S_{\ant}:\ar[d]&  1 \ar[r]& G_{\ant} \cap H \ar[r]\ar@{^(->}[d] &
  G_{\ant} \ar@{^(->}[d]\ar[r]^-{q|_{_{G_{\ant}}}}& A
  \ar@ {=}[d]\ar[r]&0\\
\mathcal S:&1 \ar[r]& H \ar[r]& G \ar[r]^{q}& A \ar[r]&0}
\]

\noindent (3) If moreover $G$ is connected of finite type, then:

\noindent (a) $(G_{\ant})_{\aff}\subset G_{\ant}\cap H$, and
  $(G_{\ant})_{\aff}$ is a  normal subgroup of finite index.

\noindent (b) If $\Delta=(\id_{(G_{\ant})_{\aff}},i|_{(G_{\ant})_{\aff}}): (G_{\ant})_{\aff}\to
G_{\ant}\times H $, $\Delta(z)=(z,z^ {-1})$,  is the diagonal embedding of
$(G_{\ant})_{\aff}$ in $G_{\ant}\times H$, then the quotient 
$G'=(G_{\ant}\times H)/\Delta\bigl((G_{\ant})_{\aff}\bigr)$  exists and it is a
group scheme  of finite type. Moreover, the 
canonical morphism  $f:G'\to G$ is an isogeny --- in the
terminology of Section \ref{section:indesp},
$G'=G_{\ant}\times^{(G_{\ant})_{\aff}}H$, the 
\emph{induced space} for the canonical action of $(G_{\ant})_{\aff}$ on $H$. 
\end{thm}
\pf
We first prove Assertion (1) in the case that  $G$ is connected, situation in
which one can follow closely the proof of \cite[Theorem
  5.1.1]{kn:brionchev}, where it is proved for the case that $G$ is a
smooth scheme of finite type --- the fact that the quotient $G/H=A$ is
an abelian variety allows this transcription.

Since $G_{\ant}$ is central in $G$, it is clear that  $m|_{_{G_{\ant}\times H}}: G_{\ant}\times H\to
  G$ is a quasi-compact morphism of group schemes, with
  $\Ker(m|_{_{G_{\ant}\times H}})=\Delta_{G_{\ant}\cap H}(G_{\ant}\cap H)=
  \bigl\{
  (h,h^{-1})\mathrel{:} h\in G_{\ant}\cap H\bigr\}$; therefore its
  schematic image  is the image of the closed immersion
  $(G_{\ant}\times H)/\Delta_{G_{\ant}\cap H}(G_{\ant}\cap H)$   by
  \cite[Corollaire V.3.3]{kn:dperrin1}.    
  Hence,  $G_{\ant}H$ is a
normal closed subgroup of $G$ and the quotient $G\to G/(G_{\ant}H)$
factors through  morphisms $G/H\to G/(G_{\ant}H)$ and $G/G_{\ant}\to
G/(G_{\ant}H)$
\[
  \xymatrix {G\ar[r]\ar[d]& A=G/H\ar[d]\\
  \operatorname{Aff}(G)=G/G_{\ant}\ar[r]& G/(G_{\ant}H)
  }
  \]

It follows that $G/(G_{\ant}H)$ is on the one hand a quotient of the
(connected) abelian variety $A$ --- and hence it is a connected
abelian variety --- and on  
the other hand  $G/(G_{\ant}H)$ is a quotient of an affine group scheme by a normal
closed subgroup --- and hence affine by \cite[VIb
11.17]{kn:SGA3-1}. We deduce that  $G/(G_{\ant}H)=\Spec(\Bbbk)$  and therefore
$G=G_{\ant}H$.

In order to prove Assertion (2) we can assume that $G$ is connected,
since $G_{\ant}\subset G^0$, and ${\xymatrixcolsep{1.5pc}\xymatrix{
    1\ar[r]& H\cap G^0\ar[r]& G^0\ar[r]^{q|_{_{G^0}}}&
    A\ar[r]& 0}}$ is an affine extension  (see Corollary \ref{cor:surimpliesflat}). 
Since $A=q(G)=q( G_{\ant}\cdot H)=q(G_{\ant})$, it
    follows that $A\cong G_{\ant}/(G_{\ant}\cap H)$, and hence Assertion (2)
    is proved.

In order to prove (1) in the non--connected situation, assume that
$G$ is not necessarily connected and  let $g:T\to G\in G(T)$ be a
$T$--point. Since $\mathcal S_{\ant}$ is an affine extension, there
exists a faithfully flat morphism $h:T'\to T$ and $b:T'\to
G_{\ant} \in G_{\ant}(T')$
such that $q\smallcirc b= q\smallcirc g\smallcirc h : T'\to A\in A(T')$. It
follows that $ (g\smallcirc h)b^{-1}\in H(T')$; therefore, $g\smallcirc
h \in G_{\ant}(T')H(T')\subset G_{\ant}H(T')$. But the morphism 
$m|_{_{G_{\ant}\times H}}:G_{\ant}\times H\to G$ is a morphism of
quasi-compact group
schemes and therefore has closed image $G_{\ant}H$. Applying 
 Lemma
\ref{lem:ffandpoints}, we deduce that $g\in G_{\ant}H(T)$. Hence
$G_{\ant}H=G$.\qed

The proof of part (a) of assertion (3) appears in \cite[Theorem
  5.1.1]{kn:brionchev}.
In order to prove   part  (b)  consider   $\mathcal G_{\ant}$, the
  Chevalley decomposition of $G_{\ant}$:
  \[
     \xymatrix{
   \mathcal G_{\ant}:&  0\ar[r]&     (G_{\ant})_{\aff}\ar[r]& G_{\ant}\ar[r]^{\widetilde{q}}& A\ar[r]& 0
 }
\]

Notice first that in general $\widetilde{q}\not= q$, since $(G_{\ant})_{\aff}\not=
G_{\ant}\cap H$. As observed in Remark \ref{rem:pushes}, since $(G_{\ant})_{\aff}$ is central in $G$ as well as in $H$, $\widetilde{q}$
induces an affine extension of finite type $\mathcal S'$:
${\xymatrixcolsep{1.5pc} \xymatrix{ 1\ar[r]& H \ar[r]& G'\ar[r]^{q'}&
    A\ar[r] & 0}}$. It is clear that the multiplication morphism $G_{\ant}\times H\to
G$ induces a morphism of group schemes $f: G'=(G_{\ant}\times H)/\Delta\bigl((G_{\ant})_{\aff}\bigr)\to G$,  with finite
Kernel $  \Ker (f)\cong
(G_{\ant}\cap H)/(G_{\ant})_{\aff}$.
\qed

\begin{nota}
   In view of Proposition \ref{prop:keretag}, Remark \ref{rem:gantqc} and Theorem \ref{thm:rosenftgs2},
   from now on if $ G$ is a 
quasi-compact group scheme, we denote $G_{\ant}=\Ker(\eta_G)$. 
\end{nota}

\begin{rem}
\label{rem:G=HG0}
  Let $\mathcal S$: $\ate$ be an affine extension. Then
  $G^0= (G^0\cap H)G_{\ant}$ and $G=HG^0=HG_{\ant}$. 

  Indeed, 
${\xymatrixcolsep{2pc} \xymatrix{1\ar[r]&H\cap
    G^0\ar[r]&G^0\ar[r]^(0.47){q|_{_{G^0}}}&\ar[r] A&0}}$ is a closed
sub-extension of $\mathcal S$ and the result follows from Theorem
\ref{thm:rosenftgs2}.
\end{rem}

\begin{rem}
  \label{rem:rosengen}
Let $G$ be a connected group scheme of finite type. Consider the
Chevalley decomposition of $G$: ${\xymatrixcolsep{1.5pc}
  \xymatrix{1\ar[r]&G_{\aff}\ar[r]&G\ar[r]^(.45){q}&\ar[r] A&0}}$.

In this situation we have that: the Rosenlicht decomposition of $G$ is
$G=G_{\aff}G_{\ant}$; $(G_{\ant})_{\aff} \subseteq G_{\aff}\cap
G_{\ant}$ is of finite index; and the product morphism
$m:G_{\ant}\times G_{\aff} \to G$ induces an isogeny $(G_{\ant}\times
G_{\aff})/\Delta\bigl((G_{\ant})_{\aff}\bigr)\to G$.

Hence, Theorem \ref{thm:rosenftgs2} can be viewed as a generalization
to affine extensions of the above well known decomposition (see
\cite{kn:brionchev}[Thm. 5.1.1]).
  
\end{rem}

 The following lemma is an easy consequence of Lemma \ref{lem:chevcase}.

\begin{lem}
\label{lem:antiafffitycase}
If $f:G\to G'$ is a morphism of quasi-compact group schemes, then
$f(G_{\ant})\subset G'_{\ant}$. If $G,G'$ are of finite type, then  the
morphism $f$ induces the following commutative diagram of Chevalley
decompositions:
\[
  \xymatrixcolsep{3pc}
\xymatrix{
\mathcal G_{\ant}:\ar[d]^\phi &0\ar[r] & (G_{\ant})_{\aff}\ar[r]\ar[d]_{f|_{_{(G_{\ant})_{\aff}}}}&
G_{\ant}\ar[r]^-{\widetilde{q}}\ar[d]^{f|_{_{G_{\ant}}}}&
A\ar[r]\ar[d]^{\widetilde{f}}&
0\\
\mathcal G'_{\ant}: &0\ar[r] & (G'_{\ant})_{\aff}\ar[r]&
G'_{\ant}\ar_-{\widetilde{q}'}[r]& A' \ar[r]& 0
}
\]

 Moreover, if $f$ is 
faithfully flat,  then  the vertical arrows of the
diagram above are also faithfully flat. In particular, if $f$
 is an affine faithfully flat morphism,  then $\widetilde{f}$ is
 an isogeny. 
\end{lem}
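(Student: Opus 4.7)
The first assertion holds without any finite type hypothesis, so I would derive it from the universal property of the affinization (Remark \ref{rem:gantqc}). Since $\operatorname{Aff}(G')$ is affine, the composition $\eta_{G'}\circ\varphi:G\to \operatorname{Aff}(G')$ factors uniquely as $\overline{\varphi}\circ\eta_G$ for some morphism $\overline{\varphi}:\operatorname{Aff}(G)\to \operatorname{Aff}(G')$. Restricting to $G_{\ant}=\Ker(\eta_G)$ shows $\eta_{G'}(\varphi(G_{\ant}))=\{e\}$, hence $\varphi(G_{\ant})\subset \Ker(\eta_{G'})=G'_{\ant}$.

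Assuming now that $G$ and $G'$ are of finite type, the inclusion $\varphi(G_{\ant})\subset G'_{\ant}$ allows us to restrict $\varphi$ to a morphism $\varphi|_{G_{\ant}}:G_{\ant}\to G'_{\ant}$ between anti-affine groups of finite type, which are therefore smooth and connected (Remark (2) after Definition \ref{def:affinization}). The announced commutative square of Chevalley decompositions is then exactly the output of Lemma \ref{lem:chevcase} applied to $\varphi|_{G_{\ant}}$.

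The substantive issue is to show that $\varphi|_{G_{\ant}}$ is faithfully flat whenever $\varphi$ is. I would first verify that $\varphi(G_{\ant})$ is a closed normal subgroup scheme of $G'$: closedness is Theorem \ref{thm:perrinigame}, normality follows from normality of $G_{\ant}$ in $G$ together with faithful flatness of $\varphi$ (which forces $G$-points to dominate $G'$-points). The quotient $G'/\varphi(G_{\ant})$ therefore exists. The composition $G\xrightarrow{\varphi} G'\to G'/\varphi(G_{\ant})$ kills $G_{\ant}$, hence factors through a faithfully flat morphism $\operatorname{Aff}(G)=G/G_{\ant}\to G'/\varphi(G_{\ant})$, and the quotient of an affine group scheme by a closed normal subgroup being affine, $G'/\varphi(G_{\ant})$ is affine. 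The universal property of $\eta_{G'}$ (equivalently, the minimality clause in Theorem \ref{thm:rosenftgs2}(1)) then forces $G'_{\ant}\subset \varphi(G_{\ant})$, so $\varphi(G_{\ant})=G'_{\ant}$. Since $G'_{\ant}$ is reduced, Corollary \ref{cor:surimpliesflat} upgrades this surjectivity to faithful flatness of $\varphi|_{G_{\ant}}$; Lemma \ref{lem:chevcase} then supplies faithful flatness of the remaining vertical arrows. Finally, if $\varphi$ is affine, so is its restriction $\varphi|_{G_{\ant}}:G_{\ant}\to G'_{\ant}$ (composition of $\varphi$ with the closed immersion $G'_{\ant}\hookrightarrow G'$), and the last assertion of Lemma \ref{lem:chevcase} gives that $\widetilde{\varphi}$ is an isogeny.

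The main obstacle is precisely the faithful flatness of $\varphi|_{G_{\ant}}$: this cannot be read off directly from the functoriality of Chevalley decomposition, and it is the detour through the auxiliary quotient $G'/\varphi(G_{\ant})$, combined with the universal property of the affinization, which upgrades the inclusion $\varphi(G_{\ant})\subset G'_{\ant}$ from the first paragraph to an equality, thereby triggering Corollary \ref{cor:surimpliesflat}.
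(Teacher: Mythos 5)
Your proposal follows the same route as the paper: the containment $\varphi(G_{\ant})\subset G'_{\ant}$ via the universal property of $\eta_{G'}$, and then Lemma \ref{lem:chevcase} applied to $\varphi|_{G_{\ant}}$ for the diagram and the remaining flatness/isogeny claims. The one place where you go beyond the paper is the faithful flatness of $\varphi|_{G_{\ant}}$ itself: the paper simply asserts that "all assertions follow from Lemma \ref{lem:chevcase}", but that lemma presupposes a faithfully flat morphism between the two groups whose Chevalley decompositions are being compared, so the surjectivity of $G_{\ant}\to G'_{\ant}$ is a genuine prerequisite that the paper leaves implicit. Your argument for it — $\varphi(G_{\ant})$ is a closed normal subgroup scheme by Theorem \ref{thm:perrinigame} and surjectivity of $\varphi$, the quotient $G'/\varphi(G_{\ant})$ is affine because it receives a faithfully flat map from $\operatorname{Aff}(G)=G/G_{\ant}$, minimality in Theorem \ref{thm:rosenftgs2}(1) then forces $G'_{\ant}\subset\varphi(G_{\ant})$, and Corollary \ref{cor:surimpliesflat} upgrades the resulting surjection onto the reduced scheme $G'_{\ant}$ to faithful flatness — is correct and mirrors exactly the argument used inside Lemma \ref{lem:chevcase} for $\varphi(G_{\aff})=G'_{\aff}$, so it is a legitimate completion of the paper's terse proof rather than a departure from it.
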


\pf
 The morphism  $\eta_{G'}\smallcirc f: G\to \operatorname{Aff}(G')$
factors through $\eta_G$. It follows that $f(G_{\ant})\subset
G'_{\ant}$.
  Since in the finite type case 
  $G_{\ant}$ is a smooth group scheme, all assertions follow from Lemma
  \ref{lem:chevcase}. 
\qed

\subsection{Filtered systems of affine  extensions}\ %
\label{section:invlim}
  
 We begin with some considerations about limits of filtered systems of
 schemes over $\Bbbk$. Recall that a \emph{filtered system of schemes} consists of
 a family $\bigl\{(X_\alpha,f_{\alpha,\beta})\mathrel{:} \alpha,\beta\in I\,,\ \alpha \geq
 \beta\,,\,f_{\alpha,\beta}:X_\alpha \to X_\beta\bigr\}$, where  $I$ is a finitely upper
 bounded --- or filtered --- poset, $\{X_\alpha\mathrel{:}\alpha \in I\}$
 is a family of schemes and $f_{\alpha,\beta}$ are morphisms of schemes
 --- called the \emph{connecting (or transition) morphisms} --- such
 that if $\alpha \geq \beta \geq \gamma$, then
 $f_{\alpha,\gamma}=f_{\beta,\gamma}\smallcirc f_{\alpha,\beta}$ and
 $f_{\alpha,\alpha}=\id_{X_\alpha}$.  If the
 limit of such a system of schemes
 exists, we use the following notation: $(X, f_\alpha: \alpha \in I)
 =\lim_{\alpha \in I} \{X_\alpha\,,\, f_{\alpha,\beta}: \alpha ,\beta
 \in I\}$ and when the rest of the ingredients are clear we write
 $X=\lim X_\alpha$. A family of morphisms $g_\alpha:Z \to
 X_\alpha\,,\,\alpha \in I$ is said to be \emph{compatible} with the
 filtered system $\bigl\{(X_\alpha,f_{\alpha,\beta})\bigr\}$ if for all $\alpha
 \geq \beta$, $f_{\alpha,\beta} g_\alpha=g_\beta$.

    Next, we recall some known properties of the limit in the above
    situation; the basic references are
    \cite[\S 8]{kn:EGAIV3} and \cite[Tag 01YT]{kn:stackproj}.

    \begin{rem}
\label{rem:surjectivity}
Let $\bigl\{(X_\alpha,f_{\alpha,\beta})\bigr\}_{\alpha,\beta\in I}$ be a filtered
system of schemes, and assume
that the transition morphisms $f_{\alpha,\beta}:X_\alpha\to X_\beta$ are
affine.  Then:

\noindent (1) $X=\lim X_\alpha$ exists in the category of schemes and the
   induced morphisms $f_\alpha$ are affine.  Moreover, if all the
   $X_{\alpha}$ are affine, so is the limit $X$ (see
   \cite[Tag 01YX]{kn:stackproj}).

   \noindent (2) If the transition morphisms  $f_{\alpha,\beta}$ are
   surjective for all $\alpha,\beta\in I$ then the
   morphisms $f_\alpha:\lim X_{\alpha} \to X_\alpha$ are surjective
   for all $\alpha\in I$
   --- this is an easy exercise on limits on the category of
   topological spaces.

   Moreover, if $Z$ is a quasi-compact $\Bbbk$--scheme and $g_\alpha:Z\to
X_\alpha$, $\alpha \in I$, is a family of compatible morphisms that
are also surjective, then the induced morphism $g:Z\to X$ is
surjective.

\noindent (3)   If  the transition morphisms $f_{\alpha,\beta}$ are
faithfully flat for all $\alpha,\beta\in I$, then the morphisms
$f_\alpha$ are faithfully flat 
for all $\alpha$.

Moreover, if $Z$ is  a quasi-compact scheme and $g_\alpha:Z\to X_\alpha$,
$\alpha \in I$, is a family of faithfully flat compatible
morphisms, then the induced morphism $g:Z\to X$ is faithfully flat.

\noindent (4) For any $\alpha \in I$ and
$U_\alpha \subseteq X_\alpha$ open subscheme, we have that
$f_\alpha^{-1}(U_\alpha)=\lim_{\beta \geq
  \alpha}f_{\beta,\alpha}^{-1}(U_\alpha)$
as schemes.
\end{rem}

\begin{nota}
  \label{nota:filteredsystem}
  In what follows we work with filtered systems in the  categories of
  group schemes over $\Bbbk$ and 
  $\GextaffA$,   with the additional condition that the transition
  morphisms  are affine faithfully flat. We introduce for clarity
    the following notations.

  \noindent (1) A filtered system of group schemes  $\{G_\alpha,
  f_{\alpha,\beta}\}$ is \emph{affine} (resp.~\emph{faithfully flat}) if the transition
  morphisms are affine (resp.~faithfully flat) morphisms of group schemes.

  \noindent (2) A \emph{filtered system of affine extensions} is a
  family     $\bigl\{(\mathcal S_\alpha, \phi_{\alpha,\beta})\mathrel{:} \alpha, \beta \in
      I\bigr\}$, where $I$ is a filtered poset, such that for all $\alpha \leq
      \beta$    
        the transition morphisms  $\phi_{\alpha,\beta}$
 \[
     \xymatrix{ \mathcal S_\alpha:\ar[d]_{\phi_{\alpha,\beta}} &
        1\ar[r] & H_{\alpha}\ar[d] \ar[r] & G_{\alpha}
        \ar[r]^{q_{\alpha}}\ar[d]^{f_{\alpha,\beta}} & A\ar@{=}[d]
        \ar[r] & 0 \cr \mathcal S_\beta: & 1\ar[r] & H_{\beta} \ar[r]
        & G_{\beta} \ar[r]_{q_{\beta}} & A \ar[r] & 0 }
      \]
are morphisms of affine extensions, and  the family $\{G_\alpha,
f_{\alpha,\beta}\}$ is a filtered system of group schemes --- notice
that in particular this implies that
$\phi_{\alpha,\gamma}=\phi_{\beta,\gamma}\smallcirc \phi_{\alpha,\beta}$ and
 $\phi_{\alpha,\alpha}=\id_{\mathcal S_\alpha}$ for all $\alpha \geq
 \beta \geq \gamma$.

 A filtered system of affine extensions is \emph{affine}
 (resp.~\emph{faithfully flat})  if the filtered system of group schemes $\{G_\alpha,
f_{\alpha,\beta}\}$ is so --- that is, in accordance with Definition
      \ref{defn:catextensions}, the transition
morphisms $\phi_{\alpha,\beta}$  are affine, faithfully flat. 
\end{nota}

\begin{rem}\label{rem:particularlimit}
 It follows from Remark \ref{rem:fisaffine} that a filtered system of
affine extensions is always affine.
\end{rem}

\begin{prop} \label{prop:flatness} 
  Let $\bigl\{ (\mathcal S_\alpha, \phi_{\alpha,\beta}:\mathcal
  S_\alpha\to \mathcal S_\beta)\mathrel{:}\alpha \geq \beta \in
  I\bigr\}$ be an (affine) faithfully flat 
  filtered system in $\GextaffA$ (see Notation \ref{nota:filteredsystem}). Then:
 
\noindent (1) The limits $G:=\lim G_\alpha$ and $H:=\lim H_\alpha$
exist in the category of group schemes and $H$ is affine. Moreover, the
morphisms $f_\alpha: G \to G_\alpha$ are affine and faithfully flat.

\noindent (2) If we call $q:G \to A$ the morphism induced by the
compatible morphisms  $q_\alpha:G_\alpha \to A$, then $\Ker(q)=H$ and the sequence
$\mathcal S:$ $\ate$ coincides with $\lim \mathcal
S_\alpha$ in $\GextaffA$.
\end{prop}
\pf (1) The existence of the limits in the category of schemes and the
affineness of $H$ as well as the assertions about the maps $f_\alpha$
follow from Remark \ref{rem:surjectivity}, taking also
into account \cite[Proposition II.1.3]{kn:dperrin1} (see Remark \ref{rem:forequiv}).
   
\noindent (2)  First observe  that $q:G \to A$ is affine and faithfully
flat, since $q=q_\alpha f_\alpha$ with $q_\alpha$ and $f_\alpha$
affine and faithfully flat.

Next, we prove that the sequence $\mathcal S$ is left exact, that is
$\Ker(q)=H$.  The commutative diagrams 
\[
{\xymatrixcolsep{1.2pc}
\xymatrixrowsep{1.2pc}
\xymatrix{
H\ar[r]\ar[rd] & H_\alpha\ar[r]\ar[d] & G_\alpha \ar[d]\cr
 & H_\beta\ar[r] & G_\beta
}}
\] 
induce an injective morphism of group schemes $H\to G$. As $H_\alpha =
\Ker(q_\alpha)$ for all $\alpha$, then $H\subset\Ker(q)$. Moreover, if
$h:K\to G$ is a morphism of group schemes such that $q\smallcirc h=0$,
then $q_\alpha\smallcirc f_\alpha\smallcirc h = 0$ for all $\alpha$.
Therefore, 
$\im(f_\alpha\smallcirc h)\subset H_\alpha$ for all $\alpha$ and
$f_{\alpha,\beta}\smallcirc f_\alpha\smallcirc h = f_\beta\smallcirc h$,
so $h :K\to G$ factors through $H$ and the proof of the left exactness is finished.

Thus, $\mathcal S\in \GextaffA$ and for all $\alpha\in I$ we have
morphisms of affine extensions $\phi_\alpha:\mathcal S\to \mathcal
S_\alpha$, compatible with the transition morphisms
$\phi_{\alpha,\beta}$: 
\[
{\xymatrixrowsep{1.5pc} 
\xymatrix{ 
\mathcal S:\ar[d]^(.4){\phi_\alpha}&1\ar[r] & H\ar[d]^(.4){f_\alpha|_{_H}} \ar[r] & G \ar[r]^{q}\ar[d]^(.4){f_\alpha} & A\ar@{=}[d] \ar[r] & 0 \cr
\mathcal S_\alpha: & 1\ar[r] & H_\alpha \ar[r] & G_{\alpha} \ar[r]_{q_{\alpha}} & A \ar[r] & 0
}}
\]
where $f_\alpha$, $\alpha \in I$, are the morphisms  associated to
$G=\lim_\alpha G_\alpha$.

In order to prove that $(\mathcal S,\phi_\alpha)=\lim \mathcal
S_\alpha$, let $\mathcal S'$: ${\xymatrixcolsep{1.5pc}\xymatrix{ 1\ar[r]&
    H'\ar[r]& G'\ar[r]^-{q'}& A\ar[r]& 0}}$ be an affine extension and 
$\phi'_\alpha: \mathcal S' \to \mathcal S_\alpha$ be a compatible
family of morphisms of affine extensions. Then we have  compatible
families  of morphisms $f'_\alpha:G'\to G_\alpha$ and
$f'_\alpha|_{_{H'}}: H'\to H_\alpha$ , for the filtered
systems $\{G_\alpha, f_{\alpha,\beta}\}$ and  $\{H_\alpha,
f_{\alpha,\beta}|_{_{H_\alpha}}\}$ respectively,  that factors through
morphisms $f':G'\to G$ and $f'':H'\to H$. It is easy to see that
$f''=f'_{_{H'}}$, and therefore $f'$ induces a morphism of extensions
$\phi':\mathcal S'\to \mathcal S$, that factors the morphisms
$\phi'_{\alpha}$. \qed

\begin{defn}
\label{defn:proalg}
 An affine  extension $\mathcal S\in \GextaffA$  is called  \emph{pro-algebraic}
if there exists an (affine) faithfully flat  filtered system  of
 affine  extensions of finite type $\{ \mathcal S_\alpha, \phi_{\alpha,\beta}
 \mathrel{:}\alpha,\beta\in I\}$ such that such that $\mathcal S=\lim
 \mathcal S_\alpha$. 
\end{defn}

  \begin{rem}
The term \emph{pro-algebraic} has its roots in  the fact that usually
group schemes of finite type are called algebraic groups, see Remark
\ref{rem:alggrps}. 
  \end{rem}

\begin{ejs}
\label{ej:affiisproalg}
(1) Any affine group scheme $G$ is the limit of an affine faithfully flat filtered system of
 affine   group
schemes of finite type (see  for example
\cite[Page 24]{kn:waterhouse}). In terms of affine
 extensions, this well known result reads as follows:  let
$G=\lim G_\alpha$ and consider
the affine  extensions 
$\mathcal G_{\aff}$: ${\xymatrixcolsep{1.5pc} \xymatrix{ 1\ar[r] &
    G\ar[r]^(.4){\id} &G\ar[r] &0\ar[r] &0}}$, and $\mathcal G_{\alpha,\aff}$: $\xymatrixcolsep{1.5pc} \xymatrix{1\ar[r] &
    G_\alpha\ar[r]^(.4){\id} &G_\alpha\ar[r] &0\ar[r] &0}$. Then $\mathcal G_{\aff}=\operatorname{lim} \mathcal G_{\alpha,\aff}$.

\noindent (2) Let $\mathcal S:$ $\ate$ be an affine 
  extension, with $H$ central. Then $H=\lim H_\alpha$, where
  $\{H_{\alpha}\}_{\alpha\in I}$ is an (affine) faithfully flat
  filtered system of affine 
group schemes of finite type; denote the transition morphisms by
$f_{\alpha,\beta} :H_\alpha \to H_\beta$ and the canonical
projections by $f_\alpha:H\to H_\alpha$. Since $H$ is central and
the canonical maps are faithfully flat, we can apply   Remark
\ref{rem:pushes} and construct the push-forward by $f_\alpha$,
obtaining an affine faithfully flat filtered system of affine extensions as follows:
\[
  \xymatrix{ (f_{\alpha})_*(\mathcal S):\ar[d]^{\phi_{\alpha,\beta}} & 1\ar[r] & H_\alpha \ar[r]\ar[d]_{f_{\alpha,\beta}}
    &G\times^HH_\alpha\ar[r]^-{\pi_{H_\alpha}}\ar[d]^{\overline{f_{\alpha,\beta}}}
    & A\ar[r] \ar@{=}[d]&0\\
  (f_{\beta})_*(\mathcal S): & 1\ar[r] & H_\beta \ar[r]
  &G\times^HH_\alpha\ar[r]^-{\pi_{H_\alpha}} & A\ar[r] &0
}
\]
where $G\times^HH_\alpha=(G\times H_\alpha)/\Delta(H)$  and
$\overline{f_{\alpha,\beta}}: G\times^HH_{\alpha}\to 
G\times^HH_{\beta}$ is the morphism induced by $G\times H_\alpha\to
G\times^HH_{\beta}$, $(g,h_\alpha)\mapsto
\bigl[g,f_{\alpha,\beta}(h_\alpha)\bigr]$ --- notice that if $h\in
H$, then $\bigl[gh^{-1},f_{\alpha,\beta}(f_\alpha(h)h_\alpha)\bigr]= 
\bigl[gh^{-1},f_\beta(h)f_{\alpha,\beta}(h_\alpha)\bigr]=\bigl[g,f_{\alpha,\beta}(h_\alpha)\bigr]\in
G\times^HH_\beta$.  
 
Thus, if $H$ is central, the affine extension $\mathcal S$ is
pro-algebraic. In particular, commutative affine extensions are
pro-algebraic.
\end{ejs}

Let $\mathcal S$:
$\ate$ be an affine extension; then $G$ is a quasi-compact group
scheme (see Remark \ref{rem:torsoraffine1}). Conversely, Perrin proved
in \cite[Corollary V.4.3.1]{kn:dperrin1} that if $G$ is a 
\emph{connected} quasi-compact group scheme, then $G$ fits into an
affine  extension. This result is a
consequence of the Chevalley decomposition of group schemes of finite
type (Theorem \ref{thm:chevftgs}), together with Perrin's
Approximation Theorem below, that in particular shows that any
quasi-compact connected group scheme $G$ is pro-algebraic, that is $G$
is the limit of a directed system of group schemes of finite type.

\begin{thm}[Perrin's Approximation Theorem, {\cite[Th\'eor\`eme
V.3.1]{kn:dperrin1}}]
\label{thm:perrinquasicomp}
Let $G$ be a quasi-compact group scheme. Then there exists an affine faithfully flat filtered system of group schemes of finite type
$\{G_\alpha, f_{\alpha,\beta}\mathrel{:}\ alpha,\beta\in I\}$, such
that  $G \cong \operatorname{lim}_\alpha G_\alpha$. In particular, the
canonical morphisms $f_\alpha:G\to G_\alpha$   are faithfully flat,
with $\Ker(f_\alpha)$ an affine closed subgroup.\qed
\end{thm}

Combining Theorem \ref{thm:rosenftgs2} with the well known
approximation theorem for affine group schemes, we can refine
Perrin's approximation Theorem and show that  any affine extension is pro-algebraic.

\begin{thm}
  \label{thm:affextisproalg}
  Let $\mathcal S$: $\ate$ be an affine extension. Then $\mathcal S$ is pro-algebraic.
  \end{thm}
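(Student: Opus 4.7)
The plan is to combine Perrin's Approximation Theorem \ref{thm:perrinquasicomp} (applied to the quasi-compact group scheme $G$) with the Rosenlicht-type analysis we have for affine extensions, and then invoke Proposition \ref{prop:flatness} to recognize the resulting inverse system as having $\mathcal S$ itself as its limit.

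First, apply Theorem \ref{thm:perrinquasicomp} to $G$ to produce a directed family $\{K_\alpha\}_{\alpha\in I}$ of closed, \emph{connected}, affine, normal subgroup schemes of $G$, defined by finitely generated ideals, such that $G_\alpha := G/K_\alpha$ is of finite type and $G = \lim_{\alpha} G_\alpha$, with affine, faithfully flat transition morphisms $\rho_{\alpha,\beta}\colon G_\alpha \to G_\beta$. The key observation is that each $K_\alpha$ is automatically contained in $H$: since $K_\alpha$ is connected and affine, its scheme-theoretic image $q(K_\alpha) \subseteq A$ is a connected subgroup scheme that is simultaneously a quotient of an affine group (hence affine) and a closed subscheme of the abelian variety $A$ (hence proper). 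A connected, affine, proper group scheme is trivial, so $q(K_\alpha) = 0$ and $K_\alpha \subseteq \Ker(q) = H$. This step, which decisively uses that the $K_\alpha$ can be taken \emph{connected} (the refined version of Perrin's theorem), is where I expect the main subtlety to lie.

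With $K_\alpha \subseteq H$ normal in $G$, set $H_\alpha := H/K_\alpha$, which is affine of finite type as the quotient of an affine group scheme by an affine normal closed subgroup scheme. The morphism $q$ factors through a surjective morphism $q_\alpha\colon G_\alpha \to A$; since $q = q_\alpha \circ \pi_\alpha$ is affine and $\pi_\alpha\colon G \to G_\alpha$ is quasi-compact and faithfully flat, descent of affineness along faithfully flat quasi-compact morphisms yields that $q_\alpha$ is affine, and Proposition \ref{prop:fmonisflat} (applied to the surjective morphism $q_\alpha\colon G_\alpha \to A$ onto the reduced group scheme $A$) gives that $q_\alpha$ is faithfully flat. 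Thus
\[
\mathcal S_\alpha\colon\ \xymatrix{1\ar[r]& H_\alpha\ar[r]& G_\alpha\ar[r]^{q_\alpha}& A\ar[r]& 0}
\]
is an affine extension of finite type. The compatibility $q_\beta \circ \rho_{\alpha,\beta} = q_\alpha$ is automatic, and since $\rho_{\alpha,\beta}$ is affine and faithfully flat, the induced system $\varrho_{\alpha,\beta}\colon \mathcal S_\alpha \to \mathcal S_\beta$ satisfies the hypotheses of an inverse system of affine extensions.

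Finally, by Proposition \ref{prop:flatness}, the limit $\widetilde{\mathcal S} := \lim \mathcal S_\alpha$ is an affine extension of $A$ whose middle group scheme is $\lim G_\alpha$, which by Perrin's theorem is canonically identified with $G$. Under this identification, the projection $\widetilde{\mathcal S} \to A$ is the unique morphism compatible with all the $q_\alpha$, namely $q$ itself, so $\Ker(q) = H$ recovers the kernel of $\widetilde{\mathcal S}$. Hence $\mathcal S \cong \widetilde{\mathcal S} = \lim \mathcal S_\alpha$, which exhibits $\mathcal S$ as pro-algebraic in the sense of Definition \ref{defn:proalg}.\qed
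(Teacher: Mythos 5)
Your route is genuinely different from the paper's: the paper never quotients $G$ by the Perrin subgroups $K_\alpha$ directly. Instead it uses the Rosenlicht decomposition $G=G_{\ant}H$ of Theorem \ref{thm:rosenftgs2}, writes the affine group $H=\lim H_i$ as a limit of affine group schemes of finite type, and pushes the extension forward along the projections $p_i:H\to H_i$ (using that $G_{\ant}\cap H$ is central in $H$, via Remark \ref{rem:pushes}), obtaining finite-type extensions with middle term $G_{\ant}\times^{G_{\ant}\cap H}H_i$ whose limit is identified with $\mathcal S$. Your plan --- Perrin approximation of $G$, the inclusion $K_\alpha\subseteq H$, then Proposition \ref{prop:flatness} --- would be shorter if it worked, but it breaks exactly at the step you flagged as the main subtlety.

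The gap: a connected, affine, proper group scheme over a field is \emph{not} trivial in general. Affine plus proper gives finite, and finite plus connected gives infinitesimal; in characteristic $p>0$ such group schemes can be nontrivial ($\mu_p$ and $\alpha_p$ are connected, affine and finite). Moreover, an abelian variety in characteristic $p$ contains nontrivial infinitesimal subgroup schemes (the Frobenius kernels), so nothing prevents $q(K_\alpha)$ from being one of them; Perrin's theorem lets you take $K_\alpha$ connected but says nothing about reducedness or smoothness, so Cartier's theorem is unavailable. Hence $K_\alpha\subseteq\Ker(q)=H$ is unjustified outside characteristic $0$ (and the paper works in arbitrary characteristic); without it $q$ does not factor through $G_\alpha=G/K_\alpha$ and the construction does not start. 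A plausible repair is to replace $K_\alpha$ by $K_\alpha\cap H$, which is still closed, affine and normal in $G$, and for which $G/(K_\alpha\cap H)$ remains of finite type (it is an extension of $G/K_\alpha$ by the finite group scheme $K_\alpha/(K_\alpha\cap H)\cong q(K_\alpha)$); but one must then re-verify that the modified family is an inverse system of affine extensions with limit $\mathcal S$, which is precisely the bookkeeping the paper avoids by using the Rosenlicht decomposition. A minor further point: your appeal to descent of affineness for $q_\alpha$ is more directly obtained from Remark \ref{rem:torsoraffine1}, since $q_\alpha$ is an $H_\alpha$--torsor with $H_\alpha$ affine.
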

  \pf
  Let $G=G_{\ant}H=G_{\ant}\times^{G_{\ant}\cap H}H$ be the Rosenlicht
  decomposition of $\mathcal S$ (Theorem \ref{thm:rosenftgs2}). Since
  $H$ is an affine scheme, there exists an affine faithfully flat
  filtered system $\{H_\alpha, p_{\alpha,\beta}\mathrel{:}
  \alpha,\beta\in I\}$,  such that $H=\lim H_\alpha$; let 
  $p_\alpha:H\to H_\alpha$ be the canonical 
  morphisms (of group schemes, faithfully flat). Since $G_{\ant}$ is central, $G_{\ant}\cap H$ is central
 in $H$; it follows that $p_\alpha(G_{\ant} \cap H)$ is central in
 $H_\alpha$. By Remark \ref{rem:pushes}, we have  morphisms of affine extensions
 \[
   \xymatrix{
  \mathcal S_{\ant}\ar[d]^{\phi_\alpha}&   1\ar[r]&  G_{\ant}\cap H\ar[r]\ar[d]_{p_\alpha}&  G_{\ant}\ar[d]_{f_\alpha}\ar[r]&
     A\ar[r]\ar@{=}[d]& 0\\
\mathcal S_\alpha  &   1\ar[r]&  H_\alpha\ar[r]&  G_{\ant}\times^{G_{\ant}\cap H}H_\alpha\ar[r]&
     A\ar[r]& 0
   }
   \]
 where $f_\alpha(z)=[z,1]$ for all $z\in G_{\ant}$ (see Example
 \ref{ej:affiisproalg}). These morphisms 
 clearly extend to morphisms
 \[
   \xymatrix{
\mathcal S \ar[d]^{\lambda_\alpha} &    1\ar[r]&
H\ar[r]\ar[d]_{p_\alpha}&  G=G_{\ant}H\ar[d]_{\ell_\alpha}\ar[r]& 
     A\ar[r]\ar@{=}[d]& 0\\
\mathcal S_\alpha&     1\ar[r]&  H_\alpha\ar[r]&  G_{\ant}\times^{G_{\ant}\cap H}H_\alpha\ar[r]&
     A\ar[r]& 0
   }
   \]
where $\ell_\alpha:G\to  G_{\ant}\times^{G_{\ant}\cap H}H_\alpha$ is the
morphism of group schemes induced by 
$\pi_{G_{\ant}\times H_\alpha}\smallcirc (\id_{G_{\ant}}\times
  p_\alpha):G_{\ant}\times H\to G_{\ant}\times H_\alpha \to
  G_{\ant}\times^{G_{\ant}\cap H}H_\alpha$. 

Moreover, the morphisms $\id_{G_{\ant}} \times p_{\alpha\beta}$ induce
affine, faithfully flat morphisms of group schemes
$\ell_{\alpha,\beta}:G_\alpha= G_{\ant}\times^{G_{\ant}\cap
  H}H_\alpha\to G_\beta = G_{\ant}\times^{G_{\ant}\cap H}H_\beta$.
Therefore, the family $\{ \mathcal S_\alpha, \ell_{\alpha,\beta}\}$
conforms an affine faithfully flat filtered system of affine extensions of finite type;
let $\mathcal G$ be its limit.

We prove now that $\mathcal G\cong \mathcal S$. Indeed, the morphisms
$\lambda_\alpha$ induce a faithfully flat morphism
\[
   \xymatrix{
 \mathcal S\ar[d]^{\lambda} &     1\ar[r]&   H\ar[r]\ar[d]_{\ell|_{_H}}&  G=G_{\ant}H\ar[d]_{\ell}\ar[r]&
     A\ar[r]\ar@{=}[d]& 0\\
 \mathcal G &     1\ar[r]&  \widetilde{H} \ar[r]&  \widetilde{G} \ar[r]&
     A\ar[r]& 0
   }
 \]
 
But by construction $\widetilde{H}=H=\lim H_\alpha$ and
$\ell|_{_H}=\id_H$. Moreover, by the
commutativity of the diagram above, $\operatorname{Ker}(\ell)\subset
H$. Hence, $\ell$ is injective and $\mathcal S\cong \mathcal G$.
\qed

Once we have established that any affine extension is pro-algebraic,
we can estate Rosenlicht decomposition (Theorem \ref{thm:rosenftgs2})
in terms of affine sub-extensions.

\begin{lem}\label{lem:minimal}
 Let $\bigl\{\mathcal S_\alpha, \phi_{\alpha,\beta}: \mathcal
 S_\alpha\to \mathcal S_\beta; \alpha \geq \beta \bigr\}$, be an
 (affine) faithfully flat 
   filtered system of affine extensions of finite type, and assume
   that all the extensions are Chevalley decompositions. Consider the
   limit $\mathcal S:=\lim \mathcal S_\alpha$
  \[
  \xymatrix{\mathcal S: & 1\ar[r]&H\ar[r]&G\ar[r]^-q&\ar[r]
    A&0.}
\]

Then $H$ is minimal among the affine
subgroups schemes $H'\subset G$ such that the quotient scheme $G/H'$ exists and is proper.
\end{lem}

\pf Let $H'\subset H$ be an affine subgroup scheme of
  $H$  such that $G/H'$ exists and is proper. If $f_\alpha:G\to
  G_\alpha$ denotes   as usual the mid morphism of $\phi_\alpha$
  (recall the notations in 
\ref{nota:filteredsystem} and \ref{nota:central}), then   $f_\alpha(H')$,
  the scheme theoretic image of $H'$ by $f_\alpha$, is a
  closed affine subscheme of $G_{\alpha,\aff}$. Since $f_\alpha$ is faithfully
  flat for all $\alpha$,  then $f_{\alpha}|_{_H}: H\to G_{\alpha,
    \aff}$ is faithfully flat (see Lemma \ref{lem:chevcase}), and we can
  factor $f_\alpha$  
  to a faithfully flat  morphism $\overline{f}_\alpha 
  : G/H'\to G_\alpha/f_\alpha(H')$.

Since $G/H'$ is proper, it follows that $G_\alpha/f_\alpha(H')$ is
 also proper, and therefore, by minimality of
$G_{\alpha,\aff}$, $f_\alpha(H')
=G_{\alpha,\aff}$. It follows that  $\{
f_\alpha|_{_{H'}}:H'\to G_{\alpha,\aff}\}$ is a compatible family of
faithfully flat morphism, with $\operatorname{inc}:H'\to H=\lim
G_{\alpha,\aff}$ as induced morphism. We deduce from that
$H'=H$, since $\operatorname{inc}$ is a faithfully flat morphism
(see Remark \ref{rem:surjectivity}).  \qed

\begin{lem}
\label{lem:affiandlim}
The affinization functor $\operatorname{Aff}:\Schkqc\to \Schkaff$
preserves limits of affine  faithfully flat filtered systems of
quasi-compact group schemes. 
 \end{lem}
\pf Let $\{G_\alpha$, $f_{\alpha,\beta}:G_\alpha\to G_\beta, \alpha
\geq \beta \in I\}$ be an affine faithfully flat filtered system of quasi-compact group
schemes, with limit the quasi-compact group scheme $G=\lim
G_\alpha$. Then the transition morphisms induce morphisms
$\widetilde{f_{\alpha,\beta}}=\operatorname{Aff}(f_{\alpha,\beta}):
\operatorname{Aff}(G_\alpha)\to \operatorname{Aff}(G_{\beta})$, such
that the solid part of the following diagram is commutative:
  \begin{equation}\label{eqn:etalim}
    \xymatrix{G \ar@{..>}[r]^{f_\alpha}\ar@{..>}[d]_{\lim \eta_{G_\alpha}}&
      G_\alpha\ar[r]^{f_{\alpha,\beta}}\ar[d]^{\eta_{G_\alpha}}&
        G_\beta\ar[d]^{\eta_{G_\beta}}\\\lim \operatorname{Aff}(G_{\alpha})\ar@{..>}[r]_(.5){\widetilde{f_\alpha}}&
         \operatorname{Aff}(G_\alpha)\ar[r]_{\widetilde{f_{\alpha,\beta}}}
           &
        \operatorname{Aff}(G_\beta)
      }
      \end{equation}

      Since the morphisms $\eta_{G_\alpha}, \eta_{G_\beta}$ and
      $f_{\alpha,\beta}$ are faithfully flat (see
      Prop. \ref{prop:keretag}), it follows that
      $\widetilde{f_{\alpha,\beta}}$ is faithfully flat. Thus, the
      limit $\lim \operatorname{Aff}(G_{\alpha})$ exists and it is an
      affine group scheme fitting into the left part of diagram
      \eqref{eqn:etalim}. Moreover, the (faithfully flat) morphisms
      $\eta_{G_\alpha}\smallcirc f_{\alpha}:G\to
      \operatorname{Aff}(G_{\alpha})$ induce a faithfully flat
      morphism $f: G\to L$ (see Remark \ref{rem:surjectivity}), that
      factorizes through $\eta_G$, by the universal property of the
      affinization morphism as shown in the diagram below.
\begin{equation}\label{eqn:etalim2}
    \xymatrix{& G\ar[ld]_{\eta_G} \ar[r]^{f_\alpha}\ar[d]^{\lim
        \eta_{G_\alpha}}& G_\alpha\ar[d]^{\eta_{G_\alpha}}
      \\ \operatorname{Aff}(G)\ar[r]_(0.38){f}&\lim
      \operatorname{Aff}(G_{\alpha})\ar[r]_(.5){\widetilde{f_\alpha}}&
      \operatorname{Aff}(G_\alpha) }
      \end{equation}

      It is then clear that $f_{\alpha}\bigl(\operatorname{Ker}(f\smallcirc
\eta_G)\bigr)\subset G_{\alpha,\ant}$. Therefore, $\operatorname{Ker}(f\smallcirc
\eta_G) \subseteq G_{\ant}$ and as the other inclusion is evident, it follows that  $\lim \operatorname{Aff}(G_{\alpha}) = G/G_{\ant}=\operatorname{Aff}(G)$.
\qed

\begin{thm}
  \label{thm:limantiaff}
  Let $\mathcal S=\lim \mathcal S_\alpha$: $\ate$ be an affine
  extension, with $\mathcal S_\alpha$: ${\xymatrixcolsep{1.5pc}\xymatrix{ 1\ar[r]&
      H_\alpha \ar[r]& G_\alpha\ar[r]^{q_{\alpha}}&  A\ar[r]&
      0}}$ an (affine) faithfully flat filtered system of finite type in $\GextaffA$, with  transition morphisms   $\phi_{\alpha,\beta}:\mathcal S_\alpha\to \mathcal S_\beta$. Let us call 
$G_{\alpha,\ant}=\Ker(\eta_{G_{\alpha}})\subset
G_\alpha$ (see Remark \ref{rem:gantqc} and Theorem \ref{thm:rosenftgs2}). Then
the morphisms 
  $f_{\alpha,\beta}|_{_{G_{\alpha,{\ant}}}}$ define an affine
  faithfully flat filtered system
  for the family $\{\mathcal S_{\alpha,{\ant}}\}_{\alpha \in I}$
\[
  \xymatrix{
  \mathcal S_{\alpha,\ant}:\ar[d]^{\widetilde{\phi}_{\alpha,\beta}}
  & 1\ar[r]& G_{\alpha,\ant}\cap
  H_\alpha\ar[r]\ar[d]_{f_{\alpha,\beta}|_{_{G_{\alpha,\ant}\cap H}}}&
  G_{\alpha,\ant}\ar[rr]^{q_\alpha|_{_{G_{\alpha,\ant}}}}\ar[d]^{f_{\alpha,\beta}|_{_{G_{\alpha,{\ant}}}}}&
  & A\ar[r]\ar@{=}[d]&  0\\ 
  \mathcal S_{\beta,\ant}: 
  & 1\ar[r]& G_{\beta,\ant}\cap H_\beta\ar[r]&
  G_{\beta,\ant}\ar[rr]_{q_\beta|_{_{G_{\beta,\ant}}}}& &A\ar[r]&  0
  }
\]
with limit $\mathcal S_{\ant}=\lim \mathcal S_{\alpha,\ant}$: ${\xymatrix {
    1\ar[r]& H\cap G_{\ant}\ar[r]&  G_{\ant}\ar[r]^{q|_{_{G_{\ant}}}}& A\ar[r]& 0}}$.
\end{thm}
\pf First we prove that in the above context, $\lim G_{\alpha,\ant}=
G_{\ant}$.   By Lemma
\ref{lem:antiafffitycase}, $f_{\alpha,\beta}(G_{\alpha,\ant})\subset
G_{\beta,\ant}$ for all $\alpha \geq \beta$ and
$f_{\alpha,\beta}|_{_{G_{\alpha,\ant}}}$ is an affine  faithfully flat
morphism; thus  the limit  $\lim
G_{\alpha,\ant}$ exists (see  Remark \ref{rem:surjectivity}); let $L=\lim
G_{\alpha,\ant}$ and 
$\widetilde{f_\alpha}:L\to G_{\alpha,\ant}$ be the canonical morphisms. Then the family
$\widetilde{f_{\alpha}}$ induces a morphism $\widetilde{f}: L\to G$.

On the
other hand, since  $f_\alpha(G_{\ant})\subset G_{\alpha,\ant}$, it
follows that there exists a morphism $h:G_{\ant}\to L$, such
that $\widetilde{f_\alpha}\smallcirc h=f_\alpha|_{_{G_{\ant}}}$
for all $\alpha$. By the universal property of the limit $G=\lim
G_\alpha$, we deduce that $\widetilde{f}
\smallcirc h = \operatorname{inc}: G_{\ant} \subseteq G$. Hence, it
suffices to prove that  $L$ is anti-affine, since if this is  the case
then  $\widetilde{f}(L)$ is anti-affine and therefore
$ \widetilde{f}(L)\subset G_{\ant}$. Then  $G_{\ant}=L=\lim G_{\alpha,\ant}$.
But applying Lemma
\ref{lem:affiandlim} we see that $\operatorname{Aff}(\lim
G_{\alpha,\ant})=\lim \operatorname{Aff}(G_{\alpha,\ant})=\Spec(\Bbbk)$.

Now we prove that $G_{\ant} \cap H=\lim G_{\alpha,\ant} \cap
H_\alpha$. Since $f_{\alpha,\beta}|_{_{H_\alpha}}:H_\alpha\to H_\beta$
is an affine 
morphism (see Remark \ref{rem:particularlimit}), it follows that
the family $\{ G_{\alpha,\ant}\cap
  H_\alpha, f_{\alpha,\beta}|_{_{G_{\alpha,\ant}\cap
  H_\alpha}}:G_{\alpha,\ant}\cap H_\alpha\to G_{\beta,\ant}\cap
H_\beta\mathrel{:} \alpha,\beta\in I\}$ is an affine filtered system
of group schemes.  Then the limit  $N=\lim G_{\alpha,\ant}\cap
H_\alpha$ is a group scheme, and the restriction morphisms
$f|_{_{G_{\ant}\cap H}}:G_{\ant}\cap H\to G_{\alpha, \ant}\cap
H_\alpha$ induce a  morphism $\ell: G_{\ant}\cap H\to
N$.  But it is clear that $N\subset G_{\ant}\cap H$ --- since
$0=q_\alpha\smallcirc f_\alpha : N\to A$ for all $\alpha$ and that $
q_\alpha(N)\subset G_{\alpha,\ant}$ ---; therefore $N=G_{\ant}\cap H$.
\qed

As a consequence of Theorem \ref{thm:limantiaff}, we have the
following result.

\begin{lem}
Let $\mathcal S=\operatorname{lim} \mathcal S_\alpha$, $\mathcal
S'=\operatorname{lim} 
\mathcal S'_\alpha$ be two  affine  extensions, where $\mathcal
S_\alpha, \mathcal S'_\alpha$ are of finite type, 
and $\phi:\mathcal S\to \mathcal S'$ a morphism of affine  extensions: 
\[
{
\xymatrixrowsep{1.5pc}
\xymatrix{
\mathcal S:\ar[d]_(.4){\phi} &0\ar[r] & H\ar[r]\ar[d]& 
G\ar[r]^q\ar[d]^(.4) f& A\ar@{=}[d]\ar[r]& 0\\ 
\mathcal S':& 0\ar[r] & H'\ar[r]& 
G'\ar[r]_{q'}& A\ar[r]& 0
}}
\]

 Then $ f$ induces by restriction a morphism of affine extensions
\[
  \xymatrix{
    \mathcal S_{\ant}:\ar[d]& 0\ar[r]&
    G_{\ant}\cap H\ar[r]\ar[d]_{ f|_{_{G_{\ant}\cap H}}}&
    G_{\ant}\ar[r]^-{q|_{_{G_{\ant}}}}\ar[d]^{ f|_{_{G_{\ant}}}}& 
    A\ar[r]\ar@{=}[d]& 0\\
 \mathcal S'_{\ant}: & 0\ar[r]&  G'_{\ant}\cap H'\ar[r]& G'_{\ant}\ar[r]_-{q'|_{_{G_{\ant}}}}&
    A \ar[r]& 0
  }
\]

 Moreover, if $ f$ is
 faithfully flat  then  $ f|_{_{G_{\ant}}}:
G_{\ant}\to G'_{\ant}$ is faithfully flat.
\end{lem}

\pf
By Lemma \ref{lem:antiafffitycase}, we have that $f(G_{\ant}\subset
G'_{\ant}$ and that $f|_{_{G_{\ant}}};
G_{\ant}\to G_{\ant}$ is a faithfully flat morphism.   
\qed

\begin{thm}[Rosenlicht decomposition of  affine
  extensions, revisited]\label{thm:rossmoorhnew} \ \\
Let $\mathcal S$: $\ate$ be an affine 
extension,  with $G$ connected. Let $\{\mathcal S_\alpha,
\phi_{\alpha,\beta}\mathrel{:}\alpha,\beta\in I\}$ be an (affine)
faithfully flat filtered system of affine extensions of finite type, with  
 $\mathcal S_\alpha$:
${\xymatrixcolsep{1.5pc}\xymatrix { 1\ar[r]&
    H_\alpha\ar[r]&G_\alpha\ar[r]^{q_\alpha}&A\ar[r]&0}}$.
Then:

 \noindent (1) Let $\mathcal G_{\alpha,\ant}$:
  ${\xymatrixcolsep{1.5pc}\xymatrix { 1\ar[r]&
      (G_{\alpha,\ant})_{\aff}\ar[r]&G_{\alpha,\ant}\ar[r]^(.55){\widetilde{q_\alpha}}&A\ar[r]&0}}$
  be the Chevalley decomposition of $G_{\alpha, \ant}$, for $\alpha\in
  I$. Then $G_{\ant}\cap H$ contains $K=\lim (G_{\alpha,\ant})_{\aff}
  $ as a closed subgroup scheme, and $(G_{\ant}\cap H)/K$ is finite. 
  
\noindent (2) The induced space 
$G'=G_{\ant}\times^KH$ is a quasi-compact group scheme, and the
canonical morphism (induced by the multiplication) $f:G'\to G$ is
an isogeny, and  $\Ker(f)=(G_{\ant}\cap H)/K$.
\end{thm}
\pf
First, observe that since $G_{\ant}=\lim G_{\alpha,\ant}$ (by Theorem
\ref{thm:limantiaff}) and $H=\lim 
H_\alpha$ (by Proposition \ref{prop:flatness}), it follows that
$G_{\ant}\cap H=\lim G_{\alpha,\ant}\cap H_\alpha$. 
Let $\nu_\alpha:K\to (G_{\alpha, \ant})_{\aff}$ be the
canonical morphisms. Then, by Theorem \ref{thm:rosenftgs2} the family
$\{\nu_\alpha\}$ induces a compatible family $\{ \widetilde{\nu}_\alpha: K\to
G_{\alpha,\ant}\cap H_\alpha\}$, with in turn induces a  morphism
$\nu:K\to G_{\ant}\cap H$.  In order to prove that $K\subset
G_{\ant}\cap H$ is a closed subgroup of finite type, observe that
$\Ker (\nu)=\nu^{-1}\bigl( \cap_\alpha \Ker (f_\alpha|_{_{H\cap G_{\ant}}})\bigr)$ (again by  Theorem \ref{thm:limantiaff}
and Proposition \ref{prop:flatness}). It  follows from the
compatibility conditions that $\Ker(\nu)= \bigcap_\alpha\bigl( \Ker(
(\nu_\alpha|_{_{H\cap G_{\ant}}})=\Spec(\Bbbk)$.

In order to prove that $K$ is of finite index in $G_{\ant}\cap H$,  we
follow the procedure presented in \cite[Theorem 5.1.1]{kn:brionchev}.

Since the filtered system $\bigl\{(G_{\alpha,\ant})_{\aff},
f_{\alpha,\beta}|_{_{G_{\alpha,\ant})_{\aff}}}\bigr\}$ is affine and
faithfully flat (see Lemma \ref{lem:antiafffitycase}), we have the
following   commutative diagrams of groups schemes,  with exact sequences
of group schemes as rows and affine faithfully flat vertical arrows,
for any 
$\beta\geq \alpha$ --- notice that by Theorem \ref{thm:rosenftgs2}
$\overline{G_{\alpha}}$ is a finite group scheme ---:  
\[
{\xymatrixrowsep{1.5pc}
\xymatrix
{
1\ar[r]&
(G_{\alpha,\ant})_{\aff}\ar[r]\ar[d]_{f_{\alpha,\beta}|_{_{(G_{\alpha,\ant})_{\aff}}}}&
G_{\alpha}\ar[rr]^-{\overline{q_{\alpha}}}\ar[d]^{f_{\alpha,\beta}}&& 
\overline{G_{\alpha}}= G_\alpha/
(G_{\alpha,\ant})_{\aff}\ar[r]\ar[d]^{\overline{f_{\alpha,\beta}}}&
1\\
1\ar[r]& (G_{\beta,\ant})_{\aff}\ar[r]&G_{\beta}\ar[rr]^-{\overline{q_{\beta}}}&&
\overline{G_{\beta}}=G_\beta/ (G_{\beta,\ant})_{\aff}\ar[r]& 1
}}
\]

Taking limits, we deduce that $G/K\cong \lim \overline{G_\alpha}$ ---
since the compatible morphisms $\widetilde{q_\alpha}\smallcirc
f_\alpha: G\to \overline{G_\alpha}$   induce a faithfully flat morphism
$G\to \lim \overline{G_\alpha}$ with Kernel equal to $K$. Thus we have
the following commutative diagram of groups schemes, with exact
sequences as rows and  faithfully flat vertical arrows:
\[
{\xymatrixrowsep{1.5pc}
\xymatrix
{
1\ar[r]& K\ar[r]\ar@{->>}[d]_{f_{\alpha}|_{_K}}&G\ar[r]\ar@{->>}[d]^{f_{\alpha}}& G/K\ar@{->>}[d]^{\overline{f_{\alpha}}}\ar[r]& 1\\
1\ar[r]& (G_{\alpha,\ant})_{\aff}\ar[r]&G_{\alpha}\ar[r]&
\overline{G_{\alpha}}=G_\alpha/ (G_{\alpha,\ant})_{\aff}\ar[r]& 1
}}
\]

Sine $G=G_{\ant}H$ (see Theorem \ref{thm:rosenftgs2}), it follows that
$G/K=(G_{\ant}H)/K= (G_{\ant}/K)(H/K)$.  
Since $H/K$
is the limit of the affine  group schemes of finite type
$H_{\alpha}/(G_{\alpha, \ant})_{\aff}$, it is an affine group
scheme. On the other hand, $G_{\ant}/K\cong
\bigl(G_{\ant}/(G_{\ant}\cap H)\bigr)/ \bigl((G_{\ant}\cap H)/ K\bigr)= A/\bigl((G_{\ant}\cap H)/ H\bigr)$ is
an  abelian variety (see Theorem \ref{thm:limantiaff}). Therefore,
$(H\cap G_{\ant})/K\cong (H/K)\cap
(G_{\ant}/K)$ is finite. 

The remaining assertions follow easily.
\qed

  \subsection{$H$--torsors and induced spaces}\ %
  \label{section:indesp}
  
Let $G$ be a smooth group scheme of finite type over an algebraically
  closed field $\Bbbk$, $H\subset G$ a 
closed subgroup scheme and $X$ a  quasi-projective scheme equipped 
with an $H$--action. Serre proved in 
\cite{kn:serreefa}  that the diagonal action $H\times (G\times X)\to
G\times X$, 
$h\cdot (g,x)=(gh^{-1}, h\cdot x)$ has a geometric quotient, that we
denote as 
$G\times^HX$. If  $(g,x)\in G\times X$, then we denote by  $[g,x] $
the class of 
$(g,x)$ in the quotient. Then $G\times^HX$ is a $G$--scheme (with action
given by $g'\cdot [g,x]=[g'g,x]$), and  the canonical projection
$G\times^HX\to G/H$, induced by 
$[g,x]\mapsto gH$,  is a fiber bundle, with fibers isomorphic to
$X$. We call $G\times^H X$ the  \emph{induced space}.

Later on, Serre's result was generalized in several directions: let
$H$ be a group scheme of finite type and $Y$ an $H$--scheme (for
a right $H$--action), such that the geometric quotient   $Y\to Y/H$
exists (in the sense of GIT, \cite[pages
  3,4]{kn:GIT}). If $X$ is an $H$--scheme, we are concerned 
with the existence of the quotient for the diagonal action, that we
denote as $\pi_{Y\times X}: Y\times X\to Y\times^HX:=(Y\times X)/H$.
In  \cite[Proposition 7.1]{kn:GIT}, Mumford gives sufficient
conditions in terms of the existence of an ample $H$--linearized line bundle on $H\times
X$  (see Definition \ref{def:Gbundle} below)  in order to guarantee the
existence of $Y\times ^HX$  (by means of 
``fppf descent'' techniques) ---  see \cite[\S\ 3.3]{kn:brionlinrev}
for a detailed proof of how to apply Mumford's result in order to
prove the existence of $Y\times^HX$. In  \cite[Chapter
I.5]{kn:jantzen1}, 
Jantzen studies this problem in the context of schemes over a
commutative ring $R$.  It is also worth noting that in  
\cite{kn:bb-induced} Bialynicki-Birula studied the existence of the
induced space $Y\times^H X$ for locally isotrivial (in the finite
\'etale topology) $H$--torsors $Y\to Y/H$, in the context of algebraic
spaces --- of course, some additional hypothesis must be made on $Y$.

Let $\mathcal S$: $\ate$ be an affine extension and $V$ a finite
dimensional $H$--module. Then $q:G\to A$
  is affine and faithfully flat. Thus,  we are in the setting of
  fpqc descent  (see \cite[Expos\'e VIII]{kn:SGA1}), and  we can
  guarantee the existence of the quotient $\pi_{G\times V} : G\times V\to
  G\times^HV$, as in Theorem \ref{thm:indesp1} below --- notice that
  this result  follows from the works cited above, but we couldn't find
  it as a precise statement. 

\begin{thm}
\label{thm:indesp1}
Let $\mathcal S$: $\ate$ be a  affi\-ne 
extension of the abelian 
variety $A$,  $V$ be a finite dimensional $H$--module, and consider  the
diagonal $H$--action $a_{G\times V}:=\bigl(m\smallcirc (p_2, i
\smallcirc p_{1}), a_V\smallcirc p_{13}\bigr):H\times (G\times V)\to
G\times V$, 
  where $p_{13}$ is the projection in the first
  and third coordinates and $a_V:H\times V\to V$ is the action
  associated to the representation.
  Then the scheme $G \times V$ endowed with the
  $H$--action $a_{G\times V}$ admits a geometric quotient
  $(G\times^HV, \pi_{G\times V}: G \times V \to
  G\times^HV)$ in the category of
  schemes over $\Bbbk$, in the sense of GIT, \cite[pages
  3,4]{kn:GIT}.
  Moreover, $E_V:=G\times^HV$ is a \emph{$G$--linearized} vector
  bundle with fibers isomorphic to $V$ --- that is, $E_V$ admits a left $G$--action, linear on the
  fibers, such that the canonical
  projection $\pi_V: E_V\to 
  A$ is a $G$--equivariant morphism.
  \end{thm}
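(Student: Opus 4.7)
My plan is to construct the geometric quotient $G \times^H V$ by fpqc descent along the affine, faithfully flat $H$--torsor $q: G \to A$, and then read off the vector bundle and $G$--linearization structures.

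First, I would reformulate the problem in the language of quasi-coherent sheaves. The trivial bundle $G \times V$ corresponds to the free $\mathcal{O}_G$--module $\mathcal{O}_G \otimes_\Bbbk V$, which is locally free of rank $\dim V$. Since $q:G\to A$ is a (faithfully flat, quasi-compact) $H$--torsor, there is a canonical isomorphism $G\times_A G \cong G\times H$ sending $(g_1,g_2)$ to $(g_1, g_1^{-1}g_2)$. The diagonal $H$--action $\psi$ on $G\times V$ is then precisely a descent datum: it packages an isomorphism $p_2^*(\mathcal{O}_G\otimes V)\cong p_1^*(\mathcal{O}_G\otimes V)$ over $G\times_A G$, and the associativity of the $H$--action translates to the cocycle condition over $G\times_A G\times_A G \cong G\times H \times H$.

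Second, I would invoke fpqc descent for quasi-coherent sheaves (see \cite{kn:SGA1}, Expos\'e VIII) to produce a unique (up to canonical isomorphism) quasi-coherent $\mathcal{O}_A$--module $\mathcal{E}_V$ with $q^*\mathcal{E}_V \cong \mathcal{O}_G\otimes V$ as $H$--equivariant sheaves. Because being locally free of given rank descends along faithfully flat morphisms, $\mathcal{E}_V$ is locally free of rank $\dim V$; let $E_V = \VBSpec(\mathcal{E}_V)\to A$ be the associated geometric vector bundle. The natural transformation $q^*\mathcal{E}_V \cong \mathcal{O}_G\otimes V$ yields a cartesian square
\[
\xymatrix{
G\times V \ar[r]^-\kappa \ar[d] & E_V \ar[d]^{\pi_V} \\
G \ar[r]_-{q} & A
}
\]
exhibiting $\kappa: G\times V\to E_V$ as a faithfully flat $H$--invariant morphism whose fibers (by the torsor property) are exactly the $H$--orbits.

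Third, I would verify the GIT geometric quotient properties from \cite{kn:GIT}: $\kappa$ is surjective and $H$--invariant by construction; its fibers are $H$--orbits because $G\times_A G \cong H\times G$ parameterizes pairs in the same orbit; $\kappa$ is submersive because faithfully flat morphisms are submersive; and $\pi_V^*: \mathcal{O}_{E_V}\to (\kappa_*\mathcal{O}_{G\times V})^H$ is an isomorphism because this can be checked after pulling back by the faithfully flat $q$, where it becomes the tautological identification of $H$--invariant functions on $G\times_A G \times V \cong G\times H \times V$. The universal property follows from the fact that any $H$--invariant morphism $G\times V \to Z$ descends along the fpqc cover $q$.

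Finally, the $G$--linearization of $E_V$ comes for free: the left multiplication action of $G$ on itself induces a $G$--action on $G\times V$ (trivial on the second factor) that commutes with $\psi$, hence respects the descent datum, and so descends to a $G$--action on $E_V$ making $\pi_V:E_V\to A$ equivariant with respect to translations on $A$. Linearity on fibers is inherited from the fact that the descended action preserves the $\mathcal{O}_A$--module structure of $\mathcal{E}_V$ up to translation. The main obstacle I anticipate is simply bookkeeping: verifying carefully that the diagonal $H$--action on $G\times V$ yields a \emph{valid} descent datum (the cocycle condition) and that the left $G$--action commutes with it; both are in the end consequences of the associativity axioms in $G$ and the definition of an $H$--module, but they require one to pass fluently between the cartesian diagrams for $G\times_A G$ and the torsor trivialization.
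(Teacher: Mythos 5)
Your proposal is correct and follows essentially the same route as the paper: both descend the trivial bundle $G\times V\to G$ (equivalently, the free sheaf $\mathcal O_G\otimes_\Bbbk V$) along the affine faithfully flat morphism $q:G\to A$, using the torsor identification $G\times_AG\cong G\times H$ to turn the diagonal $H$--action into an fpqc descent datum as in \cite[Expos\'e VIII]{kn:SGA1}, and then observe that left translation by $G$ commutes with the descent datum and hence descends to the linearization. The paper's proof is just a terser version of yours, citing Expos\'e VIII, Theorem 2.1 for the existence of the quotient and Theorem 1.1 and Corollary 1.2 for the local triviality, where you spell out the descent datum, the cocycle condition, and the GIT axioms explicitly.
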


  \pf
The existence of  the
  quotient $E_V$, as well as the fact that the  
fibers of $\pi_V:E_V\to A$ are isomorphic to $V$,  follow directly
from fpqc descent  (see \cite[Expos\'e VIII, Theorem 2.1]{kn:SGA1}).
Moreover, the affine morphism $G\times V \to G$ can be seen as the
bundle associated to the free sheaf $\sO_G^{\oplus\, \dim V}$ and the
local triviality of $E_V\to A$ follows from 
\emph{loc.cit.} Expos\'e VIII, Theorem 1.1 and Corollary 1.2.
Finally, it is clear that $G\times (G\times^HV)\to 
G\times^HV$, (induced by $g'\cdot (g,v)=[g'g,v]$), is an action linear
on the fibers, and that $\pi_V$ 
is a $G$--equivariant fibration. 
\qed

\begin{nota}
  Let $\mathcal S$: $\ate$ be an affine extension and $V$ an
  $H$--module. If $f:G\times V\to Y$ is $H$--invariant, then by the
  universal property of the quotient there exists a unique morphism 
$\widetilde{f}: E_V\to Y$  such that $\widetilde{f}
\smallcirc \pi_{G \times     V}=f$. We will abuse notations  denote 
  $\widetilde{f}\bigl([g,v]\bigr)=f(g,y)$.
  \end{nota}

  \begin{rem}
    \label{rem:fpqcquot}
    (1) Notice that $\pi_{G\times V}:G\times V\to G\times^H V$ is an $H$--torsor.
    
\noindent (2) By definition of geometric quotient, $G\times^H V$
    represents the quotient of the fpqc sheaf $G\times V$ by the
    pre-relation $j=(a_{G\times V},p_{23}):H\times (G\times V)\to (G\times V)\times
    (G\times V)$ (see  \cite[Tags 022O and 02VE]{kn:stackproj}). It follows in particular that
    $\pi_{G\times V}: G\times V\to G\times^HV$ is a categorical quotient in the
    category of fpqc sheaves (see  \cite[pages 3,4]{kn:GIT}).

    \noindent (3) In this context, recall that a morphism (of schemes,
    resp.~fpqc sheaves)  $f:G\times V \to Z$ is
     $H$--invariant 
     if $f\smallcirc a_{G\times V} =f\smallcirc
    p_{23}: H\times (G\times V)\to Z$.
      \end{rem}

\section{A finite dimensional representation theory for affine  extensions}
\label{sect:repaffext}

\subsection{Homogeneous vector bundles over an abelian variety}\ %
\label{subsec:homogvecbun}

In this section we recall some basic facts on the category of
\emph{homogeneous vector bundles} over an abelian variety (see
Definition \ref{defn:hmogvecbunbrion} below). The study of homogeneous vector bundles over an abelian variety was
initiated by Atiyah in 1956 (see
\cite{kn:atiyahsh}, \cite{kn:atiyahconn}, \cite{kn:atiyahvbec}). Later on, Miyanishi,
Mukai and others generalized Atiyah's original results (for
homogeneous vector bundles over elliptic curves  over
  $\mathbb C$) to a more general
setting  --- for homogeneous vector bundles over
  an abelian variety $A$, over an arbitrary algebraically closed field $\Bbbk$ ---, giving
a nice description of the corresponding category and 
its main properties (see for example \cite{kn:miy}, \cite{kn:Mu78} and
\cite{kn:bprit}). Recently, Brion in \cite{kn:brionrepbund} introduced the definition and first properties of the category of
homogeneous vector bundles over an arbitrary field $\Bbbk$. In what
follows we take Brion's definition as departure point in order to
enlarge the category of homogeneous vector bundles by
introducing new morphisms (see Definition \ref{defn:vbgraded}
below). For this, we adapt  the approach taken in \cite{kn:bprit}
(where the authors dealt with homogeneous vector bundles over an
algebraically closed field) to this more general
context.

\begin{defn}
  \label{defn:homogvecbundl}
  If $T$ is an scheme, then  the \emph{category of
    vector bundles with base $T$}, denoted as $\VB_0(T)$, is defined as
    follows:

  \begin{enumerate} \item \emph{Objects}: the family of vector bundles
    with base $T$. Recall that a vector bundle is a pair $(E,\pi)$ with $\pi:E \to
    T$ a morphism that is locally trivial in the
    Zariski topology: there exists an open covering $\{U_i\}_{i\in I}$
    of $T$ and isomorphisms  $\psi_i:\mathbb A_{U_i}^n\to \pi^{-1}(U_i)$ compatible
    with $\pi$ --- called the trivializations of the
    bundle ---  such  that for any affine open subset
    $V=\Spec(R)\subset U_i\cap V_j$, the ``transition morphisms''
    $\psi_j^{-1}\smallcirc \psi_i|_{_{\mathbb A^n_{V}}}:\mathbb
    A^n_{V}\to \mathbb A^n_{V}$ are given by linear automorphisms of
    $R[x_1,\dots , x_n]$. If $t\in T$, the $n$-dimensional $\Bbbk(t)$--vector space
    $\pi^{-1}(t):=E\times_T\Spec\bigl(\Bbbk(t)\bigr)$ is called the
    fiber of $E$ at $t \in T$. Notice that in particular the morphism
    $\pi$ is affine.

    \item  \emph{Arrows}: if  $\pi:E\to  T, \pi':E'\to  T$ are  vector
      bundles  over $T$,  a  \emph{morphism of  vector  bundles} is a
      morphism $f:\pi \to  \pi'$ of schemes over $T$,  i.e.~a morphism
      of  schemes $f:  E  \to E'$  such  that $\pi'\smallcirc  f=\pi$
      i.e. the diagram
\[\xymatrix{ E\ar@{->}[r]^f\ar@{->}_(.4)\pi[d]
    &E'\ar@{->}^(.4){\pi'}[d]\\ T\ar@{->}[r]_{\id_T}& T
  }\]
is commutative, and  for any pair of trivializations $\psi'_j: \mathbb A^m_{V_j}\to
      {\pi'}^{-1}(V_j)$ and $\psi_i:\mathbb A^n_{U_i}\to\pi^{-1}(U_i)$ and any
      open affine subset $V=\Spec(R)\subset V_j\cap U_i$, the morphism
      ${\psi'_j}^{-1}\smallcirc f\smallcirc \psi_i|_{_{\mathbb A^n_{V}}}: \mathbb A^n_V\to \mathbb A^m_V$ is
      given by a linear endomorphism $R[x_1,\dots, x_n]\to R[y_1,\dots,y_m]$. 
    \end{enumerate}
    \end{defn}

    \begin{nota}
      \label{nota:homogvecbundl}
      (1) In the literature, the vector bundles (defined as above) 
are sometimes called \emph{geometric vector bundles}, see for example
\cite[Definition 11.5]{kn:gortz} or
\cite[Exer. 5.18]{kn:hartshorne}. Later in the consideration of
\emph{Hopf sheaves}, it is convenient to view the 
vector bundles from a more general perspective (see
Section \ref{sect:repassheaves}). We regard them in terms of the
relative spectrum of a locally free sheaf (see Section
\ref{sect:quasicompandsheaves} and Remark \ref{rem:basicpropvb}). 

\noindent (2) The pair $(E,\pi)$ is abbreviated as $E$ and if $t\in
T$,  the fiber
$\pi^{-1}(t)$  is denoted as $E_t$.  For further compatibility we
denote the $\Bbbk$-vector space of arrows  between two vector bundles $E,E'$ as
$\HomO(E,E')$ (see Definition 
\ref{defn:vbgraded} and Lemma \ref{lem:homgrhomog}).  A morphism $f \in
\HomO(E,E')$  restricts to the fibers defining a
$\Bbbk(t)$--linear map written as $f_t:E_t \to E'_t$.

\noindent (3) If $(E,\pi)$ is a vector bundle, we denote
$\End_0(E)=\Hom_0(E,E)$, and $\Aut_0(E)\subset \End_0(E)$ the group of
automorphisms  of $E$.
\end{nota}

\begin{defn}\label{defn:zerosectionO}
  Given a vector bundle $\pi: E \to T$ then, using the local
  characterization of a vector bundle, one can define a morphism of
  schemes, called the \emph{zero section}, $\sigma_E : T \to E$
such that: i) $\pi \smallcirc \sigma=\id_T$, ii) if $f: (\pi:E \to T)
\to (\pi': E' \to T)$ is a morphism in $\HomO(E,E')$, then $f
\smallcirc \sigma_E = \sigma_{E'}$. In other words, $\sigma_E(t)=0\in
E_t$ for all $t\in T$.
\end{defn}

\begin{rem}\label{rem:basicpropvb}
  (1) It is standard knowledge that the category $\VB_0(T)$ is
  op-equivalent with the category of locally free
  finitely generated sheaves of $\mathcal O_T$--modules (see
  e.g. \cite[Def. 1.7.8]{kn:EGAII} and the basic general constructions
  of Section \ref{sect:quasicompandsheaves} below).

\noindent (2) It is also well known that $\VB_0(T)$ is a 
monoidal, rigid, $\Bbbk$--linear category, with unit object $p_2:
\mathbb A^1_{\Bbbk}\times T=\Bbbk\times T\to T$ and final object the
trivial bundle $\Speck \times T$.

\noindent (3) In particular, given the vector bundles $E,E'$, $\HomO(E,E')$ is a
$\Bbbk$--vector space, and naturally it supports a $\Bbbk$--scheme.
Thus, $\VB_0(T)$ can be seen as a category enriched over $\Sch|\Bbbk$ in
a canonical way (compare with Definition \ref{defn:gradedhom} and
Remark \ref{rem:compogr}).
\end{rem}

If $\pi:E\to A$ is a vector bundle over an abelian variety,  it is
well known that $\Aut_0(E)$  can be
endowed with a structure of a group scheme of finite type (see 
  for example \cite{kn:matsumuraoort} and \cite[\S
  2.3]{kn:brionrepbund}).  Moreover, it is 
   it is possible to define  a group scheme $\Autgr(E)$ (of
   \emph{graded automorphisms})  as follows. 

   \begin{nota}
(1) If $X, T$ are $\Bbbk$--schemes, then the canonical projection   $p_2:
X\times T\to T$ endows $X\times T$ with a structure of $T$-scheme,
that we denote as $X_T=(X\times T,p_2)$.

\noindent  (2) If  $\pi: E\to S$ is a vector bundle, then $\pi_T=\pi\times \id_T:
E_T\to S_T$ is a vector bundle.

\noindent (3) If $A$ is an abelian variety and $\ell\in A(T)$ is a
$T$--point, then we define the \emph{translation by $\ell$} as the
morphism of $T$--schemes $t_\ell=(s\times \id_T)\smallcirc (\ell\smallcirc p_2,
\id_{A\times T}):A_T\to A_T$  --- notice that if $(b,l)\in A_T$,  then
$t_\ell (b,t)=(\ell+b,t)$. 
\end{nota}

\begin{defn}
  \label{defn:autgr}
 If $A$ is an abelian variety and $\pi:E\to A$ is a vector bundle,
 define a functor $\Autgr(E): 
 (\Schk)^{\op} \to \mathrm{Groups}$ as follows:

 \noindent (1)   {\em Objects.}
If $T \in \operatorname{Obj}(\Schk)$, then
$\Autgr(E)(T)$ is the group of pairs $(f, \ell)$, where $f:E_T\to E_T$
is  a $T$--automorphism  and $\ell:T\to A$ is a $T$--point.
such that:

\begin{enumerate}
  \item[(i)] the diagram below commutes
\[
{\xymatrixcolsep{3.5pc}
\xymatrixrowsep{1.5pc}
 \xymatrix{
E_T=E\times T \ar[r]^{f}\ar[d]_{\pi\times \id_T} & E_T=E\times T \ar[d]^{\pi\times\id_T} \\
A_T= A\times T \ar[r]_{t_\ell} & A_T=A\times T
}}
\]

\item[(ii)] the morphism of $A_T$-schemes $\widehat{f}:E_T\to
  t_\ell^*E_T$, defined by the pullback diagram given by base change, is
  an isomorphism of $A_T$--vector bundles in $\Sch|T$ --- in other
  words $\widehat{f}$ is
  an isomorphism in   $\VB_0(A_T)$. 
\[\xymatrix{
  E_T\ar[rd]|-{\widehat{f}}\ar@/_1pc/[rdd]_{\pi \times \id_T}\ar@/^1pc/[rrd]^{f}
  &&\\&t_\ell^*(E_T)\ar[d]\ar[r]&E_T\ar[d]^{\pi \times \id_T}\\&A_T\ar[r]_{t_\ell}&A_T }\]
\end{enumerate}

\noindent (2) {\em hom-objects.} The functor $\Autgr(E)$ at the level of
hom-objects is defined  as follows: 
   if $g: T'\to T$ is a morphism of $\Bbbk$--schemes then the map
   $\Autgr(E)(g): \Autgr(E)(T) \to \Autgr(E)(T')$ is given by 
\[
\Autgr(E)(g)(f,\ell)=
\bigl(p_1\smallcirc (f\smallcirc(\id_E \times g),p_2),\ell\smallcirc g\bigr) \in
\Autgr(E)(T').
\]
\end{defn}

\begin{rem}
  \label{rem:autgr}
  \noindent (1)  The product  in $\Autgr(E)(T)$ (\emph{the composition of graded
    automorphisms}) is defined by the following
  rule: $(f',\ell')(f,\ell)=\bigl(f'\smallcirc f, s\smallcirc (\ell,\ell')\bigr)$.

  \noindent (2) Notice that by construction, if $(f,\ell)\in
  \Autgr(E)(T)$, then   $f=(\widetilde{f}, p_2)$, where
  $\widetilde{f}:E\times T\to E$ is a 
  morphism of $\Bbbk$--schemes. It is equivalent to give $f$ or $\widetilde{f}$ provided that the new map makes the diagram below commutative,
 \[
{\xymatrixcolsep{3.5pc} \xymatrixrowsep{1.5pc} \xymatrix{ E\times
    T \ar[rr]^{\widetilde{f}}\ar[d]_{\pi\times \id_T} && E
    \ar[d]^{\pi\times\id_T} \\ A\times T \ar[r]_(,4){\id_A \times \ell} & A \times A \ar[r]_s& A.}}
\]
  To simplify notations we write indistinctly $f: E_T \to E_T$ or $f:E
  \times T \to E$ in accordance with the context.

  \noindent (3) Let $(f, \ell)\in \Autgr(E)(T)$ and $g:T'\to T$. If 
  $\Autgr(E)(g)(f,\ell)= (h,\ell\smallcirc g)\in \Autgr(E)(T')$, then
  $h=(\widetilde{h},p_2):E_{T'}\to E_{T'}$, with 
  \[
    \widetilde{h}(e,t')=\bigl(p_1\smallcirc (f\smallcirc(\id_E \times g)\bigr)(e,t')=
    \widetilde{f}\bigl(e,g(t')\bigr) \in E.
    \]
\end{rem}

\begin{rem}
  \label{rem:autgrisrep}
\noindent (1) In \cite[Lemma 2.8]{kn:brionrepbund} it is proved that
the functor $T \to \Autgr(E)(T)$ is representable by a group scheme of
finite type, denoted as $\Autgr(E)$. It follows that the projection
$d:\Autgr(E)\to A$, given by $d(T) (f,\ell)= \ell\in A(T)$ for any
$(f,\ell)\in \Autgr(E)(T)$, is a morphism of group schemes.  It is
clear that $\operatorname{Ker}(d)=\AutO(E)$, i.e.~the smooth, affine
and connected group scheme of finite type consisting of all the
automorphisms of the vector bundle $E$.

\noindent (2) In the particular case where $A=\Spec(\Bbbk)$ and $T=\Spec(R)$
for $R$ a commutative $\Bbbk$--algebra, it is clear that $E$ is a
$\Bbbk$--vector space and $f\in \Autgr(E)\bigl(\Spec(R)\bigr)$ is
determined by a morphism $f_R :E\times_{\Bbbk} \Spec(R)\to
E\times_{\Bbbk} \Spec(R)$, linear on the fibers, which is equivalent
to give an $\Bbbk$--linear automorphism  $\widehat{f}:E\to E$.

\noindent (3) Consider the canonical action $a: \Autgr(E)\times E\to
E$ of the group scheme $\Autgr(E)$ on $E$ as described in Remark
\ref{rem:morfmonsch}. If $T$ is a $\Bbbk$--scheme and
$\bigl(g,d(g)\bigr) \in \Autgr(E)(T)$, then  $g=(\widetilde{g},p_2) :
E_T\to E_T$. If  $e\in E(T)$, then $a(g,e)=\widetilde{g}\,
\smallcirc (e,\id_T): T \to E \times T \to E \in E(T)$. A direct
computation shows that 
the degree $d\bigl(e \to \widetilde{g}\,\smallcirc(e,\id)\bigr)=d(e \to a(g,e))= d(g)$.

Therefore, the diagram in Definition \ref{defn:autgr}, (i) in this context reads as:
\[
   \raisebox{5ex}{\xymatrix{
    \Autgr(E)\times E\ar[r]^(.68)a\ar[d]_{d\times \pi}& E\ar[d]^\pi\\
    A\times A\ar[r]^-s& A.
    }}
  \]
  \end{rem}

\begin{defn}
  \label{defn:hmogvecbunbrion}
  Let $A$  be an abelian variety. A vector bundle $\pi:E\to A$ is
  called \emph{homogeneous} if the induced morphism of group schemes
  $d: \Autgr(E)\to A$ is faithfully flat --- i.e.~if $d$ is surjective,
  in view of Theorem \ref{thm:perrinigame}.

  The \emph{category $\HVB_0(A)$} is defined as the full subcategory of
  $\VB_0(A)$ that has as objects the homogeneous vector bundles.
\end{defn}
    
\begin{rem}
  \label{rem:algclosforhomog}
(1) In view of Corollary \ref{cor:surimpliesflat}, a vector bundle
$\pi:E\to A$ is homogeneous if and only if for any geometric point
$b\in A\bigl(\,\overline{\Bbbk}\,  \bigr)$, there exists an isomorphism of $\overline{\Bbbk}$--vector
bundles   $E_{\overline{\Bbbk}}\to t_b^*E_{\overline{\Bbbk}}$.

\noindent (2) Since $A$ is an abelian variety, it follows that  if
$\pi:E\to A$ is a homogeneous vector bundle, then the short exact
sequence:
\[
  \xymatrix
  {
    \Authbext(E): & 1\ar[r]&  \AutO(E)\ar[r]& \Autgr(E)\ar[r]^-d& A\ar[r]&
  0
  }
\]
is a smooth  affine extension of $A$, of finite type. In particular,
$\Autgr(E)$ is a smooth group scheme of finite type.

\noindent (3) 
It follows from Remark \ref{rem:G=HG0} that
$\Autgr(E)=\AutO(E)\Autgr(E)^0$ and therefore $\Autgr(E)$ is a
connected group scheme.

\end{rem}

\begin{lem}
\label{lem:homfibbun} Let $\mathcal S$: $\ate$ be an  affine extension of
the abelian variety $A$, and let $V$ be a finite dimensional
$H$--module.  Then the vector bundle $\pi_V: E_V=G\times^HV\to A$ is
homogeneous. Conversely, if $\pi_E:E\to A$ is an arbitrary homogeneous vector
bundle, then $E\cong \Autgr(E)\times^{\AutO(E)}E_0$, where
$E_0=\pi^{-1}(0)$ is as usual the fiber over $0\in A$.
\end{lem}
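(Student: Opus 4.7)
The plan is to prove the first assertion by exhibiting the faithful flatness of the morphism $d: \Autgr(E_V) \to A$ through a natural morphism of group schemes $\rho: G \to \Autgr(E_V)$ satisfying $d\circ \rho = q$. By Theorem \ref{thm:indesp1}, the assignment $g'\cdot[g,v]=[g'g,v]$ defines an action of $G$ on $E_V$ that is linear on fibers. For a $T$--point $g':T\to G$, the induced automorphism of $E_V\times T$ satisfies $\pi_V\bigl(g'\cdot [g,v]\bigr)=q(g'g)=t_{q(g')}\bigl(q(g)\bigr)$, so it fibers over translation by $q(g')\in A(T)$ and hence belongs to $\Autgr(E_V)(T)$. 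Functoriality gives a morphism of group schemes $\rho:G\to \Autgr(E_V)$ with $d\circ\rho = q$. Since $q$ is faithfully flat, so is $d$ (Corollary \ref{cor:surimpliesflat}), and therefore $E_V$ is homogeneous.

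For the second assertion, apply the first to the affine extension $\Authbext(E)$ of \eqref{eqn:authbext} together with the $\AutO(E)$-module $E_0$: elements of $\AutO(E)$ fix the base point $0\in A$ and act linearly on the fiber $E_0=\pi^{-1}(0)$. The resulting homogeneous vector bundle is $\Autgr(E)\times^{\AutO(E)}E_0$. To identify it with $E$, consider the evaluation morphism $\mathrm{ev}:\Autgr(E)\times E_0\to E$ given at $T$--points by $(\varphi,v)\mapsto \varphi(v)$, which lies in $\pi^{-1}\bigl(d(\varphi)\bigr)$. For $h\in \AutO(E)$ one has $\mathrm{ev}(\varphi h^{-1}, hv)=(\varphi h^{-1})(hv)=\varphi(v)$, so $\mathrm{ev}$ is $\AutO(E)$--invariant for the diagonal action defining the induced space, and hence by the categorical quotient property recalled in Remark \ref{rem:fpqcquot} it descends to a morphism $\widetilde{\mathrm{ev}}:\Autgr(E)\times^{\AutO(E)}E_0\to E$ of vector bundles over $A$.

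To conclude that $\widetilde{\mathrm{ev}}$ is an isomorphism, one checks that it is a fiberwise isomorphism. Given a geometric point $a\in A(\overline{\Bbbk})$, faithful flatness of $d$ provides $\varphi\in \Autgr(E)(\overline{\Bbbk})$ with $d(\varphi)=a$; modding out by $\AutO(E)$ identifies the fiber of the induced space over $a$ with $\{[\varphi,v]:v\in E_0\}$, and $\widetilde{\mathrm{ev}}[\varphi,v]=\varphi(v)$ is precisely the linear isomorphism $E_0\xrightarrow{\sim} E_a$ coming from $\varphi$. Since both sides are vector bundles over $A$ and $\widetilde{\mathrm{ev}}$ is a fiberwise isomorphism, it is an isomorphism of vector bundles.

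The main obstacle will be the bookkeeping required to verify well-definedness and functoriality at the level of $T$--points: establishing that the natural $G$--action furnishes elements of $\Autgr(E_V)(T)$ in the precise sense of Definition \ref{defn:autgr} (linear on fibers and covering translation on $A_T$), and that the evaluation map descends through the fpqc quotient $\kappa$ of Theorem \ref{thm:indesp1}. These verifications are essentially formal once one works consistently at the functor-of-points level.
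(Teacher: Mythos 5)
Your proposal is correct and follows essentially the same route as the paper: the $G$--action on $E_V$ yields a morphism of affine extensions $\mathcal S\to \Authbext(E_V)$ whose surjectivity onto $A$ (inherited from $q$ via Corollary \ref{cor:surimpliesflat}) gives homogeneity, and the second assertion is obtained by applying the first part to $\Authbext(E)$ with the fiber $E_0$ and descending the evaluation/action map to the induced space. Your write-up merely makes explicit the functor-of-points and fiberwise-isomorphism verifications that the paper leaves as ``clearly''.
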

\pf Indeed, it follows from
Theorem \ref{thm:indesp1} that $\pi_V:E_V\to A$ is a vector bundle and
that $G$ acts linearly on $E_V$. Since the $G$--action of $E_V$
induces a morphism 
of affine extensions $\mathcal S\to \Authbext(E)$, it follows (for
example from Remark \ref{rem:algclosforhomog}) that the vector bundle
$E_V$  is homogeneous.

Conversely if $\pi_E:E\to A$ is a homogeneous vector bundle, by
the first part of the lemma $\Autgr(E)\times^{\AutO(E)}E_0$ is a
homogeneous vector bundle, and clearly the restriction of the action
$\Autgr(E)\times E \to E$ to $\Autgr(E)\times E_0\to E$ induces the
required isomorphism (see Remark \ref{rem:algclosforhomog}).  \qed

\begin{defn}
The vector bundle $E_V$ is called the \emph{homogeneous vector bundle
associated to the 
$H$--module $V$}.

\end{defn}

\begin{rem}\label{rem:hbvabelian}
It is clear from the definition that if $E, E'\in \HVB_0(A)$ are
homogeneous vector bundles, then $E\oplus E'$, $E\otimes E'$ and
$E^\vee$ also are objects of $\HVB_0(A)$ and these operations (and
the corresponding morphisms) endow this category with a
$\Bbbk$--linear and monoidal rigid structure --- this was proved in
\cite{kn:miy} and \cite{kn:Mu78}) when $\Bbbk$ is an algebraically
closed field, then by Remark
\ref{rem:algclosforhomog} we deduce the general case. Moreover, in
\cite[Theorem 2.9 and Corollary 
2.10]{kn:brionrepbund}, Brion proved that $\HVB_0(A)$ is also an
abelian category, stable by direct summands.  
  \end{rem}

Next we enlarge the family of arrows in the categories $\VB_0(A)$ and
$\HVB_0(A)$ taking instead of arrows of degree zero, arrows of arbitrary
degree $a \in A$. We use the same notations and abbreviations than in
Definition \ref{defn:autgr}, and remarks \ref{rem:autgr} and \ref{rem:autgrisrep}.

\begin{defn} \label{defn:gradedhom}
Let $\pi: E\to A$ and $\pi':E' \to A$ be two  vector bundles. We define
the \emph{graded homomorphisms functor} $\Homgr(E,E') : \Sch^{\op}\to
\Sets$ as follows.

\noindent (1)  If $T\in \Sch$, then
$\Homgr(E,E')(T)$ is the set of pairs $(f, \ell)$, where
  $f:E_T\to E'_T$ is a $T$--morphism and $\ell\in A(T)$ such that:

\begin{enumerate}
  \item[(i)] the diagram below commutes
\[
{\xymatrixcolsep{3.5pc}
\xymatrixrowsep{1.5pc}
 \xymatrix{
E_T=E\times T \ar[r]^{f}\ar[d]_{\pi \times \id_T} & E'_T=E'\times T \ar[d]^{\pi'\times \id_T} \cr
A_T= A\times T \ar[r]_{t_\ell} & A_T=A \times T
}}
\]

\item[(ii)] The induced morphism of $A_T$-schemes $ E_T\to t_\ell^*E'_T$ (see diagram below) is a
  morphism in $\VB_0(A_T) $.
 \[\xymatrix{
  E_T\ar[rd]|-{\widehat{f}}\ar@/_1pc/[rdd]_{\pi \times \id_T}\ar@/^1pc/[rrd]^{f}
  &&\\&t_\ell^*(E'_T)\ar[d]\ar[r]&E'_T\ar[d]^{\pi' \times \id_T}\\&A_T\ar[r]_{t_\ell}&A_T. }\]
 
 \end{enumerate}

\noindent (2)
If $g:T'\to T$ is a morphism of schemes and $(f, \ell)\in
\Homgr(E,E')(T)$, with $f=(\widetilde{f},p_2):E\times T\to E\times T$, then
\[
  \Homgr(E,E')(g)(f)=
\bigl((\widetilde{f}\smallcirc (\id_E\times g),p_2), \ell\smallcirc g \bigr).
\]

Notice that $\bigl(\widetilde{f}\smallcirc (\id_E\times
g),p_2\bigr):E_{T'}\to E'_{T'}$ is a morphism of $A_{T'}$--schemes.
\end{defn}

\begin{rem}\label{rem:compogr}
  (1) By construction (and descent theory, see
  \cite[I.2.2.7]{kn:demgab} of \cite[Tag 0238]{kn:stackproj}) $\Homgr(E,E')$ is a fpqc sheaf.

\noindent (2) Let $\pi:E\to A$, $\pi': E'\to A$ and  $\pi'':E''\to A$ be three vector
bundles. Then the composition of morphisms induces a natural composition morphism (a natural transformation between the functors)  $\Homgr(E,E')\times \Homgr(E',E'')\rightarrow \Homgr(E, E'')$. 

\noindent (3) Notice that the family $d(T):\Homgr(E,E')(T)\to A(T)$ produces also a morphism
(natural transformation) $d:\Homgr(E, E')\rightarrow A$. 
\end{rem}

\begin{ej}
  \label{ej:pz}
  Given two homogeneous vector bundles $E,E'$, the \emph{zero
    morphism} $0:E\to E'$ is given by $0= \sigma_{E'}\smallcirc \pi_E$
  (see Definition \ref{defn:zerosectionO}). We generalize this
  construction to arbitrary degrees as follows.

   If $\ell\in A(T)$,  it is
clear that the pair $(\zeta_\ell,\ell)$ with $\zeta_\ell=\sigma_{E'}
\smallcirc p_1\smallcirc t_\ell\smallcirc (\pi\times \id_T): E \times
T \to E'$ yields a graded morphism of degree $\ell$.  The $T$--point
$(\zeta_\ell,\ell) \in \Homgr(E,E')(T)$ is called \emph{the
  pseudo-zero of degree $\ell$}.

Notice that  $(\zeta_\ell,\ell)$ induces the zero morphism $0:E_T\to t_\ell^*E_T$.
\end{ej}

\begin{rem}\label{rem:yoneda}
Let  $ \mathcal
  V=\operatorname{Func}\bigl((\Schk)^{\op},\Sets\bigr)$; then we can endow
  $\mathcal V$ with the product  induced by the cartesian
  product in $\Sets$,  which  final object  is the constant functor
  equal to the final object in $\Sets$.

    Clearly, the Yoneda embedding $Y: \Schk \to \mathcal
  V=\operatorname{Func}\bigl((\Schk)^{\op},\Sets\bigr)$ preserves finite products.
\end{rem}

\begin{defn}\label{defn:vbgraded}
(1)     For $\mathcal V$ as above, we define the $\mathcal V$--category
     $\VBG(A)$ (i.e.~$\VBG(A)$ is enriched over $\mathcal V$) as
     follows:

\noindent The  \emph{objects} of $\VBG(A)$ are the same than $\VB_0(A)$.

Given  $(E,\pi)$, $(E',\pi')
     \in \VBG(A)$,  the \emph{hom-object} with domain $(E,\pi)$ and codomain
     $(E',\pi')$ is $\Homgr(E, E') \in \mathcal V$, i.e.~the
     \emph{functor of graded homomorphisms of vector bundles}, with
     compositions as defined before (Remark \ref{rem:compogr}).

\noindent (2) The
     $\mathcal V$--category $\HVBG(A)$ is the full $\mathcal V$--subcategory of
     $\VBG(A)$ with objects the homogeneous vector bundles 
     (compare with Definition \ref{defn:hmogvecbunbrion}).

Similarly than Definition \ref{defn:hmogvecbunbrion}, the category
$\HVBG(A)$ is the full subcategory of $\VBG(A)$ with objects the
homogeneous vector bundles.
\end{defn}

\begin{rem}
  \label{rem:clasicosygraded}
  Let $\pi:E\to A$ and $\pi':E'\to A$ be two vector bundles and
  $f:E\to E'\in \HomO(E,E')$ be a morphism of vector bundles. Then
  $f_T=f\times \id_T: E_T\to E'_T\in \Homgr(E, E')(T)$. Thus
  $\HomO(E,E')$ represents a subfunctor of $\Homgr(E,E')$, with
  $\HomO(E,E')(T)=\bigl\{ f\in \Homgr(E, E')(T)\mathrel{:}
  d(T)(f)=0\bigr\}$.  Thus, $\VB_0(A) \subseteq \VBG(A)$ is a
  \emph{wide} ($\mathcal V$--enriched) subcategory --- in the sense
  that has the same objects but less morphisms. Similarly for the
  homogeneous situation.
    \end{rem}

\begin{nota}
 If $E=E'$, then $\Endgr(E):=\Homgr(E,E)$ and
 $\EndO(E):=\HomO(E,E)$.
 \end{nota}

 \begin{rem}
   It is clear that $\Endgr(E)$ and $\EndO(E)$ are functors on
   monoids, and that the group $\Autgr(E)$ (resp.~$\AutO(E)$) 
is a subfunctor on monoids of $\Endgr(E)$ (resp.~$\End_0(E)$).
\end{rem}

The relationships between the (enriched) categories we just defined is
illustrated in the diagram below, where the vertical arrows are full
subcategories and the horizontal are wide subcategories. 
\[
  {\xymatrixrowsep{1pc}\xymatrixcolsep{1pc}      \xymatrix{\VB_0(A)
      \ar@{}[r]|-*[@]{\subseteq} & \VBG(A) 
        \\ \HVB_0(A)\ar@{}[u]|-*[@]{\subseteq}\ar@{}[r]|-*[@]{\subseteq}&
        \HVBG(A)\ar@{}[u]|-*[@]{\subseteq}}}
  \]

\begin{ej}\label{exam:casoaffin1}
It is clear that if $A=\Spec(\Bbbk)$, then all these categories
collapse into $\operatorname{Vect}_\Bbbk=
\VB_0\bigl(\Spec(\Bbbk)\bigr)=\HVB_0\bigl(\Spec(\Bbbk)\bigr)=\HVBG\bigl(\Spec(\Bbbk)\bigr)=\VBG\bigl(\Spec(\Bbbk)\bigr)$.

Indeed, if $T=\Spec(R)\in
\Schkaff$, and $V,W\in \operatorname{Vect}_{\Bbbk}$, then $V_T=
(V\times T\to T)$ and $\Homgr(V,W)(T)\cong 
\Hom_\Bbbk(V,W)\otimes_\Bbbk R$ --- in other words, the functor
$\Homgr(V,W)$ is represented by the vector space $\Hom_\Bbbk(V,W)$.
\end{ej}

  \begin{rem}
    \label{rem:pullback-algclo1}
    (1) In the category $\VB_0(A_{\overline{\Bbbk}})$, for $\ell \in A$ we
    denote as $T_\ell$ the ``pullback by the translation $t_\ell$'' functor
    (compare with Definition \ref{defn:autgr},(ii)). Thus,
    $T_\ell:\VB_0(A_{\overline{\Bbbk}}) \to \VB_0(A_{\overline{\Bbbk}})$ is
    given at the level of objects by: \[\xymatrix{
      T_\ell(E)\ar@{->}[r]^-{p_E}\ar@{->}_{\widehat{\pi_\ell}}[d]
      &E\ar@{->}^{\pi}[d]\\ A\ar@{->}[r]_{t_\ell }& A.}\]
   
It is clear that the vector bundle
      $\bigl(T_\ell(E),\widehat{\pi_\ell}\bigr) \cong (E,t_{-\ell}\smallcirc\pi)$; when there
    is no danger of confusion, the structure map $\widehat{\pi_\ell}$ is
    denoted simply as $\pi_\ell$.

If
      $(E,\pi)$ and $(E',\pi')$ are objects in $\VB_0(A_{\overline{\Bbbk}})$ and
      $f:(E,\pi)\to(E',\pi')$ is an arrow in $\VB_0(A_{\overline{\Bbbk}})$, then $T_\ell(f)=f:
      \bigl(T_\ell(E),\pi_\ell\bigr) \to \bigl(T_\ell(E'),\pi'_\ell\bigr)$ is an
      arrow in $\VB_0(A_{\overline{\Bbbk}})$ as shown in the diagram below.
      \[
   \xymatrix{E \ar[rr]^{T_\ell(f)=f}
    \ar[rd]^{\pi}\ar[rdd]_{\pi_ell}& &
    E'\ar[dl]_{\pi'}\ar[ddl]^{\pi'_\ell}\\    
    &A\ar[d]|<<<<{t_{-\ell}}&\\&A&}
\]

   The map $\ell \to T_\ell: A \to \operatorname{Fun}\bigl(\VB_0(A_{\overline{\Bbbk}})\bigr)$ is a morphism
   of the monoid $(A,+)$ to $\bigl(\operatorname{Fun}(\VB_0(A_{\overline{\Bbbk}})),\smallcirc\bigr)$
   ($\smallcirc$ denotes the composition of functors). In particular for each $\ell \in A$ the functor $T_\ell$ is invertible and its inverse is $T_{-\ell}$.

   \noindent (2) Let  $\ell \in A$, and $(E,\pi)$,
  $(E',\pi')$ be two objects in $\VB_0(A_{\overline{\Bbbk}})$ and $f:E \to E'$ a morphism of
  the underlying schemes. The diagram (whose rightmost triangle is
  commutative):
  \[
    \xymatrix{E\ar[d]_\pi \ar[rr]^{f}&&E'\ar[d]^{\pi'}\ar[dll]|{\pi'_{\ell}}\\A\ar[rr]_{t_\ell}&&A}\]
proves that
   $\HomO\bigl(E,T_\ell(E')\bigr) = \bigl\{f:E \to E':  \pi'
   f= t_\ell \pi, f|_{_{E_b}}:E_b\to E_{\ell+b} \text{ linear}\bigr\}$.
  
\end{rem}

 In view of the preceding remark, if $E,E'$ are homogeneous vector
 bundles over an algebraically closed field one can
 work with \emph{sets of graded morphisms}, i.e.~to consider the
 set of morphisms $f:E\to E'$ such that $\pi'\smallcirc f=t_\ell\smallcirc \pi$ for
 some $\ell\in A$, rather than with the functor $\Endgr(E,E')$. This is
 the approach taken by L.~Brambila-Paz and A.~Rittatore in
 \cite{kn:bprit}, for examination of the geometry and algebraic
 structure of $\Endgr(E)$ and $\Homgr(E, E')$.

 If $\Bbbk$ is an arbitrary field, and $E,E'\in \HVBG(A)$, we present
 below the proof that $\Homgr(E,E')$ is representable by the vector
 bundle $L_{\HomO(E,E')}=\Autgr(E')\times^{ \AutO(E')} \HomO(E,E')$
 (see Lemma \ref{lem:homfibbun}). It is similar to the proof in
 \cite{kn:bprit} (in the hypothesis that $\Bbbk=\overline{\Bbbk}$),
 with the necessary adaptations to the general situation.

\begin{lem}
\label{lem:homgrhomog}
Let $\pi: E\to A$, $\pi':E'\to A$ be two homogeneous vector bundles
over the abelian variety $A$. Then the homogeneous vector bundle
$L_{\HomO(E,E')}=\Autgr(E')\times^{ 
  \AutO(E')} \HomO(E,E')$ (see Lemma \ref{lem:homfibbun})  represents
  $\Homgr(E,E')$. 

Moreover, $\Homgr(E,E') \cong R_{\HomO(E,E')}=
\Autgr(E)\times^{\AutO(E)}\HomO(E,E')\in \HVB_0(A)$. 
  \end{lem}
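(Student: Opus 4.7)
The plan is to exhibit $E'_{\HomO(E,E')}$ as a representing object for $\Homgr(E,E')$ by building a natural transformation given by composition, and then showing it is an isomorphism of fpqc sheaves using the faithful flatness of $d:\Autgr(E')\to A$ granted by homogeneity of $E'$. The isomorphism with $E_{\HomO(E,E')}$ will then follow by a symmetric argument, factoring on the other side, together with Yoneda.

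First I would define a natural transformation $\Phi: E'_{\HomO(E,E')}\Rightarrow \Homgr(E,E')$ at the level of presheaves by
\[
\bigl[\psi,\phi\bigr]\;\longmapsto\; \psi\circ\phi: E_T\to E'_T,
\]
where $\psi\in\Autgr(E')(T)$ has degree $d(\psi)=a$ and $\phi\in\HomO(E,E')(T)$, so the composite is a graded morphism of degree $a$. This is well-defined on equivalence classes of the induced-space construction, since for $h\in\AutO(E')(T)$ one has $(\psi h)\circ(h^{-1}\phi)=\psi\circ\phi$. The source of $\Phi$ is a scheme (by Theorem \ref{thm:indesp1}) and hence an fpqc sheaf, while the target is an fpqc sheaf by construction, so $\Phi$ is a morphism of fpqc sheaves.

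For injectivity on $T$-points, suppose $\psi\circ\phi=\psi'\circ\phi'$; then both sides have degree $a=d(\psi)=d(\psi')$, so $h:=(\psi')^{-1}\psi$ lies in $\AutO(E')(T)$, and the relations $\psi=\psi' h$, $\phi=h^{-1}\phi'$ yield $\bigl[\psi,\phi\bigr]=\bigl[\psi',\phi'\bigr]$. For surjectivity as a map of fpqc sheaves it suffices to lift any graded $f\in\Homgr(E,E')(T)$ of degree $a$ after a fpqc base change. Since $E'$ is homogeneous, $d:\Autgr(E')\to A$ is faithfully flat, so there exists a fpqc cover $\sigma:T'\to T$ and $\psi\in\Autgr(E')(T')$ with $d(\psi)=a\circ\sigma$. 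Then $\phi:=\psi^{-1}\circ f_{T'}$ has degree $0$, so $\phi\in\HomO(E,E')(T')$, and $\Phi\bigl(\bigl[\psi,\phi\bigr]\bigr)=f_{T'}$. A morphism of fpqc sheaves that is injective on all $T$-points and surjective after a covering is an isomorphism, so $E'_{\HomO(E,E')}$ represents $\Homgr(E,E')$.

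For the second assertion, repeat the argument factoring a graded morphism on the other side: after pulling back along a suitable fpqc cover $T'\to T$, the homogeneity of $E$ yields $\psi'\in\Autgr(E)(T')$ of degree $a$, and then $\phi':=f_{T'}\circ(\psi')^{-1}\in\HomO(E,E')(T')$ satisfies $f_{T'}=\phi'\circ\psi'$. This shows $E_{\HomO(E,E')}$ also represents $\Homgr(E,E')$, so Yoneda gives a canonical isomorphism $E'_{\HomO(E,E')}\cong E_{\HomO(E,E')}$ in $\HVB(A)$; that both are homogeneous vector bundles is already provided by Lemma \ref{lem:homfibbun}. The main technical point is the fpqc descent step producing the lift $\psi$ (resp.\ $\psi'$); the rest is formal bookkeeping on the induced-space construction.
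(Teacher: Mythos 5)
Your proof of the representability statement is essentially the paper's own argument: the same composition map $\Autgr(E')\times\HomO(E,E')\to\Homgr(E,E')$ descending to the induced space, the same injectivity computation via $h=(\psi')^{-1}\psi\in\AutO(E')$, and the same use of faithful flatness of $d:\Autgr(E')\to A$ to produce a local lift of the degree, followed by fpqc descent. Where you genuinely diverge is in the second assertion. The paper does \emph{not} argue by double representability: it invokes the Rosenlicht decomposition of the affine extension $\Authbext(E)$ to rewrite both bundles as induced spaces from the anti-affine subgroups $\Autgr(E)_{\ant}$ and $\Autgr(E')_{\ant}$, treats the case $E=E'$ using centrality of the anti-affine part, and then deduces the general case from the direct-sum decomposition of $\EndO(E\oplus E')$. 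Your route instead repeats the representability argument factoring graded morphisms on the source side, $f=\phi'\circ\psi'$ with $\psi'\in\Autgr(E)$ of degree $a$ and $\phi'\in\HomO(E,E')$, so that $E_{\HomO(E,E')}$ also represents $\Homgr(E,E')$ and Yoneda yields the isomorphism. This is cleaner and more uniform --- it avoids the case split and the $\EndO(E\oplus E')$ trick --- at the cost of two small bookkeeping points you should make explicit: the $\AutO(E)$--action on $\HomO(E,E')$ defining $E_{\HomO(E,E')}$ must be the precomposition action matching your factorization, and the Yoneda isomorphism is one of vector bundles over $A$ because both representing maps are compatible with the degree map $d$ and linear in the $\HomO(E,E')$ coordinate. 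The paper's longer route has the side benefit of exhibiting both bundles explicitly as induced from the anti-affine part, a description it reuses elsewhere (e.g.\ in Corollary \ref{cor:repasindGant}), but as a proof of the stated isomorphism your argument is complete and arguably preferable.
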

    \pf
We adapt  the strategy used  in \cite{kn:bprit} for the algebraically
closed field case to this general case.

Let $\varphi :\Autgr(E')\times \HomO(E,E')\to \Homgr(E,E')$ the
morphism of fpqc sheaves given by composition. Then clearly $\varphi$
is $\AutO(E)$--invariant (see Remark \ref{rem:fpqcquot}), and
therefore induces a morphism of fpqc sheaves 
$\phi: L_{\HomO(E,E')}\to \Homgr(E,E')$.

We prove now that $\varphi$ is a monomorphism. Let $y_1: T\to L_{\HomO(E,E')}$, $y_2:T\to  L_{\HomO(E,E')}$ be two
points in $ L_{\HomO(E,E')}(T)$ such that
$\phi(T)(y_1)=\phi(T)(y_2)\in \Homgr(E,E')(T)$. Let $\sigma_i: T_i\to
T$, $i=1,2$, be fpqc morphisms and $x_1=(g_1,f_1),x_2=(g_2,f_2)\in \Autgr(E')\times
\HomO(E,E')(T_i)$ be such that $\pi(x_i)=y_i\smallcirc \sigma_i$. 
Then as points in $\Homgr(E,E')(T_1\times_T T_2)$, we have that 
\[
g_1\smallcirc f_1 =\phi(T_1\times_TT_2)(x_1)=\phi(y_1)=\phi(y_2)=
\phi(T_1\times_TT_2)(x_2)=g_2\smallcirc f_2.
\]

It follows that $f_2=g_2^{-1}\smallcirc g_1\smallcirc f_1\in
\HomO(E,E')(T_1\times_TT_2)$, with $g_2^{-1}\smallcirc g_1\in
\AutO(E')$. Thus, $y_1=y_2\in L_{\HomO(E,E')}(T_1\times_TT_2)$ and it
follows that $y_1=y_2\in L_{\HomO(E,E')}(T)$.

In order to prove that $\varphi(T)$ is surjective for all $T$, let $
(f,\ell)\in \Homgr(E,E')(T)$. Let $\sigma :T'\to T$ a fpqc morphism
and  $g \in \Autgr(E')( T')$ such that $q(T)(g)=\ell\smallcirc
\sigma\in A(T')$. Then  $h=g^{-1}\smallcirc
(f\smallcirc (\id_E\times \sigma) ,\ell\smallcirc
\sigma\bigr)\in \HomO(E,E')(T')$. It follows that 
$\varphi(T')\bigl(g, h)= = \bigl((f\smallcirc (\id_E\times \sigma), \ell\smallcirc \sigma)\in
\Homgr(E,E')(T')$.
From the commutative diagram:
\[
  \xymatrix{
  L_{\HomO(E,E')}(T)\ar[r]^{\phi(T)}\ar[d]_{\sigma^*}&\Homgr(E,E')(T)\ar[d]^{\sigma^*}\\
  L_{\HomO(E,E')}(T')\ar[r]^{\phi(T')}&\Homgr(E,E')(T')
}
\]
we deduce that there exists $y\in   L_{\HomO(E,E')}(T)$ such that
$\phi(y)=(f,\ell)$ by descent.

The  last assertion can be proved by a similar argument.\qed

\begin{rem}
  Let $E,E'\in \HVBG(A)$.  Since
  $\Homgr(E,E')=\Autgr(E')\times^{\AutO(E')}\HomO(E,E')$ is a
    homogeneous vector bundle, it follows that $\HVBG(A)$ is a closed
    category.
  \end{rem}

\begin{cor}
  \label{cor:endgrismon}
Let $\pi:E\to A$ be a homogeneous vector bundle. Then $\Endgr(E)$ is a
smooth monoid scheme of finite type, such that the following diagram is
commutative, where the vertical arrows are open immersions.
\[
  \xymatrix{
    1\ar[r]&  \EndO(E)\ar[r]& \Endgr(E)\ar[r]^-d&A\ar[r]& 0\\
    1\ar[r]& \AutO(E)\ar@{^(->}[u]\ar[r]& \Autgr(E)\ar@{^(->}[u]\ar[r]_-d& A\ar@{=}[u]\ar[r]&  0
    }
  \]
\end{cor}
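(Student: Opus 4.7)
The plan is to deduce this corollary from Lemma \ref{lem:homgrhomog} applied with $E'=E$, together with the fact that invertibility is an open condition; all substantive work is already contained in that lemma.

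By Lemma \ref{lem:homgrhomog}, $\Endgr(E) = \Homgr(E,E)$ is representable by the homogeneous vector bundle $E_{\EndO(E)} = \Autgr(E)\times^{\AutO(E)}\EndO(E)$ over $A$, with structure map $d$. The fiber $\EndO(E) = H^0\bigl(A,\mathcal{H}om(E,E)\bigr)$ is a finite dimensional $\Bbbk$-vector space, because $\mathcal{H}om(E,E)$ is a coherent sheaf on the proper scheme $A$. Hence $\Endgr(E)$ is a vector bundle of finite rank over the smooth finite type scheme $A$, and is therefore itself smooth of finite type over $\Bbbk$.

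For the monoid structure, I would argue on $T$-points: if $(f,a),(g,b)\in\Endgr(E)(T)$, then $\pi\circ(f\circ g) = t_a\circ\pi\circ g = t_{a+b}\circ\pi$, so $f\circ g\in\Endgr(E)(T)$ has degree $a+b$. Composition therefore defines a natural transformation $\Endgr(E)\times\Endgr(E)\Rightarrow\Endgr(E)$, which by representability and Yoneda lifts to a morphism of schemes; associativity and the unit $e:\Speck\to\Endgr(E)$ corresponding to $(\id_E,0)$ are inherited from composition of underlying morphisms. The degree map $d$ is then a morphism of monoids to the group $A$, and its scheme-theoretic fiber over $0_A$ is by construction $\EndO(E)$, which gives the top row of the diagram.

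Finally, the diagram commutes because the bottom row is the affine extension $\Authbext(E)$ from \eqref{eqn:authbext}, and $d|_{\Autgr(E)}$ is by definition the restriction of $d:\Endgr(E)\to A$. The two vertical inclusions are open immersions since in each case the smaller scheme is the unit subscheme of a scheme of endomorphisms: for $\AutO(E)\subset\EndO(E)$, invertibility of an $A$-linear bundle endomorphism is the fiberwise non-vanishing of the determinant, an open condition; for the graded case, a $T$-point $(f,a)\in\Endgr(E)(T)$ lies in $\Autgr(E)(T)$ exactly when the induced $A_T$-morphism $E_T\to t_a^*E_T$ is an isomorphism of vector bundles, which is again open (and compatible with the faithfully flat degree map $d$). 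I do not expect a serious obstacle: the only point one must take care to check is that composition on $T$-points is representable by a scheme morphism, and this is immediate once $\Endgr(E)$ is known to be representable.
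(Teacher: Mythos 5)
Your proposal is correct, and its first half --- representability of $\Endgr(E)$ by the homogeneous vector bundle $\Autgr(E)\times^{\AutO(E)}\EndO(E)$ via Lemma \ref{lem:homgrhomog}, finite dimensionality of $\EndO(E)$ by properness of $A$, and the monoid structure obtained from composition on $T$--points plus Yoneda --- is exactly the route the paper takes. Where you diverge is the justification of the open immersions: the paper disposes of this in one line by invoking \cite[Theorem 1]{kn:brionoasam}, i.e.\ the general fact that the unit group scheme of a (smooth, finite type) algebraic monoid is an open subscheme, applied to the monoid $\Endgr(E)$ whose unit group is precisely $\Autgr(E)$. You instead give a direct, self-contained argument: for the degree-zero part, the non-invertible locus is the image under the proper projection $\EndO(E)\times A\to\EndO(E)$ of the closed vanishing locus of the fiberwise determinant, so $\AutO(E)$ is open; this is more elementary and avoids the external citation. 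The one place you should be slightly more explicit is the passage from the fiber to the total space in the graded case: openness of $\Autgr(E)$ in $\Endgr(E)$ follows because the torsor map $\Autgr(E)\times\EndO(E)\to\Autgr(E)\times^{\AutO(E)}\EndO(E)$ is faithfully flat and quasi-compact, hence submersive, and the preimage of $\Autgr(E)$ is the open subset $\Autgr(E)\times\AutO(E)$; your phrase ``compatible with the faithfully flat degree map $d$'' gestures at this but does not quite pin it down. With that sentence added, both arguments are complete; yours buys independence from Brion's monoid theorem, while the paper's is shorter given that reference.
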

\pf
Once we know that $\Endgr(E)$ is a smooth scheme of finite type, and
taking into account \cite[Theorem 1]{kn:brionoasam}, the
result follows easily.
\qed

\begin{rem}
  \label{rem:hvbnotmonoidal}
  We observed in Remark \ref{rem:hbvabelian} that $\HVB_0(A)$ is a
  abelian, monoidal, rigid category. Nevertheless, these
structures cannot be defined in the (wide) extension of the category
$\HVB_0(A)$ that we denoted as $\HVBG(A)$.

However, for homogeneous morphisms of the \emph{same} degree it is clear that the following holds.
\end{rem}

\begin{lem}
\label{lem:tensordualhvb}
Let $E,E', F, F'\in \HVBG(A)$ and
$(f, \ell)\in \Homgr(E,F)(T)$, $(f', \ell)\in
\Homgr(E',F')(T)$ be graded morphisms. Then the following maps are
graded morphisms in $\HVBG(A)$:

\noindent (i) $(f\oplus f', \ell)$, where $f \oplus f' :(E\oplus
E')_T\cong E_T\oplus E'_T\to (F
\oplus F')_T$ is given by $(f\oplus f')(e+e')= f(e)+f'(e')$;

\noindent (ii) $(f\otimes f',\ell)$; where $f\otimes f':(E\otimes
E')_T\cong E_T\otimes E'_T\to
(F\otimes F')_T$ is given by $(f\otimes
f')(e \otimes e')=f(e)\otimes f(e')$. 

\noindent (iii) $(f^\vee,-\ell)$, where $f^\vee: (E'_T)^\vee\cong
((E')^\vee)_T\to E_T$. \qed
\end{lem}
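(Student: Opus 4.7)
The plan is to reduce both assertions to the standard fact that pullback of vector bundles along $t_a: A_T \to A_T$ is a symmetric monoidal functor with respect to $\oplus$ and $\otimes_{\mathcal O_{A_T}}$. The crucial observation, already implicit in the discussion preceding the lemma, is that giving a graded morphism $(f, t_a) \in \Homgr(E,F)(T)$ is the same as giving a morphism of $A_T$-vector bundles $\widetilde{f}: E_T \to t_a^* F_T$, via the universal property of the pullback square in Definition \ref{def:homogvecbun2}(i). The hypothesis that both $f$ and $f'$ have the \emph{same} degree $a$ is what allows us to land in the pullback along the \emph{same} morphism $t_a$, so that the two translated morphisms can be combined.

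First I would rewrite the data: the pairs $(f, t_a)$ and $(f', t_a)$ correspond respectively to $A_T$-linear morphisms $\widetilde{f}: E_T \to t_a^* F_T$ and $\widetilde{f}': E'_T \to t_a^* F'_T$. Then I would use the canonical isomorphisms of $A_T$-vector bundles
\[
t_a^*(F_T \oplus F'_T) \cong t_a^* F_T \oplus t_a^* F'_T, \qquad t_a^*(F_T \otimes_{\mathcal O_{A_T}} F'_T) \cong t_a^* F_T \otimes_{\mathcal O_{A_T}} t_a^* F'_T,
\]
which hold because pullback commutes with finite limits and with tensor products of quasi-coherent sheaves. Under these identifications, the $A_T$-linear morphisms $\widetilde{f} \oplus \widetilde{f}'$ and $\widetilde{f} \otimes \widetilde{f}'$ yield morphisms of $A_T$-vector bundles $(E\oplus E')_T \to t_a^*(F\oplus F')_T$ and $(E\otimes E')_T \to t_a^*(F\otimes F')_T$. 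Translating back by the same universal property produces the desired graded morphisms of degree $a$.

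The last thing to verify is that the morphisms thus obtained agree with the fiberwise formulas stated in the lemma, namely $(f \oplus f')(e+e') = f(e) + f'(e')$ and $(f\otimes f')(e \otimes e') = f(e)\otimes f'(e')$. This amounts to unwinding the identification of $(E\oplus E')_T$ with $E_T \times_{A_T} E'_T$ and of $(E \otimes E')_T$ with $E_T \otimes_{\mathcal O_{A_T}} E'_T$, together with the analogous identifications downstairs; since both constructions are defined by their universal properties in $\VB(A_T)$, compatibility is automatic.

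I do not anticipate a substantive obstacle: the statement is essentially formal once the correspondence between graded morphisms of degree $a$ and $A_T$-linear morphisms to the $t_a$-pullback has been established, and it is precisely the coincidence of degrees that makes the construction possible (which, as emphasized in Remark \ref{rem:hvbnotmonoidal}, explains why $\HVBG(A)$ fails to be monoidal in the usual sense: for morphisms of different degrees, there is no common $t_a$ through which to factor).
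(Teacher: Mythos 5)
Your proposal is correct and fills in, in the natural way, an argument the paper itself omits: the lemma is stated with no proof (it is presented as immediate from the remark that morphisms of the \emph{same} degree can be combined), and your reduction to the monoidality of $t_a^*$ with respect to $\oplus$ and $\otimes$ is exactly the verification the authors are implicitly relying on. In particular you correctly isolate the key point — equality of the degrees is what allows both morphisms to factor through the pullback along the same translation $t_a$ — which is the content of Remark \ref{rem:hvbnotmonoidal}.
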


 \begin{rem}
 \label{rem:authbmiya1}
 Let $E\to A$ be a homogeneous vector bundle and assume that
 $\Authbext(E)$ admits a section $\sigma :A\to \Autgr(E)$, $d\smallcirc
 \sigma=\id_A$. Then $(\sigma, \id_{E_0}):A\times E_0\to
 \Autgr(E)\times E_0$ clearly induces a morphism of vector bundles $A\times
 E_0\to E=\Autgr(E)\times^{\AutO(E)} E_0$. Thus, we have  proved that a
 homogeneous vector bundle  is trivial if and only if $\Authbext(E)$
 admits a section. This is a well known result when $\Bbbk $ is
 algebraically closed field (see \cite{kn:miy} and \cite{kn:bprit}).
 \end{rem}

\begin{defn}
  \label{defn:maincat}
  Given an object $E$ in the category
  $\HVB_0(A)$, we call $\HVB_0(A)_E$ the full abelian monoidal rigid 
  category generated by $E$. We call $\HVBG(A)_E$
  the full subcategory of $\HVBG(A)$ that has the
  same objects that $\HVB_0(A)_E$.
\end{defn} 

\begin{rem}
    (1)  By definition the category $\HVB_0(A)_E$ is characterized by
    the  following universal property: for every abelian monoidal
    rigid  category $\mathcal C$ and any object $c \in \mathcal C$ there is one and only one additive monoidal functor $F_c: \HVB_0(A)_E \to \mathcal C$ such that $F_c(E)=c$.

    \noindent (2)  The relations between the above categories is depicted in the diagram below:
      \[
        {\xymatrixcolsep{1pc}\xymatrixrowsep{1pc}
          \xymatrix{
            \HVB_0(A) \ar@{}[r]|-*[@]{\subseteq} &
            \HVBG(A) \\
        \HVB_0(A)_E\ar@{}[u]|-*[@]{\subseteq}\ar@{}[r]|-*[@]{\subseteq}&
        \HVBG(A)_E\ar@{}[u]|-*[@]{\subseteq}
       } }
    \]
        where the horizontal maps are wide inclusions and the vertical
    ones are full.  
    \end{rem}

\begin{rem}\label{rem:homislinfib}
  Let $\pi_E:E\to A$, $\pi_{E'}: E'\to A\in \HVB_0(A)$. Then we have a morphism of schemes
  $a:\Homgr(E, E')\times E\to E'$ as follows:

  If $T$ is a $\Bbbk$--scheme, then $a(T): \Homgr(E, E')(T)\times
  E(T)\to E'(T)$ is given by $a(T)\bigl((f,\ell), e\bigr)=f\smallcirc (e\times
  \id_T)$. Notice that $\pi_{E'}\bigl(f(e\times
  \id_T)\bigr)=d(f)+\pi_{E}(e)$. 

  In other words, we have a commutative diagram
  \[
    \xymatrix{
      \Homgr(E, E')\times E\ar[r]^-a\ar[d]_{d\times \pi_E}&
      E'\ar[d]^{\pi_{E'}}\\
      A\times A\ar[r]^-s& A
      }
    \]

 If $E=E'$, then $a: \Endgr(E)\times E\to E$ is an action of the
 smooth monoid $\Endgr(E)$ (see Corollary \ref{cor:endgrismon}), we
 say that the action is \emph{linear on the fibers}.
  \end{rem}

\subsection{Representations of affine  extensions}\ %
\label{subsect:repaffext}

\begin{defn}
  \label{defn:repaffext}
Let $\mathcal S:$ $\ate$ be an affine  extension of the abelian
variety $A$. A \emph{representation of $\mathcal S$} or
\emph{$\mathcal S$--module}, is a  homogeneous vector
bundle $\pi_E:E\to A$ equipped with a morphism of affine 
extensions $\varrho: \mathcal S\to \Authbext(E)$
\[
\xymatrix{\mathcal S:\ar[d]_{\varrho}& 1\ar[r] & H\ar[r]\ar[d]
  &G\ar[r]^{q} \ar[d]_{\rho}&A\ar[r]\ar@{=}[d] &0\\
\Authbext(E) & 1\ar[r] & \AutO(E)\ar[r]
  &\Autgr(E)\ar[r]^-{d_E} &A\ar[r] &0}
\]
  \end{defn}

  \begin{rem}
    \label{rem:repsiact}
(1) To give a representation of $\mathcal S$ on an homogeneous vector
bundle $\pi_E:E\to A$ is equivalent to give an action of  
$a:G\times E\to E$, linear on the fibers (see Remark
\ref{rem:autgrisrep}),  such that the following diagram is commutative
\[
\xymatrix{
G\times E \ar[r]^(0.6){a}\ar[d]_{q\times \pi_E} & E\ar[d]^{\pi_E} \\
A\times A\ar[r]_(0.6){s} & A
}
\]

Therefore, when we talk
about a representation of $\mathcal S$ we mean either a morphism of
affine extensions
schemes $\varrho:\mathcal S \to \Authbext (E)$ or  a vector bundle $E$ together with
the action $a_\varrho$ of $G$ associated to $\varrho$.

In particular in the above perspective, if $g\in G(T)$ and
$\rho(T)(g)=\bigl(f_g ,\ell\bigr)$, then
$\ell=d\bigl(\rho(T)(g)\bigr)=q(g)\in A(T)$ and 
 $\bigl(a_\varrho(g,-),p_2\bigr)=f_g:E_T\to E_T$  is such that the
 induced morphism $E_T\to t^*_{q(g)}E_T$ is an  
 automorphism of vector bundles (i.e.~an isomorphism in the category $\HVB_0$).

\noindent (2) By construction, if $\rho(G)$ is the scheme theoretic
image of $\rho:G\to \Autgr(E)$, then $\varrho(\mathcal S)$:  
$ {\xymatrixcolsep{1.2pc}\xymatrix{ 1\ar[r] & \rho(H)\ar[r]
  &\rho(G)\ar[rr]^-{{d_E}|_{_{\rho(G)}}} &&A\ar[r] &0} }$ is a 
  closed sub-extension of $\Authbext(E)$.

\end{rem}

\begin{ej}
\label{eje:reptriv}
Let $\mathcal S$: $\ate$ be an affine extension and ${\mathbb I}:=
(p_2:\Bbbk\times A\to A)$ be the trivial bundle.  Then
$\Authbext({\mathbb I})$ is the extension $
  \xymatrix{ 0\ar[r]&  G_m\ar[rr]^-{(\id,0_A \smallcirc \st)}&& G_m\times
    A\ar[r]^-{p_2} & A\ar[r] & 0 }$, where $\st:G\to \Spec(\Bbbk)$ is the
  structure morphism of $G$ as a $\Bbbk$--scheme.

  It is clear that the representations $\varrho: \mathcal S\to
  \Authbext({\mathbb I})$ are in bijective correspondence with the
  \emph{characters} of $G$, i.e. with the group scheme homomorphisms
  $\chi: G \to G_m$, this identification is given by $\chi \mapsto
  (\chi,q): G \to G_m \times A$ for $\chi$ a character as above.

  The \emph{trivial character} i.e. the morphism $\chi_0 =
  1_{G_m}\smallcirc \st$ , induces the representation
  $\varrho_0=(\chi_0,q): \mathcal S\to \Authbext({\mathbb I})$, and
  the associated action ${a_\varrho}_0: G\times {\mathbb I}\to
  {\mathbb I}$ is called the \emph{trivial representation} or
  \emph{trivial $\mathcal S$--module}.
\end{ej}

  \begin{rem}
\label{rem:Gequivariant}
    Let  $(E,\varrho_E), (E',\varrho_{E'})$ be two $\mathcal
S$--modules. Then $G$ acts on the vector bundle
$\Homgr(E,E')$ as follows.

If $g: T \to G \in G(T)$ and $(f,\ell)\in \Homgr(E,E')(T)$, then
$a_\varrho\bigl( g,(f,\ell)\bigr)= \varrho_E'(g)\smallcirc (f,\ell)\smallcirc
\varrho_E(g^{-1})\in \Homgr(E,E')(T)$.

Notice  that $a_\varrho(g,-): \Homgr(E,E')(T)\to
\Homgr(E,E')(T)$ is a morphism of $A_T$--vector bundles and that  the diagram below is commutative
\[
  \xymatrix{
    G\times \Homgr(E,E') \ar[rr]^-{a_\varrho}\ar[dr]_{d\smallcirc p_2}& & 
    \Homgr(E,E')\ar[dl]^d\\
    &A &
    }
  \]

  In particular, $a_\varrho$ induces a
morphism of group schemes $G\to\AutO\bigl( \Homgr(E,E')\bigr)\subset \Autgr\bigl(
\Homgr(E,E')\bigr)$.

 \end{rem}

 \begin{lem}
   \label{lem:Smorfarebundle}
Let   $(E,\varrho_E), (E',\varrho_{E'})$ be two $\mathcal
S$--modules and consider the action $a_\varrho :G\times \Homgr(E,E')\to \Homgr(E,E') $ defined in Remark \ref{rem:Gequivariant}. Then  ${}^G\Homgr(E,E')\cong
G_{\ant}\times^{G_{\ant}\cap H}\, {}^G\HomO(E,E')$, where
${}^G\Homgr(E,E')$ denotes as usual the  fixed points subscheme. In particular,
${}^G\Homgr(E,E')$   is an  $\mathcal 
S_{\ant}$--module and hence it is a 
homogeneous vector
sub-bundle of $\Homgr(E,E')$. 
\end{lem}

   \pf Consider the action of $G$ given by post-composition by
   $\rho_{E'}(g)$, that is $a_{\rho_{E'}}= \rho_{E'}\smallcirc - :
   G\times \Homgr(E,E')\to \Homgr(E,E')$.
 
 Let $\mathcal S_{\ant}$ be the closed subextension associated to the
  Rosenlicht decomposition of $\mathcal S$ (see Theorem
 \ref{thm:rosenftgs2}), and notice that  ${}^G\Homgr(E,E')$ is stable
 by the action of $G_{\ant}$, since $G_{\ant}$ is central in $G$. In
 particular, ${}^G\HomO(E,E')$ is $G_{\ant}\cap H$--submodule (clearly
 ${}^G\HomO(E,E')$ is a vector space), and 
 $R_{\,{}^G\HomO(E,E')}= G_{\ant}\times^{G_{\ant}\cap H}  {}^G\HomO(E,E')\to A$
 is a homogeneous vector bundle (see Lemma \ref{lem:homfibbun}). The
 action $a_{\rho_{E'}}$ clearly induces an injective  morphism of vector
 bundles $\widetilde{a_{\rho_{E'}}}: R_{\,{}^G\HomO(E,E')}\to
 \Homgr(E,E')$ , with image contained in
 ${}^G\HomO(E,E')$. Since $\bigl(R_{\,{}^G\HomO(E,E')}\bigr)_0=
 {}^G\HomO(E,E')$, it follows that $R_{\,{}^G\HomO(E,E')}\cong
 {}^G\Homgr(E,E')$.
 \qed

 \begin{defn}
In the notations of Lemma \ref{lem:Smorfarebundle}, the sub-bundle
${}^G\Homgr(E,E')$ is called  the \emph{(homogeneous) vector bundle of
  $G$--equivariant morphisms}.
   \end{defn}

\begin{rem}\label{rem:equivmorasmono}
Let $(E,\varrho_E)$ and $(E,\varrho_{E'})$ be 
$\mathcal S$--modules and  denote by $a_E:G\times E\to E$  and
$a_{E'}:G\times E'\to E'$ the corresponding linear actions. Let us call
$a:=a_{{}^G\Homgr(E,E')}:{}^G\Homgr(E,E')\times E\to E'$ the morphism
associated to $ {}^G\Homgr(E,E')$ (see remarks \ref{rem:repsiact} and
\ref{rem:homislinfib}). Then we have the following 
commutative diagram:
\[
{\xymatrixrowsep{1.5em}  \xymatrix{
    {}^G\Homgr(E,E')\times G\times E
 \ar[d]_{\sigma_{12}}   \ar[rrr]^-{id_{ {}^G\Homgr(E,E')}\times a_E}& && {}^G\Homgr(E,E')\times
    E\ar[dd]^{a}\\
G\times {}^G\Homgr(E,E')\times
    E\ar[d]_{\id_G\times a} &&&\\
     G\times E'\ar[rrr]^-{a_{E'}} &&&E'
}}
    \]
where $\sigma_{12}: {}^G\Homgr(E,E')\times G\times E\to G\times {}^G\Homgr(E,E')\times
    E$ is the isomorphism given by the permutation of the first two
    coordinates. 
\end{rem}

\begin{cor}
  \label{cor:restriction}
Let $\mathcal S$: $\ate$  be an affine extension and  $E,E'\in \Rep(\mathcal
S)$. Then a morphism $f\in \Hom_{\Rep(\mathcal S)}(E,E')$ is
determined by its restriction to $E_0=\pi^{-1}(0)$, the fiber over $0\in A$.
  \end{cor}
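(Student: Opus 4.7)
The plan is to exploit $G$-equivariance together with the fact that every $\mathcal S$-module $E$ is generated, under the $G$-action, by its fiber $E_0$ over $0\in A$. Concretely, I would first identify $E$ with the induced space $G\times^H E_0$. Since $\mathcal S$ is an affine extension, $q:G\to A$ is an $H$-torsor, and $\pi_E:E\to A$ is a $G$-equivariant vector bundle (with $G$ acting on $A$ via $q$); in particular the fiber $E_0$ is stable under $H$ and inherits a structure of finite dimensional $H$-module. The restriction of the action $\kappa:G\times E_0\to E$, $\kappa(g,v_0)=\rho_E(g)(v_0)$, is $H$-invariant for the diagonal action $h\cdot(g,v_0)=(gh^{-1},h\cdot v_0)$, so by Theorem \ref{thm:indesp1} it factors through the geometric quotient $G\times^H E_0$; checking fiberwise (or comparing with Lemma \ref{lem:homfibbun}), the induced morphism $G\times^H E_0\to E$ is an isomorphism of vector bundles over $A$. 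Since the quotient $G\times E_0\to G\times^H E_0$ is an $H$-torsor, it is faithfully flat, and hence so is $\kappa$; in particular $\kappa$ is an epimorphism in the category of schemes.

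Second, $G$-equivariance of any $f\in\Hom_{\Rep(\mathcal S)}(E,E')$ translates into the identity
\[
f\circ\kappa \;=\; \varphi_{E'}\circ(\id_G\times f|_{E_0}),
\]
where $\varphi_{E'}:G\times E'\to E'$ denotes the $G$-action on $E'$. Thus the composite $f\circ\kappa$ is completely determined by the restriction $f|_{E_0}$.

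Consequently, if $f_1,f_2\in\Hom_{\Rep(\mathcal S)}(E,E')$ satisfy $f_1|_{E_0}=f_2|_{E_0}$, then $f_1\circ\kappa=f_2\circ\kappa$, and the epimorphism property of $\kappa$ forces $f_1=f_2$. No serious obstacle is anticipated: the only non-formal ingredient is the identification $E\cong G\times^H E_0$, which is a direct application of Theorem \ref{thm:indesp1}, after which the argument reduces to faithfully flat descent.
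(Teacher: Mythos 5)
Your proof is correct and follows essentially the same route as the paper: the paper's argument is the one-line computation $f\bigl([g,e]\bigr)=f\bigl(g\cdot[1,e]\bigr)=g\cdot f\bigl([1,e]\bigr)$, which presupposes exactly the identification $E\cong G\times^{H}E_0$ from Theorem \ref{thm:indesp1}/Theorem \ref{thm:GaffG} and the $G$-equivariance of $f$ that you spell out. Your additional observation that $\kappa:G\times E_0\to E$ is faithfully flat, hence an epimorphism, just makes explicit the descent step the paper leaves implicit.
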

\proof  Indeed,  $f\bigl([g,e]\bigr)=
f\bigl(g\cdot [1,e]\bigr)= g\cdot f\bigl([1,e]\bigr)$ for all
$(g,e)\in G\times E_0$.\qed

\begin{defn}
  \label{defn:catrepaffext}
Let $\mathcal S:$ $\ate$ be an affine  extension of the abelian
variety $A$. We define the category  (enriched over $\Sch|\Bbbk$) of
   \emph{representations of $\mathcal S$} or \emph{$\mathcal S$--modules}, 
denoted as $\Rep(\mathcal S)$, as  follows: 

The \emph{objects} are the
\emph{representations} of $\mathcal S$.

If $(E,\varrho_E), (E',\varrho_{E'})\in \Rep(\mathcal S)$, then
$\Hom_{\Rep(\mathcal S)}(E,E') := {}^G\Homgr(E,E')$, the (homogeneous)
vector bundle of $G$--equivariant morphisms.

 We define
  $\RepO(\mathcal S)$ as  the wide subcategory of $\Rep(\mathcal S)$
  that has as  morphisms $\Hom_{\RepO(\mathcal
    S)}(E,E') := \Hom_{\Rep(\mathcal S)}(E,E')_0$. Notice that 
  $\Hom_{\RepO(\mathcal S)}(E,E')$  can be identified with the vector
  space of morphisms of vector bundles $f:E\to E'$ that commute with
  the action (compare with remarks
   \ref{rem:clasicosygraded} and \ref{rem:equivmorasmono}). 
 \end{defn}
 
 \begin{rem}\label{rem:notationqS}
   Later --- for reasons of notational uniformity --- we represent an
   affine extension such as $\mathcal S:$ $\ate$ simply as $q:G \to A$
   and $\Rep(\mathcal S)$ simply as $\Rep(q)$ and the same for
   $\RepO(\mathcal S)=\RepO(q)$. See the comments at the
   introduction of Section \ref{sect:Hopf} and also Example
   \ref{ej:qS}.   
 \end{rem}
 
The following theorem exhibits the relationship between $\Rep(\mathcal
S)$ and $\Rep(H)$ --- it can be seen as a generalization of
\cite[Theorem 2.9]{kn:brionrepbund}.

\begin{thm}
\label{thm:GaffG}
Let $\mathcal S:$ $\ate$ be an affine  extension and $V\in
\Rep_{\operatorname{fin}}(H)$ a finite dimensional
(rational) $H$--module. Then  $\pi_V:E_V=G\times^{H}
V=(G\times V)/H \to  A$ is a representation of $\mathcal S$ --- recall from
Theorem \ref{thm:indesp1} that the 
quotient $\pi_{G\times V}: G\times V\to E_V$  exists and that $\pi_V:E_V\to A$
is a vector bundle of fiber
isomorphic to $V$.

Conversely, if the vector bundle  $\pi :E\to A$ is a representation of
$\mathcal S$, then $E$ and $G\times^{H} E_0$ are isomorphic in the
category $\HVB_0(A)$, where  the action $H\times E_0\to
E_0$ is given by restriction.

Moreover, the category $\RepO(\mathcal S)$ is equivalent to
$\Rep_{\operatorname{fin}}(H)$. In
particular, $\RepO(\mathcal S)$  is an abelian, mo\-noi\-dal, rigid, category.
\end{thm}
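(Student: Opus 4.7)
The plan is to prove the three assertions sequentially, with the equivalence of categories $\RepO(\mathcal S) \cong \Rep_{\operatorname{fin}}(H)$ emerging as the corollary of the first two. First, to see that $E_V = G \times^H V$ is an $\mathcal S$--module, I invoke Theorem \ref{thm:indesp1}, which provides $E_V$ as a $G$--linearized vector bundle $\pi_V: E_V \to A$ with fiber $V$, together with Lemma \ref{lem:homfibbun}, which gives its homogeneity. The linearity of the $G$--action on fibers together with its compatibility with $q: G \to A$ yields, via Remark \ref{rem:repsiact}, the required morphism of affine extensions $\mathcal S \to \Authbext(E_V)$.

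Second, given an $\mathcal S$--module $E$ with action $\rho: G \times E \to E$, I build the isomorphism $E \cong G \times^H E_0$ in $\HVB(A)$. Since $H = \Ker(q)$ acts linearly on each fiber, $E_0 = \pi^{-1}(0)$ is $H$--stable and becomes a finite dimensional rational $H$--module. The map $\phi: G \times E_0 \to E$, $\phi(g,e) = \rho(g)(e)$, is invariant under the diagonal $H$--action $h \cdot (g,e) = (gh^{-1}, h \cdot e)$ because
\[
\phi(gh^{-1}, h \cdot e) = \rho(g)\rho(h^{-1})\rho(h)(e) = \rho(g)(e).
\]
Hence $\phi$ descends to a $G$--equivariant morphism $\tilde\phi: G \times^H E_0 \to E$ of vector bundles over $A$. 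To conclude that $\tilde\phi$ is an isomorphism, I apply fpqc descent along the faithfully flat morphism $q: G \to A$: after pullback, $q^*E$ trivializes as $G \times E_0$ via $\rho$, while $q^*(G \times^H E_0)$ trivializes using the $H$--torsor structure $q^*G \cong G \times H$, and $q^*\tilde\phi$ matches these trivializations canonically, forcing $\tilde\phi$ itself to be an isomorphism.

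Third, these two steps assemble into the equivalence of categories. Define $F: \RepO(\mathcal S) \to \Rep_{\operatorname{fin}}(H)$ by $E \mapsto E_0$, and $\Phi: \Rep_{\operatorname{fin}}(H) \to \RepO(\mathcal S)$ by $V \mapsto E_V$. The unit $F\Phi \cong \id$ is tautological since $(E_V)_0 = V$ as $H$--modules, and the counit $\Phi F \cong \id$ is exactly the reconstruction from the second step. Fullness and faithfulness of $F$ follow from Corollary \ref{cor:restriction}, which shows a morphism in $\RepO(\mathcal S)$ is determined by its restriction to the fiber over $0$, together with the fact that any $H$--equivariant map $f: E_0 \to E'_0$ extends to a $G$--equivariant morphism via $(g,e) \mapsto \rho'(g)(f(e))$, which is $H$--invariant precisely by the equivariance of $f$ and hence descends to a morphism $G \times^H E_0 \to E'$. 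The abelian, monoidal, rigid structure on $\RepO(\mathcal S)$ is then transported from $\Rep_{\operatorname{fin}}(H)$ through the equivalence.

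The principal obstacle will be the clean execution of the fpqc descent establishing that $\tilde\phi$ is an isomorphism. Intuitively both bundles are homogeneous with identical fiber $E_0$ over $0 \in A$, but the rigorous argument requires pairing the two trivializations of $q^*E$ and $q^*(G \times^H E_0)$ compatibly; once this is done, the remaining assertions of the theorem are essentially formal consequences.
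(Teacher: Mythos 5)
Your proposal is correct and follows essentially the same route as the paper: the first assertion is delegated to Theorem \ref{thm:indesp1}, the converse is obtained by descending the $H$--invariant map $G\times E_0\to E$ to a morphism $G\times^H E_0\to E$ and showing it is an isomorphism, and the equivalence is assembled from the induction and fiber-restriction functors. The only difference is cosmetic: where the paper simply observes that the induced map is a bijective morphism of vector bundles (hence an isomorphism), you spell out the fpqc-descent trivializations along $q$, which is a more detailed justification of the same step.
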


\pf
The first assertion is the content of Theorem \ref{thm:indesp1}. 

Conversely, if $E\to A$ is a  $\mathcal S$--module, then $E_0$ is an
$H$--module and therefore, again by Theorem \ref{thm:indesp1}, the
induced space $E_V=G\times^HE_0$ is a representation of $\mathcal
S$. Moreover, the morphism $f:G\times E_0\to E$, $(g,v)\mapsto g\cdot v$
is $H$--invariant and therefore induces a morphism $\widetilde{f}:E_V\to E$ (given
by $\widetilde{f}\bigl([g,v]\bigr)=g\cdot v$).  Since  $\widetilde{f}$ is clearly a
bijective morphism of vector bundles, it follows that  $\widetilde{f}$ is an
isomorphism.

It is now an easy exercise to verify that a morphism of $H$--modules
$f: V\to W$  induces the  morphism of $\mathcal S$--modules
$\widetilde{f}: G\times^HV\to G\times^HW$
$\widetilde{f}\bigl([g,v]\bigr)=\bigl[ g, f(v)\bigr]$. Therefore, we have just 
constructed a functor $\Rep_{\operatorname{fin}}(H)\to \RepO(\mathcal
S)$ such that $V\mapsto
G\times^HV$ and $\Hom_{\Rep_{\operatorname{fin}}(H)}(V,W)\ni f\mapsto
\widetilde{f}\in
\HomO(G\times^HV,G\times^HW)$. This functor is clearly
 the inverse functor of the ``restriction to the fiber'' functor $\RepO(\mathcal S)\to
\Rep_{\operatorname{fin}}(H)$.  \qed

\begin{cor}
  \label{cor:GaffGimpliesiso}
Let $\mathcal S$: $\ate$ be an affine extension and
$E,E'\in\Rep(\mathcal S)$. Then $E\cong E'$ as $\mathcal S$--modules
if and only if $E_0\cong E'_0$ as $H$--modules.
  \end{cor}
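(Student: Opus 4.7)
The assertion is an immediate consequence of Theorem~\ref{thm:GaffG}, which establishes an equivalence of categories $\RepO(\mathcal S) \simeq \Rep_{\operatorname{fin}}(H)$ sending an $\mathcal S$-module $E$ to its fiber $E_0$ viewed as an $H$-module via restriction of the $G$-action. Since any equivalence of categories preserves and reflects isomorphisms, the corollary follows once one interprets ``$E \cong E'$ as $\mathcal S$-modules'' as isomorphism in $\RepO(\mathcal S)$.

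Explicitly, for the ``if'' direction, an $H$-module isomorphism $\varphi: E_0 \to E'_0$ produces an $\mathcal S$-module isomorphism $G \times^H E_0 \to G \times^H E'_0$ given by $[g,v] \mapsto [g,\varphi(v)]$; combined with the canonical identifications $E \cong G \times^H E_0$ and $E' \cong G \times^H E'_0$ provided by Theorem~\ref{thm:GaffG}, this yields $E \cong E'$. For the ``only if'' direction, any degree-zero $\mathcal S$-isomorphism $f: E \to E'$ restricts to an $H$-equivariant linear isomorphism $f|_{E_0}: E_0 \to E'_0$, which is again immediate from the equivalence (alternatively, one applies Corollary~\ref{cor:restriction}).

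If instead one wishes to allow isomorphisms of arbitrary degree in the larger category $\Rep(\mathcal S)$, the ``if'' direction is unchanged, while the ``only if'' direction requires the following additional observation: a $G$-equivariant isomorphism $f: E \to E'$ of degree $a \in A$ restricts to an $H$-equivariant linear bijection $E_0 \to E'_a$, so one must show $E'_a \cong E'_0$ as $H$-modules. Choosing (after a faithfully flat base change, since $q$ is surjective) a lift $g_0 \in G$ of $a$, the map $v \mapsto g_0 \cdot v$ identifies $E'_0$ with $E'_a$ up to twisting the $H$-action by the automorphism $\sigma_{g_0}: h \mapsto g_0 h g_0^{-1}$ of $H$. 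The main technical point is that $\sigma_{g_0}$ is always an inner automorphism of $H$: the conjugation morphism $G \to \Aut(H)$ sends $H$ into $\operatorname{Inn}(H)$ and so descends to a morphism of group functors $A = G/H \to \operatorname{Out}(H)$, and any such morphism from the proper connected abelian variety $A$ to the affine-type functor $\operatorname{Out}(H)$ is constant. Since twisting an $H$-module by an inner automorphism yields an isomorphic module, we conclude $E'_a \cong E'_0$ as required, completing the argument.
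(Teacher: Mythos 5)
Your first two paragraphs are essentially the paper's own proof: the ``if'' direction via the morphism $[g,v]\mapsto[g,\varphi(v)]$ between the induced spaces $G\times^HE_0$ and $G\times^HE'_0$ supplied by Theorem~\ref{thm:GaffG}, and the ``only if'' direction by restricting to the fiber over $0$. That part is correct and needs no change.

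Your third paragraph flags a real subtlety that the paper passes over with ``it is clear'' (an isomorphism in $\Rep(\mathcal S)$ may have degree $a\neq 0$, so its restriction identifies $E_0$ with $E'_a$ rather than $E'_0$), but the argument you give there has a gap. You claim the conjugation map descends to a morphism $A\to\operatorname{Out}(H)$ which must be constant because $A$ is a proper connected variety and $\operatorname{Out}(H)$ is of ``affine type.'' For a general affine group scheme $H$ the functor $\Aut(H)$ need not be representable by an affine scheme, nor even by a scheme of finite type (already $\Aut(\mathbb G_a)$ in positive characteristic is an ind-scheme), so neither the representability of $\operatorname{Out}(H)$ nor the rigidity statement you invoke is available without further argument. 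The paper's machinery avoids the issue entirely: by the Rosenlicht decomposition (Theorem~\ref{thm:rosenftgs2}) the restriction $q|_{G_{\ant}}:G_{\ant}\to A$ is surjective, so the lift of $a$ can be chosen in $G_{\ant}$, which is \emph{central}; then $v\mapsto z\cdot v$ is already $H$-equivariant and gives $E'_a\cong E'_0\otimes_\Bbbk\Bbbk(a)$ with no twisting by any automorphism of $H$ --- this is precisely Lemma~\ref{lem:fibrasiso}. You should replace the $\operatorname{Out}(H)$ argument with that one.
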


  \pf
Since the $H$--action of $E_0,E'_0$ is by restriction, it is clear
that if $E\cong E'$, then $E_0\cong E'_0$. Assume now that $f: E_0\to
E'_0$ is an isomorphism of  $H$--modules. Then the canonical morphism
$G\times E_0\to E'=G\times^HE'_0$ given by $(g,v)\mapsto
\bigl[g,f(v)\bigr]$ induces a morphism $f: E=G\times^HE_0\to
E'$, that is clearly a $G$--equivariant isomorphism of vector bundles.\qed

  \begin{cor}
\label{cor:repasindGant}
Let $\mathcal S$: $\ate$ be an affine extension and $E\in\Rep(\mathcal
S)$. Then $E\cong G_{\ant}\times^{G_{\ant}\cap H}E_0$.
\end{cor}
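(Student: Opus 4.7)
The plan is to derive the corollary as a direct application of Theorem \ref{thm:GaffG} to the Rosenlicht sub-extension $\mathcal S_{\ant}$ of $\mathcal S$ produced in Theorem \ref{thm:rosenftgs2}.

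First, I would recall that by Theorem \ref{thm:rosenftgs2}(2), the restriction $q|_{G_{\ant}}:G_{\ant}\to A$ is faithfully flat and gives rise to a closed sub-extension
\[
\mathcal S_{\ant}: 1\to G_{\ant}\cap H \to G_{\ant}\xrightarrow{q|_{G_{\ant}}} A\to 0
\]
of anti-affine type, with $G_{\ant}\cap H$ affine. Thus $\mathcal S_{\ant}$ is itself an affine extension of $A$ to which the formalism of Section \ref{subsect:repaffext} applies.

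Second, I would observe that any representation $\varrho:\mathcal S\to \Authbext(E)$ restricts to a morphism of affine extensions $\varrho|_{\mathcal S_{\ant}}:\mathcal S_{\ant}\to \Authbext(E)$: the morphism $\rho:G\to \Autgr(E)$ restricts to $\rho|_{G_{\ant}}:G_{\ant}\to \Autgr(E)$, the vertical arrow over $A$ remains the identity, and $(G_{\ant}\cap H)\hookrightarrow H$ composes with $\rho|_H:H\to \AutO(E)$ to give the left vertical arrow. Hence $E$ acquires the structure of an $\mathcal S_{\ant}$--module, whose fiber over $0\in A$ is $E_0$ endowed with the restriction of the given $H$--action to $G_{\ant}\cap H$.

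Finally, I would apply Theorem \ref{thm:GaffG} (or equivalently Corollary \ref{cor:GaffGimpliesiso}) to the affine extension $\mathcal S_{\ant}$ and the $\mathcal S_{\ant}$--module $E$. This immediately yields the claimed isomorphism in $\HVB(A)$,
\[
E \;\cong\; G_{\ant}\times^{G_{\ant}\cap H} E_0.
\]
There is essentially no obstacle: the only point worth double-checking is that the right-hand side is a well-defined homogeneous vector bundle, which follows from Lemma \ref{lem:homfibbun} applied to $\mathcal S_{\ant}$, and that the canonical morphism $G_{\ant}\times E_0\to E$, $(g,v)\mapsto g\cdot v$, is $(G_{\ant}\cap H)$--invariant and therefore descends to the induced space --- both facts being immediate from the construction.
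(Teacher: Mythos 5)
Your proof is correct and follows essentially the same route as the paper: both rest on Theorem \ref{thm:rosenftgs2}(2) to produce the closed sub-extension $\mathcal S_{\ant}$ and then identify $E$ with the induced space from the fiber $E_0$. The only cosmetic difference is that the paper builds the isomorphism directly from the inclusion $G_{\ant}\times E_0\hookrightarrow G\times E_0$ and a dimension count, whereas you restrict the representation to $\mathcal S_{\ant}$ and invoke Theorem \ref{thm:GaffG} --- the same argument in packaged form.
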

\proof
Let $(G_{\ant}\cap H)\times E_0\to E_0$  be the $(G_{\ant}\cap
H)$--module obtained by restriction of the $H$--action. Since
$\mathcal S$ has a  Rosenlicht decomposition,  $G/(G_{\ant}\cap
H)=A$ and $G_{\ant}\times^{G_{\ant}\cap H}E_0\to A$ is a vector
bundle. The canonical inclusion $G_{\ant}\times E_0\hookrightarrow
G\times E_0$ induces a morphism of vector bundles
$G_{\ant}\times^{G_{\ant}\cap H}E_0\to G\times^HE_0=E$ (of the same
dimension) that is clearly
an isomorphism. \qed

\begin{cor}
\label{con:repisind}
 Let $\mathcal S:$ $\ate$ be an affine extension, and
 $\bigl(\pi: E\to A,\varrho_E:G\to \Authbext(E)\bigr) $  an $\mathcal S$--module. Then
 $E\cong \rho(G)\times^{\rho(H)}E_0$, where, as usual,  $\rho:G\to \Autgr(E)$ is
 the mid morphism of $\varrho_E$ and $E_0=\pi^{-1}(0)$.
 \end{cor}
\pf
Immediate.\qed

Combining Theorem \ref{thm:GaffG} with Corollary \ref{con:repisind} we
obtain the following characterization of an homogeneous vector bundle.

\begin{prop}\label{prop:charachb} Let $\pi:E\to A$ be vector
bundle. Then $E$ is homogeneous if and only if there exists an affine
extension $\mathcal S$: $\ate$ and an action $a: G\times E\to E$, such
that the following diagram is commutative.
\[ \xymatrix{ G\times E\ar[r]^a\ar[d]_{q\times \pi} & E\ar[d]^{\pi}\\
A\times A\ar[r]^s& A }
  \] and the restriction $a|_{_{H\times E_0}}H\times E_0\to E_0$ is a
linear representation of $H$.
\end{prop} \pf Let $E$, $\mathcal S$ and $a$ be as in the
hypothesis. Then, as in the proof of Theorem \ref{thm:GaffG},
$a|_{_{G\times E_0}} :G\times E_0\to E$ induces an isomorphisms
$G\times^HE_0\cong E$.  Conversely, if If $\pi:E\to A$ is homogeneous,
then $E$ is a $\Authbext(E)$--module by Corollary
\ref{con:repisind}.\qed

\begin{ejs}
\label{exam:rep}
(1) Let $\mathcal G_{\aff}:$  ${\xymatrixcolsep{1.5pc} \xymatrix{ 1\ar[r] & G\ar[r]^(0.45){\id} &G\ar[r] &\Spec(\Bbbk)\ar[r]
    &0}}$ be an affine group scheme viewed as an affine extension. Then
$\Rep(\mathcal G_{\aff})=\Rep(G)$, the ``classical'' category of 
representations of an affine group scheme (see Example \ref{exam:casoaffin1}).

\noindent (2) Let $\mathcal A$ be the trivial extension
${\xymatrixcolsep{1.5pc} \xymatrix{ 0\ar[r] & 0\ar[r]& A\ar[r]^(.45){\id} &A\ar[r] 
    &0}}$. Since a homogeneous vector bundle $E$ is trivial if and
only if there exists a section $A\hookrightarrow \Autgr(E)$ (see
Remark \ref{rem:authbmiya1}), it
follows that $\Rep(
\mathcal A)$ has as objects the trivial bundles $A\times V$,
with action $a: A\times(A\times V)\to A\times V$, $b\cdot
(c,v)=(b+c,v)$. On the other hand $\Hom_{\Rep(\mathcal
  S)}(E,E')=\Homgr(E,E')= \Hom_{\Bbbk}(E_0,E'_0)\times A$.

\noindent (3)  Consider an isogeny
$g:A\to A$ and the corresponding affine 
 extension  $\mathcal S_N:$ ${\xymatrixcolsep{1.5pc}\xymatrix{
     1\ar[r]& N\ar[r]& A\ar[r]^-{g}&A\cong A/N\ar[r]& 0}}$, where $N$ is a normal finite
 subgroup scheme. If $E\in \Rep (\mathcal S_N)$, then $E=A\times^NV$, where $V\in
\Rep(N)$.

It follows that $\Rep(\mathcal S_N)$ can be obtained as follows. Let
$\mathcal N$ be the category of the
trivial homogeneous vector bundles built on $\Rep(N)$: (i) $E\in
\operatorname{Obj}(\mathcal N)$ if $E= A\times V$, with $V\in
\Rep(N)$; (ii) $(f,\ell) \in \Hom_{\mathcal N}(A\times V,A\times V')(T)$ if
and only if $f=(t_\ell\times h):A\times T\times V\to A\times T\times
V'$, with $h\in \Hom_{\Rep(N)}(V,V')$. Consider the functor $Q: \mathcal N\to
\HVBG(A)$ given by the  quotient by the diagonal action $n\cdot
(a,v)=(an^{-1},nv)$. Then the $\Rep(\mathcal S_N)$ is the image of
$\mathcal N$ by $Q$.

\noindent (4) Assume that $\Bbbk=\overline{\Bbbk}$ and let $L\in
\operatorname{Pic}(A)$ be an invertible  
homogeneous vector bundle. Then $L^\times=L\setminus \theta(L)$, where
$\theta:A\to L$ is the trivial section, is a smooth group scheme, with
Chevalley decomposition induced by the canonical projection
$\pi:L\to A$ (see \cite[Theorem 2]{kn:oam} and \cite[Corollary 6]{kn:bprit}):
$\mathcal L^\times:$
${\xymatrixcolsep{0.5pc} \xymatrix{
1\ar[rr]&& \Bbbk^*\ar[rr]&& L^\times\ar[rrr]^-{\pi|_{_{L^{^{\times}}}}}&&&A\ar[rr]&& 0} }$.

It follows from Theorem \ref{thm:GaffG} that $E$ is an
$\mathcal L^\times$--module if and only if $E\cong L^\times\times^{\Bbbk^*}V$, where $V$ is a
$\Bbbk^*$--module. On the other hand,  it is clear that $L^{\otimes n}$
is an $\mathcal L^\times$--module, with action $L^\times\times L^{\otimes n}\to
L^{\otimes n}$ given by $a\cdot( l_1\otimes\dots\otimes l_n)=  (a\cdot
l_1)\otimes\dots\otimes (a\cdot l_n)$.
It follows that if $V\cong \oplus V_i$, where $a\cdot v=a^iv$ for
$v\in V_i$, then $E\cong \bigoplus_i \bigoplus_{j=1}^{\dim V_i} L^{\otimes i}$.
\end{ejs}

For further use in the next section, we  introduce now the definition of
\emph{$T$--morphisms of homogeneous vector bundles} and some related
notions.

\begin{defn}\label{defn:C(T)}
Let $\mathcal C$ be a $\Schk$--category and $T\in \Schk$. The
\emph{category $\mathcal C(T)$} is defined as follows:

\noindent (1) its \emph{objects} are the objects of $\mathcal C$.

\noindent (2) given two objects $x,y\in\mathcal C$, $\Hom_{\mathcal
  C(T)}(x,y)=\Hom_{\mathcal C}(x,y)(T)$.
\end{defn}

\begin{rem}\label{rem:Tfunctors}
(1) Notice that if $\mathcal C$ is a $\Schk$--category,  $T\in \Schk$ and
$x\in \mathcal C $, then  the structure morphism $\st:T\to
\Spec(\Bbbk)$ induces an identity morphism in $\End_{\mathcal
  C(T)}(x)$, by post-composition with the identity morphism
$\Spec(\Bbbk)\to \End_{\mathcal C}(x)$. 

\noindent (2) Let $\mathcal C,\mathcal D$ be two  $\Schk$--categories, and
$F:\mathcal C\to  \mathcal D$ a functor.  It is clear that $F$ induces
a functor $F(T):\mathcal C(T)\to \mathcal D(T)$. 
  \end{rem}

\begin{defn}
  Let $\mathcal S$ be an affine extension and $T\in \Schk$. We define
  the \emph{category   of $\mathcal S$--modules with $T$--morphisms},
  as the category  $\Rep(\mathcal S)(T)$.
\end{defn}

\begin{rem}
Notice that the degree morphism $\Homgr(E, E')\to A$ induces a degree
morphism  $\Hom_{\Rep(\mathcal S)(T)}(E,E')= \Homgr(E, E')(T)\to A(T)$.
  \end{rem}
  
\begin{defn}
Let $\mathcal C,\mathcal D$ be two  $\Schk$--categories, and
$F,G:\mathcal C\to  \mathcal D$ two functors. If $T\in \Schk$, a
\emph{$T$--natural transformation} is a natural transformation
$\lambda: F(T)\Rightarrow G(T)$.

A \emph{functor on natural transformations $\lambda:F\Rightarrow G$ } is a
functor $\lambda: \Schk^{\op}\to  \Sets$, such that $\lambda
(T):F(T)\Rightarrow G(T)$ is a $T$--natural transformation.
  \end{defn}

\subsection{The category $\Rep(\mathcal S)$}\ %
\label{sct:repg}

In this paragraph we collect some basic properties of the categories
of representations $\Rep(\mathcal S)$ and $\RepO(\mathcal S)$ of an
affine extension $\mathcal S:$ $\ate$.

\begin{rem}
\label{rem:tensordual}
Let $\mathcal S$  be an affine
extension. Even though the category $\Rep(\mathcal S)$ is not monoidal, a
 situation similar to the one described in Lemma
 \ref{lem:tensordualhvb} holds, since $\RepO(\mathcal S)$ is an 
 abelian monoidal rigid  category.

 Indeed, once Theorem \ref{thm:GaffG} is established,  the assertion
 for $\RepO(\mathcal S)$ is proved  by
 transplanting the corresponding structure from
 $\Rep_{\operatorname{fin}}(H)$.  In that manner we obtain for $E,E'$
 objects in $\RepO(\mathcal S)$ 
 (that has the same objects than $\Rep(\mathcal S)$), other objects in
 the same categories called $E^\vee$, $E\oplus E'$ and $E\otimes E'$
 and for arrows in $\RepO(\mathcal S)$ we can define the arrows --- also
 in $\RepO(\mathcal S)$ ---: $f^\vee$, $f \otimes g$, $f + g$ as well
 as the functors $\Ker$ and $\operatorname{Coker}$ in $\RepO(\mathcal
 S)$. This construction at the level of arrows can be defined
  directly or by transporting them from $\Rep_{\operatorname{fin}}(H)$.

 On the other hand,  Lemma \ref{lem:tensordualhvb} implies the following
 weaker version of the universal properties.

\end{rem}

\begin{lem}
Let $\mathcal S$ be an affine  extension and $E,E', F, F'$ objects in
$\Rep(\mathcal S)$. Consider the graded morphisms
$(f,\ell)\in \Hom_{\Rep(\mathcal S)}(E,F)(T)$, $(f',\ell)\in  
\Hom_{\Rep(\mathcal S)}(E',F')(T)$, and $ (g, \ell) \in
\Hom_{\Rep(\mathcal S)}(E',F)(T)$. Then $(f\otimes f', \ell)\in
\Hom_{\Rep(\mathcal S)}(E\otimes E',F\otimes F')(T)$, $(f+ g, \ell)\in
\Hom_{\Rep(\mathcal S)}(E\oplus E',F)(T)$ and $(f^\vee,-\ell)\in \Hom_{\Rep(\mathcal S)}(F^\vee,E^\vee)(T)$.
\end{lem}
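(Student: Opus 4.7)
My plan is to reduce the statement to two routine checks: first, that the given formulas actually define graded morphisms of vector bundles (of degree $a$), and second, that these graded morphisms are $G$-equivariant.

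First I would invoke Lemma \ref{lem:tensordualhvb} to conclude that $(f\otimes f',t_a)\in \Homgr(E\otimes E',F\otimes F')$ and $(f+g,t_a)\in \Homgr(E\oplus E',F)$. (For the second one, note that $(f+g)(e,e')=f(e)+g(e')$ is just the composition of $(f\oplus g,t_a)\in\Homgr(E\oplus E',F\oplus F)$ with the codiagonal $F\oplus F\to F$, which is of degree $0$; alternatively, write it directly as a sum of the two morphisms $f\circ p_E$ and $g\circ p_{E'}$ of degree $a$.) Thus, the only nontrivial point is the $G$-equivariance, i.e.\ that these graded morphisms are fixed under the action described in Remark \ref{rem:Gequivariant}.

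Next, the representation morphism on $E\otimes E'$ is the graded morphism $\rho_{E\otimes E'}(g)=\rho_E(g)\otimes\rho_{E'}(g)$ of degree $q(g)$, which is well defined by Lemma \ref{lem:tensordualhvb} (both factors have the same degree $q(g)$); similarly $\rho_{E\oplus E'}(g)=\rho_E(g)\oplus\rho_{E'}(g)$. Using the bilinearity of the two operations on morphisms (i.e.\ $(\alpha\otimes\beta)\circ(\gamma\otimes\delta)=(\alpha\circ\gamma)\otimes(\beta\circ\delta)$ and the analogous identity for direct sums), I would compute, for each $T$-point $g\in G(T)$,
\[
\rho_{F\otimes F'}(g)\circ(f\otimes f')=(\rho_F(g)\circ f)\otimes(\rho_{F'}(g)\circ f')=(f\circ\rho_E(g))\otimes(f'\circ\rho_{E'}(g))=(f\otimes f')\circ\rho_{E\otimes E'}(g),
\]
where the middle equality uses the $G$-equivariance of $f$ and $f'$ separately. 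The analogous computation
\[
\rho_F(g)\circ(f+g)=\rho_F(g)\circ f+\rho_F(g)\circ g=f\circ\rho_E(g)+g\circ\rho_{E'}(g)=(f+g)\circ\rho_{E\oplus E'}(g)
\]
handles the direct-sum case. Both identities can be checked on $T$-points with $T$ varying over $\Schk$, i.e.\ at the level of the fpqc sheaves $\Homgr(-,-)$, so everything is legitimate.

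The only mild obstacle is keeping track of the bookkeeping that ensures the tensor and sum of graded morphisms of degree $a$ land again in degree $a$ rather than in degree $2a$; this is precisely the content of Lemma \ref{lem:tensordualhvb}, and it is the reason the hypothesis that all three morphisms $f$, $f'$, $g$ share the same degree is essential. Once that point is absorbed, the proof reduces to the bilinearity of $\otimes$ and $\oplus$ with respect to composition, combined with the definition of the $G$-action on the tensor and direct sum of $\mathcal S$-modules.
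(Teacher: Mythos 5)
Your proof is correct and supplies precisely the routine verification the paper intends: the paper's own proof of this lemma is just the word ``Immediate,'' and your reduction to Lemma \ref{lem:tensordualhvb} (for the graded-morphism bookkeeping in degree $a$) combined with the compatibility of $\otimes$ and $\oplus$ with composition and the definition of the $G$-action on tensor products and direct sums is the standard way to fill it in. The only blemish is the notational clash in your second display, where $g$ denotes both the $T$-point of $G$ and the morphism $E'\to F$; renaming one of them would make that computation unambiguous.
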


\pf
Immediate.
\qed

\defn{
\label{def:repS}
Let $\mathcal S$: $\ate$ be an affine  extension of the abelian
variety $A$. We  call $\omegagr: \Rep(\mathcal S)\to
\HVBG(A)$ the  forgetful functor  in the category of
homogeneous vector bundles over $A$;  and
  $\omegaO : \RepO(\mathcal S) \to \HVB_0(A)$ is the functor induced by
  restriction of $\omegagr$ --- notice that $\omegaO$ is a
        monoidal functor.
        \[{\xymatrixcolsep{1pc}\xymatrix{\RepO(\mathcal
            S)\ar[d]_{\omegaO}\ar@{^(->}[rr]&&\Rep{\mathcal
              S}\ar[d]^{\omegagr}\\ 
   \HVB_0(A)\ar@{^(->}[rr]&&\HVBG(A).}}
\] 
      }

  \begin{nota}
In the future and in order to simplify the notations, if $(E, \varrho_E)\in \Rep(\mathcal S)$ we
often omit the morphism $\varrho_E$ and write that \emph{$E$ is an
  $\mathcal S$--module}. The forgetful functor $\omegagr:\Rep(\mathcal
S)\to \HVBG(A)$ is given at the level of objects by
$(E,\varrho_E)\mapsto E$.  Occasionally and when it does not
produce confusions, the forgetful functor applied to
  objects might be omitted and we write $\omegagr(E):=E$, and
  similarly for the hom-objects.
    \end{nota}

  \begin{rem}\label{rem:interpretation}
     Consider the functor $\omegagr:\Rep(\mathcal S)\to \HVBG(A)$ and
    let $T\in \Schk$. Then a  $T$--natural transformation  
  $\lambda:\omegagr(T)\Rightarrow
  \omegagr(T)$ is  given by a family
  of graded morphisms $\lambda_E=(f_E, \ell_E)\in
  \Endgr\bigl(\omegagr(E)\bigr)(T)=\Endgr(E)(T)$, $E\in \Rep(\mathcal
  S)$, such that  the graded morphisms
  $\lambda_E$ satisfy the following compatibility condition:

   For all $E, F \in \Rep(\mathcal S)$ and $(g,a) \in
  \Hom_{\Rep(\mathcal S)}(E,F)(T)={}^G\Homgr(E,F)(T)$ the diagram below,
  that is a diagram in $\Sch|T$, commutes:
  \[
      \xymatrix{\omegagr(E)_T\ar[d]_{g}\ar[rr]^{f_E}&&
      \omegagr(E)_T\ar[d]^{g}\\ \omegagr(F)_T\ar[rr]_{f_F}&&
      \omegagr(F)_T}
\] 

Notice that in the diagram above we use the fact that
$\omegagr(T)(g,a)=(g,a)\in \Homgr(E,F)$.

 In particular, given $E_1, E_2 \in \Rep(\mathcal S)$, then the canonical
 inclusions $\operatorname{inc}_i: (E_i)_T\to (E_1\oplus E_2)_T\cong (E_1\oplus
 E_2)_T$ induce the following commutative diagram: 
  \[
      \xymatrix{\omegagr(E_i)_T\ar[d]_{\omegagr (\operatorname{inc}_i,0)}\ar[rr]^{\lambda_{E_i}}&&
      \omegagr(E_i)_T\ar[d]^{\omegagr(\operatorname{inc}_i,0)}\\ \omegagr(E_1\oplus
      E_2)_T\ar[rr]_{\lambda_{E_1\oplus E_2}}&&
      \omegagr(E_1\oplus E_2)_T}
\] 

It follows that $d(\lambda_{E_1})=d(\lambda_{E_1\oplus
  E_2})=d(\lambda_{E_2})$. In other words, the degree of the morphisms
$\lambda_E$ is constant.
\end{rem}

\begin{defn}
In  view of Remark \ref{rem:interpretation}, if  $\lambda
:\omegagr(T)\Rightarrow 
 \omegagr(T)$ is a $T$--natural transformation, the \emph{degree} of
 $\Lambda$  is defined as $d(\lambda_{\mathbb I})$, where $\mathbb
 I=\Bbbk\times A\to A$ is the trivial representation 
 (see Example \ref{eje:reptriv}).
  \end{defn}

\begin{defn}\label{defn:transfnatw}
   \noindent  Given the functor $\omegagr:\Rep(\mathcal S)\to \HVBG(A)$ we
    consider  the functor on natural transformations
    $\End(\omegagr): \Schk\to \operatorname{Mon} $, defined 
    as  
    \[
      \End(\omegagr)(T)= \bigl\{ \lambda: \omegagr(T) \Rightarrow
      \omegagr(T)\mathrel{:} \lambda \text { is $T$-natural transformation}\bigr\}.
    \]

If $g:T'\to T$, then
$\End(\omegagr)(f): \End(\omegagr)(T)\to \End(\omegagr)(T')$ is given
by $\End(\omegagr)(f)\bigl(\{\lambda_E\}\bigr)=\{ \lambda'_E\}$, where
$\lambda'_E=\Endgr\bigl(\omegagr(E)\bigr)(g)=\Endgr(E)(g)$ (see
Definition \ref{defn:gradedhom}).
  \end{defn}

  \begin{rem}
(1) By definition, an element of  $\End(\omegagr)(T) $ is a family $\bigl\{
\lambda_E=(f_E,\ell)\in \Endgr(E)(T)\mathrel{:} E\in \Rep(\mathcal
S)\bigr\}$ (see Remark \ref{rem:interpretation}).

\noindent (2) The monoid structure on $ \End(\omegagr)(T)$ is given by
vertical composition of the families: $\{\lambda_E\}\smallcirc
\{\mu_E\}=\{\lambda_E\smallcirc \mu_E\}$. The unit of the monoid is
the family $\bigl\{ (\id_{E\times T},0)\in \Endgr(E)(T)\bigr\}$. Notice
that $d(\lambda_E\smallcirc \mu_E)=d(\lambda_E)+d( \mu_E)\in A(T)$.
\end{rem}

\begin{defn}
  The \emph{degree map} $d_{\omegagr}: \End(\omegagr)\to A$ is given
  by $d_{\omegagr}(T)\bigl(\lambda \bigr)=d\bigl(\lambda_E\bigr)=
  d\bigl(\lambda_{\mathbb I}\bigr)$.

 We denote by $\End_0(\omegagr)\subset \End(\omegagr)$ the subfunctor
 of the families of degree $0$ $\End_0(\omegagr)(T)=\bigl\{\lambda
\in \End(\omegagr)(T) \mathrel{:} d_{\omegagr}(\lambda)=0\bigr\}$.
\end{defn}

  \begin{rem}
    It is clear the degree map $d$ is a morphism of functors on monoids,
    and that $\End_0(\omegagr)=\operatorname{Ker}(d)$.
  \end{rem}

  \begin{defn}
    Define the subfunctor on monoids
    $\Aut(\omegagr)\subset \End(\omegagr)$ by 
    \[
      \Aut(\omegagr)(T) =
\bigl\{\lambda=\{\lambda_E\} \mathrel{:} \lambda_E\in
\Autgr(E)(T)\bigr\} \subset \End(\omegagr)(T)
\]
and the
corresponding subfunctor $\Aut_0(\omegagr)\subset \End_0(\omegagr)$ by
$\Aut_0(\omegagr)=\operatorname{Ker}\bigl(d_{\omegagr}|_{_{\Aut(\omegagr)}}\bigr):\Aut(\omegagr)\to A$:
\[
\Aut_0(\omegagr)(T) =\{\lambda \in \Aut(\omegagr)(T):
d_{\omegagr}(\lambda)=0\}.
\]
\end{defn}

\begin{rem}\label{rem:interpretation2}
  In accordance with Corollary \ref{cor:endgrismon} and Lemma
  \ref{lem:Smorfarebundle},  if $E:=(\pi:E\to A)\in
  \Rep(\mathcal S)$, then $\Endgr\bigl(\omegagr(E)\bigr)=\Endgr(E)$
  and $\omegagr\bigl(\End_{\Rep(\mathcal S)}$ are a smooth
monoid scheme of finite type, and
$\Autgr\bigl(\omegagr(E)\bigr)\hookrightarrow
\Endgr\bigl(\omegagr(E)\bigr)$ and   $\omegagr\bigl(\Aut_{\Rep(\mathcal S)}(E)\bigr)\hookrightarrow
\omegagr\bigl(\End_{\Rep(\mathcal S)} (E)\bigr)$ are open
immersions.

On the other hand,  is not clear that the functor on monoids $\End(\omegagr)$ and
$\Aut(\omegagr)$  are representable, since the situation much more
complex as one must  take into account the
complete natural transformation --- i.e.~the family $\lambda= \{\lambda
_E \in \Endgr(E): E \in \Rep(\mathcal S)\}$ --- as well as all the
consistency conditions (see Remark \ref{rem:interpretation}).
\end{rem}

  Next we define a subfunctor on monoids of $\End(\omegagr)$ and the
  corresponding subfunctor on groups,   that will be crucial
  for the reconstruction process.

\begin{defn}\label{defn:endotimes}
 (1)  In the context above, we call $\Endwgr$ the subfunctor on monoids of $\End(\omegagr)$
  given  by the natural transformations  $\lambda
  \in \End(\omegagr)(T)$, $T\in \Schk$,  such that:

  \begin{itemize}
\item[(i)] $\lambda_{E_1\otimes E_2}=\lambda_{E_1}\otimes\lambda_{E_2}$
for all $E_1,E_2\in \Rep(\mathcal S)$ (see Lemma \ref{lem:tensordualhvb});  

\item[(ii)] If $\mathbb I=(p_2: \Bbbk \times A \to A)$ is the trivial
  representation (see Example \ref{eje:reptriv}), then
  $(\lambda_{{\mathbb I}},\ell)=(\id_{\Bbbk}\times t_\ell, \ell) \in
  \Endgr(\Bbbk\times A)(T)$.
\end{itemize}

  \noindent (2) We define the subfunctor on monoids  $\Autwgr(T) = \bigl\{\lambda_E:
 \lambda_E \text{ is an isomorphism}\bigr\}\subset \Endwgr(T)$, for
 $T\in \Schk$ --- notice that $\Autwgr$ is a functor on groups.
\end{defn}

\begin{rem}
It is clear that  $\Autwgr$ can also be seen as a subfunctor of
$\Aut(\omegagr)$. 
\end{rem}

\begin{ej}\label{exam:casoaffin3}
$G$ be an affine group and let $\mathcal S$ be the associated affine 
extension of $\Spec(\Bbbk)$, (see examples \ref{exam:rep} and
\ref{exam:casoaffin1}). In this case, $\omegagr: \Rep(\mathcal S)\to
\HVBG\bigl(\Spec(\Bbbk)\bigr))$ is the forgetful functor $\omega:\Rep(G)\to
\operatorname{Vect}_\Bbbk$. If $T=\Spec(R)\in \Schkaff$, since
$\Hom_{\Rep(S)}(V,W)(T)\cong \Hom_{\Rep(G)}(V,W)\otimes R$, we 
deduce  that a
$T$-natural transformation $\omegagr(T)\Rightarrow \omegagr(T)$ is a
family $\lambda_V:V\otimes R\to 
V\otimes R$ of $R$--linear morphisms, such that the following diagram
is commutative for all $f\in \Hom_{\Rep(G)}(V,W)$
\[
  \xymatrix{
    V\otimes R \ar[d]_{f\otimes \id_R}\ar[r]^{\lambda_V} & V\otimes R\ar[d]^{f\otimes
      \id_R}\\
    W\otimes R\ar[r]^{\lambda_W} &W\otimes R
    }
  \]

Moreover, in the notation of \cite[page 20]{kn:delmil}
$\{\lambda_R\}\in\Autwgr(T)$ if and and only if 
$\{\lambda_V\}\in \Aut^\otimes(\omega)$.
\end{ej}

\begin{defn}\label{defn:endotimes2} Adapting Definition
  \ref{defn:transfnatw} we can consider the forgetful functor
  $\omega_0:\Rep_0(\mathcal S) \to \HVB_0(A)$ and define    
  $\End(\omegaO)$
  as \[\End(\omegaO)=\bigl\{\zeta:\omegaO\Rightarrow \omegaO \mathrel{:} \zeta
  \textrm { is a natural transformation}\bigr\}.\]

  We can proceed similarly and define: $\Aut(\omegaO), \EndwO, \AutwO$.  
\end{defn}

\begin{rem}\label{rem:endotimes}
  It is easy to see that  the functor on  monoids $\End_0(\omegagr)$
  and $\End(\omegaO)$; 
  $\EndwgrO$ and $\EndwO$; $\Aut_0(\omegagr)$ and $\Aut(\omegaO)$ as
  well as $\AutwgrO$ and $\AutwO$ are isomorphic.  For example, it is
  clear that $d(\lambda_E, \ell)=0$ if and only if $\ell=0$ and
  $\lambda_E \in \End(\omegaO)$.
  \end{rem}

\begin{defn}
  \label{defn:monoidgen}
  \noindent (1) Given $E$ an object in $\RepO(\mathcal S)$ (or in $\Rep(\mathcal
  S)$) we call $\RepO(\mathcal S)_E$ the abelian (monoidal)
  subcategory of $\RepO(\mathcal S)$ generated by $E$ and
  $\Rep(\mathcal S)_E$ the wide extension (in $\Rep(\mathcal S)$)
  obtained by taking the   graded morphisms.

  \noindent (2) For $E\in \Rep(\mathcal S)$ we call $\omegagr|:
  \Rep(\mathcal S)_E \to \HVBG(A)$ the restriction of the forgetful
  functor $\omegagr:\Rep(\mathcal S)\to \HVBG(A)$ to the subcategory
  $\Rep(\mathcal S)_E$ and similarly for $\omegaO|: \Rep_0(\mathcal
  S)_E \to \HVBG(A)$ the restriction of 
  $\omegaO:\Rep_0(\mathcal S)\to \HVBG(A)$
\end{defn}

\begin{rem}
  \label{rems:endotimesE} \noindent (1) The two categories $\RepO(\mathcal S)_E$ and $\Rep(\mathcal
        S)_E$ are defined along the same lines than the constructions of
        Definition \ref{defn:maincat}.

  \noindent (2) The structures just defined are illustrated in the
  following commutative diagram:
\[
  \xymatrix@=1em{
            \RepO(\mathcal
            S)\ar[dd]_{\omegaO} \ar@{<-^)}[dr]\ar@{^(->}[rrr]&&&\Rep{\mathcal
              S}\ar[dd]^{\omegagr}\\ 
    &\RepO(\mathcal S)_E\ar[dl]_{\omegaO|}\ar@{}[r]|-*[@]{\subseteq}&\Rep(\mathcal S)_E\ar@{^(->}[ur]\ar[dr]^{\omegagr|}&\\
   \HVB_0(A)\ar@{^(->}[rrr]&&&\HVBG(A)}
 \]

  \noindent (3) If $\mu\in
\operatorname{End}^{\otimes}\bigl(\omegagr|_{_{\Rep(\mathcal
  S)_E}}\bigr)$,  from the conditions on the family $\mu$ it follows
that $\mu_E$ determines $\mu$. Moreover, the universal property of the category $\Rep(\mathcal S)_E$ guarantees that $\operatorname{End}^{\otimes}\bigl(\omegagr|_{_{\Rep(\mathcal
   S)_E}}\bigr)$ is isomorphic with a   closed  submonoid scheme  
of  $\Endgr\bigl(\omegagr(E)\bigr)$ and hence it  is a
 monoid scheme of finite type. 
\end{rem}

\begin{ej}[The universal extension of an abelian variety]
\label{ej:ueav}
In \cite{kn:brionfundgrp} and  \cite{kn:brionrepbund}, Brion
constructs  the projective
  cover of $A$ in the category of commutative pro-algebraic group
  schemes. This cover has associated an affine  extension $\mathcal
  G_A$ of anti-affine type,  called the \emph{universal extension of
    the abelian variety $A$}. We prove in this example   that
  $\Rep(\mathcal G_A)\cong \HBA$.

Given a homogeneous vector bundle $E\to A$, consider the smooth
  affine extension $\Authbext(E)$   (see Remark
\ref{rem:algclosforhomog}), and let
$\Authbext(E)_{\ant}$ be the
associated closed sub-extension of anti-affine type (see Theorem
\ref{thm:rosenftgs2}). Then, by  Corollary \ref{cor:repasindGant},  
 $E \cong \Autgr(E)_{\ant}\times^{\Autgr(E)_{\ant}\cap \AutO(E)}E_0$.

Consider an affine faithfully flat filtered system within the family of the
affine extensions $\Authbext(E)_{\ant}$, $E\in \HVBG(A)$ --- for example, such a family can by
constructed  using  the partial order $E\leq E'$
if $E\cong E'\oplus E''$ for some homogeneous vector bundle $E''$, see
the proof of Lemma \ref{lem:autw} ---. Then, taking limit on $E$ we  get a (commutative)
 affine extension $\mathcal G_A$ together with morphisms $ \varrho_E:
 \mathcal G_A\to \Authbext(E)_{\ant}$: 
\[
  \xymatrix{
   1\ar[r]&H_A\ar[d]^{\rho_E|_{_{H_A}}}\ar[r]&G_A\ar[d]^{\rho_E}\ar[r]^-{q}&
  \ar[r]\ar@{=}[d]
  A&0\\
1\ar[r]&\Autgr(E)_{\ant}\cap \AutO(E)\ar[r]&\Autgr(E)_{\ant}
 \ar[r]_-{q_E}&\ar[r]
    A&0
}
\]

The affine extension $\mathcal G_A$ is called the \emph{universal (anti-affine) extension
  of the abelian 
variety $A$}.

The equivalence of Brion's construction and the construction of
$\mathcal G_A$  as an  limit, is a direct consequence of the
Tannaka Duality Theorem \ref{thm:reconstruction}, see Example
\ref{ej:recforueav} below. 

Observe that the affine extension $\mathcal G_A$, being the  limit of 
extensions of anti-affine type, is also an extension of  anti-affine type,  by Theorem
\ref{thm:limantiaff}. 

Next, we prove that $\Rep(\mathcal G_A)\cong \HBA$.

If $E\to A$ is a homogeneous vector bundle, then the morphism
$\varrho_A:\mathcal G_A\to \Authbext(E)_{\ant}\subset \Authbext(E)$ is
a representation for $\mathcal G_A$. Consider the restricted action
$H_A\times E_0\to E_0$;  by Theorem \ref{thm:GaffG}
$G_A\times^{H_A}E_0\to A$ exists and is a $\mathcal
G_A$--module. Clearly, $E\cong G_A\times^{H_A}E_0$ in $\HVB_0(A)$, 
and therefore the vector bundles are isomorphic in $\HBA$.

Moreover, let $E,E'\in\HBA$ be two vector bundles and consider the
structures of $\mathcal G_A$--modules defined above. Then
${}^{G_A}\Homgr(E,E')=\Homgr(E,E')$. Indeed, the 
action  $G_A\times\Homgr(E,E')\to \Homgr(E,E')$ (given as in Remark
\ref{rem:Gequivariant}) is such that
$\Homgr(E,E')_a=\HomO(E,E')\otimes \Bbbk(a)$ is
$(G_A)_{\Bbbk(a)}$--stable for all $a\in A$. Thus the anti-affine
group $(G_A)_{\Bbbk(a)}$ acts trivially on $\Homgr(E,E')_a$, since
$\HomO(E,E')_a$ is an affine $\Bbbk(a)$--space. It follows that $G_A$
acts trivially on $\Homgr(E,E')$ and $\Hom_{\mathcal G_A}(E,E')=\Homgr(E,E')$.

   The remarks above clearly show that the category  $\Rep(\mathcal G_A)$ is
   equivalent to
   $\HBA$.
  \end{ej}

  \begin{ej}
   Recall that any affine group scheme $G$ can be interpreted as an
   affine  extension of the trivial abelian variety $A=\Spec
   (\Bbbk)$ (see Example \ref{exam:trivials}); in particular,
   the trivial group $\Spec(\Bbbk)$ corresponds to the sequence
   \[
     {\xymatrixcolsep{1.5pc} \xymatrix{\mathcal E:& 1\ar[r] &
     \Spec(\Bbbk)\ar[r] &\Spec(\Bbbk)\ar[r] &\Spec{(\Bbbk)}\ar[r]
     &0}}.
 \]
 Analogously, the category $\HVB_0\bigl(\Spec(\Bbbk)\bigr)$
   is equivalent to $\operatorname{Vect}_\Bbbk$.

   Moreover,  $\Rep(\mathcal
   E)=\HVB_0\bigl(\Spec(\Bbbk)\bigr)\cong
   \operatorname{Vect}_\Bbbk=\Rep\bigl(\Spec(\Bbbk)\bigr)$. 
 On the other hand, since 
$\Autgr(V)=\operatorname{GL}(V)$ and that
$\operatorname{GL}(V)_{\ant}=\Spec(\Bbbk)$, it follows that
$\mathcal G_{\Spec(\Bbbk)}$ is the  limit of the constant trivial
extension $\mathcal E$. Hence, $\mathcal
G_{\Spec(\Bbbk)}=\mathcal E$ and in particular
$G_{\Spec(\Bbbk)}=\Spec(\Bbbk)$ --- as expected 
from the Tannaka Duality Theorem for affine group schemes applied to
the category $\operatorname{Vect}_\Bbbk $ with the identity as
forgetful functor.
  \end{ej}

The
  definition that follows is the natural generalization of the  one
  referred to in the affine case.

\begin{defn}
An $\mathcal S$--module $E\in \Rep(\mathcal S)$ is \emph{faithful} is the
corresponding morphism $\mathcal S\to\Authbext (E)$ is a closed
immersion of affine extensions. 
\end{defn}

\begin{rem}
Let $\mathcal S:$ $\ate$ be an affine  extension
and $\varrho: \mathcal S\to \Authbext(E)$ be a  representation. Since
$H\hookrightarrow G$ is a  closed immersion, it follows that $\varrho$
is faithful if and only if $\rho:G\to\Autgr(E)$ is a closed immersion,
if and only if $\rho$ is an immersion (since $G$ is a quasi-compact
group scheme, see
Theorem \ref{thm:perrinigame}).
\end{rem}

\begin{thm}
\label{thm:ftypefaith}
Let $\mathcal S:$ $\ate$ be an affine  extension. Then $\mathcal S$ is of finite type if and only if there exists a
faithful $\mathcal S$--module $E\in \Rep(\mathcal S)$.
\end{thm}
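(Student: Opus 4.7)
I will prove both implications separately, with the ``if'' direction being essentially formal and the ``only if'' direction reducing to the classical existence of a faithful representation for an affine group scheme of finite type.

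For the easy direction, suppose $E\in\Rep(\mathcal S)$ is faithful, so that the associated $\rho:G\to\Autgr(E)$ is a closed immersion. By Remark \ref{rem:algclosforhomog}(2), $\Autgr(E)$ is a group scheme of finite type, hence so is its closed subscheme $G$. By Remark \ref{rem:torsoraffine1}(3), $\mathcal S$ is then of finite type.

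For the converse, assume $\mathcal S$ is of finite type, so that $H$ is an affine group scheme of finite type. The classical theory of affine group schemes provides a finite-dimensional faithful $H$--module $V$, i.e.\ a closed immersion $H\hookrightarrow \operatorname{GL}(V)$. By Theorem \ref{thm:indesp1} (and Theorem \ref{thm:GaffG}), the induced space $E_V=G\times^H V$ is a homogeneous vector bundle over $A$ that is naturally an $\mathcal S$--module, with fiber $(E_V)_0\cong V$ and with the $H$--action on the zero fiber equal to the original action on $V$. Let $\varrho:\mathcal S\to \Authbext(E_V)$ be the associated morphism of extensions; I need to show $\varrho$ is a closed immersion, which (by Remark \ref{rem:torsoraffine1}(4)) amounts to $\rho:G\to\Autgr(E_V)$ being a closed immersion.

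The key step is to check that $\rho|_H:H\to\AutO(E_V)$ is a closed immersion. Evaluating on the zero fiber gives a morphism of group schemes $\operatorname{ev}_0:\AutO(E_V)\to\operatorname{GL}(V)$ whose composition with $\rho|_H$ is the original faithful representation $H\hookrightarrow\operatorname{GL}(V)$. Hence $\ker(\rho|_H)$ is trivial; since both $H$ and $\AutO(E_V)$ are of finite type, Theorem \ref{thm:perrinigame} (applied to $\rho|_H$, which is quasi-compact) implies $\rho|_H$ factors through $H/\ker(\rho|_H)=H$ as a closed immersion.

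Finally, the commutativity of the extension diagram yields $\ker(\rho)\subseteq \ker(q)=H$, so $\ker(\rho)=\ker(\rho)\cap H=\ker(\rho|_H)$ is trivial. Applying Theorem \ref{thm:perrinigame} once more to the quasi-compact morphism $\rho:G\to\Autgr(E_V)$ (both schemes of finite type), it follows that $\rho$ is a closed immersion. Therefore $E_V$ is faithful. The main (mild) obstacle is just keeping track of the identifications needed to invoke Perrin's theorem: namely verifying quasi-compactness and triviality of kernels, which both follow cleanly in the finite type setting.
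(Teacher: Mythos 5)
Your proof is correct and follows essentially the same route as the paper: for the hard direction you induce $E_V=G\times^HV$ from a faithful finite-dimensional $H$--module and use $\Ker(\rho)\subset H$ together with Perrin's theorem to conclude that $\rho$ is a closed immersion, which is exactly the paper's argument (with the intermediate steps spelled out in more detail). The only cosmetic difference is in the easy direction, where you deduce finiteness of type from $G$ being a closed subgroup scheme of the finite type group $\Autgr(E)$, while the paper restricts to the fiber $E_0$ to get a faithful representation of $H$; both are immediate.
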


\pf
Recall that $G$ is of finite type if and only if $H$ is so (see Remark
\ref{rem:torsoraffine1}).
If $H$ is of finite type, then there exists a faithful representation
$\rho_V: H \hookrightarrow \operatorname{GL}(V)$. Consider the induced
$\mathcal S$--module
$E_V=G\times^HV$ (see Theorem \ref{thm:GaffG}). Then
we have a morphism  of
affine  extensions
\[
\xymatrix{
\mathcal S:\ar[d]_{\varrho}& 1\ar[r]& H\ar[r]\ar[d]_{\rho |_{_H}} & G\ar[r]^q\ar[d]_\rho  & A\ar[r]\ar@{=}[d]& 0 \\
\Authbext (E_V): & 1\ar[r]& \AutO(E)\ar[r] & \Autgr(E)\ar[r]  & A\ar[r]& 0 
}
\]
where $\rho |_{_H}: H\to \AutO(E)$ is a closed
immersion. It follows that $\varrho$ is a closed immersion (since
$\operatorname{Ker}(\rho)\subset H$). 

On the other hand, if there exists a faithful representation $\varrho:
\mathcal S\to
\Autgr(E)$, then the restriction $\rho |_{_{H}}:H\to \AutO(E)$ is
a closed immersion. It follows that the restriction
$\overline{\varrho} |_{_{H\times
  E_0}}: H\times E_0\to E_0$ is  a faithful representation of
$H$. Therefore, $\mathcal S$ is of finite type.
\qed

\begin{lem}
\label{lem:stabilizer}
Let $\mathcal S:$ $\ate$ be an affine  extension, and
$\mathcal S':$ $ {\xymatrixcolsep{0.5pc}\xymatrix{1\ar[rr]&&
H' \ar[rr]&& G'\ar[rrr]^-{q|_{_{G'}}}&&& A\ar[rr]&& 0}}$  a closed
sub-extension  of $\mathcal S$. Then there exists a
homogeneous vector bundle $E\in \Rep(\mathcal S)$ and a homogeneous
line sub-bundle $L\subset E$, such that $G'$ is 
the stabilizer of $L$, that is  for all schemes $T$,
\[
G'(T)=\bigl\{
g\in G(T)\mathrel{:} g \text{ induces a $T$--automorphism in }L\times T\bigr\}
\]
(see for example \cite[\S\ 2.2]{kn:brionchev}).
\end{lem}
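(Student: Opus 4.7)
My plan is to reduce the problem to the classical Chevalley-type theorem for the affine group scheme $H$, via the equivalence $\RepO(\mathcal S) \simeq \Rep_{\operatorname{fin}}(H)$ established in Theorem \ref{thm:GaffG}. Since (as indicated in the introduction) the lemma concerns the finite-type case, $H$ is an affine group scheme of finite type, and the classical Chevalley theorem supplies a finite-dimensional rational $H$-module $V$ together with a line $L_0 \subset V$ such that $H' = \operatorname{Stab}_H(L_0)$ as closed subgroup schemes of $H$.

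Armed with $(V, L_0)$, I would construct $E$ and $L$ as induced bundles. Set $E := G \times^H V$, which by Theorem \ref{thm:indesp1} is a homogeneous vector bundle on $A$ and an $\mathcal S$-module with fiber at $0$ isomorphic to $V$. Since $L_0$ is $H'$-stable, restricting the action turns $L_0$ into an $H'$-module, so that $L := G' \times^{H'} L_0$ is a homogeneous line bundle on $A$ (Lemma \ref{lem:homfibbun}) carrying a natural $G'$-action. The inclusion $G' \times L_0 \hookrightarrow G \times V$ descends to a morphism $L \hookrightarrow E$, which is injective: if $(g'_1, \ell_1), (g'_2, \ell_2) \in G' \times L_0$ have equal images in $E$, then $(g'_2, \ell_2) = (g'_1 h^{-1}, h\ell_1)$ for some $h \in H$, and since $h\ell_1 = \ell_2 \in L_0$ and $L_0$ is a line, $h$ stabilizes $L_0$, forcing $h \in H'$.

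For the stabilizer computation, the containment $G'(T) \subseteq \operatorname{Stab}_G(L)(T)$ follows directly from the construction of $L$. For the converse, take $g \in G(T)$ stabilizing $L_T$. Using that $q|_{G'} : G' \to A$ is faithfully flat, pick an fpqc cover $\sigma: T' \to T$ and a $T'$-point $g' \in G'(T')$ with $q(g') = q(g) \circ \sigma$. Then $g^{-1} g' \in G(T')$ has trivial image in $A$, hence lies in $H(T')$; since both $g$ and $g'$ induce $T'$-automorphisms of $L_{T'}$, the element $g^{-1} g'$ preserves the fiber $L_0$ of $L$ over $0 \in A$, so it lies in $\operatorname{Stab}_H(L_0)(T') = H'(T')$. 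This gives $g \in G'(T')$, and fpqc descent for the closed immersion $G' \hookrightarrow G$ (cf.~Lemma \ref{lem:ffandpoints}) yields $g \in G'(T)$.

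The main obstacle I anticipate is the careful functorial bookkeeping in the stabilizer argument: one must track the degree of $g \in G(T)$, extract from the scheme-theoretic stabilization of the sub-bundle $L$ the fiberwise stabilization of $L_0 \subset V$, and handle everything at the level of $T$-points after an fpqc extension. The classical Chevalley input of Step~1 is off-the-shelf and the two induced-space constructions of Step~2 are parallel applications of Theorem \ref{thm:indesp1}; the genuine content is verifying the identification of stabilizers in Step~3.
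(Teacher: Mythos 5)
Your proposal follows essentially the same route as the paper: the classical Chevalley theorem for $H'\subset H$, the induced bundles $E=G\times^HV$ and $L=G'\times^{H'}L_0$ with the immersion $L\hookrightarrow E$, and the fpqc-descent argument (via Lemma \ref{lem:ffandpoints}) reducing the stabilizer computation to $\operatorname{Stab}_H(L_0)=H'$. The only point where the paper's argument is slightly cleaner is the injectivity of $L\to E$: rather than arguing that $h\ell_1=\ell_2\in L_0$ forces $h$ to stabilize $L_0$ (which degenerates when $\ell_1=0$), the paper observes directly that $h=g_2^{-1}g_1\in G'\cap H=H'$.
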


\pf It is well known that given the pair $H'\subset H$ as above, there
exists a finite dimensional $H$--module $V$ and a one dimensional
subspace $W\subset V$ such that $H'$ is the stabilizer of $W$,
i.e.~$H'$ is the largest closed group subscheme of $H$ such that
$H'\cdot W\subset W$ (see for example \cite[Chapter 8, Theorem
  2.3]{fer-ritt}).  Since $\mathcal S'$ is an affine extension, it
follows from Theorem \ref{thm:GaffG} that the quotients
$E_V=G\times^HV$ and $E_W=G'\times^{H'}W$ exist and are
representations of the extensions $\mathcal S$ and $\mathcal S'$
respectively.  We affirm that $\varphi: E_W\to E_V$, the morphism
induced by the canonical morphism $G' \times W\to E_V$, $(g,w)\mapsto
[g,w]$; is an immersion of vector bundles. Indeed, if
$\xi_i=[g_i,w_i]\in E_W= G' \times^{H'}W$, $i=1,2$, such that
$[g_1,w_1]=[g_2,w_2] \in E_V=G\times^{H}V$, then there exists $h\in H$
such that $g_2h=g_1$ and $w_1=h\cdot w_2$. It follows that $h\in G'$
and therefore $h\in H'$; hence, $\xi_1=\xi_2$.

 Let $L=\varphi(E_W)\subset E_V$ be the subvector
 bundle image of 
 $\varphi$; we prove that $L\subset E_V$  does the required job for $G$ and $G'$.
 Let $g\in G$ be such that $g\cdot L=L$; we want to prove
 that $g\in G'$.  Since $g$ stabilizes $L$, it follows that $g\cdot
 [g_1,w_1]=[gg_1,w_1]\in L$ for all $[g_1,w_1]\in L$; therefore there exist 
 $g_2\in G'$, $w_2\in W$ such
 that $[gg_1,w_1]= [g_2,w_2]$. 

 Assume that $g\in H$. If moreover $g_1=1$, then
$[g_2,w_2]=[g,w_1]=[1,gw_1]$, and  there exists $t\in H$ such that
$t=g_2$ and $t w_2= g w_1$. It follows that $t\in H\cap G'$,
and thus $gw_1\in
W$ for all $w_1\in W$. Therefore, $g\in H'$.

If $g\in G(T)$ is arbitrary,   let  $f:T'\to T$ a fpqc morphism and $c\in
 G'\bigl(T')$,  $q\smallcirc c = q\smallcirc g\smallcirc f \in A(T')$ (such a pair
 $(f,c)$ exists 
because $\mathcal S'$ is a short exact sequence). Then  $(g\smallcirc f) c^{-1}\in
H(T')$  
stabilizes $L\bigl(T'\bigr)$ and therefore $(g\smallcirc f)c^{-1}\in
H'(T')$. It follows that $g\smallcirc f \in
G'(T')$, and hence $g\in G'(T)$ (indeed $f$ is a faithfully flat
morphism and hence we can apply Lemma \ref{lem:ffandpoints}).
\qed

\section{Recovering an   affine  extension from its representations}
\label{sect:reconstruction}

In this section we fix an
   affine  extension $\mathcal S:$ $\ate$,  $\mathcal S=\lim
  \mathcal S_\alpha$, where $\bigl\{\mathcal S_\alpha:
  \!\!{\xymatrixcolsep{1.2pc}\xymatrix{1\ar[r]& H_\alpha \ar[r]&
      G_\alpha \ar[r]^{q_\alpha}& A \ar[r]& 0}};
  \phi_{\alpha,\beta}\}_{\alpha,\beta \in I}$ is an (affine)
  faithfully flat filtered system
  of  affine  extensions of finite type. Call
  $\phi_\alpha:\mathcal S \to \mathcal S_\alpha$ the canonical maps
  depicted in the diagram below:
\[
\xymatrix{\mathcal S:\ar[d]^{\phi_\alpha}& 1\ar[r]& H
  \ar[d]_{f_{\alpha}|_{_H}}\ar[r]&
  G\ar[d]^{f_{\alpha}}\ar[r]^{q} & A\ar@{=}[d]\ar[r] &0 \\ \mathcal
  S_\alpha:&1\ar[r]& H_\alpha \ar[r]& G_\alpha\ar[r]^{q_\alpha} &
  A\ar[r] &0.}
\]

 As in the classical case of Tannaka Duality
for affine group schemes, given now the more general situation of an
affine  extension $\mathcal S$ and the
category $\Rep(\mathcal S)$, we characterize
 $G$ as the group scheme consisting of all the (families of)
 automorphisms of the objects $E \in \Rep(\mathcal S)$ 
that commute with all the morphisms of the category  $f\in\Hom_{\Rep(\mathcal S)}(E,E')$ and
 that satisfy  additional compatibility conditions related to the abelian and
monoidal properties of $\Rep(\mathcal S)$. In order to formalize this
idea, we will make use of the forgetful functors
$\omegagr:\Rep(\mathcal S)\to \HVBG(A)$ and $\omegaO: \RepO(\mathcal
S)\to \HVB_0(A)$, as well as the associated functors on groups
$\Autwgr$ and $\AutwgrO\cong \AutwO$ (see definitions \ref{def:repS}
and \ref{defn:endotimes}).

Following the usual pattern and similarly to the classical case, we first treat the
problem in the ``finite type'' setting, and then take 
limits.

  \begin{rem}
\label{rems:autw}
(1) By definition, $(\lambda_E,\ell)_E\in \Autwgr(T)$ if

\noindent (i) The morphisms  $\lambda_{E}$ fit in
the commutative diagram of $T$--schemes
\[
{\xymatrixcolsep{2.8pc} 
\xymatrixrowsep{1.7pc} 
\xymatrix{ 
E_T=E\times T\ar[d]_{\pi_E\times \id_T}\ar[r]^-{\lambda_{E}}&E\times T\ar[d]^{\pi_E\times \id_T}\\
A_T=A\times T\ar[r]_-{t_\ell}& A\times T
}}
\]
for all $E\in\Rep(\mathcal S)$, and the induced morphisms
$\widehat{\lambda_E}: E_T\to
t_\ell^*(E_T)$ are isomorphisms of $A_T$--vector bundles (recall that
$\ell\in A(T)$);

\noindent (ii)   for all $E,E'\in
\Rep(\mathcal S)$ we have equalities of morphisms of $A_T$--vector bundles  $\widehat{\lambda_{E \otimes E'}} =
\widehat{ \lambda_{E}}\otimes\widehat{\lambda_{E'}} :(E_T\otimes E'_T)\to t_\ell^*(E_T\otimes
E'_T)$;

\noindent (iii)  
$\lambda_{{\mathbb I}}=\bigl(\id_\Bbbk\times t_\ell, \ell\bigr): (\Bbbk\times A) \times T\to (\Bbbk\times
A)\times T$, where $\mathbb I$ is the trivial representation, and

\noindent (iv) for every
$G$-equivariant morphism $(f, b) \in \Hom_{\Rep(\mathcal S)}(E,E')(T)$ the following diagram of morphisms
of $T$--schemes is commutative:
\[
{\xymatrixcolsep{3pc}
\xymatrix{ 
E\times T\ar[r]^{f}\ar[d]_{\lambda_{E}}& E'\times 
T\ar[d]^{\lambda_{E'}}\\
E\times T\ar[r]_{f}& E'\times 
T
 }}
\]

  \noindent (2) There exists a canonical morphism (natural
  transformation) from the group functor $G$ into
   $\Autwgr$, given as follows. If $T$ is a
  scheme, we consider the morphism of groups $(g:T\to G) \mapsto
  \overline{g}=\bigl(\rho_E(T)(g)\bigr)_E: G(T) \to \Autwgr(T)$,
  where $\varrho_E:\mathcal S\to \Authbext(E)$ is as usual the
  morphism of affine extensions associated to the representation $E$.

Observe that if $\rho_E(T)(g)=(\rho_g,b)$, then the morphisms of
$T$--schemes $\rho_g:E_T\to E_T$ satisfy the
following commutative diagram.
  \[
        {\xymatrixcolsep{2.8pc} \xymatrixrowsep{1.7pc} \xymatrix{ E
            \times T\ar[d]_{\pi_{E}\times
              \id_T}\ar[rr]^{\rho_g}&&E \times
            T\ar[d]^{\pi_{E}\times \id_T}\\ A \times
            T\ar[rr]_{t_b}&& A\times T,}}
      \]
        and induce   morphisms of $A_T$--vector bundles
        $\widehat{\rho_E(T)(g)}:E_T\to t_b^*E_T$. Moreover, by definition of
        $\Rep(\mathcal S)$,  the commutativity of
        the maps $\overline{g}$ with the maps that come from applying
        the forgetful functor (condition stated in Remark
        \ref{rems:autw}, (iv))  follows directly.  Regarding the other requirements in
        the remarks just mentioned we have that condition (i) was
        already checked, and conditions (ii) and (iii) are direct.  

\noindent (3) Let $E\in\Rep(\mathcal S)$ and consider the restriction
of the forgetful functor 
$\omegagr:\Rep(\mathcal S)\to \HVBG(A)$ to the subcategory
$\Rep(\mathcal S)_E$ (see Definition \ref{defn:monoidgen}
and Remark \ref{rems:endotimesE}) and construct
the corresponding group functor $\Autwe$. Then the map $\lambda\mapsto
\lambda_E$ identifies $\Autwe$ as a group subfunctor with its image in
$\Autgr(E)\subset \Aut(E)$. Moreover, it follows (in a similar manner
than in the mentioned remark)
  that $\Autwe$ can be identified with a closed subgroup scheme of
    the smooth group scheme of finite type  $\Autgr(E)$ and therefore $\Autwe$ is of
    finite type --- $\Autwe$ is the unit group of the algebraic monoid
  scheme $\operatorname{End}^{\otimes}\bigl(\omegagr|_{_{\Rep(\mathcal
    S)_E}}\bigr)$.
\end{rem}
  
\begin{rem}
  \label{rem:autw3}
Let $(E,\varrho_E)\in \Rep(\mathcal S)$. We denote the
scheme theoretic image  $\rho_E(G)$ by $G_E$. Since $\Autgr(E)$ of
finite type, it follows that $G_E$ is a group scheme  of finite type, 
and the morphism $\varrho_E$ factors through an affine 
subextension $\mathcal S_E$ as follows
\[
  {\xymatrixcolsep{1.5pc}
    \xymatrix{
\mathcal S:\ar[d]^{\varrho_E}& 1\ar[r]&  H\ar[r]\ar[d]&  G \ar[d]^{\rho_E}\ar[r]^q& A \ar@{=}[d]\ar[r]&  0\\
\mathcal S_E:\ar@{^(->}[d]& 1\ar[r]& (G_E)_0=\rho_E(H)\ar@{^(->}[d]\ar[r]& G_E\ar@{^(->}[d]\ar[r]^-{d_E|_{_{G_E}}}& A \ar@{=}[d]\ar[r]&  0\\
\Authbext(E): & 1\ar[r]& \AutO(E)\ar[r]& \Autgr(E)\ar[r]_-{d_E}& A \ar[r]&  0
} }
\]
\end{rem}

\begin{lem}
\label{lem:repGX}
Let $E\in \Rep (\mathcal S)$. Then $\Rep(\mathcal S)_E\cong
\Rep(\mathcal S_E)$. Moreover, the canonical inclusion
$G_E\hookrightarrow 
\Autwe$ is an isomorphism. In particular, the
corresponding 
affine  extensions are isomorphic.
\end{lem}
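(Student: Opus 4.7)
The plan is to prove first the equivalence of categories $\Rep(\mathcal S)_E \cong \Rep(\mathcal S_E)$, then identify the group functors $G_E$ and $\Autwe$ via a two-step argument, handling first the degree-zero case and then arbitrary degrees by faithfully flat descent. The resulting identification of affine extensions is automatic once both steps are established.

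For the equivalence, the representation morphism $\varrho_E$ factors through $\mathcal S_E$ by Remark \ref{rem:autw3}, and $\rho_E : G \to G_E$ is faithfully flat by Theorem \ref{thm:perrinigame}. Thus a morphism of vector bundles is $G$-equivariant if and only if it is $G_E$-equivariant, so pullback along $\varrho_E$ defines a fully faithful functor $\Phi : \Rep(\mathcal S_E) \to \Rep(\mathcal S)$. Its essential image contains $E$ and is stable under $\oplus$, $\otimes$, duals, kernels, cokernels, and graded morphisms, hence contains $\Rep(\mathcal S)_E$. Conversely, $\mathcal S_E$ is of finite type since $G_E$ is closed in the group scheme of finite type $\Autgr(E)$, and $E$ is a faithful $\mathcal S_E$-module; by Theorem \ref{thm:GaffG}, $\RepO(\mathcal S_E) \cong \Rep_{\operatorname{fin}}(H_E)$ sends $E$ to the faithful $H_E$-module $E_0$, and the classical fact that $\Rep_{\operatorname{fin}}(H_E)$ is generated as an abelian monoidal rigid category by any faithful module (\cite[Proposition 3.1]{kn:delmil}) yields $\Rep(\mathcal S_E) = \Rep(\mathcal S_E)_E$. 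Hence $\Phi$ induces the desired equivalence.

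For the identification of groups, the inclusion $G_E \subset \Autwe$ inside $\Autgr(E)$ is immediate, since every $g \in G_E$ yields a compatible family of automorphisms preserving morphisms and tensor products in $\Rep(\mathcal S)_E$. Both subgroups carry the degree map $d$ to $A$ and $d|_{G_E}$ is faithfully flat since $\mathcal S_E$ is an affine extension. I first show $H_E = \AutwgrOe$. Under the equivalence $\RepO(\mathcal S)_E \cong \Rep(H_E)_{E_0}$ given by $F \mapsto F_0$, restriction to the fiber over $0 \in A$ defines $r : \AutwgrOe \to \operatorname{Aut}^{\otimes}\bigl(\omega_{E_0}|_{\Rep(H_E)_{E_0}}\bigr) = H_E$, the last equality being the classical Tannaka duality of Saavedra--Deligne--Milne. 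The natural $G_E$-action on each $F$ gives a section $\sigma : H_E \to \AutwgrOe$ with $r \circ \sigma = \id$. For the other composition, fix $\lambda \in \AutwgrOe(T)$ and $h := r(\lambda)$; we must show $\lambda_F = \rho_F(h)$ for every $F \in \RepO(\mathcal S)_E$. The key observation is that since $G_{E,\ant}$ is central in $G_E$ by Theorem \ref{thm:rosenftgs2}, for every $z \in G_{E,\ant}(T')$ the action $z \cdot - : F \to F$ is a $G_E$-equivariant graded morphism, hence a morphism of $\Rep(\mathcal S)_E$, and $\lambda_F$ commutes with it. By the Rosenlicht decomposition $G_E = H_E \cdot G_{E,\ant}$, every point of $F_{T'}$ is fpqc-locally of the form $z \cdot [1,w]$ with $z \in G_{E,\ant}$ and $w \in F_0$; using the relations $[gh',v] = [g,h'v]$ for $h' \in H_E$ and centrality of $z$, both $\lambda_F(z \cdot [1,w])$ and $\rho_F(h)(z \cdot [1,w])$ reduce to $[z,hw]$, so $\lambda_F = \rho_F(h)$ by fpqc descent.

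Finally, for arbitrary degree: given $\lambda \in \Autwe(T)$ of degree $\ell \in A(T)$, the faithful flatness of $d|_{G_E}$ provides an fpqc cover $T' \to T$ and $g \in G_E(T')$ with $d(g) = \ell|_{T'}$. Then $\lambda|_{T'} \cdot g^{-1}$ has degree zero, hence lies in $\AutwgrOe(T') = H_E(T') \subset G_E(T')$ by the previous step, so $\lambda|_{T'} \in G_E(T')$, and fpqc descent gives $\lambda \in G_E(T)$. The resulting identification $G_E = \Autwe$ respects the degree maps to $A$ and restricts to $H_E = \AutwgrOe$ on kernels, so the corresponding affine extensions are isomorphic. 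The main obstacle is the degree-zero identification, where a priori an element of $\AutwgrOe$ is only a collection of vector bundle automorphisms rather than $G_E$-equivariant maps; the required rigidity must be extracted from the rich supply of central graded morphisms furnished by $G_{E,\ant}$.
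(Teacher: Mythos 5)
Your proposal is correct, but it reaches the key identification $G_E=\Autwe$ by a genuinely different route than the paper. The paper's proof is a single application of its Chevalley-type Lemma \ref{lem:stabilizer}: it realizes $G_E$ as the stabilizer of a line sub-bundle $L\subset F$ with $F\in\Rep(\mathcal S)_E$ an $\Autgr(E)$--module, and then observes that any $\lambda\in\Autwe$ must stabilize $L$ simply because it commutes with the inclusion $L\hookrightarrow F$, which is a morphism of the category; this forces $\Autwe\subseteq G_E$ in one stroke and mirrors the classical Deligne--Milne argument. You instead split into a degree-zero step --- identifying $\AutwgrOe$ with $H_E$ via classical Tannaka reconstruction on the fiber over $0$, and then pinning down $\lambda_F$ on all of $F$ using the graded morphisms $z\cdot(-)$ for $z\in G_{E,\ant}$ together with Corollary \ref{cor:repasindGant} --- followed by a reduction of arbitrary degree to degree zero by translating with a point of $G_E$ and descending along Lemma \ref{lem:ffandpoints}. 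Your route is longer but makes explicit exactly which structural inputs (Theorem \ref{thm:GaffG}, the Rosenlicht decomposition, surjectivity of $d|_{G_E}$) are doing the work, and it avoids invoking Lemma \ref{lem:stabilizer} altogether; the paper's route is shorter but hides the degree bookkeeping inside the stabilizer lemma. One small caveat: your rigidity step uses that $G_{E,\ant}$ is central in all of $G_E$ (so that $z\cdot(-)$ is $G_E$--equivariant), whereas Theorem \ref{thm:rosenftgs2} only asserts centrality in $G_E^0$; this is however exactly the convention the paper itself uses elsewhere (e.g.\ in lemmas \ref{lem:Smorfarebundle} and \ref{lem:fibrasiso}), so it is not a defect specific to your argument. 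Both treatments agree on the first part of the statement, the equivalence $\Rep(\mathcal S)_E\cong\Rep(\mathcal S_E)$ via faithfulness of $E_0$ over $H_E$.
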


\pf
Recall that $G_E\subset \Autgr(E)$ is a closed  subgroup scheme, and
hence of finite type.
Since any representation of $G_E$ (resp.~$\Autgr(E)$) is a $G$-homogeneous vector bundle, and that
$E$ is a faithful representation of $G_E$  (resp.~$\Autgr(E)$), it
follows that any 
representation of $G_E$ (resp.~$\Autgr(E)$)  belongs to $\Rep(\mathcal
S)_E$. Indeed, 
it follows from Theorem \ref{thm:GaffG} that $E_0$ is a faithful
representation of $(G_E)_{0}$ and $\AutO(E)$; therefore,
any $(G_E)_{0}$--module (resp.~$\AutO(E)$--module) belongs
to $(\operatorname{Vect}_\Bbbk )_{E_0}$ (see for example 
\cite[\S~3.5]{kn:waterhouse}). Applying again Theorem \ref{thm:GaffG}
we deduce that 
$\operatorname{Obj}\bigl(\Rep(\mathcal S_E)\bigr)=\operatorname{Obj}\bigl(\RepO(\mathcal S_E)\bigr)$
and
$\operatorname{Obj}\bigl(\Rep\bigl(\Autgr(E)\bigr)\bigr)=\operatorname{Obj}\bigl(\RepO
\bigl(\Autgr(E)\bigr)\bigr)$
are contained in $\operatorname{Obj}\bigl(\Rep(\mathcal S)_E\bigr)$.

Let $F\in \Rep(\mathcal S)_E$ be a $\Autgr(E)$--homogeneous vector
bundle and $L\subset F$ a $G_E$--line sub-bundle. We
affirm  that $\Autwe$
stabilizes $L$. If this is the case, since $\Autwe\subset \Autgr(E)$
(by the assignment $(\lambda_{E'},\ell)\mapsto (\lambda_E, \ell)$) is an
closed subgroup scheme, it follows from Lemma
\ref{lem:stabilizer} applied to $G_E\subset \Autgr(E)$ that
$\Autwe=G_E$; in particular, notice  that $\AutO(E)=(G_E)_0$.

Let $L\subset F$ as before; then the morphism $\rho_E:G\to \Autgr(E)$
induces  $G$--linearizations on $L$ and $F$.
 Since the inclusion $\iota:L\hookrightarrow
F$ is $G_E$--equivariant, it is also $G$--equivariant, and it follows
that if $T$ is a $\Bbbk$--scheme, $g\in G(T)$,
$(\lambda_{E'},b)_{E'}\in \Autwe(T)$  and
$(\ell,t)\in L\times T$, then 
\[
\lambda_E(\ell,t)=\bigl(\lambda_E \smallcirc(\iota\times \id_T)\bigr)(\ell,t)=
\bigl((\iota\times \id_T)
\smallcirc \lambda_{L}\bigr)(\ell,t)\in L\times T .
\]
In other words, $(\lambda_E,b)$ stabilizes $L$, and therefore
$(\lambda_{E'},b)_{E'}\in \Autgr(E)$ stabilizes $L$.\qed

\begin{lem}
\label{lem:autw}
Let $\mathcal S$ be an  affine  extension. Let
$\AutwgrO\subset \Autwgr$ be the subgroup functor constructed in
Definition \ref{defn:endotimes}.
Then the sequence 
\[
{\xymatrixcolsep{0.7pc}\xymatrix{\operatorname {\mathit{Aut}}^\otimes(\omegagr) :&  1\ar[rr]&&\AutwgrO=\AutwO\ar[rr]&& \Autwgr \ar[rr]^(.60){q_\omegagr}&& A
  \ar[rr]&& 0}}
\]
is the  limit of  the affine faithfully flat filtered system of  affine extensions of finite type
$\operatorname{\mathit{Aut}}^\otimes\bigl(\omegagr |_{_{\Rep(\mathcal S_E)}}\bigr)$:
\[
{\xymatrixcolsep{1em}\xymatrix{  1\ar[r]& \AutwgrOe=\AutweO \ar[r]& \Autwe  \ar[rrr]^-{q_{\omegagr|_{_{\Rep(\mathcal S)_E}}}}&&& A\ar[r] & 0}}
\]
where the system is directed as follows: if $E,E' \in \Rep(\mathcal S)$, then
$E'\geq E$ if and only $E=E'\oplus F$ for some  $F\in \Rep(\mathcal
S)$, with transition morphisms given by restriction.

In particular, $\operatorname {\mathit{Aut}}^\otimes(\omegagr) $ is an affine  extension. 
\end{lem}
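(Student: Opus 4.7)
My plan is to reduce the lemma to an application of Proposition \ref{prop:flatness} by first identifying each $\operatorname{Aut}^\otimes\bigl(\omegagr|_{\Rep(\mathcal S)_E}\bigr)$ with a concrete affine extension of finite type, and then showing that $\Autwgr$ is the inverse limit of this system.

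\medskip

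First, I would verify that the indexing poset is upper directed: given $E,E'\in \Rep(\mathcal S)$, the $\mathcal S$--module $E\oplus E'$ dominates both (with $E$ and $E'$ as direct summands), so the system has a well--defined inverse limit. Next, by Lemma \ref{lem:repGX} and Remark \ref{rem:autw3}, for every $E\in \Rep(\mathcal S)$ one has a canonical identification $\Autwe\cong G_E=\rho_E(G)\subset \Autgr(E)$, a closed subgroup scheme of a group scheme of finite type. Thus each $\Autwe$ fits in the affine extension of finite type $\mathcal S_E:\ {\xymatrixcolsep{1.2pc}\xymatrix{1\ar[r]& \rho_E(H)\ar[r]& G_E\ar[r]& A\ar[r]& 0}}$, and the associated short exact sequence matches $1\to \AutweO\to \Autwe\to A\to 0$.

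\medskip

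The second step is to construct the transition morphisms: whenever $E$ is a direct summand of $E'$, the action of $G$ on $E'$ restricts to the action on $E$ (i.e.\ $\rho_{E'}$ and $\rho_E$ are compatible via the projection to the summand), and passing to scheme--theoretic images yields a morphism $\tau_{E',E}:G_{E'}\to G_E$ that fits in a morphism of extensions of $A$. Since $G\twoheadrightarrow G_E$ factors through $G_{E'}$ and $G\twoheadrightarrow G_E$ is faithfully flat, $\tau_{E',E}$ is surjective; then Corollary \ref{cor:surimpliesflat} makes it faithfully flat. Its kernel is contained in $\ker(G_{E'}\to A)=\rho_{E'}(H)$, which is affine, so $G_{E'}\to G_E$ is an affine morphism (as it exhibits $G_{E'}$ as a torsor over $G_E$ under an affine group scheme). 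Hence $\{\mathcal S_E\}$ is an inverse system of affine extensions of finite type in the sense of Section \ref{section:invlim}.

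\medskip

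Third, I would establish a natural isomorphism $\Autwgr\cong \lim_{E}\Autwe$. Given a $T$--point $(\lambda_E,t_\ell)_{E\in \Rep(\mathcal S)}$ of $\Autwgr$, restriction to the subcategory $\Rep(\mathcal S)_E$ produces, by Remark \ref{rems:endotimesE}, a $T$--point of $\Autwe$; the compatibility with direct sum decompositions required in the definition of $\Autwgr$ is exactly the compatibility with the transition morphisms $\tau_{E',E}$, so the collection forms a point of $\lim \Autwe$. Conversely, a compatible family in $\lim \Autwe$ defines a value $\lambda_E$ on each $E\in \Rep(\mathcal S)$, and the tensor, unit and naturality axioms of Definition \ref{defn:endotimes} follow because each individual $\Autwe$ satisfies them and the system covers every finite configuration of objects and morphisms in $\Rep(\mathcal S)$ (any such configuration lies in some $\Rep(\mathcal S)_{E_0}$). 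The same argument applied to the ``degree zero'' subfunctors gives $\AutwgrO\cong \lim \AutweO$.

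\medskip

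Finally, since inverse limits commute with the left--exact formation of the sequences, and the transition morphisms are affine and faithfully flat, Proposition \ref{prop:flatness} applies to yield that $1\to\AutwgrO\to\Autwgr\to A\to 0$ is itself an affine extension, obtained as the limit of the $\mathcal S_E$. The main obstacle I anticipate is a careful verification that the transition morphisms $G_{E'}\to G_E$ are affine and faithfully flat (so that the hypotheses of Proposition \ref{prop:flatness} are met) together with the natural identification of the limit as a functor on schemes with $\Autwgr$; the rest is bookkeeping of the definitions in Remark \ref{rems:autw} and Definition \ref{defn:endotimes}.
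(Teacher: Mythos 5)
Your argument is correct and follows essentially the same route as the paper's (much terser) proof: identify each $\Autwe$ with $G_E$ via Lemma \ref{lem:repGX} and Remark \ref{rem:autw3}, direct the system by direct sums with restriction as transition morphisms, observe that a tensor-compatible family on all of $\Rep(\mathcal S)$ is by definition the same thing as a compatible family on the subcategories $\Rep(\mathcal S)_E$, and conclude via Proposition \ref{prop:flatness}. One small point: Corollary \ref{cor:surimpliesflat} requires a \emph{reduced} target, which the closed subgroup scheme $G_E\subset \Autgr(E)$ need not be; faithful flatness of the transition morphism $\tau_{E',E}:G_{E'}\to G_E$ is obtained more directly from Theorem \ref{thm:perrinigame}, since the schematic image of $G_{E'}$ in $G_E$ contains that of $G$ and hence equals $G_E$.
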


\pf
Ii is clear that if $E'\geq E$, then $ \Rep(\mathcal S)_{E'}\subset \Rep(\mathcal
S)_{E}$, and the system defined above is filtered, with transition 
morphisms given by restriction.
\[
{\xymatrixcolsep{1.3pc} 
\xymatrixrowsep{1.5pc} 
\xymatrix{ 
 1\ar[r]&  (G_E)_0\ar[r]\ar@{=}[d]& 
G_E\ar[rrr]\ar@{=}[d]&& &A \ar@{=}[d]  \ar[r]& 0\\
1\ar[r]&
  \AutwgrOe\ar[r]\ar[d]&
  \Autwe \ar[rrr]^-{q_{\omegagr|_{_{\Rep(\mathcal S)_E}}}}\ar[d]&&&
  A\ar[r]\ar@{=}[d] &  0\\
1\ar[r]& \Aut_0^{\otimes}\bigl(\omegagr|_{_{\Rep(\mathcal S)_{E'}}}\bigr)\ar[r]& \Aut^{\otimes}\bigl(\omegagr|_{_{\Rep(\mathcal S)_{E'}}}\bigr) \ar[rrr]_-{q_{\omegagr|_{_{\Rep(\mathcal S)_{E'}}}}}&&& A\ar[r] & 0}}
\]

Moreover, by the very definition of $\Autwgr$ and $\AutwgrO$ as
group functors,  it
follows that 
\[
\xymatrix{
\xymatrixrowsep{1.5pc} 
1\ar[r]&  \AutwgrO\ar[r]\ar@{=}[d]& 
\Autwgr\ar@{=}[d]\ar[r]& A \ar@{=}[d]\ar[r]&  0\\
1\ar[r]& \lim \AutwgrO\ar[r]& 
\lim \Autwe \ar[r]& A \ar[r]&  0
} 
\]
\qed

\begin{nota}
\label{nota:autw}
In what follows, $\mathcal K$  denotes the (affine, faithfully flat)
filtered system defined in Lemma \ref{lem:autw} above.  
\end{nota}

\begin{thm}[Reconstruction of  affine  extensions]
\label{thm:reconstruction}
Let $\mathcal S$ be an affine  extension. Then the natural map $\varphi: G\to \Autwgr$ is an isomorphism of functors
$G\cong \Autwgr:\operatorname{\mathit{Sch}}^{op}\to
\mathrm{Groups}$. Moreover, this isomorphism  induces
an 
isomorphism of affine  extensions
\[
{\xymatrixcolsep{1.5pc}\xymatrix{
\mathcal S:\ar[d]_(.4)\phi ^(.4){\cong}& 1\ar[r] & H\ar[r]\ar[d]_(.4){f|_{_H}}^(.4){\cong} & G\ar[r]^q\ar[d]_(.4)f^(.4){\cong} & A\ar[r]\ar@{=}[d] & 0\\
\operatorname{\mathit{Aut}}^\otimes(\omegagr)\big):&  1\ar[r] & \AutwgrO=\AutwO\ar[r] & \Autwgr\ar[r] & A\ar[r] & 0
}}
\]

In particular, two  affine  extensions $\mathcal S$
and $\mathcal 
S'$ of the abelian variety $A$ are isomorphic if and only if there
exists an equivalence of categories $F: \Rep(\mathcal S)\to
\Rep(\mathcal S')$ such that $F|_{_{\RepO(\mathcal S)}}: \RepO(\mathcal
S)\to \RepO(\mathcal S')$ is a monoidal functor and the following
diagram is commutative
\[
\xymatrix{
\Rep(\mathcal S)\ar[rd]_{\operatorname{\omega_{gr,\Rep(\mathcal
      S)}}}\ar[rr]^F && \Rep(\mathcal
S')\ar[ld]^{\operatorname{\omega_{gr,\Rep(\mathcal S')}}}\\ 
& \HBA &
 }
\]

\end{thm}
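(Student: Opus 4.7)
The plan is to reduce the statement to the finite type case via the pro-algebraic structure of affine extensions (Theorem \ref{thm:affextisproalg}), and then apply the identification of $G_E$ with $\Autwe$ from Lemma \ref{lem:repGX} in the finite type setting. Throughout, I will exploit that by Lemma \ref{lem:autw} we have already written $\operatorname{\mathit{Aut}}^\otimes(\omegagr)$ as the inverse limit of the affine extensions of finite type $\operatorname{\mathit{Aut}}^\otimes\bigl(\omegagr|_{_{\Rep(\mathcal S)_E}}\bigr)$, indexed by the directed system $\mathcal K$ of Notation \ref{nota:autw}.

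First I would treat the finite type case. If $\mathcal S$ is of finite type, Theorem \ref{thm:ftypefaith} provides a faithful $\mathcal S$--module $E$, so that $\varrho_E:\mathcal S\hookrightarrow \Authbext(E)$ is a closed immersion and the scheme--theoretic image $G_E$ (Remark \ref{rem:autw3}) equals $G$. Lemma \ref{lem:repGX} then gives an isomorphism $G=G_E\cong \Autwe$; and since any other $E'\in\Rep(\mathcal S)$ factors through $\Rep(\mathcal S)_E=\Rep(\mathcal S_E)$, the restriction maps in the directed system $\mathcal K$ are all isomorphisms for indices $\geq E$. Hence in the finite type case $\varphi:G\to\Autwgr=\lim_{\mathcal K}\Autwe$ is an isomorphism and the whole diagram of affine extensions is an isomorphism (the identification of the kernels $H\cong \AutwgrO=\AutwO$ follows from Theorem \ref{thm:GaffG} together with classical Tannaka duality applied to $H$ and the category $\RepO(\mathcal S)\cong \Rep_{\operatorname{fin}}(H)$).

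Next I pass to the general case. By Theorem \ref{thm:affextisproalg} write $\mathcal S=\lim_{\alpha\in I}\mathcal S_\alpha$ with $\mathcal S_\alpha$ of finite type and $\rho_{\alpha}:G\twoheadrightarrow G_\alpha$ faithfully flat. Every $E\in\Rep(\mathcal S)$ factors through some $\mathcal S_\alpha$: indeed the scheme--theoretic image $G_E$ is a group scheme of finite type, hence by the pro-algebraic presentation of $\mathcal S$ the surjection $G\twoheadrightarrow G_E$ factors through some $\rho_\alpha$. Conversely, by Theorem \ref{thm:ftypefaith} each $\mathcal S_\alpha$ admits a faithful module $E_\alpha\in\Rep(\mathcal S_\alpha)\subset \Rep(\mathcal S)$ with $G_{E_\alpha}=G_\alpha$. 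Thus the subsystem $\{E_\alpha\}_{\alpha\in I}$ is cofinal in $\mathcal K$, and one has
\[
\Autwgr=\lim_{\mathcal K}\Autwe=\lim_{\alpha\in I}\Autw{E_\alpha}=\lim_{\alpha\in I}G_\alpha=G,
\]
where the next to last equality uses the finite type case already proven, and the last one the definition of $G$ as the inverse limit. The same argument applied to the kernel yields $\AutwgrO=\AutwO\cong H$, and all squares in the diagram of affine extensions are compatible with the transition maps. The main obstacle here is verifying this cofinality and the compatibility of the finite type reconstructions under restriction, which must be checked carefully using Lemma \ref{lem:subproalg} and Remark \ref{rem:autw3}.

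Finally, for the ``In particular'' clause, suppose $F:\Rep(\mathcal S)\to\Rep(\mathcal S')$ is an equivalence whose restriction to the degree zero wide subcategories is monoidal and such that $\omega_{\operatorname{gr},\Rep(\mathcal S')}\circ F=\omega_{\operatorname{gr},\Rep(\mathcal S)}$. Then $F$ carries the data defining $\Autwgr$ for $\mathcal S$ to the data defining $\Autwgr$ for $\mathcal S'$ (families $(\lambda_E,t_\ell)_{E}$ transform to families indexed by the objects of $\Rep(\mathcal S')$, preserving all the commutation and tensor--unit conditions because $F$ fixes $\omegagr$ and is monoidal on the degree zero part). This gives an isomorphism of group functors $\Autw(\omegagr_{\Rep(\mathcal S)})\cong \Autw(\omegagr_{\Rep(\mathcal S')})$ of affine extensions, and combining with the first assertion yields $\mathcal S\cong \mathcal S'$. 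The converse is immediate: an isomorphism of extensions transports modules to modules and preserves all relevant structure.
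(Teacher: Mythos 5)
Your proposal is correct and follows essentially the same route as the paper: reduce to the finite type case via a faithful module (Theorem \ref{thm:ftypefaith}) and the identification $G_E\cong\Autwe$ of Lemma \ref{lem:repGX}, then pass to the general case by exhibiting, via the pro-algebraic presentation of Theorem \ref{thm:affextisproalg}, a cofinal subsystem of the directed system of Lemma \ref{lem:autw} whose limit is $\mathcal S$ on one side and $\operatorname{\mathit{Aut}}^\otimes(\omegagr)$ on the other. The only differences are organizational (the paper bookkeeps two directed systems $\mathcal J$ and $\mathcal K$ and identifies their limits, while you work directly with $\mathcal K$), not mathematical.
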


\pf
Let $E\in\Rep(\mathcal S)$ and $G_E\subset \Autgr(E)$ be the scheme theoretic
image of $\rho_E:G\to \Autgr(E)$. The group $G_E$ is by definition a
closed subgroup scheme of
$\Autgr(E)$, and fits into the    affine extension $\mathcal
S_E=\varrho_E(\mathcal S)$ (see Remark \ref{rem:autw3}). Moreover, $G_E=  \Aut(\omegagr|_{_{\Rep(\mathcal S)_E}}) \subset
\Autgr(E)$  by Lemma
\ref{lem:repGX}.

We direct the system of  affine  extensions $\{\mathcal S_E\}_{E\in
  \Rep(\mathcal S)}$
 by $E'\geq E$ if and only if the representation $E'$ factorizes
 trough $G_{E}$ --- i.e.~there exists a morphism of group schemes
 $\rho_{E,E'}:G_{E}\to G_{E'}$, with $\rho_{E'}=\rho_{E,E'}\smallcirc
 \rho_{E}$. In particular, if $E'\geq E$, then $E'\in \Rep(\mathcal S_{E})$; it follows that
 $\Rep(\mathcal S)_{E'}\subset \Rep(\mathcal S)_{E}$. Hence,
 we have the following commutative diagram of  group schemes (of
 finite type)
\[
\xymatrix{
 G_{E}\ar[r]^-{\cong}\ar[d]_{\rho_{E,E'}}&\Aut^\otimes\bigl(\omegagr|_{_{\Rep(\mathcal S)_{E}}}\bigr)\ar[d]^{f_{E,E'}}\\
G_{E'}\ar[r]_-{\cong}&\Aut^\otimes\bigl(\omegagr|_{_{\Rep(\mathcal S)_{E'}}}\bigr)
}
\]
that fits in a commutative diagram of affine  extensions. In
particular, one has that  $f_{E, E'}$ is an epimorphism if and 
only if $\rho_{E,E'}$ is so. It is clear that these morphisms induce
an affine faithfully flat filtered system indexed by $\Rep(\mathcal S)$, that we call $\mathcal J$. 

Since $\mathcal S$ is an affine  extension, we deduce from
Theorem \ref{thm:ftypefaith} that $\mathcal S$ is the 
 limit of a subsystem  of affine  extensions
$\{\mathcal S_E\}_{E\in I}$, $I\subset \mathcal J$, and therefore
$\lim_{\mathcal J} \mathcal S_E=\lim_{I} \mathcal
S_E=\mathcal S$.

On the other hand, it follows from Lemma \ref{lem:autw}  that the systems of  affine
 extensions  $\{\mathcal S_E\}_{\mathcal J}$ and $
\bigr\{\operatorname{\mathit{Aut}}^\otimes(\omegagr|_{_{\Rep(\mathcal
      S_E)}})\bigr\}_{\mathcal   K}$ (see Notation \ref{nota:autw}) have the same   limit
$\lim_{\mathcal J} \mathcal S_E=\lim_{\mathcal K}
\operatorname{\mathit{Aut}}^\otimes(\omegagr|_{_{\Rep(\mathcal
    S_E)}})=\operatorname{\mathit{Aut}}^\otimes(\omegagr)$.

The last assertion is clear.
\qed

\begin{defn}\label{defn:gene}
  Let  $\mathcal S:$ $\ate$ be an affine  extension and $E$ an
  object in $\Rep(\mathcal S)$. Call $\langle E \rangle$ the full
  subcategory of $\Rep(\mathcal S)$ generated by the objects of the
  form $E^n$ and its subquotients. 
\end{defn}

\begin{prop}
\label{prop:reconscons}
Let $\mathcal S:$ $\ate$ be an affine  extension. Then

\noindent (1) $H$ is a finite group if and only if there exists a
representation $E\in \Rep(\mathcal S)$ such that any object in
$\Rep(\mathcal S)$ is isomorphic to an object of $\langle E\rangle$.
In
particular, the extension $\mathcal S$ is of finite
type.

\noindent (2) $G$ is a group scheme  of finite type if and only if there
exist $E\in \Rep (\mathcal S)$ such
  that $\Rep (\mathcal S)=\Rep(\mathcal S)_E$ (see Definition \ref{defn:monoidgen}).

\noindent (3) $\mathcal S$: ${\xymatrixcolsep{1.5pc}\xymatrix{
    1\ar[r]& H\ar[r]&H\times A\ar[r]&A\ar[r]&0}}$ is a trivial extension of $A$,
  if and only if any representation of $\mathcal S $ is 
    constructed over a trivial bundle $\Bbbk^n\times
  A$ (compare with  Example \ref{exam:rep} (3)). 
\end{prop}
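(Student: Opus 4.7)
My plan is to prove the three statements separately: (2) is essentially a direct corollary of results already established, while (1) and (3) each require one small additional construction on top.

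For (2), I would combine Lemma \ref{lem:repGX} with the Reconstruction Theorem \ref{thm:reconstruction}. The equivalence $\Rep(\mathcal S)_E \simeq \Rep(\mathcal S_E)$ translates the condition $\Rep(\mathcal S) = \Rep(\mathcal S)_E$ into the isomorphism $\mathcal S \cong \mathcal S_E$, i.e.~into $E$ being a faithful representation. Theorem \ref{thm:ftypefaith} then identifies the existence of a faithful representation with $\mathcal S$ (equivalently $G$) being of finite type.

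For (1), I would use the equivalence $\RepO(\mathcal S) \simeq \Rep_{\operatorname{fin}}(H)$ from Theorem \ref{thm:GaffG} to transport the question to $\Rep(H)$. If $H$ is finite, take $V = \mathcal O(H)$, the finite-dimensional faithful regular representation: the comodule map $W \to W\otimes \mathcal O(H)$ embeds every finite-dimensional $W \in \Rep(H)$ into $V^{\dim W}$. Setting $E = G\times^H V$ and using that the functor $G\times^H(-)$ is exact, any $F = G\times^H F_0 \in \Rep(\mathcal S)$ becomes a subquotient of $E^n$. Conversely, taking fibers over $0 \in A$ turns the existence of such an $E$ into a finite-dimensional faithful $H$-module $E_0$ with the property that every finite-dimensional $H$-module is a subquotient of $E_0^n$ for some $n$; this classical property forces $H$ to be finite (compare \cite{kn:waterhouse}). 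The ``in particular'' clause then follows from Remark \ref{rem:torsoraffine1}(3).

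For (3), the forward implication is immediate: the inclusion $A \hookrightarrow H\times A = G$, $a \mapsto (1,a)$, composed with any $\rho_E$, yields a section of $d:\Autgr(E) \to A$, so $E$ is trivial by Remark \ref{rem:authbmiya1}. For the converse I would use Theorem \ref{thm:reconstruction} to exhibit a group-scheme section $s:A \to G \cong \Autwgr$. Since every $E \in \Rep(\mathcal S)$ is trivial, $\Autgr(E)$ splits canonically as $\AutO(E)\times A$; for each $T$-point $a \in A(T)$ I define $\lambda_E^{(a)} := (1,a) \in \AutO(E)\times A$. The key compatibility to verify is $\lambda_{E'}^{(a)}\circ \alpha = \alpha \circ \lambda_E^{(a)}$ for every $\alpha \in \Hom_{\Rep(\mathcal S)}(E,E')(T)$: this reduces to the observation that a graded morphism between trivial bundles has linear part constant in $A$ (because morphisms from the abelian variety $A$ to an affine scheme are constant), so both its linear and translational components commute with translation by $a$. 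Tensor compatibility and the trivial-representation normalization are immediate, and $s(a) := (\lambda_E^{(a)})_E$ produces a morphism of group functors $s:A \to \Autwgr \cong G$ that is a section of $q$. A direct computation in each $\Autgr(E) = \AutO(E)\times A$ shows that $(1,a)$ commutes with $\rho_E(h) = (\phi_E(h),0)$ for every $h \in H$, so $s(A)$ commutes with $H$ in $G$; combining this with $G = H\cdot s(A)$ and $H \cap s(A) = \{1\}$ yields $G \cong H \times A$.

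The main obstacle is the classical characterization of finite affine group schemes needed in the reverse direction of (1); the verifications in (3) are straightforward but require the careful bookkeeping at the level of $T$-points indicated above.
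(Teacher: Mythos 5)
Your proposal is correct, and parts (1) and (2) follow essentially the same route as the paper: (2) is the same combination of Lemma \ref{lem:repGX}, the Reconstruction Theorem and Theorem \ref{thm:ftypefaith}, and (1) is the same reduction via Theorem \ref{thm:GaffG} to the classical statement for $\Rep_{\operatorname{fin}}(H)$, which the paper simply cites from \cite[Prop.~2.20]{kn:delmil} while you unwind one direction with the regular representation $\mathcal O(H)$ and the exactness of $G\times^H(-)$.

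Where you genuinely diverge is the converse of (3). The paper argues object-by-object: since $\Autgr(\Bbbk^n\times A)=\operatorname{GL}_n(\Bbbk)\times A$, each image $G_E$ splits as $K_E\times A$ with $K_E\subset \operatorname{GL}_n$ closed, and then $G\cong\lim G_E=(\lim K_E)\times A$. This is shorter but leaves implicit why a closed subgroup of $\operatorname{GL}_n\times A$ surjecting onto $A$ must be a product (one needs, e.g., that $G_{E,\ant}$ lands in $\{1\}\times A$ and apply the Rosenlicht decomposition). Your route instead builds a single tensor-natural family $\bigl(\lambda_E^{(a)}\bigr)_E=(1,a)\in\AutO(E)\times A$ and checks directly that it defines a group-functor section $s:A\to\Autwgr\cong G$ commuting with $H$, whence $G\cong H\times s(A)$. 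The naturality check is the real content, and your justification is sound: the linear part of a graded morphism between trivial bundles is constant along $A$ because $\mathcal O_{A_T}(A_T)=\mathcal O_T(T)$, so it commutes with the pure translations $\lambda^{(a)}$; note also that the splitting $\Autgr(\Bbbk^n\times A)=\operatorname{GL}_n(\Bbbk)\times A$ is independent of the trivialization since two trivializations differ by a constant element of $\operatorname{GL}_n(\Bbbk)$, which centralizes $\{1\}\times A$. Your argument costs more $T$-point bookkeeping but avoids both the inverse limit and the unstated product-splitting of $G_E$; the paper's is quicker once that splitting is granted. Both are valid proofs.
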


\pf (1) It is enough to
  prove the corresponding result for $\Rep_0(S)$ (see Theorem
  \ref{thm:GaffG}). For the proof in this situation of the classical
  representation theory of affine groups, see for example
  \cite[Prop. 2.20]{kn:delmil}.
  
  \noindent (2) Just combine Theorem \ref{thm:ftypefaith} and Lemma
\ref{lem:repGX}, together with the fact that if 
 $E\in \operatorname{Obj}\bigl(\Rep(\mathcal S)\bigr)$ is such that
$\Rep(\mathcal S)_E=\Rep(\mathcal S)$, then $G\cong G_E$ by the Reconstruction Theorem.

\noindent (3) If $G=H\times A$, and $E$ is a representation, then we
clearly 
have a section $A\to \Autgr(E)$ of the corresponding affine  extension. It follows that $E$ is a trivial
homogeneous vector bundle (see Remark \ref{rem:authbmiya1} above).

Assume now that any $\mathcal S$--representation is trivial. Since $\Autgr(\Bbbk^n\times A)=\operatorname{GL}_n(\Bbbk)\times A$, it
follows that $G_E=K_E\times A$ for some closed subgroup scheme
$K_E\subset \operatorname{GL}_n(\Bbbk)$. Therefore, $G\cong\lim
G_E=\lim K_E\times A=K\times A$, where $K$ is the affine group
scheme $K=\lim K_E$.
\qed

\section{The Recognition Theorem}
\label{sect:recognition}

Once that the Reconstruction Theorem
\ref{thm:reconstruction} has been proved, its combination with the structure Theorem
\ref{thm:GaffG} and with the Recognition Theorem for affine group schemes, yields the Recognition Theorem for affine  extensions.

\begin{thm}[Recognition Theorem]
\label{thm:recognition}
Let $(\mathcal C, \omegagr)$ be a 
  category $\mathcal C$, enriched over $\Sch|\Bbbk$, 
together with a fully faithful 
functor
$\omegagr :\mathcal C\to \HVBG(A)$, such that:

\noindent (1) $\Hom_{\mathcal C}(X,Y)$ is a homogeneous vector
bundle over $A$.

  \noindent (2) For any pair of objects
$X,Y\in\mathcal C$, 
\[
\omegagr\bigl(\Hom_{\mathcal C}(X,Y)\bigr)=\Hom_{\omegagr(\mathcal C)}
\bigl(\omegagr(X),\omegagr(Y)\bigr) \subset \Homgr\bigl(\omegagr(X),\omegagr(Y)\bigr)
\]
is a subvector bundle.

\noindent (3) The category $\mathcal C_0$ with objects
  $\operatorname{Obj}(\mathcal C_0)=\operatorname{Obj}(\mathcal C)$
  and morphisms 
\[
\Hom_{\mathcal
    C_0}(X,Y)=\Hom_{\mathcal
    C}(X,Y)_0=
  \omegagr^{-1}\bigl(\HomO\bigl(\omegagr(X),\omegagr(Y)\bigr)\bigr)
\]
  is abelian, monoidal,
  rigid.

  \noindent (4) $\End_{\mathcal C_0}(\mathbb I) \cong \Bbbk$.
  
  \noindent (5) The restriction of the forgetful functor
  $\omegaO=\omegagr|_{_{\mathcal C_0}}: \mathcal C_0\to \HVB_0(A)$ is a
  monoidal functor.

  \noindent (6) The functor $\omegaO$ remains fully faithful after taking
  restriction to the fiber over $0\in A$. In other words, the functor
  $\widetilde{\omega}: \mathcal C_0\to \operatorname{Vect}_\Bbbk$,
$\widetilde{\omega}(X)= \bigl(\omegaO(X)\bigr)_0$, $\widetilde{\omega}(f:X\to X')=
f|_{_{(\omegaO(X))_0}}:\bigl(\omegaO(X)\bigr)_0\to\bigl(\omegaO(X')\bigr)_0$ is a fully faithful abelian,  monoidal functor.

Then there exists an  affine  extension $\mathcal
S_{\mathcal C}$ and an equivalence of categories $F: \mathcal C\to
\Rep(\mathcal S_{\mathcal C})$ such that the following diagrams are
commutative
\[
  \xymatrix{
    \mathcal C\ar[rr]^F\ar[rd]_{\omegagr}& & \Rep(\mathcal S_{\mathcal
      C})\ar[dl]^{\omegagr} &  \mathcal C_0\ar[rr]^{F|_{_{\mathcal C_0}}}\ar[rd]_{\omegaO}&& \Rep_0(\mathcal S_{\mathcal
      C})\ar[dl]^{\omegaO} & \\
    & \HVBG(A) && & \HVB_0(A)    &
    }
  \]
  where the restriction $F|_{_{\mathcal C_0}}$ is a monoidal functor. 
\end{thm}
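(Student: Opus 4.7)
The strategy is to reduce the recognition problem for affine extensions to the classical Tannaka recognition for affine group schemes, applied fiber-wise, and then to ``glue'' the result over $A$ using the graded structure. First, I would apply the classical Deligne--Milne Recognition Theorem to the pair $(\mathcal C_0,\widetilde\omega)$: by hypothesis (3)--(5), $\mathcal C_0$ is a rigid abelian monoidal $\Bbbk$--linear category equipped with an exact faithful monoidal functor $\widetilde\omega:\mathcal C_0\to \operatorname{Vect}_{f,\Bbbk}$. This produces an affine group scheme $H:=\operatorname{Aut}^\otimes(\widetilde\omega)$ and an equivalence $\widetilde F:\mathcal C_0\simeq \Rep_{\operatorname{fin}}(H)$ compatible with the underlying vector space functors.

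Next I would construct the candidate middle group $G$. Following the pattern of Lemma \ref{lem:autw} and Definition \ref{defn:endotimes}, define the group functor $\Autwgr$ on $\Schk^{\op}$ by specializing the definitions to the forgetful functor $\omegagr:\mathcal C\to \HVBG(A)$ and its restriction to the subcategories $\mathcal C_E$ generated by a single $E\in\operatorname{Obj}(\mathcal C)$. As in Remark \ref{rems:autw}(3) and Lemma \ref{lem:repGX}, each $\operatorname{Aut}^\otimes\bigl(\omegagr|_{\mathcal C_E}\bigr)$ is representable by a closed subgroup scheme of the smooth group scheme of finite type $\Autgr\bigl(\omegagr(E)\bigr)$, and the degree morphisms assemble into a compatible family of affine extensions of finite type of $A$ (using hypothesis (1)--(2) to guarantee that morphisms in $\mathcal C$ are detected as graded bundle maps). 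Taking the inverse limit along the directed system of such $\mathcal C_E$ and applying Proposition \ref{prop:flatness} produces an affine extension
\[
  \mathcal S_{\mathcal C}:\ \xymatrix{1\ar[r]& H'\ar[r]& G\ar[r]^-{q}& A\ar[r]& 0.}
\]
The identification $H'\cong H$ follows from Theorem \ref{thm:GaffG} together with the classical recognition applied in each fiber: restriction of $\Autwgr$ to $\ker(q)$ recovers $\AutwO\cong\Aut^\otimes(\widetilde\omega)=H$, because restricting to degree zero morphisms on $\mathcal C$ yields precisely $\mathcal C_0$, which by hypothesis is controlled by $\widetilde\omega$ on fibers over $0$.

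Having $\mathcal S_{\mathcal C}$, I would construct the equivalence $F:\mathcal C\to \Rep(\mathcal S_{\mathcal C})$ by setting $F(X):=\omegagr(X)$ with the tautological $G$--action $\rho_X:G\to \Autgr\bigl(\omegagr(X)\bigr)$ induced by the canonical projection $\Autwgr\twoheadrightarrow \operatorname{Aut}^\otimes\bigl(\omegagr|_{\mathcal C_X}\bigr)\hookrightarrow \Autgr\bigl(\omegagr(X)\bigr)$, and on morphisms by $F(f)=\omegagr(f)$. The fact that $F$ lands in $\Rep(\mathcal S_{\mathcal C})$ follows from hypothesis (2) and the definition of $\Autwgr$ (the action is linear on fibers and has the correct degree). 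Essential surjectivity of $F$ and of $F|_{\mathcal C_0}$ reduces via Theorem \ref{thm:GaffG} and Corollary \ref{cor:repasindGant} to essential surjectivity of $\widetilde F:\mathcal C_0\to \Rep_{\operatorname{fin}}(H)$, already established. Full faithfulness follows from hypothesis (2) (the underlying bundle morphisms are the same) together with Lemma \ref{lem:Smorfarebundle} which identifies $G$--equivariant graded morphisms with $H$--equivariant morphisms on the fiber over $0$ plus the extra degree information captured by $\Autwgr$.

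\textbf{Main obstacle.} The delicate step is verifying that the inverse system indexed by $\mathcal C_E$ consists of affine extensions (not merely quasi-compact ones) with faithfully flat transition morphisms, so that Proposition \ref{prop:flatness} applies and the limit $G$ fits in a genuine affine extension. This requires translating the rigidity and monoidal hypotheses of $\mathcal C_0$ (specifically, that $\widetilde\omega$ remains monoidal and that $\HomO$ bundles are honest vector bundles) into the affineness of the kernel of each finite-type degree map $q_E:\operatorname{Aut}^\otimes\bigl(\omegagr|_{\mathcal C_E}\bigr)\to A$, a reduction which ultimately rests on the classical Deligne--Milne recognition applied compatibly across the system.
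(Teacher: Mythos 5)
Your proposal is correct and follows essentially the same route as the paper: apply the classical Deligne--Milne recognition to $(\mathcal C_0,\widetilde\omega)$ to obtain $H$, realize $\operatorname{Aut}^\otimes(\omegagr)$ as the inverse limit of the finite-type affine extensions $\operatorname{Aut}^\otimes\bigl(\omegagr|_{\mathcal C_X}\bigr)$ via Lemma \ref{lem:autw}, and then use Theorem \ref{thm:GaffG} together with the comparison of the $\Hom$--bundles (the paper invokes Corollary \ref{cor:GaffGimpliesiso} and a fiber-dimension argument where you cite Corollary \ref{cor:repasindGant} and Lemma \ref{lem:Smorfarebundle}, but these are interchangeable here) to get the equivalence $\mathcal C\simeq\Rep(\mathcal S_{\mathcal C})$. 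The "main obstacle" you flag is handled in the paper exactly as you suggest: the kernel of each finite-type degree map is $\operatorname{Aut}_0^\otimes\bigl(\omegagr|_{\mathcal C_X}\bigr)$, a closed subgroup scheme of the affine group $\AutO\bigl(\omegagr(X)\bigr)$, so affineness is automatic once $\Aut^\otimes(\omegagr)_0=H$ is identified via the affine reconstruction theorem.
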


\pf
Since the pair  $\bigr(\mathcal C_0, \widetilde{\omega}:\mathcal C_0 \to
\operatorname{Vect}_\Bbbk\bigr)$ satisfies the hypothesis of the Recognition Theorem for
affine group schemes (see \cite[Proposition 2.8]{kn:delmil}), it
follows  that there exists an affine  group scheme $H$ such that
$\mathcal C_0\cong \Rep_{\operatorname{fin}}(H)$.

Let $\operatorname{\mathit{Aut}}^\otimes(\omegagr)$ be as presented in
Definition \ref{defn:endotimes} (for the category $\mathcal C$ instead of
$\Rep(\mathcal S)$), and for $X$ an object of $\mathcal C$ define
$\mathcal C_X\subset \mathcal C$ as in definitions
\ref{defn:monoidgen} and \ref{defn:maincat}.
Then, as in Remark
\ref{rem:autw3} and Lemma \ref{lem:autw}, it follows that we have a
 limit of  affine  extensions of finite type
\[
{\xymatrixrowsep{1.5pc} \xymatrixcolsep{0.7pc} 
\xymatrix
{
\operatorname{\mathit{Aut}}^\otimes(\omegagr):\ar[d]& 1\ar[rr] &&
\Aut^{\otimes}(\omegagr)_0=H \ar[rr]\ar[d]& &
\Aut^{\otimes}(\omegagr)\ar[rr]\ar[d]&& A\ar@{=}[d]\ar[rr]&& 0\\
\operatorname{\mathit{Aut}}^\otimes\bigl(\omegagr|_{_{\mathcal
    C_X}}\bigr) :& 1\ar[rr] &&
\Aut_0^\otimes\bigl(\omegagr|_{_{\mathcal C_X}}\bigr) \ar[rr]&& 
\Aut^{\otimes}\bigl(\omegagr|_{_{\mathcal C_X}}\bigr)\ar[rr]&& A\ar[rr]&& 0
}}
\]
Indeed,  since the functor
$\omegagr|_{_{\mathcal C_0}}$ is monoidal, 
the same calculations hold --- recall that  $  \Aut^\otimes(\omegagr)_0=H $ by the
Reconstruction Theorem for affine group schemes.

Next, we show that $\mathcal C$ (or equivalently $\omegagr(\mathcal C)$) is
equivalent to the representation theory of
$\operatorname{\mathit{Aut}}^\otimes(\omegagr)$. For this,
let $X\in \mathcal C$; then $\omegagr(X)$ is a
$\operatorname{\mathit{Aut}}^\otimes(\omegagr)$--module.

Conversely,
if $E$ is a 
$\operatorname{\mathit{Aut}}^\otimes(\omegagr)$--module, then $E_0$ is a
$H$--module, and $E\cong \Aut^{\otimes}(\omegagr) \times^HE_0$ by Theorem \ref{thm:GaffG}. Let 
$X\in\mathcal C$ be such that $\omegagr(X)_0\cong E_0$ as
$H$--modules --- recall that $\mathcal C_0$ is the representation
theory of $H$. Since $\omegagr(X)$ is an
$\operatorname{\mathit{Aut}}^\otimes(\omegagr)$--module, it follows
that 
$\omegagr(X)\cong E$,  by Corollary \ref{cor:GaffGimpliesiso}.

Let $X,Y\in\mathcal C$, be two objects. Since $\omegagr(\mathcal
C)_0=\omegagr(\mathcal C_0)\cong 
\Rep(H)=\Rep\bigl(\operatorname{\mathit{Aut}}^\otimes(\omegagr)_0\bigr)$,
it follows that  
\[
\begin{split}
\omegagr\bigl(\Hom_{\mathcal C_0}(X,Y)\bigr) &= \Hom_{\omegagr(\mathcal C)_0}\bigl(\omegagr(X),\omegagr(Y)\bigr)\cong\\
 \Hom_{\Rep(H)}(X_0,Y_0)&=\Hom_{\Rep\bigl(\operatorname{\mathit{Aut}}^\otimes(\omegagr)_0\bigr)}\bigl(\omegagr(X)_0,\omegagr(Y)_0\bigr)
\end{split}
\]

Recall that 
$
\Hom_{\omegagr(\mathcal
  C)}\bigl(\omegagr(X),\omegagr(Y)\bigr)=\omegagr\bigl(\Hom_{\mathcal
  C}(X,Y)\bigr)\in \HBA$ is a vector bundle, with fiber
$\omegagr\bigl(\Hom_{\mathcal
  C_0}(X,Y)\bigr) =\Hom_{\Rep(\operatorname{\mathit{Aut}}^\otimes(\omegagr)_0)}\bigl(\omegagr(X)_0,\omegagr(Y)_0\bigr)$.
On the other hand, by construction we have that 
\[
\Hom_{\omegagr(\mathcal
  C)}\bigl(\omegagr(X),\omegagr(Y)\bigr)\subset 
\Hom_{\Rep(\operatorname{\mathit{Aut}}^\otimes(\omegagr))}\bigl(\omegagr(X),\omegagr(Y)\bigr);
\]
the later being also a vector bundle of fiber
$\Hom_{\Rep(\operatorname{\mathit{Aut}}^\otimes(\omegagr)_0)}\bigl(\omegagr(X)_0,\omegagr(Y)_0\bigr)$
by definition. It follows that these vector bundles
coincide. In
other words, $\omegagr(\mathcal C)$ is the category of representations of
$\operatorname{\mathit{Aut}}^\otimes(\omegagr)$.
\qed

\begin{rem}
Condition (5) in Theorem \ref{thm:recognition} states that any
morphism $f\in \Hom_{\mathcal C_0}(X,Y)$ is determined (after taking
the forgetful functor) by its value in the fiber over $0\in A$.
\emph{A fortiori},  by Corollary \ref{cor:restriction}, this implies
that this condition holds for any morphism in $ \Hom_{\mathcal C}(X,Y)$. 
\end{rem}

We finish this section by  describing $\HVBG(A)$ as the category or
representations of $\mathcal G_A$, the universal extension of the abelian
variety $A$ (see Example \ref{ej:ueav}).

\begin{ej}
  \label{ej:recforueav}
  The identity functor $\operatorname{Id}: \HVBG(A)\to \HVBG(A)$ can
  be though as a forgetful functor. Therefore, $\operatorname{\it
    Aut}^{\otimes} (\operatorname{Id})$ is an   affine  extension,
  such that $\Rep\bigl(\operatorname{\it Aut}^{\otimes}
  (\operatorname{Id})\bigr) $ is equivalent as a category with the
  forgetful functor (in the sense of Theorem \ref{thm:reconstruction})
  with  $\HVBG(A)$ with the identity functor. 

Since $\Rep(\mathcal G_A)$,  the representation theory of
  the universal extension of $A$ (see Example \ref{ej:ueav}), is also equivalent to $\HVBG(A)$, it
follows by the Reconstruction Theorem \ref{thm:reconstruction} that
$\mathcal G_A\cong \operatorname{\it Aut}^{\otimes}
(\operatorname{Id})$.
\end{ej}

\section{Affine  extensions and Hopf sheaves}
\label{sect:Hopf}

The well known op--equivalence between the category of affine group
schemes over a field $\Bbbk$ and the category of Hopf algebras over
$\Bbbk$ has been generalized in \cite[Expos\'e I, Section
  4.2]{kn:SGA3-1} to the context of \emph{affine group schemes over a
  scheme $S$} --- that is, group objects in the category of affine
schemes over $S$ with respect to the monoidal structure given by the
fibered product over $S$. The algebraic counterpart of the group
object is in this case a sheaf of bialgebras in $\sSalg$.   
In this section we go one step further and establish an
op--equivalence  between the category $\GextaffA$ of
\emph{affine extensions of the abelian variety $A$} (see Definitions
\ref{defn:catextensions} and Notation \ref{nota:diffcateg} as well as
Definition \ref{defn:othermonoidal1}) and a category of $\mathcal
O_A$--algebras with additional structure that we call \emph{faithful
  (commutative) Hopf
  sheaves} --- named as $HQ\fsAalg$ in Definition \ref{defn:caths}.

In our situation $\GextaffA$ will appear as a subcategory of the category of bimonoids
(with an antipode) in a duoidal category based upon $\Schasqc$, the
category of separated, quasi-compact schemes over $A$  (see
Definition \ref{defi:duoidalcat}).  One of the two monoidal
structures is the fibered product over $A$ as 
in the classical case, but the other is defined taking into account
the additive structure of $A$: it is the composition of the product
over $\Bbbk$ with the base change by $s:A \times A \to A$, where $s$
is the sum in $A$.  This second structure is essential in order to capture in abstract terms the
fact that the base scheme has the additional structure of an
\emph{abelian variety} and that $q:G \to A$ is a group homomorphism
(see Definitions \ref{defn:othermonoidal1} and
\ref{defn:othermonoidal2}). See also remark \ref{rem:affbutqc}.

For the following undertakings, as was mentioned before, it is
better to view the affine extensions of an abelian variety $A$ as a
surjective (faithfully flat, separated) affine morphism of group 
schemes: $q:G\to A$ (see Remark \ref{rem:forequiv} and Section
\ref{subsect:affextassch}).

To make the exposition clearer, we deal first with the monoid
structure of $G$ --- and the bialgebra structure of the associated
sheaf ---, and after this is firmly secured, we present a formal treatment
of the inversion morphism of $G$ and the corresponding ``antipode'' in
the associated sheaf. As it happens frequently when dealing with
``generalized Hopf type objects'', it is harder to deal with the
antipode than with the bialgebra structure.

To be in safe ground from the viewpoint of the categorical
considerations, we will recall and use some definitions and concepts
pertaining to the theory of \emph{duoidal categories}, that are
categories with two monoidal structures, that are related by an \emph{interchange law}  (see
\cite{kn:aguiarspecies} 
and \cite{kn:garner}). The reader should be aware that  other names are used in the
literature for this concept, see Definition \ref{defi:duoidalcat} below.

\begin{nota}
In what follows we will deal with several monoidal structures, on
different categories. A monoidal structure in a category $\mathcal C$ will be
    denoted as a $3$-uple (\emph{e.g.}~$(\mathcal C, \otimes, \mathbb
    I)$) --- that is,   we omit the associativity and unit constraints in the
    formul\ae. We  write $\mathcal C^{\op}$ for the opposite category
    with the same monoidal structure. 
\end{nota}
  
  \subsection{Affine extensions as schemes over an abelian variety, revisited}\ %
\label{subsect:affextschoverA}

Even though our final goal is to work in the category $\Schaaff$ (of
affine schemes over $A$) we have to formulate our basic definitions in
larger categories such as $\Schaqc$ (of quasi-compact schemes
over $A$) and others. This is due to the fact that some of the basic
ingredients --- for example the construction of the new monoidal
structure --- do not live in the ``affine universe'' (see Remark \ref{rem:affbutqc}).

It is convenient to begin by setting the notation of the different
subcategories of $\Sch|S$ that we will use henceforth.

\begin{nota}
  \label{nota:diffcateg}
Let $S$ be a $\Bbbk$--scheme.

\noindent   (1) We denote  the 
    category of quasi-compact schemes over the $\Bbbk$--scheme $S$ 
    as $\Schqc$:
    its objects are the quasi-compact morphisms of $\Bbbk$-schemes
    $x:X\to S$ and its morphisms $f:(x:X \to S) \to (y:Y \to S)$ are
    morphisms of schemes $f:X \to Y$ such that $y\smallcirc f=x$.

We denote  an object $(x:X\to S)\in \Schqc$ as
   $x$, when no confusion arises. 
    
    \noindent (2) We denote as $\Schsqc$  the full
    subcategory of separated, quasi-compact schemes
    over $S$; the full subcategory of affine schemes over $S$ is
    denoted as $\Schaff$. Since  any affine morphism is separated
    and quasi-compact, we have that $\Schaff$ is fully embedded in
    $\Schsqc$.

    \noindent (3) We also consider the categories $\Schpqc$,
    $\Schfpqc$ defined by the conditions that the map $x:X \to S$ is
    flat (\emph {plate} in french) quasi-compact and faithfully flat
    (\emph{fid\`element plate}) quasi-compact respectively.

    \noindent (4) Also, we denote as $\Schpsqc$ and $\Schfpsqc$ the
    categories  defined by the conditions that the map $x: X \to S$ is
    flat separated and quasi-compact or  faithfully flat separated
    and quasi-compact, respectively.

      \noindent (5) Let $x:X \to S \in \Schsqc$. If there exists a
      closed point $s:\Spec (\Bbbk)\to S\in S(\Bbbk)$, such that $x $
      factors through $s$, way say that $x$ has constant structure
      morphism equal to $s$. See diagram below.
      \[\xymatrix@=7pt{&X\ar[ld]_(.35){\st}\ar[dd]^{x}\\
        \Speck\ar[rd]_{s}&\\ &S}\]

     \noindent (6) If $f: T \to S$ is a morphism of schemes, recall
     that the \emph{pull-back functor} $f^*:\Sch|S \to \Sch|T$ has the
     \emph{push forward functor} $f_*:\Sch|T \to \Sch|S$ as left
     adjoint --- the functor $f_*$ is defined as $f_*(x:X \to T)=f
     \smallcirc x$ for $x \in \Sch|T$ and $f^*(y:Y \to S)=(p_T: Y \times_S
     T \to T)$ is given by the pull-back diagram:
     \[\xymatrix@=15pt {Y \times_S T\ar[r]^-{p_T}
         \ar[d]_{p_Y}&T\ar[d]^(.35){f}\\ Y\ar[r]_{y}&S.}\]
     At the level
     of arrows the definitions are the standard ones.  Also recall
     that if $f$ is an isomorphism, then $f^*=(f^{-1})_*=(f_*)^{-1}$.
\end{nota}

In the case that $S=A$ an abelian variety, we have additional elements
to take into account.
 
\begin{defn}
    \label{defn:op-0-om}
  \noindent (1) Let $\op:A\to A$ be the inversion morphism,
  $\op(a)=-a$ and denote $\op_*(x)=-x=x^-$, where $\op_*: \Schaqc \to
  \Schaqc$ is $\op_*(x:X \to A)=\op\smallcirc{x}: X \to A$ and
  $\op_*(f)=f$.

  \noindent (2) Let $c_0=0\,\smallcirc \st:A\to A$, where
 $0=e_A:\Spec(\Bbbk) \to A$ and $\st:A\to \Spec(\Bbbk)$ is the
 structural morphism of the $\Bbbk$--scheme $A$ --- thus, $c_0$ is
 ``the constant morphism equal to $ 0\in A$''.
\end{defn}

\begin{defn}\label{defn:othermonoidal1}
(1) The  \emph{Cauchy monoidal structure  in $\Schaqc$} is defined as  follows:
 \[ 
   \wtimes:=s_*\times: \xymatrix{\Schaqc \times \Schaqc
     \ar[r]^(0.53){\times}&
     \operatorname{Sch}|_{_{\operatorname{qc}}}\,(A \times
     A)\ar[r]^(0.6){s_*}& \Schaqc},
 \]
 where $s$ denotes as usual the addition in $A$ and the functor
 $\times$ is the product in the category $\Schkqc$, i.e.~if $x:X \to
 A$, $y:Y \to A \in \Schaqc$, then \[(x: X \to A) \times (y:Y \to A)
 := \xymatrix{(X \times Y \ar[r]^{x \times y} & A \times A).}\]

The fact that the construction $\wtimes$ induces a monoidal
  structure on $\Schaqc$, with unit element $0: \Speck \to A$, is a
  straightforward calculation that we omit. We denote its unit element
  as $\uwtimes$.

\noindent (2) Similarly the fibered product $\times_A$, that we call
the \emph{Hadamard monoidal structure},  
induces a monoidal structure on $\Schaqc$, with unit element $\id_A:A \to A$ that we 
denote as $\uAtimes$.\footnote{The names of Hadamard and
    Cauchy monoidal structure, are used in 
  similar situations in other contexts, in particular in the theory of
  species (see \cite[Sect.~6.1,Ex.~6.22,Ex.~8.13.5]{kn:aguiarspecies}).} 
\end{defn}

\begin{rem}\label{rem:symmetric} Both monoidal structures presented in
  Definition \ref{defn:othermonoidal1} are symmetric braided. This
  fact is true in general for the fibered (Hadamard) product over any
  base, and in the case of the Cauchy product is due to the abelianity
  of the group structure in $A$.  
\end{rem}

Later, when working with the ``group type objects'' in the category
$\Schaqc$ we will concentrate our attention mainly to the case of
group extensions, i.e.~morphisms of group schemes $q:G\to A$ with
additional properties. As these morphisms are separated, it is natural
to consider the restriction of the Cauchy and Hadamard monoidal
structures to the category $\Schasqc$ of separated, quasi-compact
schemes over $A$. Similarly, one can consider the subcategory
$\Schapsqc$ (see Notation \ref{nota:diffcateg}). These 
restrictions will be necessary to deal with certain technical aspects
such as the ones considered in Section \ref{sect:quasicompandsheaves}.

\begin{lem}\label{lem:restrict}
The Cauchy and Hadamard monoidal structures restrict to 
$\Schapsqc$ and $\Schasqc$.
\end{lem}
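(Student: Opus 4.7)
The plan is to verify, for each of the two bifunctors $\wtimes, \times_A : \Schaqc \times \Schaqc \to \Schaqc$ and the corresponding unit, that the defining properties of each subcategory (separated, flat+separated, faithfully flat+separated, always together with quasi-compact) are preserved. Everything will be reduced to the standard permanence of the classes ``separated'', ``quasi-compact'', ``flat'' and ``faithfully flat'' under base change and under composition, together with two specific observations about the abelian variety $A$: the identity $\id_A$ has all four properties, and the sum morphism $s : A\times A \to A$ is smooth and surjective (being a morphism of group schemes whose fibers are all translates of $A$ via a section of $s$), in particular separated, quasi-compact, flat and faithfully flat. The main obstacle, as explained below, is the behaviour of the Cauchy \emph{unit} on the flat subcategories.

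For the Hadamard product, take $x : X \to A$ and $y : Y \to A$ in one of the three subcategories and factor $x \times_A y$ as
\[
\xymatrix{X \times_A Y \ar[r]^-{p_Y} & Y \ar[r]^-{y} & A,}
\]
where $p_Y$ is the base change of $x$ along $y$. Since all four properties descend under base change and are stable under composition, $x \times_A y$ inherits every property common to $x$ and $y$. The unit $\uAtimes = \id_A$ obviously lies in $\Schafpsqc \subset \Schapsqc \subset \Schasqc$, so the Hadamard structure restricts to each subcategory.

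For the Cauchy product, write $x \wtimes y = s \circ (x\times y)$ and decompose the middle morphism as
\[
\xymatrix{X \times Y \ar[r]^-{x \times \id_Y} & A \times Y \ar[r]^-{\id_A \times y} & A \times A \ar[r]^-{s} & A,}
\]
where the first two arrows are base changes of $x$ and $y$ respectively along the structural morphisms to $\Speck$. Each of the three properties under consideration is preserved by every factor, so it is preserved by the composition; quasi-compactness is preserved throughout by the same reasoning. This settles the bifunctor in all three cases. The unit $\uwtimes = 0 : \Speck \to A$ is a closed immersion, hence separated, affine and quasi-compact, so it lies in $\Schasqc$, which completes the proof of the lemma for that subcategory.

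The only point requiring comment is that $0 : \Speck \to A$ is not flat when $\dim A > 0$, so the Cauchy unit does not belong to $\Schapsqc$ or $\Schafpsqc$. Consequently, the restriction of the Cauchy structure to the flat subcategories must be understood as a semigroupal (non-unital) tensor bifunctor; this is the only feature of that restriction that will be used in the sequel (for instance in Lemma \ref{lem:duoschemes}), so no further adjustment is needed.
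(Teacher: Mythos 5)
Your verification of the two bifunctors is correct and supplies precisely the details that the paper's proof (which reads only ``This is clear'') omits: writing $x\times_A y$ as $y\circ p_Y$ with $p_Y$ a base change of $x$, and $x\wtimes y$ as $s\circ(\id_A\times y)\circ(x\times\id_Y)$ with the first two factors base changes of $x$ and $y$ and with $s$ faithfully flat, separated and quasi-compact, reduces everything to the stability of the classes ``separated'', ``quasi-compact'', ``flat'' and ``faithfully flat'' under base change and composition. Since there is no argument in the paper to compare against, the only substantive question is whether your extra observation about the Cauchy unit is a flaw in your proof or in the statement.

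It is the latter. The morphism $0:\Speck\to A$ is flat if and only if $\Bbbk=\mathcal O_{A,0}/\mathfrak m_{A,0}$ is flat over the local ring $\mathcal O_{A,0}$, which fails whenever $\dim A>0$ because $\mathcal O_{A,0}$ is then a regular local ring that is not a field. Hence $\uwtimes\notin\Schapsqc$ and $\uwtimes\notin\Schafpsqc$, so on the flat subcategories the Cauchy product restricts only as a (non-unital) tensor bifunctor, exactly as you say. This is a genuine imprecision in the paper: Lemma \ref{lem:restrict} as stated, and its use in Lemma \ref{lem:duoschemes} where $(\Schafpsqc,\wtimes,\uwtimes,\times_A,\uAtimes)$ and $(\Schapsqc,\wtimes,\uwtimes,\times_A,\uAtimes)$ are asserted to be duoidal \emph{with unit} $\uwtimes$, would need to be reformulated (e.g.\ as a non-unital restriction, or by enlarging the subcategory to contain the unit). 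The full monoidal and duoidal structures do restrict to $\Schasqc$, which is the case your argument establishes completely and the one on which the subsequent development of Hopf sheaves (Theorem \ref{thm:mainspec} onward, and ultimately $\Schaaff$) actually relies.
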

\pf This is clear.\qed

The Cauchy and the Hadamard monoidal structures endow $\Schaqc$ with
a structure of \emph{duoidal category}: 

 \begin{defn}\label{defi:duoidalcat}
A \emph{duoidal category} --- also known as \emph{2--monoidal
  category} or \emph{two-fold monoidal category}, see
\cite[Sect.~4.9]{kn:garner} or \cite[Chapter 6]{kn:aguiarspecies} ---
is a quintuple $(\mathcal C,\diamond, \mathbb
I_{\diamond},\star,\mathbb I_{\star})$ with the following properties
--- the quintuple will be abbreviated as $\mathcal C$ when there is no
danger of confusion ---:

    \noindent (1) $(\mathcal C,\diamond,\mathbb I_{\diamond})$ and
      $(\mathcal C,\star,\mathbb I_{\star})$ are monoidal
      categories with their respective units.

    \noindent (2) There is a natural transformation, called the
      \emph{interchange law},
      \[\zeta_{a,b,c,d}: (a \star b) \diamond (c \star d) \Rightarrow
      (a \diamond c) \star (b \diamond d),\] defined for all $a,b,c,d
      \in \mathcal C$.

    \noindent (3) There are three morphisms: \[\Delta_{\diamond} : \mathbb
        I_{\diamond} \to \mathbb I_{\diamond} \star \mathbb
        I_{\diamond}\,,\, \mu_{\star} : \mathbb I_{\star}
        \diamond \mathbb I_{\star} \to \mathbb I_{\star}\,,\,
        u_{\mathbb I_{\star}}= \varepsilon_{\mathbb I_{\diamond}} :
        \mathbb I_{\diamond} \to \mathbb I_{\star}.\]

      \noindent (4)  All the data above satisfy additional conditions:

\noindent (i)  \emph{compatibility of units}, that amounts to the
following assertions:

$(\mathbb I_\star,\mu_\star,u_{\mathbb 
    I_{\star}})$ is a monoid in $(\mathcal C,\diamond,\mathbb
  I_{\diamond})$;

  $(\mathbb
  I_\diamond,\Delta_\diamond,\varepsilon_{\mathbb I_{\diamond}})$ is a
  comonoid in $(\mathcal C,\star,\mathbb I_{\star})$.

 \noindent (ii) \emph{associativity for $\zeta$}, that is expressed as
 the commutativity of the following diagrams for all objects
 $a,b,c,d,e,f\in \mathcal C$ --- for the sake of simplicity, all the
 diagrams are written omitting the 
  associativity constrains, i.e.~pretending that the monoidal
  structures are strict ---:
      \[
{\xymatrixcolsep{5em} \xymatrix{ (a\star b)\diamond (c\star d)\diamond
    (e\star f)\ar[r]^-{\id_{a\star b}\diamond
      \zeta_{c,d,e,f}}\ar[d]_{\zeta_{a,b,c,d}\diamond \id_{e\star f}}
    & (a\star b)\diamond\big((c\diamond e)\star (d\diamond
    f)\bigr)\ar[d]^{\zeta_{a,b,c\diamond e,d\diamond f }}\\ \bigl((a\diamond c)\star(b\diamond
    d)\bigr)\diamond (e\star f)\ar[r]_-{\zeta_{a\diamond c, b\diamond d,e,f}} & (a\diamond c\diamond
    e)\star (b\diamond d\diamond f) }}
      \]
 \[
{\xymatrixcolsep{5em} \xymatrix{ \left((a\star b)\star
    c\right)\diamond \left((d\star e) \star f\right)\ar[r]^-{
      \zeta_{a,b\star c,d,e\star f}}\ar[d]_{\zeta_{a\star b,c,d\star e,f}} & (a\diamond d)\star \bigl((b\star
    c)\diamond (e\star f)\bigr)\ar[d]^{\id_{a\diamond d}\star
      \zeta_{b,c,e,f}}\\ \bigl((a\star b)\diamond (d\star e)\bigr)\star
    (c\diamond f)\ar[r]_-{\zeta_{a,b,d,e}\star \id_{c\diamond f}} & (a\diamond
    d)\star (b\diamond e)\star (c \diamond f) }}
      \]

\noindent (iii) \emph{unitality/counitality for $\zeta$}, that is
expressed as the commutativity of the following diagrams for all
$a,b\in \mathcal C$ ---  again we omit the associativity constrains ---:
\[
  \xymatrixcolsep{5em}\xymatrix{
        (a \star b) = (a \star b)  \diamond \mathbb I_{\diamond} = 
        \mathbb {I_\diamond} \diamond (a\star b)
        \ar[r]^-{\Delta_{\diamond} \diamond \id_{a\star b}}
        \ar[d]_{\id_{a\star b} \diamond \Delta_{\diamond}} \ar@{<->}[rd]^{\id}
        & (\mathbb I_{\diamond} \star \mathbb I_{\diamond}) \diamond
        (a\star b) \ar[d]^{\zeta_{\mathbb I_{\diamond},\mathbb
            I_{\diamond},a,b}}\\
        (a\star b) \diamond (\mathbb I_{\diamond}
        \star \mathbb I_{\diamond}) \ar[r]_-{\zeta_{a,b,\mathbb
            I_{\diamond},\mathbb I_{\diamond}}} & a\star b }
      \]
      \[\xymatrixcolsep{3em}\xymatrix{
        (a \diamond b) = (a \diamond b) \star \mathbb I_{\star} =
        \mathbb {I_\star} \star (a\diamond b)
        \ar@{<-}[r]^-{\mu_{\star} \star \id_{a\diamond b}}
        \ar@{<-}[d]_{\id_{a\diamond b} \star \mu_{\star}}
        \ar@{<->}[rd]^{\id} & (\mathbb I_{\star} \diamond \mathbb
        I_{\star}) \star (a\diamond b) \ar@{<-}[d]^{\zeta_{\mathbb
            I_{\star},a,\mathbb I_{\star},b}}\\
        (a\diamond b)
        \star (\mathbb I_{\star} \diamond \mathbb I_{\star})
        \ar@{<-}[r]_-{\zeta_{a,\mathbb I_{\star},b,\mathbb I_{\star}}}
        & (a \star \mathbb I_\star)\diamond (b \star \mathbb I_\star)
        =  a\diamond b = (\mathbb I_\star \star a) \diamond (\mathbb
        I_\star \star b )} 
        \]

It is clear that if $(\mathcal C,\diamond, \mathbb
I_{\diamond},\star,\mathbb I_{\star})$ is a duoidal category, then
$(\mathcal C^{\op},\star,\mathbb I_{\star}, \diamond, \mathbb
I_{\diamond})$ is also a duoidal category that is written simply as ${\mathcal C}^{\op}$ and is called the opposite of the duoidal category $\mathcal C$ --- the interchange law of $\mathcal C$ and $\mathcal C^{\op}$ is the same morphism.  
        
We refer the reader to \cite[Paragraph 6.1.1]{kn:aguiarspecies} or
\cite[Section 4]{kn:garner} for more information on the properties of
duoidal categories.
\end{defn}

In the context of  duoidal categories, one can establish the notion of
bimonoid as follows.

 \begin{defn}
   \label{defi:bimonoid}
   Let $(\mathcal C,\diamond, \mathbb
   I_{\diamond},\star,\mathbb I_{\star})$ be a duoidal category. A
   quintuple $(b, \mu_b, u_b, \Delta_b, \varepsilon_b)$ consisting of an
   object $b \in \mathcal C$, and morphisms $\mu_b: b \diamond b \to b$,
   $u_b:\mathbb I_{\diamond} \to b$, $\Delta_b: b \to b \star b$ and
   $\varepsilon_b: b \to \mathbb I_{\star}$ is a \emph{bimonoid for the
     duoidal category} if:

  (1)  $(b, \mu_b, u_b)$ is a monoid in
     $(\mathcal C,\diamond,\mathbb I_{\diamond})$;
   
(2)  $(b, \Delta_b, \varepsilon_b)$ is a comonoid in
     $(\mathcal C,\star, \mathbb I_{\star})$;

     (3) The following conditions hold:

     \quad $\varepsilon_b: b \to \mathbb
     I_{\star}$ is a morphism of monoids in
     $(\mathcal C,\diamond,\mathbb I_{\diamond})$;

     \quad $u_b:\mathbb
     I_{\diamond} \to b$ is a morphism of comonoids in
     $(\mathcal C,\star,\mathbb I_{\star})$;

   (4)  The following diagram is commutative:
     \[\xymatrix{(b \star b)\diamond (b \star b)\ar[rr]^{\zeta_{b,b,b,b}}&&(b \diamond b)\star (b \diamond b)\ar[d]^{\mu_b \star \mu_b}\\
     b \diamond b\ar[u]^{\Delta_b \diamond
       \Delta_b}\ar[r]_{\mu_b}&b\ar[r]_{\Delta_b}&b \star b}\]

We call
  $\operatorname{Bimon}(\mathcal C)$ the category whose objects are
  the bimonoids in $\mathcal C$ and its arrows the morphisms of
  $\mathcal C$ that preserve the bimonoid structure.
\end{defn}

 \begin{rem} (1) In accordance with Definition \ref{defi:duoidalcat},4.(i) $(\mathbb I_\star,\mu_\star,u_{\mathbb 
    I_{\star}})$ is a monoid in $(\mathcal C,\diamond,\mathbb
   I_{\diamond})$ and in a trivial manner $\mathbb I_\star$ is also a comonoid in
   $(\mathcal C,\star,\mathbb I_{\star})$. It is clear that the
   compatibility conditions are satisfied and hence,
   $\mathbb I_\star$  is a bimonoid in the
   duoidal category $\mathcal C$.

   \noindent (2) Similarly $(\mathbb
   I_\diamond,\Delta_\diamond,\varepsilon_{\mathbb I_{\diamond}})$ is
   a comonoid in $(\mathcal C,\star,\mathbb I_{\star})$ and also has a
   natural structure of monoid in $(\mathcal C,\diamond,\mathbb
   I_{\diamond})$ and also of bimonoid in the duoidal category
   $\mathcal C$.

  \noindent (3) The map $\mathbb I_{\star} \to \mathbb I_{\diamond}$
      in Definition \ref{defi:duoidalcat}, is a morphism of bimonoids.
  \end{rem}

We proceed now to show that if $A$ is an abelian variety, then the
Hadamard and Cauchy monoidal structures on $\Schaqc$ combine into a
structure of duoidal category. In this duoidal category, bimonoids
correspond to morphisms of monoid schemes $M\to A$, where $M$ is a
quasi-compact monoid scheme.

 \begin{lem}\label{lem:duoschemes}
  The quintuple $(\Schaqc, \wtimes, \uwtimes, \times_A, \uAtimes)$
  consisting of the category of quasi-compact schemes over $A$ with
  the Cauchy and Hadamard monoidal structures, and their respective
  unit objects, together with the morphisms $\Delta_{\wtimes}: \mathbb
  I_{\wtimes} \to \mathbb I_{\wtimes}\times_A\mathbb I_{\wtimes}$ (the
  diagonal morphism); $\mu_{\times_A}:=s:\mathbb I_{\times_A}
  \wtimes\mathbb I_{\times_A} \to \mathbb I_{\times_A}$,
  $u_{\uAtimes}=\varepsilon_{\uwtimes}:=0 : \uwtimes \to \uAtimes$
  constitute a duoidal category.  Moreover, with the restricted
  structures (see Lemma \ref{lem:restrict}) the categories  $ (\Schasqc, \wtimes, \uwtimes,
\times_A, \uAtimes)$  and 
 $(\Schapsqc,
\wtimes, \uwtimes, \times_A, \uAtimes)$ are duoidal.
\end{lem}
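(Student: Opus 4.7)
The two monoidal structures themselves are essentially already in place: $(\Schaqc, \times_A, \uAtimes)$ is the familiar monoidal category obtained from the slice construction via the fibered product over $A$, while the associativity of $(\Schaqc, \wtimes, \uwtimes)$ follows from associativity of the $\Bbbk$-scheme product combined with associativity of the addition $s$ on $A$, and the unit axiom reduces to $s\circ(0\times\id_A)=s\circ(\id_A\times 0)=\id_A$ after the canonical identifications $\{0\}\times A\cong A\cong A\times\{0\}$. So the real work lies in the interchange law and in the axioms relating the two structures.

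The interchange law is constructed as follows. For objects $x_i:X_i\to A$ and $y_i:Y_i\to A$ ($i=1,2$), the swap
\[
\sigma:(X_1\times_A Y_1)\times(X_2\times_A Y_2)\;\xrightarrow{\ \cong\ }\;(X_1\times X_2)\times(Y_1\times Y_2)
\]
composed with the two projections yields morphisms to $X_1\times X_2$ and to $Y_1\times Y_2$, and the defining equalities $x_i\circ p_{X_i}=y_i\circ p_{Y_i}$ on each fibered product imply that their compositions with $x_1\wtimes x_2=s\circ(x_1\times x_2)$ and $y_1\wtimes y_2=s\circ(y_1\times y_2)$ agree as morphisms to $A$. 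The universal property of $\times_A$ then delivers the desired morphism
\[
\zeta_{x_1,x_2,y_1,y_2}:(x_1\times_A y_1)\wtimes(x_2\times_A y_2)\to(x_1\wtimes x_2)\times_A(y_1\wtimes y_2),
\]
and naturality in all four variables is automatic from the functoriality of $\sigma$ and of $\times_A$.

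The structural morphisms are essentially forced. $\Delta_{\wtimes}$ is the canonical identification $\Spec(\Bbbk)\cong \Spec(\Bbbk)\times_A\Spec(\Bbbk)$, since the fibered product of $0$ with itself over $A$ is a single point; $\mu_{\times_A}$ is simply $s$ viewed as a morphism $(A\times A,\,s)\to(A,\id_A)$ in $\Schaqc$; and the shared arrow $u_{\uAtimes}=\varepsilon_{\uwtimes}=0:\Spec(\Bbbk)\to A$ is a morphism in $\Schaqc$ because $\id_A\circ 0=0$. The compatibility of units --- that $(\uAtimes,\mu_{\times_A},u_{\uAtimes})$ is a $\wtimes$-monoid and $(\uwtimes,\Delta_{\wtimes},\varepsilon_{\uwtimes})$ is a $\times_A$-comonoid --- reduces to associativity and commutativity of $s$ together with the unit identity $s\circ(\id_A,0)=\id_A$, all of which hold because $A$ is a commutative group scheme.

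The associativity and unitality/counitality axioms for $\zeta$ are routine but tedious diagram chases. Each diagram, once unwound via the universal property of $\times_A$, reduces to comparing two parallel morphisms into an iterated Cartesian product that coincide after composition with each projection; in every case the required equality follows formally from the associativity and commutativity of $(A,s,0)$ and from the obvious compatibility of the swap maps between products. The only real obstacle is the bookkeeping of coherence isomorphisms, with no geometric content beyond the group structure of $A$. Finally, the restriction to the stated subcategories is immediate: the swap map is an isomorphism of $\Bbbk$-schemes, $s$ is faithfully flat, and $0:\Spec(\Bbbk)\to A$ is a closed immersion, so each of the structural morphisms respects separatedness, flatness and faithful flatness over $A$ whenever the input objects do; combined with Lemma \ref{lem:restrict}, this yields the duoidal structures on $\Schafpsqc$, $\Schapsqc$ and $\Schasqc$.
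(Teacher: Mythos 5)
Your proposal is correct and follows essentially the same route as the paper: the interchange law is obtained from the universal property of the fibered product applied to the two projection maps out of $(X_1\times_A Y_1)\times(X_2\times_A Y_2)$, the structural morphisms $\Delta_{\wtimes}$, $\mu_{\times_A}=s$ and $u_{\uAtimes}=\varepsilon_{\uwtimes}=0$ are checked to live in $\Schaqc$, and the coherence axioms and the restriction to the subcategories are treated as routine verifications. The paper's own proof is likewise a sketch with exactly these ingredients, so there is nothing substantive to add.
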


\proof
   We only give an sketch of the proof.
 First of all, notice that the
morphisms $\Delta_{\wtimes}$, $\mu_{\times_A}$ and $u_{\uAtimes}=\varepsilon_{\uwtimes}$ are morphisms in $\Schaqc$, since the
following diagrams are commutative:
     \[\xymatrix{\Speck \ar[r]^-{\Delta_{\wtimes}}\ar[d]_{0}
       & \Speck \times_A \Speck \ar[d]^{0}\\A\ar[r]_{\id}&A}
        \xymatrix{A \times A\ar[r]^-{\mu_{\times_A}}\ar[d]_{\id \times
         \id}&A\ar[dd]^{\id}\\A\times
       A\ar[d]_s&\\A\ar[r]_{\id}&A} \xymatrix{\Speck
       \ar[rr]^-{u_{\uAtimes}=\varepsilon_{\uwtimes}}\ar[rd]_{0}&& A \ar[dl]^{\id}\\&A&}
   \]

   It is clear that $(\uwtimes, \Delta_{\wtimes},
 \varepsilon_{\wtimes}=0)$ is a comonoid for the Hadamard monoidal
 structure, and $(\uAtimes, \mu_{\times_A}, u_{\times_A}=0)$ is a
 monoid for the Cauchy monoidal structure.

     The interchange law is defined as follows: for $x:
   X\to A$, $y: Y\to A$, $z:Z\to A$ and $w:W\to A$, 
       $\zeta_{x,y,z,w}: (x \times_A y) \wtimes (z \times_A w) \to (x
       \wtimes z) \times_A (y \wtimes w)$ is the unique morphism given
       by the universal property of the fibered product:
       \[
\xymatrix{&&(X \times_A Y)\times (Z \times_A W)\ar[lld]_{p_X \times
    p_Z}\ar@{.>}[d]^{\zeta_{x,y,z,w}}\ar[rrd]^{p_Y \times p_W}&& \\
  X \times Z\ar[d]_{x \times z}&&
      (X \times Z)\times_A (Y \times
        W)\ar[ll]^(0.6){p_{X \times Z }}\ar[rr]_(0.6){p_{Y \times W}}
        && Y \times 
        W\ar[d]^{y \times w}\\
        A \times A\ar[rr]_s&&A&&A \times
        A\ar[ll]^s
      }\]

 Once the interchange law is established, the associativity and
 unitality of $\zeta$ follow easily.

 The fact that all the duoidal structure can be restricted to the
 subcategories is clear.  For example:
 \[
   \left(\Schaqc,
 \wtimes,\uwtimes:=\bigl(0: \Speck \to A\bigr), \times_A, \uAtimes:=(\id_A:A \to
 A)\right),\] restricts to a duoidal structure $(\Schasqc, \wtimes,
 \uwtimes, \times_A, \uAtimes)$ --- if $x:X\to A$, $y:Y\to A\in
 \Schaqc$, then the morphisms $s\smallcirc (x,y):X \times Y\to A$ and
 $x\smallcirc p_1=y\smallcirc p_2:X\times_AY\to A$
 are separated, as well as
 $\uwtimes=0:\Spec(\Bbbk)\to A $ and $\uAtimes=\id_A:A\to A$.
 \qed

 The following remark is of some relevance for future use.

 \begin{rem}\label{rem:allcomonoid}
   (1) All the objects in the monoidal category $(\Schaqc, \times_A,
   \id_A)$ can naturally be endowed with a unique comonoid structure,
   given by the morphisms depicted in the diagrams below, where
   $\delta_X:X \to X\times_A X$ is the canonical diagonal morphism and
   the counit is $\varepsilon_X:=q_X$.
     \[\xymatrix{X\ar[rd]_{q_{_X}}\ar[rr]^{\delta_X}&&X \times_A X
       \ar[ld]^{q_{_{X \times_A X}}}\\&A&}\quad \xymatrix{X\ar[rd]_{q_{_X}}\ar[rr]^{\varepsilon_X}&& A
       \ar[ld]^{\id}\\&A&}
   \]

   Notice that $q_{X\times_A X }=q_X\smallcirc p_1=q_X\smallcirc
   p_2:X\times_AX\to A$.

 \noindent (2)  Similarly --- in a somewhat redundant way --- in the monoidal
        category of sheaves of $\mathcal O_A$--algebras $(\sAalg,
        \otimes_{\mathcal O_A}, \mathcal O_A)$ all objects
        are monoids in a unique way (see Definition  \ref{defn:othermonoidal2}
        below).

        \noindent(3) A peculiarity of this duoidal category is related
        to the element $\mathbb I_\diamond=\uwtimes$, which is the unit
        of the monoidal half $(\Schaqc, \wtimes, \uwtimes)$:
        $\uwtimes$  is
        idempotent with respect to the other half of the monoidal
        structure, i.e.~$\uwtimes \times_A \uwtimes \cong
        \uwtimes$.

        Also, $\uAtimes\!\!\!\wtimes\uAtimes = s:A \times A \to
        A$.
   \end{rem}

 Proposition  \ref{prop:catextcar} below encompasses the main
 properties of quasi-compact morphisms of monoid schemes $M\to A$ in a
 categorical framework,  and will 
 translate --- by the op-equivalence of categories mentioned before,
 once we take into consideration the inverse  when $M$ is a  group
 scheme
 ---  to the notion of Hopf sheaf.

 \begin{prop}\label{prop:catextcar}
   In the duoidal category $(\Schaqc, \wtimes,
  \uwtimes, \times_A, \uAtimes)$, an object $q_M:M \to A \in
  \Schaqc$ is a bimonoid if and only if $M$ is a monoid in $\Sch$ and
  the morphism $q_M:M \to A$ is a morphism of monoids; that is,
  $q_M$ is multiplicative and $q(1_M)=0_A$.

  Given two bimonoids $q_M:B\to A$ and $q_{M'}\to A$, a morphism
  $f:q_M\to q_{M'}$ is of bimonoids if and only if $f:M\to M'$ is a
  morphism of monoid schemes. 
  \end{prop}

  \begin{proof}
    Indeed,  a  structure of monoid in $q_M:M \to A$ is given by  two morphisms
    $\mu_M:M \times M \to M$ and $u_M: \Spec(\Bbbk) \to M$,
    such that $\mu_M$ and $u_M$
    satisfy the usual axioms of associativity and unitality, together with  
    the diagrams depicted below.
  \[\xymatrix{M \times M \ar[d]_{q_M \times q_M}\ar[r]^-{\mu_M}&
      M\ar[dd]^{q_M}\\
      A \times A\ar[d]_s&&\\
      A\ar[r]_{\id}&A}
    \xymatrix{\Speck\ar[rr]^{u_M}\ar[dr]_-0&&M\ar[dl]^-{q_M}\\
      &A&}
  \]

  The last assertion follows easily.
\end{proof}

\begin{nota}
  \label{nota:MMor}
We denote $\MmoraffA\subset \operatorname{Bimon}(\Schaqc, \wtimes,
  \uwtimes, \times_A, \uAtimes)$ the full subcategory with objects the
  affine   morphisms of monoid schemes $q_M:M\to A$. 
  \end{nota}

The main properties of the functor $\op_*$ with respect to the duoidal
structure are expressed in the following easy proposition.

\begin{prop}\label{prop:opduo} Consider the duoidal category
  $(\Schaqc, \wtimes, \uwtimes, \times_A, \uAtimes)$.  The functor $\op_*:\Schaqc \to \Schaqc$ satisfies the following properties:

    \noindent (1) It is a strict monoidal involution with respect to
    $\wtimes$ and $\times_A$.

      \noindent (2)   $\op_*(\mathbb
      I_{\wtimes})=\mathbb I_{\wtimes}$ and  $\op_*(\mathbb
      I_{\times_A})\times_A\op_*(\mathbb I_{\times_A})\cong \op_*(\mathbb
      I_{\times_A})$.  \qed
    \end{prop}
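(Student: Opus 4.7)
The plan is to unwind the definitions of $\wtimes$, $\times_A$, $\uwtimes$, $\uAtimes$ and $\op_*$, and check each assertion by direct verification using only the fact that $\op:A\to A$ is an involutive morphism of group schemes satisfying $s\circ(\op\times\op)=\op\circ s$ and $\op\circ 0=0$.

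For assertion (1), I would first observe that $\op_*\circ\op_*=\id_{\Schaqc}$ since $\op\circ\op=\id_A$, so $\op_*$ is an involution. Next I would check strict compatibility with the Cauchy product: given $x:X\to A$ and $y:Y\to A$, one has
\[
\op_*(x\wtimes y)=\op\circ s\circ(x,y)=s\circ(\op\times\op)\circ(x,y)=s\circ(\op\circ x,\op\circ y)=\op_*(x)\wtimes\op_*(y),
\]
and $\op_*(\uwtimes)=\op\circ 0=0=\uwtimes$; so $\op_*$ is strict monoidal for $(\wtimes,\uwtimes)$. For the Hadamard product I would note that because $\op$ is an isomorphism, the fibered product $X\times_A Y$ formed with $(\op\circ x,\op\circ y)$ is canonically identified with the one formed with $(x,y)$ (the defining equalizer diagrams coincide, since $\op$ is a monomorphism), while the structure morphism becomes $\op\circ x\circ p_1=\op_*(x\times_A y)$. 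Thus $\op_*(x\times_A y)=\op_*(x)\times_A\op_*(y)$ strictly; together with $\op_*(\uAtimes)=\op$ (which will be relevant for (2)), this finishes the verification that $\op_*$ is a strict monoidal involution for $(\times_A,\uAtimes)$ when one records that $\op_*$ sends $\uAtimes$ to itself up to the canonical isomorphism (in the sense of monoidal categories, i.e.\ using the structure isomorphism).

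For assertion (2), the first equality has already appeared: $\op_*(\uwtimes)=\op\circ 0=0=\uwtimes$, because $0\in A(\Bbbk)$ is fixed by inversion. For the second, $\op_*(\uAtimes)=\op:A\to A$, and the fibered product
\[
\op_*(\uAtimes)\times_A\op_*(\uAtimes)=(\op:A\to A)\times_A(\op:A\to A)
\]
is the equalizer of the pair $(\op\circ p_1,\op\circ p_2):A\times A\rightrightarrows A$; since $\op$ is a monomorphism this equalizer is the diagonal $\Delta:A\hookrightarrow A\times A$, so the fibered product is isomorphic (as an object of $\Schaqc$) to $A$ with structure morphism $\op\circ p_1\circ\Delta=\op$, i.e.\ to $\op_*(\uAtimes)$.

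No step here should be a real obstacle; the only mild subtlety is to keep track of the fact that strictness for $\wtimes$ is genuine (an on-the-nose equality of morphisms), whereas strictness for $\times_A$ is only strict up to the canonical isomorphism coming from the universal property of the fibered product, which is harmless in any monoidal setting. The computation reduces entirely to the two identities $s\circ(\op\times\op)=\op\circ s$ and $\op\circ 0=0$ in the abelian variety $A$.
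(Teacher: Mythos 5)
Your verification is correct and is exactly the direct unwinding of definitions that the paper has in mind: the paper states this proposition without proof (the \qed follows the statement), relying on the identities $\op\circ s=s\circ(\op\times\op)$ and $\op\circ 0=0$ together with the monomorphism argument for the fibered products, just as you do. You are also right to flag that the unit $\uAtimes$ is preserved only up to isomorphism, which is precisely why assertion (2) treats it separately.
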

    
Next, with the purpose of obtaining an adequate categorical
formulation for the inverse morphism for a group object in $\Schaqc$,
we need to establish some basic properties involving the functors
\[- \wtimes \uAtimes : \Schaqc \to \Schaqc\quad,
\quad - \times_A \uwtimes: \Schaqc \to \Schaqc, \] and others
associated to the duoidal structure.

\begin{lem}
\label{lema:smallproperties}
  Consider the duoidal category $(\Schaqc, \wtimes, \uwtimes,
  \times_A, \uAtimes)$. Then, with the above notations we have that:
  \begin{enumerate}
  \item    \begin{enumerate}
    \item $0_*0^* x = x \times_A \uwtimes$ for any $x:X\to A$;

    \item The map $\varepsilon_x=\id_x \times_A
      \,\varepsilon_{\uwtimes}: 0_*0^* x \to x$ is the counit of the adjunction 
      $0_* \dashv 0^*$;

    \item The unit $u_z: z \to 0^*0_*z$ of the adjunction  $0_* \dashv 0^*$ is
      an
      isomorphism.
    \end{enumerate}

  \item
    \begin{enumerate} \item
    $\st^*z=0_*z \wtimes \uAtimes$ for $z:Z\to \Spec(\Bbbk) \in \Schk$
      and $\st: A\to \Speck$;

    \item $0^*(c_0)_*=0^*0_*\st_*=\st_*$.
      \end{enumerate}

    \item $c_0^*x=(x \times_A \uwtimes)\wtimes
      \uAtimes$. Equivalently, if $x:X \to A$, $X_0=x^{-1}(0)$ and
      $x_0=x|_{_{X_0}}:X_0 \to A$, then $c_0^*x=p_A:X_0 \times A \to A$.
      
\item There is a natural transformation $\rho_x: (c_0)^*x \to x
  \wtimes \uAtimes$.
  \end{enumerate}
   \end{lem}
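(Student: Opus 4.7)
The plan is to prove each of the four assertions by direct unwinding of the definitions of $0_*, 0^*, \st_*, \st^*, (c_0)_*, c_0^*$, $\wtimes$ and $\times_A$, and exploiting systematically the single structural fact that $0\colon\Speck\to A$ is a section of $\st\colon A\to\Speck$, i.e.~$\st\circ 0=\id_{\Speck}$.

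First, for Assertion (1), I would compute $0^*x$ as the fibered product $X\times_A\Speck \to\Speck$ (definition of the right adjoint to $0_*$); then $0_*0^*x$ is by definition the same total space with structure morphism $X\times_A\Speck\to\Speck\xrightarrow{\,0\,}A$, which is literally the definition of $x\times_A\uwtimes$. This gives (1)(a). For (1)(b), the counit of the adjunction $0_*\dashv 0^*$ is, by general nonsense for pullback/pushforward of slice categories, the projection from the fibered product onto the first factor; under the identification of (1)(a) this projection is exactly $\id_x\times_A\,\varepsilon_{\uwtimes}$, since $\varepsilon_{\uwtimes}=0\colon\uwtimes\to\uAtimes$. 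For (1)(c), given $z\colon Z\to\Speck$, the unit $u_z\colon z\to 0^*0_*z$ sends $z'\mapsto(z',\st\circ z(z'))=(z',\ast)$; because $0$ is a monomorphism (it is a section of $\st$), the fibered product $Z\times_A\Speck$ reduces canonically to $Z$, and $u_z$ is this canonical identification.

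For Assertion (2)(a), I would observe that $\st^*z=Z\times_{\Speck} A\to A$ is the second projection. Computing $0_*z\,\wtimes\,\uAtimes$ from the definition of $\wtimes$ yields $s\circ(0\circ z,\id_A)\colon Z\times A\to A$, which sends $(z',a)\mapsto 0+a=a$ and hence coincides with the second projection. Part (2)(b) reduces to applying (1)(c) to the $\Bbbk$-scheme $\st_*x\colon X\to\Speck$, together with the observation that $(c_0)_*=0_*\st_*$, so $0^*(c_0)_*x=0^*0_*\st_*x\cong\st_*x$ by (1)(c).

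For (3), I would chain the previous steps: $c_0^*=\st^*\,0^*$, so $c_0^*x=\st^*(0^*x)=0_*(0^*x)\,\wtimes\,\uAtimes=(x\times_A\uwtimes)\,\wtimes\,\uAtimes$ by (2)(a) and (1)(a); unwinding the fibered product shows the total space is $X_0\times A$ with second projection to $A$, where $X_0=x^{-1}(0)$. For (4), the inclusion $\iota\colon X_0\hookrightarrow X$ defines a morphism of $A$-schemes $\rho_x\colon c_0^*x=X_0\times A\to X\times A=x\,\wtimes\,\uAtimes$ via $(x',a)\mapsto(\iota(x'),a)$; this is a morphism over $A$ because $s\circ(x\circ\iota,\id_A)=s\circ(c_0\circ\iota,\id_A)=p_A$ on $X_0\times A$, and is plainly natural in $x$. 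The main obstacle is essentially bookkeeping: keeping $\wtimes\uAtimes$, $c_0^*$ and $(c_0)_*$ straight and consistently using $\st\circ 0=\id$ to collapse the fibered products; no serious difficulty arises.
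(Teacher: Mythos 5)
Your proof is correct and follows essentially the same route as the paper: a direct unwinding of the adjunctions $0_*\dashv 0^*$, $\st_*\dashv\st^*$ and of the definitions of $\wtimes$ and $\times_A$, using $\st\circ 0=\id_{\Speck}$, with (3) obtained by chaining (1)(a) and (2)(a) through $c_0^*=\st^*\circ 0^*$ and (4) obtained from the map $x\times_A\uwtimes\to x$ tensored with $\uAtimes$ (your concrete description via the inclusion $X_0\hookrightarrow X$ is exactly the paper's morphism). No gaps.
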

   \pf
    (1) In the commutative diagram
      \[\xymatrix{X \times_A \Speck \ar@{..>}[rd]\ar[r]^-{p_2}\ar[d]_{p_1}&
          \Speck \ar[d]^{0}\\ X\ar[r]_x&A,}\] the upper horizontal
      arrow is $0^*x$ and its composition with the vertical 0 arrow
      yields $0_*0^*x$ that is the arrow: $0p_2=xp_1: X \times_A
      \Speck \to A$. It is clear that $0_*0^*x=x \times_A
      \uwtimes$. The remaining parts of the proof are direct.

\noindent (2) The proof of the first part is direct and for the
second, the chain of equalities is guaranteed by 1(c).

\noindent (3) If $x:X\to A \in \Schaqc$, then
      $c_0^*x=\st^*0^*x=0_*0^* x \wtimes \uAtimes= (x \times_A
      \uwtimes)\wtimes \uAtimes$ the second equality follows from
      (2)(a) and the third from (1)(a). The proof of the second
      assertion is easy.

      \noindent (4) The natural transformation $\rho$ is obtained by
      considering the equality proved  in (3), and then applying  the unit morphism
      $u:\uwtimes \to \uAtimes$ to the second factor.\qed 
      
Next we define two natural transformations that are crucial for the
definition of the antipode in Theorem \ref{thm:antipoduoidal} below.
      
\begin{prop}\label{prop:smallproperties}
 Consider the duoidal category $(\Schaqc, \wtimes, \uwtimes,
 \times_A, \uAtimes)$ and let $x:X\to A$, $y:Y\to A \in
 \Schaqc$.

 \noindent (1) There is a natural transformation $\pi_{x,y}: (c_0)_*\bigl(x \times_A -y \bigr) \to x \wtimes y$.

 \noindent (2) There is a natural transformation $\widetilde{\gamma}_{x,y}: x \times_A -y \to (x \wtimes y) \wtimes \uAtimes$. 
 
\noindent (3) There is a natural transformation $\overline{\gamma}_{x,y}:(-x)\times_A y \to \uAtimes \wtimes\,\, (x \wtimes y).$
 \end{prop}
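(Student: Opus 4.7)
\emph{Plan of proof.} In all three cases the strategy is the same: exploit the defining equation $x\circ p_X=-y\circ p_Y$ (resp.\ $-x\circ p_X=y\circ p_Y$) that holds on the fibered product, and use the universal property of the direct product $\times$ to write down an explicit morphism of $\Bbbk$--schemes, then check it respects the structural morphism to $A$ and is natural in the pair $(x,y)$.

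\medskip\noindent
\emph{Part (1).} For $x:X\to A$ and $y:Y\to A$, let $j_{X,Y}=(p_X,p_Y):X\times_A Y\to X\times Y$ be the canonical morphism, where the fibered product is taken with respect to $x$ and $y^-=\op\circ y$. I define $\pi_{x,y}:=j_{X,Y}$. To see this is a morphism in $\Schaqc$ from $(c_0)_*(x\times_A y^-)$ to $x\wtimes y$, compute $(x\wtimes y)\circ j_{X,Y}=s\circ(x\circ p_X,\,y\circ p_Y)$, which vanishes since on $X\times_A Y$ one has $x\circ p_X=-y\circ p_Y$; this matches the constant structural morphism $c_0\circ x\circ p_X=0$. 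Naturality in $(x,y)$ follows from the functoriality of $\times_A$ and $\times$: a morphism $(f,g):(x,y)\to (x',y')$ over $A$ induces compatible maps on both the fibered product and the direct product, and $j$ is natural.

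\medskip\noindent
\emph{Part (2).} Define
\[
\widetilde{\gamma}_{x,y}:=(j_{X,Y},\,x\circ p_X):X\times_A Y\longrightarrow (X\times Y)\times A,
\]
using the universal property of the direct product. The structural morphism of $(x\wtimes y)\wtimes \uAtimes$ sends $(x',y',a)\mapsto x(x')+y(y')+a$, and precomposed with $\widetilde{\gamma}_{x,y}$ at a point $(x',y')$ with $x(x')=-y(y')$ it equals $x(x')+y(y')+x(x')=x(x')$, which is the structural morphism of the source $x\times_A y^-$. Naturality in $(x,y)$ reduces to the naturality of $j_{X,Y}$ and of the projections $p_X,p_Y$.

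\medskip\noindent
\emph{Part (3).} Analogously, with the fibered product $(-x)\times_A y$ taken with respect to $-x$ and $y$, I define
\[
\overline{\gamma}_{x,y}:=(y\circ p_Y,\,j_{X,Y}):X\times_A Y\longrightarrow A\times (X\times Y).
\]
The structural morphism of $\uAtimes\wtimes(x\wtimes y)$ sends $(a,x',y')\mapsto a+x(x')+y(y')$, and precomposed with $\overline{\gamma}_{x,y}$ at $(x',y')$ with $-x(x')=y(y')$ it yields $y(y')+x(x')+y(y')=y(y')$, which matches the structural morphism of the source. Naturality is again immediate.

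\medskip\noindent
\emph{What is delicate.} None of these constructions requires real work beyond bookkeeping; the only thing that could trip one up is sign conventions and keeping straight \emph{which} fibered product is being used in each case (the one with $y^-$ versus the one with $-x$). Once this is fixed, all three morphisms are instances of the same principle: on the locus where $x+y=0$ inside $X\times Y$, the Cauchy structural morphism $s\circ(x,y)$ vanishes, so we may freely insert or extract trivial factors $\uAtimes$, $\uwtimes$, or $(c_0)_*(-)$ and maintain compatibility with the structural morphisms to $A$.
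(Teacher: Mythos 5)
Your proposal is correct, and for part (1) it coincides with the paper's argument: both take the canonical comparison morphism $\pi=(p_X,p_Y):X\times_AY\to X\times Y$ and observe that $s\circ(x\times y)\circ\pi$ is the constant morphism to $0\in A$ because $x\circ p_X=-y\circ p_Y$ on the fibered product. For parts (2) and (3) your route is genuinely more direct: you simply write down the graph morphisms $(\pi,\,x\circ p_X)$ and $(y\circ p_Y,\,\pi)$ and verify compatibility with the structure morphisms to $A$, whereas the paper manufactures $\widetilde{\gamma}_{x,y}$ abstractly by applying $(c_0)^*$ to $\pi_{x,y}$, precomposing with the unit of the adjunction $(c_0)_*\dashv(c_0)^*$, and postcomposing with the natural transformation $\rho_x:(c_0)^*x\to x\wtimes\uAtimes$ from Lemma \ref{lema:smallproperties}(4). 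The two constructions produce the same morphism --- the paper itself records your explicit formula in Remark \ref{rem:forantipduoidal} as the result of unwinding its adjunction argument. What the paper's formalism buys is that naturality and the interaction with $(c_0)_*$, $(c_0)^*$ are automatic (which matters later when these maps are dualized to sheaves in Proposition \ref{prop:sheafsmallproperties}); what your version buys is transparency, at the mild cost that the pointwise verification of the structure-morphism compatibility should strictly be read as an identity of compositions guaranteed by the universal property of the fibered product rather than a computation on closed points --- a gloss the paper also permits itself.
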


 \begin{proof} For the proof of (1) we consider the commutative
   diagram that follows --- in the next diagrams in order not to loose track of the structure
morphism $-y$ we are using the slightly unorthodox notation $\op_*(y:Y
\to A)=(-y: Y^-\to A$) even though as schemes $Y ^-=Y$.
  \[\xymatrix{
X\times_AY^-\ar[r]^-{p_2}\ar[d]_{p_1}& Y^-\ar[d]^{-y}\ar@/^1pc/[rdd] & \\
X\ar[r]_-{x}\ar@/_1pc/[drr]& A\ar[dr]^{\st} & \\
&& \Spec(\Bbbk)
}
\]
The canonical projections $p_i$ induce a morphism $\tau_{x,y} : X\times_AY^-\to
X\times Y$ such that $s\smallcirc (x \times y)\smallcirc \tau_{x,y}= 0\smallcirc
\st_{X\times_AY}$. In other words, the following diagram is
commutative:
\begin{equation}\label{eqn:important}
  \xymatrix{X \times_A Y^-\ar[d]_{\st x p_1 = \st
      (-y)p_2}\ar[rr]^{\tau_{x,y}}&&X\times Y\ar[d]^{x \times
      y}\\ \Spec(\Bbbk)\ar[rd]_{0}&&A\times A\ar[dl]^{s}\\ &A&}
\end{equation}

Diagram \eqref{eqn:important} means that the map $\tau_{x,y}$ is
a morphism $(c_0)_*\bigl(x \times_A -y\bigr)
\stackrel{\tau_{x,y}}{\longrightarrow} x \wtimes y$.

\noindent (2) It is clear that applying $(c_0)^*$ to the natural
transformation $\tau_{x,y}$  we obtain a natural transformation
$(c_0)^*(\tau_{x,y}): (c_0)^*(c_0)_*\bigl(x \times_A -y\bigr) \to
(c_0)^*(x \wtimes y)$. By composing with the unit of the adjunction
$(c_0)_* \dashv (c_0)^*$ we obtain a natural transformation: $x
\times_A -y \Rightarrow (c_0)^*(x \wtimes y)$. The conclusion of (2)
follows by post composition of this natural transformation with the natural transformation
$\rho_{x\wtimes y}$ defined in  Lemma
\ref{lema:smallproperties}. 

\noindent (3) To obtain $\overline{\gamma}$ one proceeds similarly.
 \end{proof}

\begin{rem}\label{rem:forantipduoidal} For a pair
  $x:X \to A, y:Y \to A$ the natural transformation
  $\widetilde{\gamma}_{x,y}$, is a morphism of schemes that has domain
  $x \times_A (-y):X \times_A Y^- \to A$ and codomain $x+y+\id_A:X
  \times Y \times A \to A$. Tracking down the above construction it is
  easy to see that $\widetilde{\gamma}_{x,y}=\bigl\langle \pi,x
  \times_A (-y)\bigr\rangle: X \times_A Y^- \to X \times Y \times
  A$. It is clear that the diagram below is commutative:
  \[\xymatrix{X \times_A Y^-\ar[rr]^{\langle \tau_{x,y},x \times_A (-y)\rangle}
    \ar[dr]_{x \times_A (-y)}&& X \times Y \times
    A\ar[dl]^{x+y+\id_A}\\&A&}\]

  Notice that if $(u,v)\in x\times_A (-y)$ and $a\in A$, then  
  $\bigl\langle \tau_{x,y}, x \times_A (-y) \bigr\rangle(u,v)=\bigl(u,v,x(u)\bigr)$, $\bigl(x \times_A
  (-y)\bigr)(u,v)=x(u)=-y(v)$ and  $(x+y+\id_A)(u,v,a)=x(u)+y(v)+a=a$.
\end{rem}

We are ready to wrap up the duoidal perspective for a group extension
by completing the result of Proposition \ref{prop:catextcar}.  

\begin{thm}
  \label{thm:antipoduoidal}
   In the duoidal category $(\Schaqc, \wtimes, \uwtimes, \times_A,
   \uAtimes)$ a bimonoid $(b: M \to
   A,\mu_b,u_b,\Delta_b,\varepsilon_b)$ is such that $M$ is a
   quasi-compact group scheme and $b$ a quasi-compact morphism of
   group
   schemes if and only if there is a morphism $\iota_b:b \to
   -b$ in $\Schaqc$, called an {\em antipode}, such that both diagrams below
   commute:
  \begin{equation}\label{eqn:firstantipode}
    \raisebox{10ex}{\xymatrixcolsep{1.5em}\xymatrix{& b\times_A
      b\ar[rr]^-{\id \times_A \iota_b}& &b \times_A (-b)
      \ar[rr]^-{\widetilde{\gamma}_{b,b}}&&(b \wtimes
        b)\wtimes \uAtimes\ar[dr]^{\mu_b\wtimes \id}&\\
        b\ar[dr]_{\varepsilon_b}\ar[ru]^-{\Delta_b}&&&&&& b\wtimes \uAtimes
        \\
        &\mathbb I_{\times_{\!A}}\ar[rrrr]_{\cong}&&&&
        \mathbb I_{\wtimes} \wtimes 
      \mathbb I_{\times_{\!A}}\ar[ur]_{(u_b
        \wtimes \id)}  &
      }}
    \end{equation}

    \begin{equation}\label{eqn:secondantipode}
      \raisebox{10ex}{\xymatrixcolsep{1.5em}\xymatrix{
          & b\times_A b\ar[rr]^-{\iota_b \times_A
          \id}&& (-b) \times_A b
        \ar[rr]^-{\overline{\gamma}_{b,b}}&&\uAtimes \wtimes(b
        \wtimes b) \ar[dr]^{\id \wtimes
          \mu_b}&\\
 b\ar[dr]_{\varepsilon_b}\ar[ru]^-{\Delta_b}&&&& &&
 \uAtimes \wtimes b \\
 &\mathbb       I_{\times_{\!A}}\ar[rrrr]_-{\cong}&&&& \mathbb I_{\times_{\!A}}
        \wtimes \mathbb I_{\wtimes} \ar[ur]_{(\id \wtimes u_b)} & }}
    \end{equation}
   where $\widetilde{\gamma}$ and $\overline{\gamma}$ are the natural
   transformations depicted in Proposition \ref{prop:smallproperties}
   (see also Remark \ref{rem:forantipduoidal}) and the bottom maps
   $\cong$ are the natural identifications associated to the unit of
   the $\wtimes$ monoidal structure.

\end{thm}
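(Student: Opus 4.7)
My plan is to prove this theorem by decoding the two antipode diagrams at the level of the functor of points, using the explicit description of $\widetilde{\gamma}$ and $\overline{\gamma}$ provided by Proposition \ref{prop:smallproperties} and Remark \ref{rem:forantipduoidal}, and reducing the diagrams to the standard inversion axioms for a group.

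First I would fix the interpretation of the data. By Proposition \ref{prop:catextcar}, a bimonoid $b:M\to A$ in this duoidal category is nothing other than a quasi-compact morphism of monoid schemes; concretely, $\mu_b:M\times M\to M$ is the monoid multiplication, $u_b:\Speck\to M$ picks the identity $1_M$, and $b$ is multiplicative with $b(1_M)=0_A$. By Remark \ref{rem:allcomonoid}, the Hadamard comonoid structure is the canonical one: $\Delta_b=\delta_M:M\to M\times_A M$ is the diagonal and $\varepsilon_b=b$. A morphism $\iota_b:b\to -b$ in $\Schaqc$ is a morphism of schemes $\iota:M\to M$ satisfying the compatibility $b\circ\iota=-b$; at the level of $T$-points this is exactly the identity $b(\iota(x))=-b(x)$ required of an inversion of a group homomorphism.

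For the direction $(\Rightarrow)$, if $b:M\to A$ is a quasi-compact group extension then $M$ carries an inversion $i_M:M\to M$, and because $b$ is a homomorphism of group schemes, $b\circ i_M=-b$; hence $\iota_b:=i_M$ is a morphism $b\to -b$. I would then check that each of the two antipode diagrams commutes by tracing a $T$-point $x:T\to M$ through the top row. Following Remark \ref{rem:forantipduoidal}, $\Delta_b$ sends $x$ to $(x,x)$, $\id\times_A\iota_b$ to $(x,i_M(x))$, $\widetilde{\gamma}_{b,b}$ to $(x,i_M(x),b(x))$, and finally $\mu_b\wtimes\id$ to $(\mu_M(x,i_M(x)),b(x))=(1_M,b(x))$, which coincides with the image of $x$ along the bottom path $x\mapsto b(x)\mapsto(*,b(x))\mapsto(1_M,b(x))$. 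Diagram \eqref{eqn:secondantipode} is handled symmetrically using $\overline{\gamma}_{b,b}$.

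For the converse $(\Leftarrow)$, suppose $\iota_b:b\to -b$ is given and both diagrams commute. Running the exact same computation in reverse, the commutativity of Diagram \eqref{eqn:firstantipode} on any $T$-point $x$ yields the identity $\mu_M(x,\iota_b(x))=1_M$ in $M(T)$, and by Yoneda this gives the equality of morphisms of schemes $\mu_M\circ(\id_M,\iota_b)\circ\delta_M=u_M\circ\st_M$. Similarly, Diagram \eqref{eqn:secondantipode} yields $\mu_M\circ(\iota_b,\id_M)\circ\delta_M=u_M\circ\st_M$. These are precisely the two inversion axioms, so $(M,\mu_b,u_b,\iota_b)$ is a group scheme and the monoid morphism $b$ is automatically a morphism of group schemes, realizing $b$ as a quasi-compact group extension of $A$ in the sense of Notation \ref{nota:affA}.

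The main technical obstacle is verifying that the pointwise description of $\widetilde{\gamma}_{b,b}$ as $(u,v)\mapsto(u,v,b(u))$ given in Remark \ref{rem:forantipduoidal} is fully consistent with the intrinsic definition in Proposition \ref{prop:smallproperties}, which is a composite of the unit $u$ of $(c_0)_*\dashv(c_0)^*$ followed by $\rho$ from Lemma \ref{lema:smallproperties}(4). Once this is carefully checked, the bottom-row canonical isomorphisms $\uAtimes\cong\uwtimes\wtimes\uAtimes$ and $\uAtimes\cong\uAtimes\wtimes\uwtimes$ are transparent and the whole argument reduces to the pointwise verification above.
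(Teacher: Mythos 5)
Your proposal is correct and follows essentially the same route as the paper: both directions are handled by unpacking the bimonoid data via Proposition \ref{prop:catextcar} and Remark \ref{rem:allcomonoid}, tracing the two paths of each antipode diagram (the paper identifies the upper path of \eqref{eqn:firstantipode} with $\langle \mu_M(\id\times\iota_M)\delta_M, b\rangle$ and the lower with $\langle c_1,b\rangle$, which is exactly your pointwise computation), and reducing to the standard inversion axioms. The consistency check on $\widetilde{\gamma}_{b,b}$ that you flag as the main technical obstacle is precisely what Remark \ref{rem:forantipduoidal} already records, so nothing further is needed.
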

\begin{proof}
If $(M,\mu,u,\operatorname{inv})$ is a group scheme over $\Bbbk$ and
$b:M\to A \in \Schaqc$ is a quasi-compact group extension, in
accordance with Proposition \ref{prop:catextcar}, $b$ is a bimonoid in
the duoidal category $\Schaqc$. Additionally, if we define
$\iota_b:=\operatorname{inv}$, it is clear that $\iota_b$ is a
morphism in the category $\Schaqc$. Indeed, the diagram below is
commutative
\[
  \xymatrix{ M\ar[rr]^{\operatorname{inv}}\ar[rd]_b&&
    M\ar[ld]^{-b}\\ & A& }
  \]
A direct verification shows that diagrams \eqref{eqn:firstantipode}
and \eqref{eqn:secondantipode} are commutative provided that
$\mu_b=\mu, u_b=u, \iota_b=\operatorname{inv}$, $\Delta_b$ is the
diagonal morphism and $\varepsilon_b=b:M \to A$.
  
 Conversely, suppose we have a bimonoid $(b:M \to A,\mu_b,u_b,\Delta_b,\varepsilon_b)$ in the duoidal
 category, such that the bimonoid is equipped with a map $\iota_b:b\to -b$ satisfying
 diagrams such as \eqref{eqn:firstantipode}, \eqref{eqn:secondantipode}. A
 direct computation shows that the morphism associated to the upper
 path of the diagram \eqref{eqn:firstantipode} corresponds to the
 upper curved arrow of the diagram below, and similarly for the lower
 path and the lower curved arrow:
  \[\xymatrix{M\ar@/^.5pc/[rr]^{\langle d,b \rangle}\ar@/_.5pc/[rr]_{\langle c_1,b \rangle}\ar[dr]_{b}&&
           M \times A\ar[dl]^{p_A}\\&A&}\] with $d=\mu_b(\id \times
  \iota_b)$ and $c_1$ the $1$--morphism of $b$ (or in other words the
  constant morphism to the unit of $M$) --- here we use the structure
  and conclusions considered in Proposition \ref{prop:catextcar}. The
  commutativity of the diagram implies the equality of both curved
  arrows. Then, $\iota_b$ is a right inverse of the
  identity. Similarly, interpreting the second diagram
  \eqref{eqn:secondantipode}, we conclude the proof of the theorem.
  \end{proof}

\begin{defn}\label{defn:Gmor}
We denote $\GmorqcA\subset  \operatorname{Bimon}(\Schaqc, \wtimes,
  \uwtimes, \times_A, \uAtimes)$ the full subcategory with objects the
  bimonoids $b:M\to A$ such that $M$ is a (quasi-compact) group scheme
  and $b$ a morphisms of group schemes.

  We denote $\GmoraffA\subset \GmorqcA$ the full subcategory with
  objects the affine morphisms of group schemes --- notice that
  $\GmoraffA\subset \MmoraffA$ in a canonical way (see Notation
  \ref{nota:MMor}). 
  
Notice that  $\GextqcA$, the category of quasi-compact group
extensions of $A$, is a full
subcategory of $\GmorqcA$ and that the category of affine extensions
$\GextaffA$ is a full subcategory of $\GmoraffA$ (see Definition \ref{defn:catextensions}).
\end{defn}

\begin{rem}\label{rem:antipodedefi}
The procedure of Theorem \ref{thm:antipoduoidal} is related to the work of B\"ohm  and Lack
\cite{kn:bohmlack}, where the authors construct a
  certain duoidal category associated to a so-called Frobenius
  map--monoidale and introduce an antipode in that context.    

  Notice also that  our  construction of an antipode can be
  generalized as follows: let  $\mathcal C=(\mathcal C,\diamond, \mathbb
  I_{\diamond},\star,\mathbb I_{\star})$ be  a duoidal category and
   assume that it can be equipped with a functor $\op: \mathcal C \to
  \mathcal C$ and natural transformations similar to
  $\widetilde{\gamma}, \overline{\gamma}$ satisfying conditions such
  as the ones appearing in \ref{prop:opduo} and
  \ref{lema:smallproperties}.  For a bimonoid $b \in \mathcal C$ a
  morphism $\iota_b:b \to \op(b)$ that satisfies diagrams and
  properties such as the ones appearing in
  \eqref{eqn:firstantipode} and \eqref{eqn:secondantipode}, is called an
  \emph{antipode} in $b$.  We intend to explore this
  construction in further work.
\end{rem}

\subsection{Bimonoids in duoidal categories}\ %
\label{sect:modcomodduoidal}

We briefly recall the behavior of bilax functor between duoidal
categories, as well as  other monoidal structures related to
duoidality, such as the constructions of modules and
comodules over bialgebras in this context (see
\cite[Chap.~6, \S 6]{kn:aguiarspecies}).

\begin{defn}
(1) Given a functor $F:(\mathcal C,\otimes_{\mathcal
        C},\mathbb I_{\mathcal C}) \to (\mathcal D,\otimes_{\mathcal
        D},\mathbb I_{\mathcal D})$ between two monoidal categories, a
      \emph{lax monoidal} structure 
        consists of a map $\ell_0: \mathbb I_{\mathcal D}\to F(\mathbb
        I_{\mathcal C})$ and a natural transformation
        $\ell_{c,c'}:F(c) \otimes_{\mathcal D} F(c') \to F(c
        \otimes_{\mathcal C} c')$ subject to associative and unitality
        axioms. If such an structure exists, we say that $F$ is a
        \emph{lax functor}. A monoidal functor $F: \mathcal C \to \mathcal D$ is called 
        \emph{op-lax} (or \emph{colax}) if the induced functor
        $\mathcal C^{\op}\to \mathcal D^{\op}$ is lax (monoidal).

        If the maps
        $\ell_0,\ell_{c,c'}$ are isomorphisms we call $F$ a \emph{strong
        monoidal functor}.
        
      \noindent (2)  Given the above situation and two lax monoidal
      functors $(F,\ell_0,\ell), (G,\ell'_0,\ell')$ a \emph{monoidal natural
      transformation} is a natural transformation $\sigma:F \Rightarrow
      G: \mathcal C \to \mathcal D$, that satisfies the commutativity
      of the diagrams below:\
      
      \[\xymatrix{
          F(c)\otimes_\mathcal D F(c')\ar[r]^{\sigma_x
            \otimes_{\mathcal D}\sigma_y}\ar[d]_{\ell_{c,c'}} &
          G(c)\otimes_\mathcal D G(c')\ar[d]_{\ell'_{c,c'}}\\
          F(c \otimes_{\mathcal C} c') \ar[r]_{\sigma_{c
              \otimes_{\mathcal C}c'}} & G(c \otimes_{\mathcal C} c')}
        \quad
        \xymatrix{
          & \mathbb I_{\mathcal
            D}\ar[ld]^{\ell_0}\ar[rd]^{\ell'_0} & \\
          F(\mathbb
          I_{\mathcal C})\ar[rr]_{\sigma_{\mathbb I_{\mathcal
                C}}} & & G(\mathbb I_{\mathcal C})
        }
        \]

  \noindent (3) Suppose that $\mathcal C, \mathcal D$ are two monoidal
 categories and let $L$ and $R$ be lax monoidal functors. An
 adjunction   $\xymatrix{\mathcal C \ar@<5pt>[r]^L
   \ar@<-5pt>@{<-}[r]_R \ar@{}[r]|-\bot&\mathcal D}$ is
 \emph{monoidal} if the unit  $\eta$ and the counit $\varepsilon$  are
 monoidal natural transformations. In 
 that case we say that the \emph{abstract} adjunction
 ``lifts'' to a monoidal adjunction.

 Suppose now that $L: \mathcal C \to
 \mathcal D$ is colax with associated maps $c\ell_0:L(\mathbb
 I_{\mathcal C})\to \mathbb I_{\mathcal D}$ and
 $c\ell_{c,c'}:L (c \otimes c')\to L(c) \otimes L(c')$. Then,
 the maps defined by the diagrams below define a lax monoidal
 structure for $R$ and vice versa (see \cite[Prop (3.84)]{kn:aguiarspecies}
 for the proof).
 \[
   \xymatrix@C=.5pc{\mathbb I_{\mathcal C}
            \ar[rd]_{\eta_{\mathbb I_{\mathcal
                  C}}}\ar[rr]^{\ell'_0}&&R(\mathbb I_{\mathcal
              D})\\ &RL(\mathbb I_{\mathcal C})\ar[ru]_{R(c\ell_0)}&}
          \xymatrix@C=5pc{R(d)\otimes R(d')\ar[r]^{\ell'_{d,d'}}
          \ar[d]_{\eta_{R(d)\otimes R(d')}}&R(d \otimes
            d')\\ RL(R(d)\otimes
            R(d'))\ar[r]_{R(c\ell_{R(d),R(d')})}&R(LR(d) \otimes
            LR(d'))\ar[u]_{R(\varepsilon_d \otimes \varepsilon_d')}}
          \]
\end{defn}

\begin{defn}\label{defn:bilaxandtransf}
  Let $\mathcal C=(\mathcal C,\diamond, \mathbb
  I_{\diamond},\star,\mathbb I_{\star})$ and $\mathcal D=(\mathcal
  D,\diamond', \mathbb I_{\diamond'},\star',\mathbb I_{\star'})$ be
  duoidal categories.

 \noindent  (1) A functor $F:\mathcal C \to \mathcal D$ that is
  lax monoidal with respect to $(\diamond,\diamond')$ with structure
  $\tau_{c,d}:F(c) \diamond' F(d) \to F(c \diamond d)$, $\nu: \mathbb
  I_{\diamond'} \to F(\mathbb I_\diamond)$, and colax monoidal with
  respect to $(\star,\star')$ with structure $\rho_{c,d}:F(c \star d) \to F(c) \star' F(d)$,
  $\lambda: F(\mathbb I_\star) \to \mathbb I_{\star'}$, is called a {\em bilax functor} from $\mathcal C$ into $\mathcal D$, provided
  the following diagrams are commutative.
  \begin{enumerate}
    \item{\tt Interchange.}\[{\xymatrixcolsep{1em}\xymatrix{&&F(a\star b)\diamond'F(c\star d)\ar[lld]_{\tau_{a\star b,c \star d}}\ar[rrd]^{\rho_{a,b} \diamond' \rho_{c,d}}&&\\
   F((a\star b)\diamond (c\star d))\ar[d]_{F(\zeta_{a,b,c,d})}
   &&&&(F(a) \star' F(b))\diamond' (F(c) \star'
   F(d))\ar[d]^{\zeta'_{F(a),F(b),F(c),F(d)}}\\ F((a\diamond c)\star
   (b\diamond d))\ar[drr]_{\rho_{a\diamond c,b \diamond d}}
   &&&&(F(a)\diamond' F(c))\star'(F(b)\diamond'
   F(d))\ar[lld]^{\tau_{a,c} \diamond'\tau_{b,d}}\\ &&F(a\diamond
   c)\star'F(b\diamond d)&&}}\]
    \item{\tt Unitality.}
      \[\xymatrix{\mathbb I_{\diamond'}\ar[r]^{\nu}\ar[d]_{\Delta_{\diamond'}}&F(\mathbb I_{\diamond})\ar[r]^-{F(\Delta_{\diamond})}&F(\mathbb I_{\diamond}\star \mathbb I_{\diamond})\ar[d]^{\rho_{\mathbb I_{\diamond},\mathbb I_{\diamond}}}\\\mathbb I_{\diamond'}\star' \mathbb I_{\diamond'}\ar[rr]_{\nu\, \star' \nu}&&F(\mathbb I_{\diamond}) \star' F(\mathbb I_{\diamond})}\quad\xymatrix{\mathbb I_{\star'}\ar@{<-}[r]^{\lambda}\ar@{<-}[d]_{\mu_{\star'}}&F(\mathbb I_{\star})\ar@{<-}[r]^-{F(\mu_{\star})}&F(\mathbb I_{\star}\diamond \mathbb I_{\star})\ar@{<-}[d]^{\tau_{\mathbb I_{\star},\mathbb I_{\star}}}\\\mathbb I_{\star'}\diamond' \mathbb I_{\star'}
        \ar@{<-}[rr]_{\lambda\, \star' \lambda}&&F(\mathbb I_{\star})
        \diamond' F(\mathbb I_{\star})}\]
      \[\xymatrix{F(\mathbb I_{\star})\ar[rr]^{F(u_{\mathbb I_{\star}})=F(\varepsilon_{\mathbb I_{\star}})}&&F(\mathbb I_{\star})\ar[d]^{\lambda}\\\mathbb I_{\star'}\ar[rr]^{u_{\mathbb I_{\diamond'}}=\varepsilon_{\mathbb I_{\star'}}}\ar[u]^{\nu}&&\mathbb I_{\star'}}\]
    \end{enumerate}

    \noindent (2) 
  Let $F,G:(\mathcal C,\diamond,\star) \to (\mathcal
  D,\diamond',\star')$ be bilax functors. Call $(\tau,\nu) ,
  (\tau',\nu')$ the lax $(\diamond,\diamond')$--monoidal structures
  for $F$ and $G$ respectively and call
   $(\rho,\lambda) , (\rho',\lambda')$ the colax $(\star,\star')$--monoidal
  structures for $F$ and $G$ respectively. 

  A natural transformation $\sigma:F \Rightarrow G$ is a
  \emph{morphism of bilax functors} if it is at the same time a natural
  transformation of lax and colax functors. In other words if the following diagrams for the monoidal structures are commutative for all $c,c'\in C$.
  \[\xymatrix@=1.5pc{F(c)\diamond F(c')\ar[r]^-{\tau_{c,c'}}\ar[d]_{\sigma_c\diamond \sigma_{c'}}&F(c\diamond c')\ar[d]^{\sigma_{c\diamond c'}}\\G(c)\diamond G(c')\ar[r]_-{\tau'_{c,c'}}&G(c \diamond c')} \quad \xymatrix@=1.5pc{&\mathbb I_{\diamond'}\ar[ld]_{\nu}\ar[dr]^{\nu'}&\\F(\mathbb I_{\diamond})\ar[rr]_{\sigma_{\mathbb I_{\diamond}}}&&G(\mathbb I_{\diamond})}\]
  \[\xymatrix@=1.5pc{F(c)\star F(c')\ar@{<-}[r]^-{\rho_{c,c'}}\ar[d]_{\sigma_c\star \sigma_{c'}}&F(c\star c')\ar[d]^{\sigma_{c\star c'}}\\G(c)\star G(c')\ar@{<-}[r]_-{\rho'_{c,c'}}&G(c \star c')} \quad \xymatrix@=1.5pc{&\mathbb I_{\star'}\ar@{<-}[ld]_{\lambda}\ar@{<-}[dr]^{\lambda'}&\\F(\mathbb I_{\star})\ar[rr]_{\sigma_{\mathbb I_{\star}}}&&G(\mathbb I_{\star})}\]

\end{defn}
The details of the proof of the result that follows (and some
variations) can be found in \cite[Chap.~6, \S 8]{kn:aguiarspecies}.

\begin{lem}\label{lem:functbimonoid}
  Let $\mathcal C=(\mathcal C,\diamond, \mathbb
  I_{\diamond},\star,\mathbb I_{\star})$ and $\mathcal D=(\mathcal
  D,\diamond', \mathbb I_{\diamond'},\star',\mathbb I_{\star'})$ be
  duoidal categories and $b=(b, \mu_b, u_b, \Delta_b, \varepsilon_b)$
  a bimonoid in $\mathcal C$. Let $F:\mathcal C \to \mathcal D$ be a
  lax $(\diamond,\diamond')$--monoidal functor and colax
  $(\star,\star')$--monoidal functor and $\tau, \nu, \rho$ and
  $\lambda$ as in Definition \ref{defn:bilaxandtransf}. Then
  $F(b)=\bigl(F(b), F(\mu_b) \smallcirc \tau_{b,b}, F(u_b) \smallcirc \nu,
  \rho_{b,b} \smallcirc F(\Delta_b), \lambda \smallcirc F(\varepsilon_b)\bigr)$
  is at the same time a monoid and a comonoid with respect to
  $\diamond'$ and $\star'$ respectively. Moreover, if $F$ is bilax
  monoidal, then $F(b)$ is a {\em bimonoid} in $\mathcal D$.

  Moreover, if $f$ is a morphism of bimonoids so is $F(f)$, hence $F$
  restricts to a functor in the categories of bimonoids:
  $F:\operatorname{Bimon}(\mathcal C) \to
  \operatorname{Bimon}(\mathcal D)$.\qed
\end{lem}

\begin{defn}
  \label{defn:actionschqc}
(1) Assume that $(\mathcal C,{\scriptstyle \odot}\,, \mathbb
I_{\scriptscriptstyle \odot})$ is a monoidal category and let $m= (m,
\mu_m, u_m)$, be a monoid in $(\mathcal C,\, {\scriptstyle \odot}\,,
\mathbb I_{\scriptscriptstyle \odot})$.  A (left) {\em $m$--module
  structure} on $x \in \mathcal C$ or a (left) {\em action} of $m$ on
$x$ is a morphism $\alpha_x: m \, {\scriptstyle \odot}\, x \to x$ such
that the following diagrams are commutative:
  \[{\xymatrixcolsep{3.5em}\xymatrix{
      m \,{\scriptstyle \odot}\, m\,{\scriptstyle \odot}\, x
      \ar[r]^-{\id_m  {\scriptscriptstyle \odot}\,
        \alpha_x}\ar[d]_{\mu_m  \,{\scriptscriptstyle \odot}\, \id_x}& m \, {\scriptstyle \odot}\, 
      x\ar[d]^{\alpha_x} & \mathbb I_{\scriptscriptstyle \odot}  \,
      {\scriptstyle \odot}\, 
      x\ar[r]^{u_m \,{\scriptscriptstyle \odot}\, \id_x}\ar[rd]_{\cong}& m \, {\scriptstyle \odot}\,
      x\ar[d]^{\alpha_x}\\
      m \, {\scriptstyle \odot}\, x\ar[r]_{\alpha_x}& x & & x }}
  \]

The pair $(x,\alpha)$ is called an \emph{$m$--module} --- when there is no
ambiguity on the action, we will say that $x$ is an $m$--module. If
$(x,\alpha_x)$, $(y,\alpha_y)$ are left $m$--modules,  a
\emph{morphism of  left $m$--modules} $x\to y$ is a morphism $f\in 
\Hom(x,y)$ such that $\alpha_y\smallcirc(\id_m\,{\scriptstyle \odot}\, f)= f\smallcirc
\alpha_x:m\,{\scriptstyle \odot}\, x\to y$. 

We define  $\Lmodmon{m}$, the \emph{category of left $m$-modules} as
the category with objects the left $m$--modules and morphism the
morphisms the morphism of left $m$--modules.

In a similar way, we define $\Rmodmon{m}$, the \emph{category of right $m$-modules}.

\noindent (2)  If $(c,\Delta_c,\varepsilon_c)$ is a comonoid in $(\mathcal
 C, \, {\scriptstyle \odot}\,,\mathbb I_{\scriptscriptstyle \odot})$,
 a right {\em $c$--comodule structure} on 
 $y \in \mathcal C$ or a (right) {\em coaction} of $c$ on $y$ is a
 morphism $\chi_y: y \to y \,{\scriptstyle \odot}\, c$ such that the following diagrams
 are commutative:
  \[{\xymatrixcolsep{3.5em}\xymatrix{
     y \ar[r]^-{\chi}\ar[d]_{\chi}& y  \, {\scriptstyle \odot}\, c\ar[d]^{\chi  \, {\scriptscriptstyle \odot}\, \id_c}
     & y\ar[r]^-{\chi}\ar[rd]_-{\cong}& y \, {\scriptstyle \odot}\, c\ar[d]^{\id_y  {\scriptscriptstyle \odot}\,
       \varepsilon_c}\\
     y \, {\scriptstyle \odot}\, c\ar[r]_-{\id  {\scriptscriptstyle \odot}\, \Delta_c}& y  \, {\scriptstyle \odot}\,c
      \, {\scriptstyle \odot}\, c & &y  \, {\scriptstyle \odot}\, \mathbb I_{\scriptscriptstyle \odot}}}
\]

We say that the pair $(y,\chi)$ is a \emph{right $c$--comodule} --- we will
often omit the coaction and say that $y$ is a $c$--comodule.

If $(x, \chi_x)$ and $(y,\chi_y)$ are $c$--comodules, a \emph{morphism
  of $c$--comodules} between $x$ an $y$ is a morphism $f\in \Hom(x,y)$
such that $(f\,{\scriptstyle \odot}\, \id_c)\smallcirc \chi_x= \chi_y\smallcirc
f:x\to y\,{\scriptstyle \odot}\, c$.

We denote  $\Rcomod{c}$ the \emph{category of right $c$--comodules},
defined in the usual way. Analogously, we define its left counter
part $\Lcomod{c}$, the  \emph{category of left $c$--comodules},

\end{defn}

\begin{ej}\label{ej:qS}
  If $m:M\to A$ is a quasi-compact morphism of monoid (group) schemes,
  then to give an $m$--module in the monoidal category $(\Schaqc,
  \wtimes, \uwtimes)$ is equivalent to give a pair $(x:X\to
  A,a)$, where $a:M\times X\to X$ is an action --- in the
  usual sense --- such that
  the following diagram is commutative:
  \[
    \xymatrix{
      M\times X\ar[d]_{m \times x}\ar[r]^-a & X\ar[d]^x\\
      A\times A\ar[r]_-s& A
    }\quad\quad \xymatrix{\Speck \times X
    \ar[dr]_{\cong}\ar[r]^-{u_M \times \id} & M \times X
    \ar[d]^{a}\\&X}
  \]

  In particular, if $\mathcal S \in q:G \to A$ is an affine (or more
  generally quasi-compact) extension, then any representation $E\in
  \Rep(\mathcal S)$ --- in the nomenclature of Definition
  \ref{defn:catrepaffext} --- is a $q$--module in the monoidal
  category $\Schaqc$ and conversely.
\end{ej}

In the case that the monoidal category $(\mathcal C,\,{\scriptstyle \odot}\,,\mathbb
I_{\scriptscriptstyle \odot})$ is part of a duoidal category, more structure is
available as shown in the next proposition, that admits --- wherever it
is possible --- a left and a right version.

\begin{prop}\label{prop:fromaguiar} Let 
  $(\mathcal C,\diamond, \mathbb I_{\diamond},\star,\mathbb
  I_{\star})$ be a duoidal category and $m_1,m_2$ be monoids in
  $(\mathcal C,\diamond, \mathbb I_{\diamond})$, $c_1,c_2$ comonoids
  in $(\mathcal C,\star, \mathbb I_{\star})$ and $(b, \mu_b, u_b,
  \Delta_b, \varepsilon_b)$ a bimonoid in $(\mathcal C,\diamond, \mathbb
  I_{\diamond},\star,\mathbb I_{\star})$. Then:

  \noindent (1) $m_1\star
  m_2$ is a monoid in $(\mathcal C,\diamond, \mathbb I_{\diamond})$
  and $c_1 \diamond c_2$ is a comonoid in $(\mathcal C,\star, \mathbb
  I_{\star})$;

  \noindent (2) if $x_1,x_2$ are left
  modules for $m_1$ 
and $m_2$ with structures $\alpha_1, \alpha_2$ respectively, then $x_1 \star x_2$ is a left $m_1 \star m_2$ module with structure:
\[
  \xymatrix{\alpha_1\underline{\star}\alpha_2: (m_1 \star m_2)\diamond (x_1 \star x_2) \ar[rr]^-{\zeta_{m_1,m_2,x_1,x_2}}&&(m_1 \diamond
  x_1) \star (m_2 \diamond x_2) \ar[r]^-{\alpha_1 \star \alpha_2}&x_1
  \star x_2;}
  \]

  \noindent (3)
  if $y_1,y_2$ are right comodules for $c_1$ and $c_2$ with structures
  $\chi_1, \chi_2$ respectively, then $y_1 \diamond y_2$ is a right
  $c_1 \star c_2$ comodule with structure:
\[
  \xymatrix{\chi_1\underline{\star}\chi_2: y_1 \diamond y_2 \ar[r]^-{\chi_1 \diamond \chi_2} & (y_1 \star c_1)\diamond (y_2 \star c_2) \ar[rr]^-{\zeta_{y_1,c_1,y_2,c_2}}&&(y_1 \diamond
    y_2) \star (c_1 \diamond c_2);}\]

\noindent (4) if  $x_1,x_2$ are
left modules for $b$ with structures $\alpha_1,\alpha_2$ then, $x_1
\star x_2$ is also a left module for $b$ with structure:
\[\xymatrix{\alpha_{1,2}: b \diamond (x_1 \star x_2) \ar[rr]^-{\Delta_b \diamond\, \id} &&(b \star
  b) \diamond (x_1 \star x_2) \ar[r]^-{\alpha_1 \underline{\star}
    \alpha_2}&x_1 \star x_2.}\]

\noindent (5) if $y_1,y_2$ are
right comodules for $b$ with structures $\chi_1,\chi_2$ then, $y_1
\diamond y_2$ is also a right comodule for $b$ with structure:
\[\xymatrix{\chi_{1,2}: y_1 \diamond y_2 \ar[r]^-{\chi_1 \underline{\diamond} \chi_2}& (y_1 \diamond y_2) \star (b \diamond b) \ar[rr]^-{\id \star \mu_b} &&(y_1 \diamond y_2)\star b.}\]
\end{prop}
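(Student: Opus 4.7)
The plan is to verify the five assertions in order, since each one provides structure that the next builds upon. Throughout I would make heavy use of the associativity and unitality axioms of the interchange $\zeta$ (Definition \ref{defi:duoidalcat}), together with the compatibility of the units $\mathbb{I}_{\diamond}$ and $\mathbb{I}_{\star}$ (which say that $\mathbb{I}_{\star}$ is a $\diamond$--monoid and $\mathbb{I}_{\diamond}$ is a $\star$--comonoid).

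First I would establish (1). For the monoid structure on $m_1 \star m_2$ I would define the multiplication as the composite
\[
(m_1 \star m_2) \diamond (m_1 \star m_2) \xrightarrow{\zeta_{m_1,m_2,m_1,m_2}} (m_1 \diamond m_1) \star (m_2 \diamond m_2) \xrightarrow{\mu_{m_1}\star \mu_{m_2}} m_1 \star m_2,
\]
and the unit as $\mathbb{I}_{\diamond} \xrightarrow{\Delta_{\diamond}} \mathbb{I}_{\diamond}\star \mathbb{I}_{\diamond} \xrightarrow{u_{m_1}\star u_{m_2}} m_1 \star m_2$. Associativity follows by chasing a hexagon in which the two ways of composing three copies are compared using the associativity pentagon for $\zeta$ (the first of the two diagrams in Definition \ref{defi:duoidalcat}(4)(ii)) and then applying $\mu_{m_1}$, $\mu_{m_2}$ on each factor. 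Unitality is exactly the content of the unitality diagram for $\zeta$ combined with the fact that $(\mathbb{I}_{\diamond},\Delta_{\diamond},\varepsilon_{\diamond})$ is a $\star$--comonoid. The dual assertion about $c_1 \diamond c_2$ is established by the formally opposite argument.

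Next, for (2) and (3) I would check the module and comodule axioms for the formulas $\alpha_1\underline{\star}\alpha_2$ and $\chi_1\underline{\star}\chi_2$ given in the statement. The associativity axiom for $\alpha_1 \underline{\star} \alpha_2$ reduces, after unwinding the definition of the multiplication on $m_1 \star m_2$ built in (1), to an instance of the first associativity pentagon for $\zeta$ applied to $(m_i,m_i,x_i)$, followed by $\alpha_1 \star \alpha_2$; the unit axiom reduces to the unitality diagram for $\zeta$ paired with the unit laws of $\alpha_1,\alpha_2$. The comodule case is dual.

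Finally, (4) and (5) follow from (2), (3) and the bimonoid axioms of $b$. Given two $b$--modules $(x_i,\alpha_i)$, the composite $\alpha_{1,2}=(\alpha_1\underline{\star}\alpha_2)\circ(\Delta_b\diamond\,\id)$ is well defined because, by (1) applied to $c_1=c_2=b$, the object $b\star b$ is a $\diamond$--monoid, and by the bimonoid axiom $\Delta_b: b \to b\star b$ is a morphism of $\diamond$--monoids (together with $\varepsilon_b$ being a morphism of $\diamond$--monoids). Associativity of $\alpha_{1,2}$ then follows from the coassociativity of $\Delta_b$ combined with the fact already proved in (2) that $\alpha_1\underline{\star}\alpha_2$ is an action of $b\star b$; the unit axiom for $\alpha_{1,2}$ uses the counit $\varepsilon_b$, the unit compatibility $u_{\mathbb{I}_{\star}}=\varepsilon_{\mathbb{I}_{\diamond}}$, and the unit diagrams for the $\alpha_i$. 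The comodule statement (5) is entirely dual, now using $\mu_b:b\diamond b \to b$ as a morphism of $\star$--comonoids, which is the other half of the bimonoid axiom.

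The main obstacle I expect is purely notational: each axiom verification expands into a diagram with two or three interlocking applications of $\zeta$, and keeping track of the order in which the associativity pentagon and the unitality triangles of $\zeta$ are applied is where the routine but delicate bookkeeping lives. There are no deep new ideas beyond carefully invoking the duoidal axioms, so the argument fits the pattern developed in \cite[Chap.~6, \S 6]{kn:aguiarspecies}; my proof would essentially be a transcription of that formalism, spelled out to the precision needed so that Lemma \ref{lem:functbimonoid} and the subsequent use in the category $\Schaqc$ can be applied without ambiguity.
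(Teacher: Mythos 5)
Your proposal is correct and follows essentially the same route as the paper, which simply cites \cite[Prop.~6.25]{kn:aguiarspecies} and illustrates only the multiplication $(\mu_1\star\mu_2)\circ\zeta_{m_1,m_2,m_1,m_2}$ on $m_1\star m_2$; your write-up is a faithful expansion of that argument, correctly reducing each axiom to the associativity/unitality of $\zeta$ and to the bimonoid compatibility making $\Delta_b$ a morphism of $\diamond$--monoids. The only slip is notational: in part (4) you invoke ``(1) applied to $c_1=c_2=b$'' where you mean $m_1=m_2=b$, since it is the monoid half of (1) that makes $b\star b$ a $\diamond$--monoid.
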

\proof The detailed proof of this result can be found in
\cite[Prop.6.25]{kn:aguiarspecies}. For example, the morphism given by
the composition of the interchange map and the $\star$ monoidal
product of the multiplications of $m_1$ and $m_2$, \[\xymatrix{(m_1
  \star m_2)\diamond (m_1 \star m_2)
  \ar[rr]^{\zeta_{m_1,m_2,m_1,m_2}}&&(m_1 \diamond m_1) \star (m_2
  \diamond m_2) \ar[r]^-{\mu_1 \star \mu_2}&m_1 \star m_2,}\] is the
multiplication morphism of $m_1 \star m_2$.  \qed

\begin{defn}\label{defn:bicomdalg}
   If $b$ is a bimonoid in $\mathcal C$, a {\em right $b$--comodule
     algebra} in the duoidal category $\mathcal C$ is a right
   $b$--comodule $(y,\chi)$ equipped also with a monoid structure
   $(y,\mu_y,u_y)$ in $(\mathcal C,\diamond,\mathbb I_\diamond)$ such
   that the diagrams below commute:
\[\xymatrix{y \diamond y \ar[r]^-{\chi_{12}}\ar[d]_{\mu_y}&(y \diamond y) \star b\ar[d]^{\mu_y\star \id}\\y\ar[r]^-{\chi} &y \star b}\quad\quad
\xymatrix{\mathbb I_\diamond
  \ar[r]^-{\Delta_{\diamond}}\ar[d]_{u_y}&\mathbb I_\diamond \star
  \mathbb
I_\diamond \ar[d]^{u_y\star u_b}\\y\ar[r]^-{\chi} &y \star b}\]

As usual, we have also the notion of \emph{left $b$--comodule algebra}.
\end{defn}

 The following Corollary follows easily from the proposition above and extends the results of Lemma \ref{lem:functbimonoid} to modules and comodules. 

 \begin{cor}\label{cor:complement}
   If $\mathcal C=(\mathcal C,\diamond, \mathbb
   I_{\diamond},\star,\mathbb I_{\star})$ be a duoidal category, then
   the subcategory $\operatorname{Mon}(\mathcal C, \diamond) \subset
   \mathcal C$ is monoidal with respect to the structure
   $(\star,\mathbb I_{\star})$ and the subcategory
   $\operatorname{Comon}(\mathcal C,\star) \subset \mathcal C$, is
   monoidal with respect to the structure $(\diamond,\mathbb
   I_{\diamond})$.

   Assume that $\mathcal D=(\mathcal D,\diamond',
   \mathbb I_{\diamond'},\star',\mathbb I_{\star'})$ is another duoidal
   category and that $F:\mathcal C \to \mathcal D$ is a functor that
   is lax $(\diamond,\diamond')$--monoidal and colax
   $(\star,\star')$--monoidal.

   Then $F$ restricts to a
   colax--$(\star,\star')$ monoidal functor
   $F:\bigl(\operatorname{Mon}(\mathcal C,\diamond, \mathbb
   I_{\diamond}),\star, \mathbb I_{\star}\bigr) \to
   \bigl(\operatorname{Mon}(\mathcal D,\diamond', \mathbb
   I_{\diamond'}),\star', \mathbb I_{\star'}\bigr)$.

   Dually, $F$ restricts to a lax--$(\diamond,\diamond')$ monoidal
   functor $F:\bigl(\operatorname{Comon}(\mathcal C,\star, \mathbb
   I_{\star}),\diamond, \mathbb I_{\diamond}\bigr) \to
   \bigl(\operatorname{Mon}(\mathcal D,\star', \mathbb
   I_{\star'}),\diamond', \mathbb I_{\diamond'}\bigr)$.

   Also, in the case that the functor $F$ is a bilax
   functor  and $b \in \operatorname{Bimod}(\mathcal C)$, we call
   $\operatorname{Mod(b)}$ and $\operatorname{Comod(b)}$ the
   categories of $b$--modules with respect to the $\diamond$--monoidal
   structure and of $b$--comodules with respect to the
   $\star$--monoidal structure. Then: 

   \noindent {\em (i)} $\bigl(\operatorname{Mod(b)},\star\bigr)$ and
   $\bigl(\operatorname{Comod(b)},\diamond\bigr)$ are monoidal categories. 

   \noindent {\em (ii)} If $F: \mathcal C \to \mathcal D$  is a bilax
   monoidal functor as above. Then $F$ induces a monoidal functor
   $F:\bigl(\operatorname{Mod}(b),\star \bigr) \to
   (\operatorname{Mod}(F(b)),\star')$ and
   $F:\bigl(\operatorname{Comod}(b),\diamond\bigr) \to
   \bigl(\operatorname{Comod}(F(b)),\diamond'\bigr)$.  
 \end{cor}
 \proof The proof is direct using the definitions and results of
 Lemma \ref{lem:functbimonoid} and Proposition \ref{prop:fromaguiar}.\qed

\subsection{Quasi-compact morphisms and their associated
   sheaves}\ %
\label{sect:quasicompandsheaves}

In this section we collect some results and definitions on
(separated) quasi-compact morphisms and their associated 
(quasi-coherent) sheaves,  that will be  used later. The basic
definitions can be found in  
\cite[Chap.~II]{kn:hartshorne} or \cite[Chap.~0, Chap.~1]{kn:EGAI}. The original reference for the adjunction results is \cite[\S 1.2,\S
  1.3]{kn:EGAII}, but they also appear as a series of exercises in
  \cite[Ex. II.5.17, II.5.18]{kn:hartshorne} as well as in many other references, e.g.~\cite{kn:stackproj}, \cite{kn:raising}. 
\begin{nota}
  If $S$ is a scheme, the category of $\mathcal O_S$--modules (algebras) will be
  denoted as $\sSmod$ ($\sSalg$) and the category of quasi-coherent $S$--modules (algebras)
  as $Q\sSmod$ ($Q\sSalg$).
\end{nota}

\begin{defn} \label{defn:defPP}
  We  denote $\PP: \Schqc\to \sSalg^{\op}$ the functor given as
  follows (see \cite{kn:EGAII}[Prop (1.3.1)] or \cite[Proposition
  II.5.8]{kn:hartshorne}):  
  If $x:X\to S \in \Schqc$, then $\PP(x):=x_*(\mathcal
  O_X)$;
  if  $(f,f^\#):(x:X \to S)\to (x':X' \to S)$ is a
  morphism in $\Schqc$ (recall that $ f^\#:\mathcal O_{X'} \to
  f_*\mathcal O_X$)  then 
 $\PP(f,f^\#):= x'_*(f^\#): \PP(x')= x'_*(\mathcal O_{X'}) \to
 \PP(x)=x_*(\mathcal O_{X})$.

It is well known that the restriction of  $\PP$ to $\Schsqc$ induces a
functor to $Q\sSalg^{\op}$, 
that we still call $\PP: \Schsqc\to Q\sSalg^{\op}$.
\end{defn}

  We recall now the  well known  construction of a right adjoint to
  $\PP: \Schsqc\to Q\sSalg^{\op}$.

  \begin{rem}(see
  \cite[Prop.~1.3.1 and Prop.~1.2.7]{kn:EGAII})
  \label{rem:affingen}
  The functor $\PP: \Schsqc\to Q\sSalg^{\op}$
  admits a right adjoint $\Spec: Q\sAalg^{\op}\to \Schsqc$. Given
  $\mathcal F\in Q\sSalg$, then $\Spec(\mathcal F)$ is the unique ---
  up to isomorphisms of $S$-schemes ---
  scheme over $S$ such that $\Spec(\mathcal F)\in \Schaff$  and
  $\PP\bigl(\Spec(\mathcal F)\bigr)=\mathcal F$. We denote $\Spec(\mathcal F)$ as 
   $\pi_{\mathcal F}:\Spec(\mathcal
   F)\to S$.

   Recall that the adjunction   ${\xymatrixcolsep{2.5em}\xymatrix{\Schsqc \ar@<5pt>[r]^{\PP}
     \ar@<-5pt>@{<-}[r]_{\Spec} \ar@{}[r]|-\bot&Q\sSalg^{\op}}}$ is
   given by a natural transformation   
   \[        \Hom_{Q\sSalg}\bigl(\mathcal F,\PP(x)\bigr)=
        \Hom_{(Q\sSalg)^{\operatorname{op}}}
        \bigl(\PP(x),\mathcal F\bigr)\cong \Hom_{\Schsqc}(x,\Spec
        \mathcal F),
      \]
      that has a  unit $\eta$  and counit $\varepsilon$
        the natural transformations:
      \[
        \begin{cases} \text{(i)}\,\, \eta_{x}: x \to
         \operatorname{Aff}_S(x):=\Spec\bigl(\PP(x)\bigr)\in \Schsqc
         ;\\ \text{(ii)} \,\, \varepsilon_{\mathcal F}:
         \PP\bigl(\Spec(\mathcal F)\bigr) \to \mathcal F \in
         (Q\sSalg)^{\operatorname{op}} \text{ or } \\ \,\,\,\,\quad
         \varepsilon_{\mathcal F}: \mathcal F \to
         \PP\bigl(\Spec(\mathcal F)\bigr)\in Q\sSalg,
      \end{cases}
    \]

    In this particular case, we have that for all $\mathcal F \in
      Q\sSalg\,,\,\varepsilon_{\mathcal F}$ is an isomorphism (see
      \cite[\S 1.2, \S 1.3, \S 1.4]{kn:EGAII}).

  \end{rem}

   \begin{defn}\label{defn:relativeaffi}
By construction,    for  $x \in
      \Schsqc$ the morphism $\eta_x:x \to \operatorname{Aff}_S(x) \in
      \Schsqc$ is affine; $\eta_x$ is called the \emph{(relative) affinization map} and its
      codomain is called the \emph{($S$--)affinization} of $x$ (frequently
      just called ``affinization'').
The functor $\operatorname{Aff}_S=\Spec\smallcirc \PP:\Schsqc \to \Schaff \subset \Schsqc$
is called the \emph{affinization over $S$} or the \emph{relative
  affinization (over $S$)}. Compare with Definition \ref{def:affinization}.
\end{defn}

 \begin{rem}      
The  affinization   satisfies the following
universal property (compare with Remark \ref{rem:gantqc}):

For any morphism $f:x \to y$ with $x \in
        \Schsqc, y \in \Schaff$ there is a unique morphism
        $\widehat{f}$ that makes commutative the diagram below:
         $\xymatrix@=15pt{x \ar[rr]^-{\eta_x}\ar[rd]_{f}&&
         \operatorname{Aff}_S(x)\ar@{.>}[ld]^{\widehat{f}}\\&y&}$
\end{rem}

     In view of Remark \ref{rem:affingen},  we have the following equivalence of categories: 
    
    \begin{prop}
      \label{prop:affschqcs}
      The
      adjunction $   {\xymatrixcolsep{3em}\xymatrix{ \Schsqc \ar@<5pt>[r]^-\PP
     \ar@<-5pt>@{<-}[r]_-\Spec \ar@{}[r]|-\bot&(Q\sSalg)^{\operatorname{op}}}}$
    restricts to functors
      $\PP|_{_{\Schaff}}: \Schaff\to Q\sSalg^{\op}$ and $\Spec:
      Q\sSalg^{\op}\to \Schaff$ that establish an adjoint
      op-equivalence between $\Schaff$ and $Q\sSalg$.

      In particular
      the counit $\varepsilon$ of the original adjunction is an isomorphism (see 
      Remark \ref{rem:affingen} and \cite[Prop.~1.3.1]{kn:EGAII} for a proof). \qed
      \end{prop}

We finish this section  presenting  (without proofs) some known results on flat (separated,
  quasi-compact) schemes over $S$ that will be needed; we follow \cite[\S 2.1 --
    2.3]{kn:EGAIV2}.

\begin{lem}
 Let  $x:X\to S\in \Schsqc$. Then $x$ is flat if and only if   the pull-back functor
  $x^*:\sSmod \to \sXmod$ is exact. \qed
 \end{lem}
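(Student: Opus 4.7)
The plan is to reduce the equivalence to the stalkwise characterization of flatness, together with the explicit formula $x^{*}\mathcal F = x^{-1}\mathcal F \otimes_{x^{-1}\mathcal O_S}\mathcal O_X$, and the fact that the topological inverse image $x^{-1}$ is always exact.

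For the forward implication, I would argue as follows. Since $x^{-1}:\sSmod\to x^{-1}(\mathcal O_S)\text{-mod}$ is exact (being a left adjoint whose exactness is moreover verifiable on stalks), it suffices to show that the functor $-\otimes_{x^{-1}\mathcal O_S}\mathcal O_X$ is exact whenever $x$ is flat. Flatness is a stalk-local condition, and the stalk of the tensor product at a point $p\in X$ is $(-)_{x(p)}\otimes_{\mathcal O_{S,x(p)}}\mathcal O_{X,p}$; by hypothesis $\mathcal O_{X,p}$ is $\mathcal O_{S,x(p)}$-flat, so this stalk functor is exact. Composing the two exact functors yields exactness of $x^{*}$.

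For the reverse implication, fix $p\in X$ with $q=x(p)$ and a finitely generated ideal $I\subseteq\mathcal O_{S,q}$; by the standard ideal criterion, it is enough to check that $I\otimes_{\mathcal O_{S,q}}\mathcal O_{X,p}\to\mathcal O_{X,p}$ is injective. Choose an affine neighborhood $V=\Spec A\subseteq S$ of $q$ and a finitely generated ideal $I_0\subseteq A$ that localizes to $I$ at the prime corresponding to $q$. Then the inclusion $\widetilde{I_0}\hookrightarrow\mathcal O_V$ is an injection of quasi-coherent sheaves on $V$. Since flatness of $x$ is a local condition on $X$ (and $S$), I would replace $X$ by the open subscheme $x^{-1}(V)$ and $S$ by $V$: the pull-back functor for the restricted morphism is again exact because restriction to an open preserves exactness. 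Applying $x^{*}$ to the inclusion and then taking stalks at $p$ gives the desired injectivity, thanks to the compatibility of pull-back with stalks, $(x^{*}\mathcal F)_p\cong \mathcal F_{x(p)}\otimes_{\mathcal O_{S,x(p)}}\mathcal O_{X,p}$.

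The main obstacle I foresee is the sheaf-theoretic bookkeeping on the reverse direction, namely promoting a finitely generated ideal in the local ring $\mathcal O_{S,q}$ to an honest injection of quasi-coherent sheaves on $S$ (or on a smaller open) in such a way that the stalk at $p$ of its pull-back recovers $I\otimes_{\mathcal O_{S,q}}\mathcal O_{X,p}$. This is handled cleanly by localizing the problem to an affine neighborhood of $q$, where quasi-coherent sheaves are exactly $A$-modules, and invoking the standard commutativity of pull-back with stalks; the rest of the argument is then essentially Lazard-style ideal-theoretic flatness.
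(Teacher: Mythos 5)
Your proof is correct. Note that the paper itself gives no argument for this lemma --- it is stated with an immediate \qed as recalled material, with the surrounding text pointing to \cite[\S 2.1--2.3]{kn:EGAIV2} --- and what you have written is precisely the standard proof underlying that reference: checking exactness on stalks via $(x^{*}\mathcal F)_p\cong\mathcal F_{x(p)}\otimes_{\mathcal O_{S,x(p)}}\mathcal O_{X,p}$ for the forward direction, and the finitely generated ideal criterion, localized to an affine neighborhood, for the converse. The one bookkeeping point you flag (promoting the ideal $I\subseteq\mathcal O_{S,q}$ to an injection of sheaves and restricting to $x^{-1}(V)$) is handled exactly as you describe, e.g.\ by pushing forward along the open immersion $V\hookrightarrow S$ before applying $x^{*}$, so there is no gap.
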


 \begin{lem}  Let  $x:X\to S\in \Schsqc$ be a flat morphism. Then
$\PP(x)=x_*(\mathcal O_X)$ is a   quasi-coherent flat sheaf of algebras in $\sSmod$.

Conversely, if  $\mathcal   F$ is a quasi-coherent flat sheaf of algebras in $\sSmod$, then
          $\pi_{\mathcal F}: \Spec(\mathcal F)\to S$ is an
          affine (hence separated) flat morphism.\qed 
        \end{lem}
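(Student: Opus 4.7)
The statement has two implications to establish, and the plan is to prove each separately. The converse reduces to an essentially tautological local computation, while the forward direction is handled by reduction to the affine case (the principal setting for the adjunction $\PP \dashv \Spec$ developed in Section \ref{sect:quasicompandsheaves}), supplemented by a projection formula argument for the general separated quasi-compact case.

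First I would dispatch the converse. By the very construction in Definition \ref{defn:affingen}, the morphism $\pi_{\mathcal F}: \Spec(\mathcal F) \to S$ is affine (hence separated) and its behaviour on any affine open $U = \Spec R \subseteq S$ is $\pi_{\mathcal F}^{-1}(U) = \Spec\bigl(\mathcal F(U)\bigr)$ with structure morphism induced by the $R$-algebra map $R \to \mathcal F(U)$. Since $\mathcal F$ is quasi-coherent, $\mathcal F|_U = \widetilde{\mathcal F(U)}$, and the flatness hypothesis on $\mathcal F$ translates into $\mathcal F(U)$ being a flat $R$-module. Hence $\Spec\bigl(\mathcal F(U)\bigr) \to \Spec R$ is flat, and since flatness of a morphism is local on the base, $\pi_{\mathcal F}$ is flat.

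For the forward direction, since $x$ is separated and quasi-compact (in particular quasi-separated), the quasi-coherence of $\PP(x) = x_*(\mathcal O_X)$ is already recorded in the text. To verify flatness as an $\mathcal O_S$-module, I would again localize to an affine open $U = \Spec R$, so the task becomes showing that the $R$-module $M := \Gamma\bigl(x^{-1}(U), \mathcal O_X\bigr)$ is flat. When the morphism $x$ is itself affine, the restriction $x^{-1}(U) = \Spec A$ satisfies $A$ flat over $R$ by hypothesis and $M = A$, so flatness is immediate; this is the setting that matches the op-equivalence $\Schaff \cong Q\sSalg^{\op}$ of Proposition \ref{prop:affschqcs}. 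For a general separated quasi-compact $x$, I would invoke the projection formula: for any quasi-coherent $\mathcal O_S$-module $\mathcal M$ one has a natural isomorphism $\PP(x) \otimes_{\mathcal O_S} \mathcal M \cong x_*(x^*\mathcal M)$. Then for a short exact sequence $0 \to \mathcal M' \to \mathcal M \to \mathcal M''\to 0$ of quasi-coherent $\mathcal O_S$-modules, flatness of $x$ gives exactness of $0 \to x^*\mathcal M' \to x^*\mathcal M \to x^*\mathcal M'' \to 0$, and left exactness of $x_*$ together with the projection formula identifications implies that $\PP(x) \otimes_{\mathcal O_S} \mathcal M' \hookrightarrow \PP(x) \otimes_{\mathcal O_S} \mathcal M$ remains injective, which is precisely flatness of $\PP(x)$.

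The main obstacle is the non-affine case of the forward direction: justifying the projection formula in sufficient generality without requiring $\mathcal M$ to be locally free of finite rank. A clean workaround is to test flatness against finitely generated ideals $I \subseteq R$ (on an affine open $\Spec R$), using the left exact sequence obtained by applying $x_*$ to the injection $x^*\widetilde{I} \hookrightarrow \mathcal O_X$ and identifying $x_*(x^*\widetilde{I})$ with $I \otimes_R M$ via a direct Čech-theoretic computation on a finite affine cover $\{U_i\}$ of $x^{-1}(\Spec R)$. Separatedness guarantees that the intersections $U_{i_0} \cap \cdots \cap U_{i_k}$ are all affine, so each term of the Čech complex is flat over $R$, and tensoring the Čech complex with $I$ commutes with taking the kernel in degree zero when combined with exactness of $x^*$. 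In the affine setting that dominates our later use, none of this is necessary, and the lemma becomes a direct consequence of the defining equivalence of Proposition \ref{prop:affschqcs}.
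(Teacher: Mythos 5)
Your converse direction is correct, and it is essentially all the paper itself supplies: the lemma is stated there with no proof at all, as a recollection from \cite[\S 2.1--2.3]{kn:EGAIV2}, and in the sequel it is only ever applied to affine morphisms, where the forward direction is equally tautological ($x^{-1}(U)=\Spec(A)$ with $A$ flat over $R$ and $\PP(x)|_U=\widetilde A$), exactly as you observe.

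The forward direction for a general separated quasi-compact flat $x$ is where your argument has a genuine gap. Both of your routes rest on the identification $x_*(x^*\mathcal M)\cong \PP(x)\otimes_{\mathcal O_S}\mathcal M$ for a quasi-coherent $\mathcal M$ that is not locally free (you take $\mathcal M=\widetilde I$), and this identification is precisely the statement at issue rather than a consequence of flatness of the \v{C}ech terms. Concretely, on $U=\Spec(R)$ with a finite affine cover $\{U_i=\Spec(A_i)\}$ of $x^{-1}(U)$, one has $\Gamma\bigl(x^{-1}(U),x^*\widetilde I\bigr)=\ker\bigl(\prod_i I\otimes_R A_i\to\prod_{i,j} I\otimes_R A_{ij}\bigr)=H^0(I\otimes_R C^\bullet)$, and you need this to equal $I\otimes_R H^0(C^\bullet)=I\otimes_R M$. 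Writing $Q=\operatorname{im}(C^0\to C^1)$, the obstruction is the vanishing of the relevant images of $\operatorname{Tor}_1^R(Q,I)$ and $\operatorname{Tor}_1^R(C^1/Q,I)$; flatness of the modules $A_{i_0\cdots i_k}$ gives no control over these, because a kernel or image of a map of flat modules need not be flat once $R$ has weak dimension at least $3$ (first syzygies of the residue field of $k[x,y,z]$ already fail). So the assertion that tensoring the \v{C}ech complex with $I$ commutes with taking the kernel in degree zero is exactly what has to be proved, and the appeal to exactness of $x^*$ does not supply it: left exactness of $x_*$ only yields an injection $\Gamma\bigl(x^{-1}(U),x^*\widetilde I\bigr)\hookrightarrow M$, whose source has not been identified with $I\otimes_R M$. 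To repair the argument you would either have to restrict the forward implication to affine $x$ --- which is the only case the paper actually uses, via Proposition \ref{prop:affschqcs} --- or supply a genuine descent or Tor-dimension argument going beyond what is written.
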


\begin{rem}
          \label{rem:monw}  
  Let $ X,Y\in \Schs$,  $\mathcal F\in \sXmod$, $\mathcal G\in \sYmod$, and let $h:X\to Y$
  $\ell:Y\to Y'$ be morphisms of 
  schemes over $S$.  Then the counits associated to the
  adjunction between $h^* \dashv h_*; \ell^* \dashv \ell_*$:
  $\varepsilon_{\mathcal F}:h^*h_* \mathcal F \to \mathcal F$ and
  $\varepsilon_{\mathcal G}:\ell^*\ell_* \mathcal G \to \mathcal G$, induce
  a  homomorphism of sheaves
  \[
    \varepsilon_{\mathcal F}\sboxtimes
  \varepsilon_{\mathcal G}:h^*h_* \mathcal F \sboxtimes \ell^*\ell_*
  \mathcal G = (h \times \ell)^*(h_* \mathcal F \sboxtimes \ell_* \mathcal
  G) \to \mathcal F \sboxtimes \mathcal G.
\]

Using the standard adjunction again we obtain the map:
\[
  \Gamma_{\mathcal F, \mathcal G}:=(h \times
  \ell)_*(\varepsilon_{\mathcal F}\sboxtimes \varepsilon_{\mathcal
    G})\nu_{_{h_* \mathcal F \sboxtimes \ell_* \mathcal G}}: h_* \mathcal
  F \sboxtimes \ell_* \mathcal G \to (h \times \ell)_*(\mathcal F
  \sboxtimes \mathcal G),\] where $\nu_{_{h_* \mathcal F \sboxtimes
    \ell_* \mathcal G}}$ is the unit of the adjunction.
\end{rem}

We are interested in conditions on $h,\ell$ and $\mathcal F,\mathcal G$ in order to guarantee that  $\Gamma_{\mathcal
  F,\mathcal G}$  is  an isomorphism. A set of conditions was
established by Brandenburg in response to a question in
``stackexchange/math'', in a more general setting. In our context,
Brandenburg answer can be stated as follows:  

\begin{lem}
  \label{lem:bran}
  Let $h,\ell$, $\mathcal F,\mathcal G$ as above and assume that either:
  
  \noindent (a) $h,\ell$ are quasi-compact and quasi-separated, or
  
  \noindent (b)  $h,\ell$ are affine morphisms.

Then  $\Gamma_{\mathcal
    F,\mathcal G}$  is  an isomorphism.
\end{lem}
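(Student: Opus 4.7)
The plan is to prove the lemma by checking the isomorphism locally on the target $X' \times Y'$ and then reducing the affine-local statement to a Künneth-type identity on global sections. The first step is to note that under either hypothesis, both $h$ and $\ell$ are quasi-compact and quasi-separated, so $h_*$ and $\ell_*$ preserve quasi-coherence; hence both source and target of $\Gamma_{\mathcal F,\mathcal G}$ are quasi-coherent sheaves on $X'\times Y'$. Since a basis of open sets on $X'\times Y'$ is given by products $U\times V$ with $U\subset X'$ and $V\subset Y'$ affine, and since all the functors involved ($\sboxtimes$, $h_*$, $\ell_*$, $(h\times \ell)_*$) commute with restriction to such opens, I would replace $X$ by $h^{-1}(U)$ and $Y$ by $\ell^{-1}(V)$ and reduce to the case $X'=\Spec A$, $Y'=\Spec C$.

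After this reduction, both sides are quasi-coherent sheaves on the affine scheme $\Spec(A\otimes_\Bbbk C)$, so the statement is equivalent to the assertion that the canonical map of $A\otimes_\Bbbk C$-modules
\[
\Gamma(X,\mathcal F)\otimes_\Bbbk \Gamma(Y,\mathcal G)\longrightarrow \Gamma(X\times Y,\mathcal F\sboxtimes \mathcal G)
\]
is an isomorphism. In case (b), where $h$ and $\ell$ are affine, $X=\Spec B$ and $Y=\Spec D$ are themselves affine, and with $\mathcal F\leftrightarrow M$, $\mathcal G\leftrightarrow N$ we have $\mathcal F\sboxtimes \mathcal G \leftrightarrow M\otimes_\Bbbk N$ by the very definition of the external tensor product on affines, so the map above is the identity.

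The essential work lies in case (a), which is the Künneth isomorphism for $H^0$ of qcqs schemes over a field. My plan is to pick finite affine covers $\{U_i\}_{i\in I}$ of $X$ and $\{V_j\}_{j\in J}$ of $Y$, and use the quasi-separation hypothesis to cover each intersection $U_i\cap U_{i'}$ by finitely many affines $\{W^\alpha_{i,i'}\}$ (and similarly on $Y$). Then $\Gamma(X,\mathcal F)$ is the equalizer of the (finite) diagram
\[
\prod_i \mathcal F(U_i) \rightrightarrows \prod_{i,i',\alpha} \mathcal F(W^\alpha_{i,i'}),
\]
and analogously for $\Gamma(Y,\mathcal G)$. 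Since $\Bbbk$ is a field, the functor $-\otimes_\Bbbk \Gamma(Y,\mathcal G)$ is exact and commutes with finite products, hence with the finite equalizer above. Applying it, and then applying the analogous procedure on the $Y$-side, converts the left side into the equalizer associated to the product cover $\{U_i\times V_j\}$ of $X\times Y$ (with its refined double intersections $W^\alpha_{i,i'}\times W^\beta_{j,j'}$), each term of which is computed by the already-established affine case (b). This equalizer computes $\Gamma(X\times Y,\mathcal F\sboxtimes\mathcal G)$, completing the proof.

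The main obstacle I expect is the careful Čech bookkeeping when double (or higher) intersections on qcqs $X$ are merely quasi-compact rather than affine: one has to make sure that the finite refinements chosen are compatible under base change and that the identification of the bicomplex-of-equalizers on the left with the single-equalizer Čech description of $\Gamma(X\times Y,\mathcal F\sboxtimes\mathcal G)$ is canonical and agrees with the natural transformation $\Gamma_{\mathcal F,\mathcal G}$. Both the finiteness of the covers (from quasi-compactness) and the flatness of everything over $\Bbbk$ (so that tensoring commutes with finite limits) are used in an essential way, and these are precisely the ingredients supplied by the quasi-compact, quasi-separated hypothesis.
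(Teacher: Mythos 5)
Your argument is correct and is essentially the proof the paper delegates to the cited reference \cite{kn:bran}: reduce to affine targets $X'=\Spec A$, $Y'=\Spec C$, identify $\Gamma_{\mathcal F,\mathcal G}$ on global sections with the degree-zero K\"unneth map $\Gamma(X,\mathcal F)\otimes_\Bbbk\Gamma(Y,\mathcal G)\to\Gamma(X\times Y,\mathcal F\sboxtimes\mathcal G)$, and compare finite \v{C}ech equalizers (finite covers from quasi-compactness, finite affine refinements of the pairwise intersections from quasi-separatedness, exactness of $\otimes_\Bbbk$ over the field), with the affine case as the base case. The one hypothesis you should make explicit is that $\mathcal F$ and $\mathcal G$ are quasi-coherent --- the lemma's ``as above'' formally allows arbitrary $\mathcal O_X$-modules, and the paper only records quasi-coherence in the remark that follows --- since both the quasi-coherence of the two sides and the affine base case depend on it.
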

\proof
See \cite{kn:bran}.\qed

\begin{rem}
 In the notations of Lemma \ref{lem:bran}, notice that since we are working with $\Bbbk$--schemes, it follows that 
$\mathcal F, \mathcal G$ are $S$--flat quasi-coherent sheaves --- that
is,
$h_*\mathcal F$ and $\ell_*\mathcal G$ are flat sheaves. 
  \end{rem}

\subsection{A duoidal structure for QA-mod}
\label{sect:hopfsheaves}\ %

In Section \ref{subsect:affextschoverA} we considered a duoidal
structure on $\Schaqc$ together with an additional functor
$\op_*:\Schaqc \to \Schaqc$ (denoted as $\op_*(x)=-x$) for which the
 subcategory $\operatorname{Bimon}(\Schaqc)
 \subseteq \Schaqc $ has as objects  the quasi-compact morphisms of
 monoids $q_M:M\to A$ (and as morphisms the morphisms over $A$, that
 are  of monoids).
Thus, the category $\GextqcA$ of quasi-compact group extensions of the abelian variety $A$
can be interpreted as the full subcategory of
$\operatorname{Bimon}(\Schaqc)$ that has objects the faithfully flat
morphisms  of group schemes. In terms of the category $\Schaqc$, a
group extension is a bimonoid $x$ in the category,   equipped
with an additional arrow $\iota_x:x \to (-x)$ satisfying the
supplementary conditions of Theorem \ref{thm:antipoduoidal}.  Since
the   category $\GextaffA$ of affine extensions of $A$ is a full
subcategory of $\GextqcA$, it can viewed also as a full subcategory of
 $\operatorname{Bimon}(\Schaqc)$.

In order to dualize the above situation to the category of sheaves, we
first restrict the above setting to the duoidal category based upon
$\Schasqc$ --- recall that since a since a group scheme
is separated $\GextqcA\subset \operatorname{Bimon}(\Schasqc)$. In this
context, the functors $\PP$ and $\Spec$ (see 
Remark \ref{rem:affingen}) establish an adjunction between
$\Schasqc$ and $Q\sAalg^{\op}$ --- our goal is  to describe the behavior of $\GextaffA$
under the mentioned adjunction. In order to describe the correlate of
$\operatorname{Bimon}(\Schasqc)$ in $Q\sAalg$, we introduce a duoidal
structure on $Q\sAmod$ such that the corresponding subcategory of
bimonoids $\operatorname{Bimon}(Q\sAmod)$ is the seeked correlate.
Finally, we introduce an additional structure that corresponds
under the adjunction to the antipode, and construct the category of
\emph{commutative Hopf sheaves}. In this  we end up with a monoidal
adjunction from the category $\GmorqcA$ (see Definition
\ref{defn:Gmor})  and the category of 
schemes \emph{commutative Hopf sheaves}. If moreover we restrict
ourselves the subcategory $\GextqcA$, we obtain an adjunction with the
subcategory of  \emph{flat commutative Hopf sheaves}, that restrict in
turn to an equivalence between the category of affine extensions of
$A$ and the category of flat commutative Hopf sheaves.

 Be begin by recalling the definition of the external tensor product
 of sheaves over a scheme $S$.

 \begin{defn}
  \label{defn:otherkind}
Let $S$ be a scheme and $X,Y\in \Schs$. If $\mathcal F\in Q\sXmod$,
and $\mathcal G\in Q\sYmod$, we define the sheaf $\mathcal F
\sboxtimes_{S} \mathcal G:=p_1^* \mathcal F \otimes_{\mathcal O_{X
    \times_S Y}}p_2^* \mathcal G \in Q\sXYmod$, where $p_1:X\times_S
Y\to X$ and $p_2: X\times _S Y\to Y$ are the canonical
projections. This correspondence can be extended to a functor
$\sboxtimes_{S}: Q\sXmod \times Q\sYmod \to Q\sXYmod$.  This functor
is called in \cite[Section 9]{kn:EGAI} the \emph{tensor product over
  $\mathcal O_S$} or the {\em tensor product over $S$}, but currently
it is called the \emph{external tensor product} (over $S$).  In the
particular case that $S=\Bbbk$, we usually write $\mathcal
F\sboxtimes_{\Spec(\Bbbk)}\mathcal G=\mathcal F \sboxtimes\, \mathcal
G$.
\end{defn}

 \begin{rem} In the situation that $X=Y=S$ and $\mathcal F,\mathcal G \in Q\sSmod$,\,$\mathcal F \sboxtimes_S \mathcal G=\mathcal F \otimes_{\mathcal O_S} \mathcal G$ the usual monoidal structure in the category of the sheaves of $\mathcal O_S$--mod.
   \end{rem}

  \begin{defn}\label{defn:othermonoidal2} Let $A$ be an abelian variety.
 \noindent(1) Along this section and to be consistent with Definition
 \ref{defn:othermonoidal1} we call the usual monoidal structure
 $\otimes_A:Q\sAmod\times Q\sAmod\to Q\sAmod$ the \emph{Hadamard monoidal structure} --- the unit
 $\uAotimes$ is $\mathcal O_A$.

    \noindent (2) We define the \emph{Cauchy monoidal structure in
      $Q\sAmod$} as follows:
 \[
   \wsboxtimes=s_* \smallcirc \sboxtimes_{\Speck}: \xymatrix{Q\sAmod \times Q\sAmod
     \ar[r]^(0.55){\sboxtimes_{\Speck}}&
     Q\sAAkmod \ar[r]^-{s_*}& Q\sAmod,}
 \]
 where  $s_*$ is the push-forward functor by
 the addition morphism $s:A\times A\to A$.

 It is easy to show that $\bigl(Q\sAmod,\wsboxtimes,
 \uwsboxtimes=\operatorname{skysc}_0(\Bbbk)\bigr)$, where 
 $\operatorname{skysc}_0(\Bbbk)\in Q\sAmod$ denotes the 
 skyscraper sheaf at $0 \in A$ with 
 stalk $\Bbbk$, is a monoidal structure.

 Indeed, if $\mathcal F,
\mathcal G$ are quasi-coherent sheaves on $A$, then 
$\mathcal F \sboxtimes_{\Speck} \mathcal G$ is also
quasi-coherent. Moreover,  since
$\mathcal F \wsboxtimes \mathcal G=s_*(\mathcal F
\sboxtimes_{\Speck}\,\mathcal G)$ is the push-forward of a
quasi-coherent sheaf by the
proper morphism $s:A\times A\to A$, the sheaf $\mathcal F \wsboxtimes \mathcal
G$ is also quasi-coherent (see for example \cite[Proposition
5.8]{kn:hartshorne}). 

On the other hand, if $\iota=(\id,0):A \to A \times_{\Bbbk}A$ is the
closed immersion $\iota(a)=(a,0)$ and $\mathcal F \in Q\sAmod$, then
$\iota_*(\mathcal F)\cong \mathcal F\sboxtimes_{\Speck}
\operatorname{skysc}_0(\Bbbk)$. Applying 
$s_*$ to the above isomorphism (and  using that $s\iota=\id_A$), we have
that $\mathcal F= s_*\bigl(\mathcal F\sboxtimes_{\Speck}
\operatorname{skysc}_0(\Bbbk)\bigr)=\mathcal F\wsboxtimes
\operatorname{skysc}_0(\Bbbk)$.  Therefore,
$\operatorname{skysc}_0(\Bbbk)$ is a right side unit, and similarly
one proves that $\operatorname{skysc}_0(\Bbbk)$ is also a  left side unit.
\end{defn}

The next elementary constructions and notations will be useful in what follows.

\begin{defn}    
 If $\op:A \to A$ is the
 inversion morphism of $A$ we consider the functor $\op_*: Q\sAmod \to Q\sAmod$ and define $-\mathcal F:=\op_*(\mathcal F)$.
\end{defn}

 \begin{rem}\label{rem:skyscisunit}
  Let $0:\Spec(\Bbbk)\to A$, $\st:A\to \Spec(\Bbbk)$ and $c_0=0\, \smallcirc \st:
  A \to A$ (see Definition \ref{defn:op-0-om}). Then:

\noindent (1) If $\mathcal F \in \sAmod$, then the sheaf
$\operatorname{st}_*(\mathcal F)$ has stalk $\mathcal F(A)$ at the
only point of $\Speck$. If $\mathcal V$ a sheaf on $\Speck$ of stalk
$V$, then $0_{*}(\mathcal V)=\operatorname{skysc}_0(V)$ and
$(c_0)_*(\mathcal F)=\operatorname{skysc}_0\bigl(\mathcal F(A)\bigr)$.

\noindent (2)  On the other hand, for $\mathcal V$ as above, 
$\operatorname{st}^{-1}(\mathcal V)(U)=V$ for all $U$ open in
$A$. Hence $\operatorname{st}^*(\mathcal V)=\mathcal O_A
\otimes_{\Bbbk} V$.
             
\noindent (3) If $\mathcal F \in \sAmod$, then $0^{-1}\mathcal F$
is the sheaf of $\mathcal O_{A,0}$--modules on $\Speck$ with stalk
$\mathcal F_0$. Hence, $0^*\mathcal F$ is the sheaf of $\mathcal
O_{\Speck}$--modules (i.e.~$\Bbbk$--spaces) with stalk $\Bbbk
\otimes_{\mathcal O_{A,0}} \mathcal F_0=\mathcal F_0/\mathcal
M_{A,0}\mathcal F_0$, where $\mathcal M_{A,0} \subseteq \mathcal
O_{A,0}$ is the maximal ideal of the local ring $\mathcal O_{A,0}$.

\noindent (4) Combining (2) and (3), we deduce that $c_0^*(\mathcal
F)=\mathcal O_A \otimes_{\Bbbk} \mathcal F_0/\mathcal M_{A,0}\mathcal
F_0$. In particular,
$c_0^*\bigl(\operatorname{skysc}_0(\Bbbk)\bigr)= \mathcal O_A$.
\end{rem}

\begin{prop}\label{prop:zetasheaf}
  Let $A$ be an abelian variety. Then
  $(Q\sAmod,\otimes_A,\uAotimes,\wsboxtimes, \uwsboxtimes)$ can be
  completed to a duoidal structure (see Definitions \ref{defi:duoidalcat},
  \ref{defn:otherkind}, \ref{defn:othermonoidal2}). Moreover, the
  subcategory $Q\sAalg$ inherits this duoidal category, that is\\
  $(Q\sAalg,\otimes_A|_{_{Q\sAalg\times
    Q\sAalg}},\uAotimes,\wsboxtimes|_{_{Q\sAalg\times Q\sAalg}},
  \uwsboxtimes)$  is a duoidal category.
\end{prop}

\begin{proof} We have already shown that
  $(Q\sAmod,\otimes_A,\uAotimes)$ and   $(Q\sAmod,\wsboxtimes,
  \uwsboxtimes)$ are monoidal structures. We concentrate now our
  attention in the description of the
  interchange law as presented in Definition
  \ref{defi:duoidalcat}: for sheaves $\mathcal A,\mathcal B, \mathcal
  C, \mathcal D \in Q\sAmod$ we need to define:
\[
\zeta_{\mathcal
    A,\mathcal B, \mathcal C, \mathcal D}: (\mathcal A \wsboxtimes
  \mathcal B)\otimes_A (\mathcal C \wsboxtimes \mathcal D) \to
  (\mathcal A \otimes_A \mathcal C)\wsboxtimes (\mathcal B \otimes_A
  \mathcal D).
\] 

Since $ p_1^*(\mathcal A \otimes_{\mathcal O_A} \mathcal C) = p_1^*
\mathcal A \otimes_{\mathcal O_{A \times A}}p_1^* \mathcal C$ and $
p_2^*(\mathcal B \otimes_{\mathcal O_A} \mathcal D) = p_2^* \mathcal B
\otimes_{\mathcal O_{A \times A}}p_2^* \mathcal D $, if we write
$p_1^* \mathcal A = \mathcal L$, $p_2^* \mathcal B = \mathcal R$,
$p_1^* \mathcal C = \mathcal M$, $p_2^* \mathcal D = \mathcal N$, we
need to check that there is a natural morphism
  \[
 {\xymatrixcolsep{1.2pc}  \xymatrix{ \zeta:s_* (\mathcal L
  \otimes_{\mathcal O_{A \times A}} \mathcal R) \otimes_{\mathcal O_A}
  s_*(\mathcal M \otimes_{\mathcal O_{A \times A}} \mathcal N)\ar[r]\ar[rd] &  s_* (\mathcal
  L \otimes_{\mathcal O_{A \times A}} \mathcal M \otimes_{\mathcal O_{A \times A}} \mathcal
  R \otimes_{\mathcal O_{A \times A}} \mathcal N)\ar[d]^{\cong}\\
  & s_* (\mathcal
  L \otimes_{\mathcal O_{A \times A}} \mathcal R \otimes_{\mathcal O_{A \times A}} \mathcal
  M \otimes_{\mathcal O_{A \times A}} \mathcal N)}}
\]

 The
  existence of this map follows from the general fact that in the
  context above if $\mathcal X,\mathcal Y$ are sheaves on ${A \times A}$ then,
  due to the existence of the morphism of algebras $s^\sharp: \mathcal
  O_A \to s_*\mathcal O_{A \times A}$, there is a natural map $s_*\mathcal X
  \otimes_{\mathcal O_A} s_*\mathcal Y \to s_*\mathcal X \otimes_{
    s_*\mathcal O_{A \times A}} s_* \mathcal Y=s_*(\mathcal X \otimes_{\mathcal
    O_{A \times A}} \mathcal Y)$.
  
  The
  $\wsboxtimes$--comonoidal structure $\Delta_{\otimes_A}:\uAotimes
  \to \uAotimes \wsboxtimes\uAotimes$ is
  $\Delta_{\otimes_A}:=s^\sharp:\mathcal O_A \to s_* \mathcal O_{A
    \times A}=\mathcal O_A \wsboxtimes \mathcal O_A$ and the
  $\otimes_A$--monoidal structure $\mu_{\wsboxtimes}:\uwsboxtimes
  \otimes_A \uwsboxtimes \to \uwsboxtimes$ is the map associated to
  the structure of $\mathcal O_A$--algebra in $\skyk$. Finally, the map
  $\varepsilon_{\uAtimes}=u_{\uwsboxtimes}:\mathcal O_A \to \skyk$ is
  defined by the multiplication of an element of $\mathcal O_A$ by the
  unit element of $\skyk$.

  The proofs of the  associativity, unitality  and counitality
  of $\zeta$ are easy exercises and therefore are
  omitted.

The fact that $(Q\sAalg,\otimes_A,\uAotimes,\wsboxtimes,
\uwsboxtimes)$ is also a duoidal category follows easily from 
Proposition \ref{prop:fromaguiar}.
  \end{proof}

The definitions of the duoidal structures on the categories 
$\Schasqc\subset \Schaqc$  and  $Q\sAalg\subset Q\sAmod$ are 
tailored to give the following result.

\begin{thm}\label{thm:mainspec}  Let $A$ be an abelian variety. Then:

  \noindent (1) The functor
  $
    \PP:(\Schasqc, \wtimes, \uwtimes, \times_A, \uAtimes) \to
    (Q\sAalg^{\op},\wsboxtimes, \uwsboxtimes,\otimes_A,\uAotimes)
  $
  is 
  bilax monoidal. More
  precisely, $\PP: (\Schasqc, \wtimes, \uwtimes) \to
    (Q\sAalg^{\op},\wsboxtimes, \uwsboxtimes)$  is strong monoidal, $\PP: (\Schasqc,  \times_A, \uAtimes) \to
    (Q\sAalg^{\op},\otimes_A,\uAotimes)$ is  colax monoidal, and the conditions of
  interchange and unitality in the definition
  \ref{defn:bilaxandtransf} are satisfied.

\noindent (2) The functor $\Spec: (Q\sAmod^{\op},\wsboxtimes,
\uwsboxtimes,\otimes_A,\uAotimes)\to  (\Schasqc, \wtimes, \uwtimes,
\times_A, \uAtimes)$  is bilax monoidal --- in this case, $\Spec: (Q\sAalg^{\op},\wsboxtimes,
\uwsboxtimes)\to  (\Schasqc, \wtimes, \uwtimes)$ is  lax monoidal and
$\Spec: (Q\sAalg^{\op},\otimes_A,\uAotimes)\to  (\Schasqc,
\times_A, \uAtimes)$ is  strong monoidal.

\noindent (3)  The adjunction $   {\xymatrixcolsep{3em}\xymatrix{ \Schsqc \ar@<5pt>[r]^-\PP
     \ar@<-5pt>@{<-}[r]_-\Spec
     \ar@{}[r]|-\bot&(Q\sSalg)^{\operatorname{op}}}}$
 $\PP \dashv \Spec$ is bimonoidal --- that is the unit and counit with respect to both structures are (lax) monoidal morphisms.
\end{thm}

\pf Let ${x:X \to A}$, ${y:Y \to A}\in \Schasqc$ and $x \times y:X
\times Y \to A\times A$. In this situation and in accordance with
Lemma \ref{lem:bran}, there is an
isomorphism $\Gamma_{\mathcal O_X,\mathcal O_Y}^{-1}: (x\times
y)_*(\mathcal O_{X \times Y})=(x\times y)_*(\mathcal O_X \sboxtimes
\mathcal O_Y) \to x_*(\mathcal O_X) \sboxtimes y_*(\mathcal O_Y)=\PP(x)\sboxtimes \PP(y)$ ---
the existence of such an isomorphism is due to the fact that $\mathcal
O_X, \mathcal O_Y$ are quasi-coherent, flat sheaves over $\Spec(\Bbbk)$  and
 $x,y$ are separated, quasi-compact morphisms (see Remark
\ref{rem:monw}).
 Then pushing forward the
above isomorphism by $s:A \times A \to A$ we have
$
   s_*(\Gamma_{\mathcal
  O_X,\mathcal O_Y}^{-1}):=s_*(x\times
y)_*(\mathcal O_{X \times Y})=(x\wtimes
y)_*(\mathcal O_{X \times Y}) \longrightarrow s_*\bigl(\PP(x) {\sboxtimes}
   \PP(y)\bigr)$.

 In other words we have a natural isomorphism:
 \[
   s_*(\Gamma_{\mathcal O_X,\mathcal O_Y}^{-1}):\PP(x \wtimes y)
   \longrightarrow \PP(x) \wsboxtimes \PP(y).
   \]
 
 This guarantees that $\PP$ is strong (and hence lax) monoidal in the
 category $Q\sAalg^{\op}$ with respect to $\wtimes$ and $\wsboxtimes$.
 For the second assertion take $p: X \times_A Y \to X$ that induces a
 morphism $p^\sharp:\mathcal O_X \to p_*(\mathcal O_{X \times_A
   Y})$. Then, $x_*(p^\sharp):x_*(\mathcal O_X)=\PP(x) \to
 x_*p_*(\mathcal O_{X \times_A Y})= (x \times_A y)_*(\mathcal O_{X
   \times_A Y})=\PP(x \times_A y)$. This map together with the map
 corresponding to $y$ yields a morphism
 \[\PP(x) \otimes_A \PP(y) \to \PP(x \times _A y) \in Q\sAalg,\]
 the required condition of colax in the category $Q\sAalg^{\op}$ with
 respect to $(\times_A, \otimes_A)$.  The proofs of the
 conditions in Definition \ref{defn:bilaxandtransf}[(1)(a),(b)] for $\PP$ is direct and left to the reader.

 It is well known that the functor $\Spec$ is strong monoidal with
 respect to $\otimes_A$ and $\times_A$ (see
 \cite{kn:EGAII}[Prop. 1.4.6]). The lax monoidality with respect to
 $\wsboxtimes$ and $\wtimes$ follows by ``doctrinal adjunction'' from
 the fact that it is the right adjoint of the colax monoidal functor
 $\PP$ (See Remark \ref{rem:next}).

 Assertion (3) is clearly a consequence of (1) and (2). \qed

 \begin{rem} \label{rem:next} 
  \noindent (1) The version of ``doctrinal adjunction'' that we are using
       can be stated as: let $L \dashv R: \mathcal C
       \stackrel{L}{\longrightarrow}\mathcal
       D\stackrel{R}{\longleftarrow}\mathcal C$ be an adjunction with
       $\mathcal C$ and $\mathcal D$ monoidal categories. Then $L:
       \mathcal C \to \mathcal D$ can be endowed with a structure of
       colax monoidal functor if and only if $R: \mathcal D \to
       \mathcal C$ can be endowed with a structure of lax monoidal
       functor that makes $(L,R)$ a monoidal adjunction. In any case
       each structure can be uniquely determined from the other. See
       \cite{kn:kelly} for the basic results on this subject and for
       example \cite{kn:aguiarspecies}[Prop. 3.84] for a direct proof.

       \noindent (2) 
           In explicit terms, if $\mathcal F,\mathcal F'\in Q\sAalg$, then
the lax monoidality of the functor $\Spec$ (see Theorem
\ref{thm:mainspec}) is implemented \emph{via}    the natural
transformation
\[
 \eta_{_{\Spec(\mathcal F)\wtimes
      \Spec(\mathcal F')}}: \Spec(\mathcal F)\wtimes \Spec(\mathcal F')
    \to \operatorname{Aff}_{_A} \bigl(\Spec(\mathcal F)\wtimes
    \Spec(\mathcal F')\bigr)=\Spec\bigl(\mathcal F\wsboxtimes \mathcal
    F').
  \]

 See also Proposition \ref{prop:catextcar}.
   \end{rem}

 \subsection{Bimonoid sheaves  and schemes of monoids over $A$}
\label{sect:hopfsheaves1}\ %

Once we have established in Theorem \ref{thm:mainspec} the adjunction
between $\Schasqc$ and $Q\sAmod^{\op}$, the notion of \emph{sheaf of
  bimonoids} as a bimonoid in the duoidal category of separable,
quasi-coherent $\sAalg$, will appear as the natural counterpart of the
notion of affine --- or quasi-compact separable --- bimonoid extension of
$A$. In this section we set the basic result for bimonoids, namely
that $\PP$ and $\Spec$ establish an adjunction between the categories
of quasi-compact separable bimonoids over $A$ (i.e.~bimonoids in  $\Schasqc$) and
bimonoids in $Q\sAalg$. The additional structure of the inversion
morphism for a ``group extension'', can also be added in a compatible
fashion in order to extend the result to group extensions.

We begin by displaying in explicit terms, the definition of bimonoid
in the duoidal category $Q\sAmod^{\op}$ (see Theorem
\ref{thm:mainspec}) --- we use the notations of Proposition
\ref{prop:zetasheaf}.

 \begin{defn}
\label{defn:bialgebrasheaf}
A \emph{sheaf of bimonoids} or a \emph{bimonoid sheaf} on $A$ is a
sheaf $\mathcal B \in Q\sAmod$ equipped with four sheaf morphisms
$\Delta_{\mathcal B}: \mathcal B \to \mathcal B \wsboxtimes \mathcal
B$, $\mu_{\mathcal B}: \mathcal B \otimes_A \mathcal B \to \mathcal
B$, $\varepsilon_{\mathcal B}: \mathcal B \to \uwsboxtimes$,
$u_{\mathcal B}:\uAotimes \to \mathcal B$ that make commutative the
diagrams below:
\[
  \xymatrix{\mathcal B\ar[rr]^{\Delta_{\mathcal
      B}}\ar[d]_{\Delta_{\mathcal B}}&& \mathcal B\wsboxtimes \mathcal
  B\ar[d]^{\Delta_{\mathcal B} \wsboxtimes \id}\\
  \mathcal
  B\wsboxtimes \mathcal B\ar[rr]^-{\id \wsboxtimes \Delta_{\mathcal B}}
  && \mathcal B\wsboxtimes \mathcal B \wsboxtimes \mathcal B
} \quad
\xymatrix{
  \mathcal B\ar@{<-}[rr]^{\mu_{\mathcal B}}\ar@{<-}[d]_{\mu_{\mathcal
      B}}&& \mathcal B\otimes_A
  \mathcal B\ar@{<-}[d]^{\mu_{\mathcal B}\otimes_A \id}\\
  \mathcal B\otimes_A \mathcal
  B\ar@{<-}[rr]^-{\id \otimes_A \mu_{\mathcal B}} && \mathcal B\otimes_A \mathcal B
  \otimes_A \mathcal B}
\] 
\[
  \xymatrix{
    &\mathcal B\ar[d]|-{\Delta_{\mathcal B}}\ar@{<->}[rd]^{\cong}\ar@{<->}[ld]_{\cong}&
    \\
    \mathcal B\wsboxtimes \uwsboxtimes&\mathcal B\wsboxtimes
    \mathcal B \ar[l]^{\id \otimes \varepsilon_{\mathcal
        B}}\ar[r]_-{\varepsilon_{\mathcal B} \otimes \id}&\uwsboxtimes
    \wsboxtimes \mathcal B
  } \quad \xymatrix{
    &\mathcal
  B\ar@{<-}[d]|-{\mu_{\mathcal
      B}}\ar@{<->}[rd]^{\cong}\ar@{<->}[ld]_{\cong}& \\
  \mathcal B \otimes_A \uAotimes&\mathcal B\otimes_A \mathcal B
  \ar@{<-}[l]^-{\id 
    \otimes u_{\mathcal B}}\ar@{<-}[r]_-{u_{\mathcal B} \otimes
    \id}&\uAotimes\otimes\mathcal B
}\]
\[
  \xymatrix{&\mathcal B\ar[dr]^{\Delta_{\mathcal B}}&\\
    \mathcal B\otimes_A \mathcal B\ar[ru]^{\mu_{\mathcal B}}\ar[d]_{\Delta_{\mathcal B} \otimes_A \Delta_{\mathcal B}}&&\mathcal B \wsboxtimes \mathcal B\\
  (\mathcal B\wsboxtimes \mathcal B)\otimes_A (\mathcal B\wsboxtimes
  \mathcal B)\ar[rr]^-{\zeta_{\mathcal B,\mathcal B,\mathcal B,
      \mathcal B}}&& (\mathcal B \otimes_A \mathcal B)\wsboxtimes
  (\mathcal B\otimes_A \mathcal B)\ar[u]_{\mu_{\mathcal B}\wsboxtimes
    \mu_{\mathcal B}}
}\]

\end{defn}

 Putting together the above results (see Proposition \ref{prop:catextcar}, Lemma
 \ref{lem:functbimonoid}, Remark
 \ref{rem:affingen} and  Theorem \ref{thm:mainspec}) we obtain the
 following consequence.

 \begin{prop}\label{prop:bimontobimon}
   The functor $\PP:\Schasqc \to Q\sAalg^{\op}$, takes bimonoids in
    $(\Schasqc, \wtimes, \uwtimes, 
  \times_A, \uAtimes)$ into bimonoids in $(Q\sAmod^{\op},\wsboxtimes,
  \uwsboxtimes,\otimes_A,\uAotimes)$.
  Similarly, the functor $\Spec$
  takes bimonoids in  $(Q\sAmod^{\op},\wsboxtimes,
  \uwsboxtimes,\otimes_A,\uAotimes)$ into affine bimonoids in  the
  category of $\Schasqc$.

Moreover, the adjunction  $   {\xymatrixcolsep{3em}\xymatrix{ \Schsqc \ar@<5pt>[r]^-\PP
     \ar@<-5pt>@{<-}[r]_-\Spec \ar@{}[r]|-\bot&(QS-\text{alg})^{\op}}}$
restricts to an adjunction as below:
\[
\xymatrix@=8pt{\operatorname{Bimon}(\Schasqc)\ar@/^1.5pc/[rr]^-{\PP}
  &\,\,\,
  \perp&\operatorname{Bimon}\bigl((Q\sAalg\bigr)^{\operatorname{op}})\ar@/^1.5pc/[ll]^{\Spec}}.
\]
    
In particular, the relative affinization over $A$ (see Definition
\ref{defn:relativeaffi}) takes bimonoids in $\Schasqc$ into bimonoids that are
affine schemes over
$A$.\qed
\end{prop}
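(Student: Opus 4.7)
The plan is to deduce this proposition almost entirely from the machinery already assembled: specifically, Lemma \ref{lem:functbimonoid} (which states that a functor that is lax monoidal for one structure and colax monoidal for the other transports bimonoids), together with Theorem \ref{thm:mainspec} (which guarantees exactly these monoidal/colax properties for both $\PP$ and $\Spec$).

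First, I would spell out the duoidal structures explicitly in the form required by Lemma \ref{lem:functbimonoid}. A bimonoid $q:G\to A$ in $(\Schasqc, \wtimes, \uwtimes, \times_A, \uAtimes)$ has its multiplication living in the $\wtimes$ direction (via $s\circ(q\times q)$) and its comultiplication --- trivially present on every object, as noted in Remark \ref{rem:allcomonoid} --- in the $\times_A$ direction. On the other side, taking the opposite duoidal category $(Q\sAalg^{\op}, \wsboxtimes, \uwsboxtimes, \otimes_A, \uAotimes)$ swaps the roles: $\wsboxtimes$ plays the role of $\diamond$ and $\otimes_A$ the role of $\star$. By Theorem \ref{thm:mainspec}, $\PP$ is strong (hence lax) monoidal with respect to $(\wtimes, \wsboxtimes)$ and colax with respect to $(\times_A, \otimes_A)$; this is exactly the hypothesis of Lemma \ref{lem:functbimonoid}, so $\PP$ transports bimonoids of $\Schasqc$ to bimonoids of $Q\sAalg^{\op}$. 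The proof for $\Spec$ is formally identical using the companion assertion of Theorem \ref{thm:mainspec}.

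Next, to show the original adjunction $\PP \dashv \Spec$ restricts, I would verify that for a bimonoid $q:G\to A$ the unit $\eta_q: q \to \Spec(\PP(q))$ is a morphism of bimonoids, and symmetrically for the counit on a bimonoid sheaf $\mathcal B$. Since both $\PP$ and $\Spec$ carry a strong monoidal structure on $(\wtimes, \wsboxtimes)$ and a colax one on $(\times_A, \otimes_A)$, the universal morphisms of the adjunction automatically intertwine the structural data $(\mu, u, \Delta, \varepsilon)$ transported from one side to the other --- this is the standard statement that an adjunction between (co)lax monoidal functors restricts to the categories of (co)monoids, applied in both directions of the duoidal structure. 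Concretely, naturality of $\eta$ in $\wtimes$ against the multiplication $\mu_q: q\wtimes q \to q$, and naturality in $\times_A$ against the diagonal $\Delta_q=\delta_G$, give the two required bimonoid axioms for $\eta_q$; compatibility with $u_q, \varepsilon_q$ is automatic from the unit/counit components. The same argument, dualized, handles $\varepsilon_{\mathcal B}$.

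Finally, the ``in particular'' clause is immediate: the relative affinization functor is $\Spec\circ\PP$, a composition of two functors which, by the previous two steps, each preserve bimonoids; its output lies in $\Schaff$ by construction (Definition \ref{defn:affingen}). The only conceptual subtlety I anticipate is keeping straight which monoidal structure of $Q\sAalg$ plays the role of $\diamond$ versus $\star$ after passing to the opposite category, so that the strong/colax properties of Theorem \ref{thm:mainspec} line up correctly with the hypotheses of Lemma \ref{lem:functbimonoid}; once the bookkeeping is set up, no further calculation is needed.
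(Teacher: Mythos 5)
Your proposal is correct and follows essentially the same route as the paper, which disposes of the statement in one line as "a direct consequence of Lemma \ref{lem:functbimonoid} and Theorem \ref{thm:mainspec}." Your additional care in checking that the unit and counit of the adjunction are morphisms of bimonoids (so that the adjunction genuinely restricts) is a detail the paper leaves implicit, and your bookkeeping of which monoidal structure plays the role of $\diamond$ versus $\star$ after passing to the opposite duoidal category is exactly right.
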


 \begin{rem}
   \label{rem:productfor spec}
In view of Remark \ref{rem:next}, if $(\mathcal B, \Delta_{\mathcal
  B}, \mu_{\mathcal B}, \varepsilon_{\mathcal B}, u_{\mathcal B})$ is
a bimonoid in $Q\sAalg$, then the product  $m :  \Spec(\mathcal
B)\wtimes \Spec(\mathcal B)\to \Spec(\mathcal B)$ is
obtained as
\[
{\xymatrixcolsep{3em}  \xymatrix{
    \Spec(\mathcal B)\wtimes \Spec(\mathcal B)
    \ar[rr]^-{\eta_{\Spec(\mathcal B)\wtimes \Spec(\mathcal
        B)}}\ar[rrd]_m& &
    \operatorname{Aff}_A \bigl(\Spec(\mathcal B)\wtimes \Spec(\mathcal
    B)\bigr)= \Spec (\mathcal B\sboxtimes \mathcal B)\ar[d]^{\Spec(\Delta_B)}\\
 &&  \Spec(\mathcal B)
}}\]
\end{rem}
 
As an immediate consequence of Proposition \ref{prop:bimontobimon}, we
have the following.

 \begin{thm}\label{thm:grothequivalence} Let $A$ be an abelian
   variety.   Then the functors $\PP$ and $\Spec$ establish a contravariant
   isomorphism between $\MmoraffA$ and  the category of sheaves of
   bimonoids on $A$ (see Notation \ref{nota:MMor} and
   Definition \ref{defn:bialgebrasheaf}) with arrows the sheaf
   morphisms of bimonoids.
 \end{thm}

 \pf
 By Proposition \ref{prop:bimontobimon}, we have that if $q_M:M\to A$
 is a morphism of monoid schemes --- that is, a bimonoid in $\Schasqc$,
 see Proposition \ref{prop:catextcar} ---, then $\PP(q_M)$ is a sheaf of
 bimonoids, and if  $f:(q_M:M \to A) \to (q_N:N \to A)$ is a  morphism
 of bimonoids, then $\PP(f): \PP(q_M)\to \PP(q_N)$ is a morphism of
 sheaves of bimonoids. Conversely, if $\mathcal H$ is a sheaf of
 bimonoids then $\Spec{H}\to A$ is an affine morphism of monoid
 schemes, and if $f:\mathcal H\to \mathcal H'$ is a morphism of
 sheaves of bimonoids, then $\Spec(f):\Spec(\mathcal H)\to
 \Spec(\mathcal H')$ is a morphism of bimonoids in $\Schaqc$. Notice
 that $\Spec(f)$ is in particular a morphism of affine schemes over $A$.

 On the other hand, since the objects of $\MmoraffA$ are affine
 morphisms $q_M:M\to A$, clearly $\MmoraffA$ is isomorphic to a
 subcategory of $\Schaaff$, that we also denote $\MmoraffA$. Since
 $\PP$ and $\Spec$ induce a contravariant isomorphism between  
 $\Schaaff$ and $Q\sAalg$, in order to prove that
 $\PP|_{_{\MmoraffA}}:\MmoraffA\to
 \operatorname{Bimon}(Q\sAalg\bigr)^{\op}$ we can  use an elementary result on reflections of monoidal categories, that  we added below for lack of an adequate reference (see Remark \ref{rem:uniqueness}).
 \qed

 \begin{rem}\label{rem:affbutqc}
The reader should be aware that, as we pointed out in the beginning of
Section \ref{subsect:affextschoverA},  in order to define the
subcategory 
$\MmoraffA\subset \Schaaff$ (see Theorem \ref{thm:grothequivalence}),
we need to 
work in the category $\Schaqc$, since 
$s\smallcirc (q_M,q_M):M\times M\to A$ is \emph{not} an affine scheme over $A$.
\end{rem}

   \begin{rem}\label{rem:uniqueness}
     \noindent (1)  Assume that $\mathcal C_0 \subseteq \mathcal C$ is a pair of categories with the following additional conditions:

\noindent (a) $\mathcal C_0$ is a full and replete subcategory of $\mathcal C$;

     \noindent (b) $\mathcal C$ is monoidal with structure $(\times, \mathbb I)$ and $\mathcal C_0$ is monoidal with structure $(\times_0,\mathbb I_0)$;

     \noindent (c) The inclusion functor $\operatorname{inc}: \mathcal
     C_0 \to \mathcal C$ has a left adjoint $\mathcal A: \mathcal C
     \to \mathcal C_0$ with counit $\varepsilon: \mathcal A\smallcirc
     \operatorname{inc} \Rightarrow \id_{\mathcal C_0}$ and unit
     $\eta: \id_{\mathcal C}\Rightarrow  \operatorname{inc}\smallcirc \mathcal A$
     such that: $\varepsilon$ is an isomorphism and $\eta$ is strong
     monoidal, i.e. $\mathcal Ax \times_0 \mathcal Ay \cong \mathcal
     A(x \times y)$ for all $x,y \in \mathcal C$ and $\mathbb I_0
     \cong \mathcal A(\mathbb I)$.

     Then, in the above situation $\operatorname{Mon}(\mathcal C) \cap \mathcal C_0 \cong \operatorname{Mon}(\mathcal C_0)$.

     The proof of the assertion  above is easy: for $(x,m) \in \operatorname{Mon}(\mathcal C)
     \cap \mathcal C_0$, we consider the counit $\eta_{x \times x}: x
     \times x \to \mathcal A(x \times x)=\mathcal A(x) \times_0
     \mathcal A(x) = x \times_0 x$. Then, the structure morphism $m: x
     \times x \to x$ can be uniquely extended as in the diagram below:
     \[\xymatrix{x \times x \ar[rr]^{\eta_{x \times x}}\ar[rd]_m&&x \otimes_0 x\ar[dl]^{\widehat{m}}\\&x& }\]
     
     It is clear that given a monoid structure in $\mathcal C_0$ such as $\widehat{m}$ the monoid in $\mathcal C$ is obtained by composition with the unit.

    \noindent (2) Thus, we complete the proof of  Theorem
     \ref{thm:grothequivalence}  by considering in (1) above the
     categories  $\mathcal C= \Schasqc$ and $\mathcal
     C_0=\MmoraffA$ (or $\Schaff$) and $\mathcal A$ the affinization
     functor, we complete the.

     \noindent (3) In particular, if $\mathcal B$ is a
     sheaf of bimonoids in $A$ with coproduct $\Delta_{\mathcal B}$,
     then the product in $\Spec\bigl(\PP(m)\bigr)$ is obtained as 
     \[m= \operatorname{Aff}_A(m)=
        \Spec\bigl(\PP(m)\bigr)\smallcirc \eta_{M\wtimes M}= \Spec(\Delta_{\mathcal
          B_M})\smallcirc \eta_{\Spec(\mathcal B_M) \wtimes \Spec(\mathcal
          B_M)}.
      \]
     \end{rem}

     \subsection{Affine extensions of abelian varieties and Hopf
   sheaves}\ %
 \label{subsect:affexthopfsheaves}
 
 To finish our considerations on this topic, we define --- given an
 abelian variety $A$ --- the concept of Hopf sheaf on $A$ and show the
 category of commutative Hopf sheaves and its morphisms is op--equivalent with
 the category $\GmoraffA$ of affine morphisms of group schemes (see
 Definition \ref{defn:Gmor}).
 
Recall that if we call $\op:A \to A$ the map given by the inverse morphism in
 $A$, the antipode of $x:X \to A$ in the duoidal category $\Schasqc$
 is a morphism $\iota_x:x \to \op_*(x)$ that fits in the commutative
 diagrams \eqref{eqn:firstantipode}, \eqref{eqn:secondantipode} (see
 Theorem \ref{thm:antipoduoidal} and Remark \ref{rem:antipodedefi}). The
 situation is analogue in $Q\sAalg$.
 
 \begin{nota}
   Let $\op:A\to A$ be the morphism given by the inversion map in the
   abelian variety $A$ and consider  the push-forward functor $\op_*:Q\sAalg \to
   Q\sAalg$. We
   denote $\op_*(\mathcal F)=-\mathcal F$ and similarly for an arrow
   $F:\mathcal F \to \mathcal G$ we denote $\op_*(F:\mathcal F \to
   \mathcal G)=(-F: -\mathcal F \to -\mathcal G)$.

Notice that since the inversion map is an involution, then $-(-\mathcal F)=\mathcal F$.
 \end{nota}
 
 \begin{rem}\label{rem:adjst0} In order to fix notation, we recall the following easy
   properties of the functor $\op$:

   \noindent(1) $\op_*=\op^*:Q\sAalg \to Q\sAalg$;

   \noindent(2) The diagrams below are commutative:
   \[\xymatrix{\Schasqc
       \ar[r]^{\PP}\ar[d]_{\op_*}&Q\sAalg\ar[d]^{\op_*}\\
       \Schasqc \ar[r]_{\PP}&\sAalg}\quad\quad
     \xymatrix{\Schasqc
       \ar[d]_{\op_*}&Q\sAalg\ar[d]^{\op_*}\ar[l]_-{\Spec}\\
       \Schasqc &\ar[l]^-{\Spec}\sAalg.}\]
 
 \noindent(3) In the situation above we consider the  morphisms
 $A \stackrel{\st}{\longrightarrow} \Speck \stackrel{0}{\longrightarrow}A$ (see Definition \ref{defn:op-0-om}) and the associated
 adjunctions $   {\xymatrixcolsep{3em}\xymatrix{ \Bbbk-\operatorname{alg} \ar@<5pt>[r]^-{\st^*}
     \ar@<-5pt>@{<-}[r]_-{\st_*} \ar@{}[r]|-\bot&(Q\sAalg)^{\operatorname{op}}}}$
 and $   {\xymatrixcolsep{3em}\xymatrix{ (Q\sAalg)^{\operatorname{op}} \ar@<5pt>[r]^-{0^*}
     \ar@<-5pt>@{<-}[r]_-{0_*} \ar@{}[r]|-\bot& \Bbbk-\operatorname{alg}}}$.
 \end{rem}

 The adjunctions defined in Remark \ref{rem:adjst0} have the following
 properties,  analogous to the
 situation in Lemma \ref{lema:smallproperties} and Proposition
 \ref{prop:smallproperties}.

 \begin{rem} \label{rem:antipodesheaf}
   \noindent(1) Consider the following pull back diagram and the
   corresponding diagram of functors:
   \[\xymatrix{A \ar[rr]^{\delta}\ar[d]_{\st}&& A \times A
       \ar[d]^{s(\id \times \op)} \\
       \Speck\ar[rr]^(.5){0}&&
       A
     }\quad\xymatrix{
       \sAmod \ar[rr]^{0^*}\ar[d]_{(s(\id \times
       \op))^*}&& \Speck-{\operatorname{mod}}\ar[d]^{\st^*} \\
     A
     \times A-{\operatorname{mod}}\ar[rr]^(.5){\delta^*}&&
     \sAmod\,.}\]

 Evaluating at $\mathcal F \wsboxtimes -\mathcal
   G:=\bigl(s(\id \times \op)\bigr)_*(\mathcal F \sboxtimes \mathcal G)$ we
   obtain a natural transformation in $\mathcal F,\mathcal G$:
   $\st^*0^*(\mathcal F \wsboxtimes -\mathcal G)=\delta^*\bigl(s(\id \times
   \op)\bigr)^*\bigl(s(\id \times \op)\bigr)_*(\mathcal F \sboxtimes \mathcal G)\to
   \delta^*(\mathcal F \sboxtimes \mathcal G) \cong \mathcal F
   \otimes_A \mathcal G$, the penultimate arrow coming from the unit
   of the corresponding adjunction and the last equality follows from
   general properties of the external tensor product (see Definition
   \ref{defn:otherkind}). Indeed, it is well known that in the case of
   a morphism $f: X \to Y$ and a pair of sheaves $\mathcal F, \mathcal
   G \in QY\mathrm{-mod}$ we have: $f^*(\mathcal F \otimes_Y
   \mathcal G)\cong f^*(\mathcal F)\otimes_X f^*(\mathcal G)$ (see
   \cite[Theorem 16.3.7]{kn:raising}). In the case that we are dealing
   with the situation of $\delta:A \to A \otimes A$ and $\mathcal
   F,\mathcal G \in QA-{\operatorname{mod}}$, $\delta^*(\mathcal F
   \wsboxtimes\mathcal G)=\delta^*(p_1^*\mathcal F \otimes_{A \times
     A} p_2^*\mathcal G)=\delta^*p_1^*\mathcal F \otimes_{A}
   \delta^*p_2^*\mathcal G=\mathcal F \otimes_{A} \mathcal G$.

   \noindent(2) For $R \in \Bbbk\mathrm{-alg}$ we have that:
   $\st^* R = 0_* R \wsboxtimes\uAotimes$.   
 \end{rem}
 
 \begin{prop}\label{prop:sheafsmallproperties} Assume that
   $\mathcal F, \mathcal G$ are sheaves in $Q\sAalg$ and recall the
   notation $\op_*\mathcal G=-\mathcal G$. Then we can define two natural transformations as below:

   \noindent(1) $\widetilde{\gamma}_{\mathcal F,\mathcal G}: (\mathcal F
   \wsboxtimes\mathcal G) \wsboxtimes{\mathbb I_{\otimes_A}}\to
   \mathcal F \otimes_A -\mathcal G$;

   \noindent(2) $\overline{\gamma}_{\mathcal F, \mathcal G}: \uAotimes
   \wsboxtimes \mathcal F \wsboxtimes\mathcal G \to -\mathcal F
   \otimes_A\, \mathcal G$.

 \end{prop}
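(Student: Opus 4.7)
The plan is to obtain $\widetilde{\gamma}$ and $\overline{\gamma}$ directly as formal consequences of Remark \ref{rem:antipodesheaf}, by feeding into it the unit of the adjunction $0^{*}\dashv 0_{*}$ and the identification $\st^{*}R = 0_{*}R\wsboxtimes\uAotimes$. Equivalently, one could dualize the scheme-level natural transformations $\widetilde{\gamma}_{x,y}$ and $\overline{\gamma}_{x,y}$ of Proposition \ref{prop:smallproperties} using the op-equivalence of Proposition \ref{prop:affschqcs} together with the strong/colax monoidality statements of Theorem \ref{thm:mainspec}; the two routes will give the same map by naturality of the adjunctions involved.

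For part (1), the first step is to replace $\mathcal G$ by $-\mathcal G$ in Remark \ref{rem:antipodesheaf}(1). Since $\op^{2}=\id$, the source $\mathcal F\wsboxtimes -(-\mathcal G)$ becomes $\mathcal F\wsboxtimes\mathcal G$ while the target becomes $\mathcal F\otimes_{A}(-\mathcal G)$. Concretely, the pullback square with top map $\delta:A\to A\times A$, left map $\st$, bottom $0:\Spec(\Bbbk)\to A$, and right map $s\circ(\id\times\op):A\times A\to A$ is genuinely cartesian (the fiber of $s\circ(\id\times\op)$ over $0$ is the diagonal $\{(a,a)\}\cong A$), and applying $\delta^{*}$ to the counit of $(s\circ(\id\times\op))^{*}\dashv(s\circ(\id\times\op))_{*}$ gives a natural transformation
\[
\st^{*}\,0^{*}(\mathcal F\wsboxtimes\mathcal G) \longrightarrow \mathcal F\otimes_{A}(-\mathcal G).
\]
Next, apply Remark \ref{rem:antipodesheaf}(2) with $R=0^{*}(\mathcal F\wsboxtimes\mathcal G)\in\Bbbk\text{-alg}$ to rewrite
\[
\st^{*}\,0^{*}(\mathcal F\wsboxtimes\mathcal G) = 0_{*}\,0^{*}(\mathcal F\wsboxtimes\mathcal G)\wsboxtimes\uAotimes,
\]
and precompose with the unit $\eta:\mathcal F\wsboxtimes\mathcal G\to 0_{*}0^{*}(\mathcal F\wsboxtimes\mathcal G)$ of the adjunction $0^{*}\dashv 0_{*}$, tensored on the right in the Cauchy structure with $\id_{\uAotimes}$. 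The resulting composition
\[
(\mathcal F\wsboxtimes\mathcal G)\wsboxtimes\uAotimes \xrightarrow{\eta\,\wsboxtimes\,\id} 0_{*}\,0^{*}(\mathcal F\wsboxtimes\mathcal G)\wsboxtimes\uAotimes \longrightarrow \mathcal F\otimes_{A}(-\mathcal G)
\]
is the required $\widetilde{\gamma}_{\mathcal F,\mathcal G}$. Naturality in both arguments is immediate, since each of the three building blocks is a natural transformation.

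For part (2), run the same recipe with the roles of the two Cauchy factors exchanged: use $s\circ(\op\times\id):A\times A\to A$ in place of $s\circ(\id\times\op)$ and place $\uAotimes$ on the \emph{left} of $\mathcal F\wsboxtimes\mathcal G$. The analogous cartesian square (with $\delta$ and $\st$, $0$ as before, and right map $s\circ(\op\times\id)$) yields a natural transformation $\st^{*}0^{*}(\mathcal F\wsboxtimes\mathcal G)\to -\mathcal F\otimes_{A}\mathcal G$, and the same combination with the unit of $0^{*}\dashv 0_{*}$ and Remark \ref{rem:antipodesheaf}(2) produces $\overline{\gamma}_{\mathcal F,\mathcal G}$.

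The only subtlety is checking that the identifications of Remark \ref{rem:antipodesheaf}(2) are compatible with the Cauchy monoidal structure on the $\uAotimes$ factor (so that the intermediate expression $0_{*}0^{*}(\mathcal F\wsboxtimes\mathcal G)\wsboxtimes\uAotimes$ really is obtained from $(\mathcal F\wsboxtimes\mathcal G)\wsboxtimes\uAotimes$ by applying $\eta\wsboxtimes\id$), but this is a direct consequence of the bifunctoriality of $\wsboxtimes$. A brisker alternative that avoids working through these identifications is to observe that $\mathcal F=\PP(\Spec\mathcal F)$ and $\mathcal G=\PP(\Spec\mathcal G)$ by the equivalence of Proposition \ref{prop:affschqcs}, and then to define $\widetilde{\gamma}_{\mathcal F,\mathcal G}:=\PP(\widetilde{\gamma}_{\Spec\mathcal F,\Spec\mathcal G})$ and $\overline{\gamma}_{\mathcal F,\mathcal G}:=\PP(\overline{\gamma}_{\Spec\mathcal F,\Spec\mathcal G})$, matching source and target with the help of the strong monoidality of $\PP$ for $(\wtimes,\wsboxtimes)$ and the fact that for affine morphisms over $A$ the fibered product $\times_{A}$ corresponds to $\otimes_{A}$. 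Both approaches produce the same natural transformations.
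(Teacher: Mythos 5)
Your argument follows the paper's proof essentially verbatim: Remark \ref{rem:antipodesheaf}(1) supplies the map out of $\st^*0^*(\mathcal F\wsboxtimes -\mathcal G)$, Remark \ref{rem:antipodesheaf}(2) rewrites $\st^*0^*$ as $0_*0^*(\,\cdot\,)\wsboxtimes\uAotimes$, and precomposition with the unit of $0^*\dashv 0_*$ finishes, with part (2) obtained as the mirror image. Your explicit substitution $\mathcal G\mapsto -\mathcal G$ to reconcile the placement of the sign with the statement, and your identification of the relevant arrow as the \emph{counit} of $\bigl(s(\id\times\op)\bigr)^*\dashv\bigl(s(\id\times\op)\bigr)_*$, are in fact slightly more careful than the paper's sketch.
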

 \proof We sketch the proof of (1), the proof of (2) being similar.
 Using the first result of the Remark \ref{rem:antipodesheaf} we
 deduce the existence of a natural transformation $\st^*0^*(\mathcal F
 \wsboxtimes -\mathcal G)\to \mathcal F \otimes_A \mathcal G$ --- we
 are using that $-(-\mathcal G)=\mathcal G$. Using 
 now the second result of the mentioned remark we transform the above
 to: $0_*0^*(\mathcal F \wsboxtimes -\mathcal G)\wsboxtimes{\mathbb
   I}_{\otimes_A} \to \mathcal F \otimes_A \mathcal G$, and then using
 the adjunction $0^* \dashv 0_*$ we obtain a natural transformation
 $\gamma_{\mathcal F,\mathcal G}: (\mathcal F \wsboxtimes -\mathcal G)
 \wsboxtimes{\mathbb I_{\otimes_A}}\to \mathcal F \otimes_A
 \mathcal G$. \qed

 We are ready to define {\em Hopf sheaf on the abelian variety $A$}. We will use the nomenclature summarized in Definition \ref{defn:bialgebrasheaf}. 

 \begin{defn} \label{defn:hsh}
   Assume that $\mathcal H$ is a sheaf of bimonoids on $A$ (see Definition \ref{defn:bialgebrasheaf}). We say that $\mathcal H$ is a {\em Hopf sheaf} if there is a sheaf homomorphism $
   \sigma_\mathcal H: -\mathcal H \to \mathcal H$ --- called {\em
     the antipode} --- such the diagrams below are commutative.
   \begin{equation}
     \label{eqn:sfirstantipode}
    \raisebox{10ex}{\xymatrixcolsep{1.5em}\xymatrix{ & \mathcal
        H\otimes_A \mathcal H\ar@{<-}[rr]^-{\id \otimes_A \sigma_{\mathcal H}}& &\mathcal H\otimes_A -\mathcal H
        \ar@{<-}[rr]^-{\widetilde{\gamma}_{\mathcal H,\mathcal
            H}}&&(\mathcal H \wsboxtimes \mathcal H)\wsboxtimes\,
        \uAotimes\ar@{<-}[dr]^{\Delta_{\mathcal H}\wsboxtimes
          \id}&\\ \mathcal H\ar@{<-}[dr]_{u_\mathcal
          H}\ar@{<-}[ru]^-{\mu_\mathcal H}&&&&&& \mathcal
        H\wsboxtimes\uAotimes \\ &\mathbb
        I_{\otimes_A}\ar@{-}[rrrr]^-{\cong}&&&& \uwsboxtimes
        \wsboxtimes\uAotimes \ar@{<-}[ur]_{\varepsilon_{\mathcal H}
          \wsboxtimes \id} &}}
    \end{equation}                               

    \begin{equation}\label{eqn:ssecondantipode}
      \raisebox{10ex}{\xymatrixcolsep{1.5em}\xymatrix{ & \mathcal
          H\otimes_A \mathcal H\ar@{<-}[rr]^-{\sigma_\mathcal
            H\otimes_A \id}&& -\mathcal H\otimes_A \mathcal H
          \ar@{<-}[rr]^-{\overline{\gamma}_{\mathcal H,\mathcal
              H}}&&\uAotimes \wsboxtimes(\mathcal H \wsboxtimes\,
          \mathcal H) \ar@{<-}[dr]^{\id \wsboxtimes \Delta_\mathcal
            H}&\\ \mathcal H\ar@{<-}[dr]_{u_\mathcal
            H}\ar@{<-}[ru]^-{\mu_\mathcal H}&&&& && \uAotimes
          \wsboxtimes\,\, \mathcal H \\ &\mathbb
          I_{\otimes_A}\ar@{-}[rrrr]^-{\cong}&&&& \mathbb
          \uAotimes \wsboxtimes\,\, \uwsboxtimes \ar@{<-}[ur]_{\id
            \wsboxtimes\varepsilon_\mathcal H} &}}
    \end{equation}
   where $\widetilde{\gamma}$ and $\overline{\gamma}$ are the natural
   transformations defined in Proposition \ref{prop:sheafsmallproperties} and the
   bottom maps $\cong$ are the natural identifications associated to
   the unit of the $\wsboxtimes$ monoidal structure.

   A Hopf sheaf $\mathcal H$ is \emph{commutative} if $(\mathcal H,\mu_{\mathcal
      H},u_{\mathcal H})$ is a sheaf of 
    commutative $\mathcal O_A$--algebras, and a \emph{flat Hopf sheaf}
    is a Hopf sheaf that is  flat as sheaf of $\mathcal O_A$--modules
    --- that is, the stalks $\mathcal H_a$ are $\mathcal
    O_{a,A}$--flat modules for all $a\in A$. A Hopf sheaf $\mathcal H$
    is \emph{faithful} if the canonical morphism $\mathcal O_A\to 
    \mathcal H$ is injective --- in other words, $\mathcal H(U)$ is a
    faithful representation of $\mathcal
    O_A(U)$. 
   \end{defn}

As a summary we write down explicitly the conditions of a {\em Hopf sheaf} on an abelian variety $A$.

\begin{summary}
    \label{def:Hopfsheaf1}\label{sum:Hopfsheaf}
    
    Let $A$ be an abelian variety. A \emph{commutative Hopf sheaf} on $A$ is a
    sextuple $(\mathcal H,\Delta_{\mathcal   
      H},\varepsilon_{\mathcal H},\mu_{\mathcal H},u_{\mathcal
      H},\sigma_{\mathcal H})$, where $(\mathcal H,\mu_{\mathcal
      H},u_{\mathcal H})$ is a  sheaf of quasi-coherent commutative
    $\mathcal O_A$--algebras (i.e.~$\mathcal H \in
    Q\sAalg$) with multiplication $\mu_{\mathcal H}$ unit
    $u_{\mathcal H}$, and $\Delta_{\mathcal H } : \mathcal H \to
    \mathcal H \wsboxtimes \mathcal H$, $\varepsilon_{\mathcal H }:
    \mathcal H \to  \operatorname{skysc}_0(\Bbbk)$, $\sigma_{\mathcal H}: -\mathcal H \to \mathcal H$ are morphisms of
    sheaves satisfying the following additional conditions:

\noindent (1) The triple $(\mathcal H,\Delta_\mathcal H,\varepsilon_{\mathcal
  H})$ is a comonoid in
  $\bigl(Q\sAalg,\wsboxtimes,\uwsboxtimes=\operatorname{skysc}_0(\Bbbk)\bigr)$;

\noindent (2)  $\Delta_{\mathcal H}:\mathcal H \to \mathcal H \wsboxtimes
  \mathcal H$ and $\varepsilon_{\mathcal H }: \mathcal H \to
  \operatorname{skysc}_0(\Bbbk)$ are morphisms of $Q\sAalg$, that is:

  \begin{enumerate}
  \item[(a)] The morphism $\Delta_{\mathcal
    H}$ is such that the following diagrams are commutative:
  \[
{\xymatrixcolsep{1.2pc}    \xymatrix{&\mathcal H\ar[dr]^{\Delta_{\mathcal H}}&\\
    \mathcal H\otimes_A \mathcal H\ar[ru]^{\mu_{\mathcal
        H}}\ar[d]_{\Delta_{\mathcal H} \otimes_A \Delta_{\mathcal
        H}}&&\mathcal H \wsboxtimes \mathcal H\\ 
    (\mathcal H\wsboxtimes \mathcal H)\otimes_A (\mathcal H\wsboxtimes
    \mathcal H)\ar[rr]^-{\zeta_{\mathcal H,\mathcal H,\mathcal H,
        \mathcal H}}&&(\mathcal H \otimes_A \mathcal H)\wsboxtimes
    (\mathcal H\otimes_A \mathcal H)\ar[u]_{\mu_{\mathcal
        H}\wsboxtimes \mu_{\mathcal H}}} 
    \xymatrix@R=2.3cm@C=1.5cm{\uAotimes\ar[r]^-{\Delta_{\otimes_A}}\ar[d]_{u_{\mathcal
          H}}&\uAotimes\!\wsboxtimes\uAotimes\ar[d]^{u_{\mathcal
          H}\wsboxtimes u_{\mathcal H}}\\
      \mathcal H
      \ar[r]^-{\Delta_{\mathcal H}}&\mathcal H \wsboxtimes\mathcal
      H}}
  \] 

\item[(b)]  The morphism  $\varepsilon_{\mathcal H}$ is such that the
  following diagrams are commutative:
  \[
    \xymatrix{\mathcal H \otimes_A \mathcal
      H \ar[d]_{\varepsilon_{\mathcal H}}
      \ar[r]^-{\mu_{\mathcal H}}&\mathcal H
      \ar[d]^{\varepsilon_{\mathcal H}}\\
      \uwsboxtimes \otimes_A \uwsboxtimes \ar[r]_-{\mu_{\mathbb
          I_{\wsboxtimes}}}&\uwsboxtimes} 
    \xymatrix{\uAotimes\ar[r]^{\varepsilon_{\uAotimes}}
      \ar[d]_{u_{\mathcal H}}&\uwsboxtimes\ar[d]^{\id}\\
      \mathcal
      H\ar[r]_{\varepsilon_{\mathcal H}}&\uwsboxtimes.}
  \]
    \end{enumerate}
    
    \noindent (3) The antipode $\sigma_{\mathcal H}: -\mathcal H \to
      \mathcal H$ is a morphism in $Q\sAmod$ --- recall that
      $-\mathcal H=\op_*(\mathcal H)$ where $\op_*$ is the functor in
      $\sAmod$ given by push-forward (or pull-back) by $a \mapsto -a:A
      \stackrel{\op}{\rightarrow} A$. Moreover, the antipode map, fits
      in the commutative diagrams \eqref{eqn:sfirstantipode},
      \eqref{eqn:ssecondantipode}.

If moreover $\mathcal H$ is a flat $\mathcal O_A$--module, then we say
that the sextuple is a \emph{flat commutative Hopf sheaf}; if
$\mathcal O_A\to \mathcal H$ is an injective morphism, then the
sextuple is a \emph{faithful commutative Hopf sheaf}.

  \end{summary}

Given the abelian variety $A$ we define the category of Hopf sheaves
in the natural manner.

\begin{defn}\label{defn:caths}
If $A$ is a given abelian variety and $\mathcal H,\,\mathcal K$ are
flat commutative Hopf sheaves, a \emph{morphism from $\mathcal H$ into
  $\mathcal K$} is simply a morphism of bimonoids in the duoidal
category $(Q\sAmod,\otimes_A,\uAotimes,\wsboxtimes,
\uwsboxtimes)$. Explicitly it is a morphism of sheaves $F: \mathcal
H\to \mathcal K$ of $\mathcal O_A$-algebras, with the additional
property that the diagrams below commute:
\[
{\xymatrixcolsep{3pc}
\xymatrix{
\mathcal H\ar[r]^{F}\ar[d]_{\Delta_{\mathcal H}}& \mathcal
K \ar[d]^{\Delta_{\mathcal K}}  
\\
\mathcal  H\wsboxtimes \mathcal  H\ar[r]^-{F\wsboxtimes
  F}&\mathcal  K\wsboxtimes \mathcal
K
}}\quad\xymatrix{\mathcal H \ar[rr]^F\ar[dr]_{\varepsilon_{\mathcal H}}&& \mathcal K\ar[ld]^{\varepsilon_{\mathcal K}}\\&\uwsboxtimes &}
\]
We call $HQ\sAalg$ (resp.~$HQ\fsAalg$) the category whose objects are the
commutative Hopf sheaves (resp.~faithful commutative Hopf sheaves) on
$A$ and whose arrows are the morphisms of Hopf sheaves.
 \end{defn}

\begin{rem}\label{rem:goodantipode}
  In the context considered above, the following two assertions can be
  proved.
  
  \noindent (1) In the case that the antipode $\sigma_{\mathcal
    H}$ exists for the bimonoid $\mathcal H$, then it is unique ---
  for example, this can be proved using the equivalence given by
  Theorem \ref{thm:hopssheaf=affext} below  and the fact that the inverse
  morphism of a group scheme is unique. 

\noindent (2) If $F:\mathcal H \to \mathcal K$ is a morphism of Hopf
sheaves, then $\sigma_{\mathcal K}\smallcirc (-F)=F \smallcirc \sigma_{\mathcal H}$. In
other words, a morphism of sheaves that are Hopf sheaves and that
preserve the bimonoid structure, automatically preserves the
antipode.  The proof of this assertion is a consequence of (1).
\end{rem}

The close relationship between the affine extensions of an abelian
variety $A$ and the commutative Hopf sheaves on $A$ is expressed in the
theorem that follows.

\begin{thm}\label{thm:hopssheaf=affext}
  Let $A$ be an abelian variety, and  $\GextaffA$ and  $HQ\fsAalg$ the
  categories of  affine extensions of $A$ and  faithful commutative Hopf sheaves
   of $A$ respectively. 
Then, $\PP:\GextaffA \to (HQ\fsAalg)^{\op}$ and
$\Spec:(HQ\fsAalg)^{\op} \to \GextaffA$ constitute an adjoint
equivalence between $\GextaffA$ and $HQ\fsAalg$. 
\end{thm}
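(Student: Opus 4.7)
The plan is to build the equivalence by successively upgrading the already--established adjoint op--equivalence $\PP \dashv \Spec$ between $\Schaaff$ and $Q\sAalg$ (Proposition \ref{prop:affschqcs}) by adding the bimonoid, antipode, flatness, and faithfulness data. In view of Example \ref{ej:forequiv} and Section \ref{subsect:affextassch}, an affine extension of $A$ is the same as a faithfully flat, affine morphism of group schemes $q:G\to A$, which by Theorem \ref{thm:antipoduoidal} is exactly a bimonoid $q$ in the duoidal category $(\Schasqc, \wtimes, \uwtimes, \times_A, \uAtimes)$ that is affine and faithfully flat over $A$ and is equipped with an antipode $\iota_q:q\to -q$ satisfying diagrams \eqref{eqn:firstantipode}, \eqref{eqn:secondantipode}. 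Dually, a flat commutative Hopf sheaf is a bimonoid in $(Q\sAalg^{\op},\wsboxtimes,\uwsboxtimes,\otimes_A,\uAotimes)$ that is flat and faithful and carries an antipode $\sigma_{\mathcal H}:-\mathcal H\to\mathcal H$ satisfying \eqref{eqn:sfirstantipode}, \eqref{eqn:ssecondantipode}.

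First I would invoke Theorem \ref{thm:grothequivalence}, together with Proposition \ref{prop:bimontobimon} and Lemma \ref{lem:functbimonoid}, to restrict $\PP$ and $\Spec$ to a contravariant equivalence between bimonoids in $\Schasqc$ that are affine over $A$ and commutative sheaves of bimonoids in $Q\sAalg$. Within this, restrict to those bimonoids $q:G\to A$ that are faithfully flat: by standard properties (\cite[\S\ 2.1--2.3]{kn:EGAIV2}), $q$ is flat iff $\PP(q)=q_*\mathcal O_G$ is flat as an $\mathcal O_A$--module, and since $q$ is surjective, the unit map $\mathcal O_A\to q_*\mathcal O_G$ is injective; conversely, if $\mathcal H$ is flat and the structure map $\mathcal O_A\to\mathcal H$ is injective, then $\pi_{\mathcal H}:\Spec(\mathcal H)\to A$ is affine, flat and the associated map at the level of rings is injective, hence faithfully flat. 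This takes care of the flatness/faithfulness bookkeeping and identifies the underlying monoid--scheme halves of the equivalence.

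Next I would transport the antipode across the equivalence. Here I would use the commutativity of the diagrams in Proposition \ref{prop:basechange} applied to $\op:A\to A$ to get $\PP\circ \op_*=\op_*\circ \PP$ and $\Spec\circ \op_*=\op_*\circ \Spec$; and the fact (Theorem \ref{thm:mainspec}) that $\PP$ is strong monoidal with respect to $(\wtimes,\wsboxtimes)$ and colax monoidal with respect to $(\times_A,\otimes_A)$, with analogous strong/colax properties for $\Spec$. Applying $\PP$ to a morphism $\iota_q:q\to -q$ produces $\PP(\iota_q):-\mathcal H\to \mathcal H$, and pushing the commutative diagrams \eqref{eqn:firstantipode}, \eqref{eqn:secondantipode} through $\PP$, together with the naturality of the transformations $\widetilde\gamma$ and $\overline\gamma$ (Proposition \ref{prop:smallproperties} and Proposition \ref{prop:sheafsmallproperties}), yields precisely the diagrams \eqref{eqn:sfirstantipode}, \eqref{eqn:ssecondantipode} defining the Hopf--sheaf antipode, so $\PP(\iota_q)=\sigma_{\mathcal H}$. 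The converse direction is symmetric, using $\Spec$ and its monoidal properties. Uniqueness of the antipode on each side (Remark \ref{rem:goodantipode}) and automatic compatibility of morphisms with the antipode then guarantee that the functors extend without extra data.

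The main obstacle I expect is the bookkeeping required to match the two natural transformations $\widetilde\gamma$ and $\overline\gamma$ on the scheme side (Definition \ref{defn:gama}) with their sheaf counterparts (Proposition \ref{prop:sheafsmallproperties}) compatibly with $\PP$ and $\Spec$: both are built from the adjunctions $0_*\dashv 0^*$, $\st_*\dashv \st^*$ and the base--change isomorphisms for the inversion $\op$, and one must check that $\PP$ (and $\Spec$) intertwine these building blocks. Once that naturality is established, the remaining verifications --- that $\PP$ sends morphisms of affine extensions (in the sense of Definition \ref{defn:catextensions}, fixing $A$) to morphisms of Hopf sheaves, that the unit and counit of the adjunction restrict to isomorphisms (hence we obtain an equivalence, not merely an adjunction), and that the image lands in $HQ\fsAalg$ rather than just $HQ\sAalg$ --- all follow from Proposition \ref{prop:affschqcs} and the explicit descriptions above. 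This completes the plan.
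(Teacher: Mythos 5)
Your plan follows essentially the same route as the paper's proof: restrict the $\PP \dashv \Spec$ adjunction to bimonoids via Theorem \ref{thm:grothequivalence} and Proposition \ref{prop:bimontobimon}, and then transport the antipode by checking that $\PP$ and $\Spec$ intertwine the natural transformations $\widetilde\gamma$ and $\overline\gamma$ on the two sides, so that the diagrams \eqref{eqn:firstantipode}--\eqref{eqn:secondantipode} correspond to \eqref{eqn:sfirstantipode}--\eqref{eqn:ssecondantipode}. The flatness/faithfulness bookkeeping and the appeal to uniqueness of the antipode that you spell out are handled more tersely in the paper, but the argument is the same.
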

\proof If $q:G\to A$ is an affine extension, then $q$ is a surjective
morphism and therefore the sheaf $\PP(q)$  (see
Definition \ref{defn:defPP}) is a faithful sheaf of commutative
$\mathcal O_A$--algebras. On the other hand, by Theorem
\ref{thm:antipoduoidal} the inverse morphism $\iota_G:G\to G$ verifies
the commutative diagrams \eqref{eqn:firstantipode} and
\eqref{eqn:secondantipode}.  It follows by construction that
$\sigma_{\mathcal H}=\PP(\iota_G)$ satisfies commutative diagrams
\eqref{eqn:sfirstantipode} and \eqref{eqn:ssecondantipode} for
$\mathcal H=\PP(q)$. Indeed, it is easy to check that
$\PP(\widetilde{\gamma}_{q,q})=\widetilde{\gamma}_{\mathcal H,\mathcal
  H}$ and $\PP(\overline{\gamma}_{q,q})=\overline{\gamma}_{\mathcal H,\mathcal
  H}$ (see Remark \ref{rem:forantipduoidal} and Proposition
\ref{prop:sheafsmallproperties}), thus applying the functor $\PP$ to
the diagrams \eqref{eqn:firstantipode} and \eqref{eqn:secondantipode}
we obtain the diagrams \eqref{eqn:sfirstantipode} and
\eqref{eqn:ssecondantipode}.  Since $\PP$ takes affine morphisms of
monoids to sheaves of bimonoids, it follows that $\PP(q)$ is a
faithful commutative Hopf sheaf.

Conversely, if $\mathcal H\in HQ\fsAalg$, with antipode $\sigma_{\mathcal H}$,  then $\Spec \mathcal H$: $q:M\to A$  is a bimonoid in  
$\Schasqc$, with $q$ a faithful affine morphism (of monoid schemes), by   
Theorem \ref{thm:grothequivalence}. Moreover, applying $\Spec$ to the commutative diagrams  \eqref{eqn:sfirstantipode} and
\eqref{eqn:ssecondantipode}, we deduce that $\iota_q=\Spec (\sigma_{\mathcal H}): q\to -q$ satisfies the commutative diagrams
\eqref{eqn:firstantipode} and \eqref{eqn:secondantipode}. In other
words, $M$ is a group scheme and $q$ an affine extension of $A$ (see
Remark \ref{rem:forequiv}).\qed 

\begin{nota}\label{nota:defHq} The following notation will be used
  in the future. Assume that $q:G \to A$ is an affine extension, then
  $\mathcal P(q)$ the associated Hopf sheaf of $\sAalg$ will be
  denoted as $\mathcal H_q:=\mathcal P(q)$.
  \end{nota}

  Notice that Theorem \ref{thm:hopssheaf=affext}
implies in particular the following result.

\begin{cor}
Let $\mathcal H$ be a  commutative Hopf sheaf on the abelian
variety $A$. Then $\mathcal H$ is a flat sheaf if and only if $\mathcal H$ is faithful,  if and only if the unit
morphism $u_{\mathcal H}$ is monic.
\end{cor}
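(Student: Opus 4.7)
The implication (ii) $\Leftrightarrow$ (iii) is immediate: for sheaves of $\mathcal O_A$-modules being monic coincides with being injective on stalks, and injectivity of $u_{\mathcal H}$ is precisely the defining condition for faithfulness in Definition \ref{defn:hsh}. The implication (ii) $\Rightarrow$ (i) is an immediate consequence of Theorem \ref{thm:hopssheaf=affext}: a faithful commutative Hopf sheaf $\mathcal H$ arises as $\mathcal H=q_*\mathcal O_G$ for some affine extension $q:G\to A$, and since $q$ is affine and faithfully flat, $q_*\mathcal O_G$ is $\mathcal O_A$-flat.

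The substance is the converse (i) $\Rightarrow$ (ii). Set $q:=\Spec\mathcal H:M\to A$. By Theorem \ref{thm:antipoduoidal}, applied via the $\PP\dashv\Spec$ correspondence, the antipode $\sigma_{\mathcal H}$ promotes $M$ to a group scheme and $q$ to a morphism of group schemes; $q$ is affine by construction and flat since $\mathcal H$ is so. Applying Theorem \ref{thm:perrinigame}, $q$ factors as $q=i\circ q'$ where $B:=q(M)$ is a closed subgroup scheme of $A$, $i:B\hookrightarrow A$ is the closed immersion, and $q':M\to B$ is faithfully flat. The claim reduces to proving $B=A$.

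Suppose for contradiction $B\subsetneq A$. For every $b\in B$ the ideal $\mathcal I_{B,b}\subset\mathcal O_{A,b}$ is a non-zero ideal contained in the maximal ideal --- non-zero because otherwise $B$ would contain a neighborhood of $b$ in the integral scheme $A$, forcing $B=A$. Tensoring $0\to\mathcal I_{B,b}\to\mathcal O_{A,b}$ with $\mathcal O_{B,b}$ shows that flatness of $\mathcal O_{B,b}$ over $\mathcal O_{A,b}$ would yield $\mathcal I_{B,b}/\mathcal I_{B,b}^2=0$; Nakayama then forces $\mathcal I_{B,b}=0$, a contradiction, so $\mathcal O_{B,b}$ is \emph{not} $\mathcal O_{A,b}$-flat. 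Now set $\mathcal F:=q'_*\mathcal O_M$. Since $q'$ is affine and faithfully flat, $\mathcal F_b$ is faithfully flat over $\mathcal O_{B,b}$; and since $\mathcal H=i_*\mathcal F$ is $\mathcal O_A$-flat by hypothesis, $\mathcal F_b=\mathcal H_b$ is $\mathcal O_{A,b}$-flat. Standard faithfully flat descent (a module that is faithfully flat over $S$ and flat over $R$ makes the ring map $R\to S$ flat) then gives $\mathcal O_{B,b}$ flat over $\mathcal O_{A,b}$, the sought contradiction.

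Hence $B=A$, so $q$ is surjective, and by Corollary \ref{cor:surimpliesflat} $q$ is faithfully flat, making $u_{\mathcal H}:\mathcal O_A\hookrightarrow q_*\mathcal O_M=\mathcal H$ injective. The heart of the argument --- and the step where care is most needed --- is the descent in the third paragraph: one must use the factorization of $q$ through its scheme-theoretic image to transfer $\mathcal O_A$-flatness of $\mathcal H$ into a flatness statement about $\mathcal O_B$ itself via the faithful flatness of $q'_*\mathcal O_M$, and then refute this via local Nakayama. The remaining verifications are bookkeeping via Theorem \ref{thm:hopssheaf=affext} and basic properties of group scheme morphisms.
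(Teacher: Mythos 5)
Your proof is correct, and for the one direction that carries real content --- flatness of $\mathcal H$ implies faithfulness --- it takes a genuinely different route from the paper. The paper disposes of this implication in two lines: since $\mathcal H$ is flat, the affine morphism $q=\Spec(\mathcal H):M\to A$ is flat, hence (its image being stable under generization and $A$ being integral) dominant, and a dominant morphism to an integral scheme induces an injection $\mathcal O_A\to q_*\mathcal O_M=\mathcal H$; the group structure on $M$ plays no role there. You instead exploit the group structure: you factor $q$ through its schematic image $B=q(M)$ using Theorem \ref{thm:perrinigame}, and rule out $B\subsetneq A$ by combining the local non-flatness of a proper closed subscheme (the $I=I^{2}$ plus Nakayama computation) with the descent lemma that a module faithfully flat over $\mathcal O_{B,b}$ and flat over $\mathcal O_{A,b}$ forces $\mathcal O_{B,b}$ to be $\mathcal O_{A,b}$-flat. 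Both arguments are sound; yours buys the stronger intermediate conclusion that $q$ is surjective (hence faithfully flat by Corollary \ref{cor:surimpliesflat}) directly, at the cost of more machinery, while the paper's is shorter and needs only that $A$ is integral. The remaining two equivalences --- faithful implies flat via Theorem \ref{thm:hopssheaf=affext}, and faithful if and only if $u_{\mathcal H}$ is monic, which is definitional --- are handled the same way in both.
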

\proof
Indeed, since  a flat morphism of schemes $f:X\to Y$, with $Y$ N\oe therian is
dominant, it follows
from Proposition \ref{prop:affschqcs}  that a
flat commutative Hopf sheaf $\mathcal H$ is faithful. Conversely, if
$\mathcal H$ is faithful, Theorem \ref{thm:hopssheaf=affext} implies
that $\Spec(\mathcal H) $: $q:G\to A$ is an affine extension, and
therefore a flat morphism by   Theorem \ref{thm:perrinigame}.

Finally, notice that  the unit
morphism $u_{\mathcal H}$  is monic if and only if
$\mathcal O_A(U)\to \mathcal H(U)$ is an inclusion for any (affine)
open subset $U\subset A$.\qed

\begin{ejs}
   \label{ej:hopfgroups}
(1) Let $H$ be an affine group scheme, and consider the corresponding
affine  extension ${\xymatrixcolsep{1.5pc}
  \xymatrix{1\ar[r]&H\ar[r]^(0.45){\id}&H\ar[r]&\ar[r] 0&0}}$.
Then $\mathcal H$ is the Hopf algebra $\Bbbk[H]$ seen as a sheaf on
$\{*\}=\Spec(\Bbbk)$.

Conversely, given a Hopf algebra $R$, then $R$ can be seen as a Hopf
sheaf on $\{*\}=\Spec(\Bbbk)$, and  the affine group scheme
$\Spec(R)$ induces the affine  extension 
${\xymatrixcolsep{1.5pc}
  \xymatrix{1\ar[r]&\Spec(R)\ar[r]^(0.45){\id}&\Spec(R)\ar[r]&\ar[r] 0&0}}$.

\noindent (2) If $A$ is an abelian variety, then the structure sheaf $\mathcal O_A$ is a faithful
commutative Hopf sheaf on $A$; it corresponds to the trivial 
extension
$
  {\xymatrixcolsep{1.5pc}
  \xymatrix{0\ar[r]&0\ar[r]&A\ar[r]^(0.45){\id}&\ar[r] A&0}}$.

\noindent (3) More generally, if $R$ is a Hopf algebra then $\mathcal
R=R\otimes_\Bbbk\mathcal O_A$ is a flat Hopf sheaf; $\mathcal R$
corresponds to the direct product: \[\Spec(R) \times A:
{\xymatrixcolsep{1.5pc}
  \xymatrix{1\ar[r]&\Spec(R)\ar[r]&\Spec(R)\times
    A\ar[r]^-(0.45){p_2}& A\ar[r] &0}}.\]
\end{ejs}

\begin{rem}
(1) Since an  affine  extension $\mathcal S:$ $\ate$ is of finite type
if and only  if $H$ is of finite type (as
follows from descent theory, see \cite[Proposition
2.6.5]{kn:brionchev} and \cite[Prop. 2.7.1]{kn:EGAIV2}), it follows that $G$
if of finite type if and only if $\mathcal H(U)$ is a finitely
generated $\mathcal O_A(U)$--algebra for any affine open subset
$U\subset A$. 

\noindent (2)  If $\mathcal H$ is a faithful commutative Hopf
sheaf on $A$, then $G =\Spec(\mathcal H)$ is an anti-affine group scheme 
if and only if $\mathcal H(A)=\Bbbk$.
Indeed, if $q:G\to A$ is the
associated morphism of quasi-compact group schemes, then  $\mathcal
H(A)=q_*(\mathcal O_G)(A)=\mathcal O_G(G)$.
\end{rem}

\subsection{Hopf ideals and affine subextensions}\ %
\label{sect:hopfideals}

In this section we present the expected generalizations on the
 relationship between ideals of a Hopf algebra $H$ and closed
 subgroups of the affine algebraic group
 $\Spec(H)$,  to the context of Hopf sheaves and affine extensions.

We begin by recalling some  definitions and known results concerning
the correspondence 
between closed subschemes of $X$ and quasi-coherent  sheaves of ideals
on
$\mathcal O_X$ (see for example \cite[Proposition II.5.9]{kn:hartshorne}
  or \cite[\S 4]{kn:EGAI}).  

  \begin{rem} \label{rem:idealsubset}
Let $X$ be  a $\Bbbk$--scheme and 
$(i,i^\#): Y \subseteq X$, a closed subscheme. Then we have a  short
exact sequence of quasi-coherent sheaves of $\Bbbk$--algebras
\begin{equation}\label{eqn:idealofY}
    \xymatrix{0\ar[r]&\mathcal I_{X|Y}\ar[r]&\mathcal O_X\ar[r]^-{i^\#}&
      i_*(\mathcal O_Y) \ar[r]&0,}
    \end{equation}
were  $\mathcal I_{X|Y}$ is a 
sheaf of ideals in $\mathcal O_X$. In this manner we obtain a
bijective correspondence between quasi-coherent sheaves of ideals of
$\mathcal O_X$ and closed subschemes of $X$. The inverse map --- that
we call $\mathfrak V$ ---, sends an
ideal $\mathcal I \subseteq \mathcal O_X$ into the closed subscheme  of
$X$ given by $\operatorname{Supp}(\mathcal O_X/\mathcal I)$.
\end{rem}

In the case of schemes over a $\Bbbk$-scheme $S$, we can push forward
the short exact sequence \eqref{eqn:idealofY}, provided that we
impose additional conditions on $Y$ and $X$.

\begin{defn}
       Let  $(x: X \to S)\in \Schpsqc$. We define  $\mathcal C(x)$ as the
       poset of closed subschemes of $X$ in $\Schpsqc$ --- that is, we
       consider $y:Y\to S\in\Schpsqc$ with $(i,i^\#): y\to x$ a closed
       subscheme.

       If $\mathcal F\in Q\sSalg$, we define  $\mathcal{II}
       (\mathcal F)$ as the poset
       of quasi-coherent sheaves of ideals of $\mathcal F$. 
\end{defn}

     \begin{lem}\label{lem:idealsclosedsub}
       Let  $(x: X \to S)\in \Schpsqc$ and $\mathcal F \in Q\sSalg$.  Then:

  \noindent(1) If $ (y: Y \to S) \in \mathcal C(x)$, then the
  sequence in the category $Q\sSmod$: 
      \[\xymatrix{0\ar[r]&x_*\bigl(\mathcal I_{X|Y}\bigr)
          \ar[r]&\PP(x) \ar[r]^(.47){\PP(i)}& \PP(y) \ar[r]&0,}\]
      is exact.
      
      \noindent(2) The map $\mathfrak I: \mathcal{C}(x) \to
      \mathcal{II}\bigl(\PP(x)\bigr)$ given by $\mathfrak
      I(y)=x_*(\mathcal I_{X|Y})$ is a contravariant functor between
      the domain and codomain posets. 

  \noindent (3) The map  $\mathfrak V:
  \mathcal{II}(\mathcal F) \to \mathcal C\bigl(\Spec(\mathcal F)\bigr)$ given
  by $\mathfrak{V}(\mathcal I)= \Spec(\mathcal F/\mathcal I) \subset
  \Spec(\mathcal 
  F)$ is a contravariant functor between the domain and codomain
  posets --- recall that in this context $\Spec(\mathcal F)$ is a $\Bbbk$-scheme
  that is affine over $S$, with $\mathcal P\bigl({\Spec(\mathcal
    F)}\bigr)\cong 
  \mathcal F$.

  \noindent (4) If $ (y: Y \to S) \in \mathcal C(x)$, then  $y \cong
  \mathfrak V\mathfrak I (y)= \Spec\bigl(\PP(x)/x_*(\mathcal
  I_{X|Y})\bigr)$.
  If $\mathcal I\in \mathcal {II}(\mathcal F)$, then  
  $\PP(\mathfrak V(\mathcal I)) \cong \mathcal F/\mathcal I$.
\end{lem}
\proof This is an easy exercise and its proof is therefore omitted.\qed

\begin{defn}\label{defn:hopfideal}
Let $\mathcal H$ be a commutative flat Hopf sheaf on $A$. A  subsheaf
$\mathcal I\subset \mathcal H$ is a \emph{sheaf of Hopf ideals} if 
there exists a pair $(\mathcal K, F)$ where $\mathcal K$ is a Hopf
sheaf and $F: \mathcal H \to \mathcal K$ is a surjective morphism of
Hopf sheaves with $\Ker(F)=\mathcal I$ --- recall that in this case
$\mathcal K\cong \mathcal H/\mathcal I$.

We say that a sheaf of Hopf ideals $\mathcal I\subset \mathcal H$  is
\emph{faithful} if   $\mathcal K=\mathcal H/\mathcal I$ is a faithful Hopf sheaf.
\end{defn}

\begin{rem}  
 By definition, a sheaf of Hopf ideals is faithful if and
only if $\mathcal I(U)\cap \mathcal O(U)=\{0\}$ for all
open subset $U\subset A$.
\end{rem}

\begin{prop}\label{prop:equivhopfsheaf}
  If  $\mathcal H$ is a commutative flat Hopf sheaf of $A$, then a  subsheaf $\mathcal
  I\subset \mathcal H$ in $\sAmod$ is a sheaf of Hopf ideals if
  and only if the following conditions hold:

  \begin{itemize}
    \item[(i)] The subsheaf $\mathcal I \subset \mathcal H$ is a
      quasi-coherent sheaf
      of ideals;

\item[(ii)]  Let $\mathrm{inc}:\mathcal I \to \mathcal H$ be the
inclusion morphism and consider $\mathrm{inc} \wsboxtimes \id + \id
\wsboxtimes \mathrm{inc}: \mathcal I \wsboxtimes\mathcal H +
\mathcal H \wsboxtimes\mathcal I \to \mathcal H \wsboxtimes\mathcal
H$. Then the morphism $\Delta \smallcirc \mathrm{inc}: \mathcal I \to \mathcal H \wsboxtimes \mathcal H$ factors as in the diagram below:
\[
\xymatrix{\mathcal H \ar[r]^-{\Delta} & \mathcal H \wsboxtimes\,
  \mathcal H \\ \mathcal I
  \ar[u]^{\mathrm{inc}}\ar[r]^-{\Delta|_{_{\mathcal I}}}&\mathcal I
  \wsboxtimes\mathcal H + \mathcal H \wsboxtimes\mathcal
  I,\ar[u]_{\mathrm{inc} \wsboxtimes \id + \id \wsboxtimes\,
    \mathrm{inc}}}
\]

\item[(iii)] $\mathcal I\subset
\operatorname{Ker}(\varepsilon_{\mathcal H})$.

\end{itemize}
\end{prop}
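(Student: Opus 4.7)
The forward direction is essentially formal. Given a presentation $\mathcal I = \Ker(F)$ with $F:\mathcal H \to \mathcal K$ a surjection of Hopf sheaves, condition (i) is immediate from $F$ being a morphism of $\mathcal O_A$-algebras; condition (iii) follows from the identity $\varepsilon_{\mathcal K}\circ F = \varepsilon_{\mathcal H}$. For condition (ii) I would first show, using the flatness of $\mathcal H$ and $\mathcal K$ together with the left-exactness of $s_*$ applied to the identity $\wsboxtimes = s_*\circ\sboxtimes_{\Speck}$, that $\Ker(F\wsboxtimes F)=\mathcal I\wsboxtimes\mathcal H + \mathcal H\wsboxtimes\mathcal I$; then the compatibility $(F\wsboxtimes F)\circ \Delta_{\mathcal H} = \Delta_{\mathcal K}\circ F$ forces $\Delta_{\mathcal H}(\mathcal I)$ to lie in this kernel, giving the desired factorisation.

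For the converse, set $\mathcal K := \mathcal H/\mathcal I$, which by (i) is a sheaf of commutative $\mathcal O_A$-algebras with canonical surjection $F:\mathcal H\twoheadrightarrow \mathcal K$. Condition (ii) implies that the composite $(F\wsboxtimes F)\circ \Delta_{\mathcal H}$ vanishes on $\mathcal I$, so it descends to $\Delta_{\mathcal K}:\mathcal K\to \mathcal K\wsboxtimes \mathcal K$; likewise (iii) makes $\varepsilon_{\mathcal H}$ descend to $\varepsilon_{\mathcal K}:\mathcal K\to \uwsboxtimes$. Together with the inherited $\mu_{\mathcal K}, u_{\mathcal K}$, these maps make $\mathcal K$ a bimonoid in the duoidal category $(Q\sAalg,\wsboxtimes,\uwsboxtimes,\otimes_A,\uAotimes)$; the verification of the compatibilities summarised in Definition~\ref{defn:bialgebrasheaf} is routine since they hold on $\mathcal H$ and the quotient maps commute with all structure morphisms.

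It remains to produce the antipode on $\mathcal K$. Translating through Theorem \ref{thm:grothequivalence}, the bimonoid $\mathcal K$ corresponds to an affine morphism of monoid schemes $q_M: M\to A$, and $F$ corresponds to a closed immersion $j: M\hookrightarrow G=\Spec(\mathcal H)$ of monoid schemes over $A$. Since $G$ is a group scheme and $j$ identifies $M$ with a closed submonoid scheme of $G$, a standard argument (closure of cyclic subsemigroups under inversion in an algebraic group, extended to the quasi-compact case via the pro-algebraic decomposition of Theorem \ref{thm:perrinquasicomp} and Theorem \ref{thm:affextisproalg}) shows that $M$ is in fact a closed subgroup scheme of $G$. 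Consequently $q_M:M\to A$ is an affine extension, and Theorem \ref{thm:hopssheaf=affext} endows $\mathcal K$ with an antipode $\sigma_{\mathcal K}=\PP(\iota_M)$, exhibiting $\mathcal I$ as a sheaf of Hopf ideals. The main obstacle is precisely this last step: verifying that a closed submonoid of the quasi-compact group scheme $G$ is automatically a subgroup. The finite-type case is classical, but in the pro-algebraic setting one needs to check that the inversions on the finite-type approximations $G_\alpha=G/K_\alpha$ of Perrin's theorem are compatible with the transition maps, and then glue them, in the spirit of Theorem \ref{thm:limantiaff}, into the antipode on $M$.
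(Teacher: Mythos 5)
Your forward direction and the bimonoid part of the converse coincide with the paper's proof: it likewise gets (i) and (iii) formally, obtains (ii) from the flatness-based identity $\Ker(F\wsboxtimes F)=\mathcal I\wsboxtimes\mathcal H+\mathcal H\wsboxtimes\mathcal I$, and in the converse builds $\Delta_{\mathcal K}$ and $\varepsilon_{\mathcal K}$ on $\mathcal K=\mathcal H/\mathcal I$ exactly as you do. The divergence is the antipode. The paper simply notes that $-\mathcal K=(-\mathcal H)/(-\mathcal I)$ and asserts that $\sigma_{\mathcal H}$ ``clearly'' descends to $\sigma_{\mathcal K}:-\mathcal K\to\mathcal K$ --- which presupposes $\sigma_{\mathcal H}(-\mathcal I)\subseteq\mathcal I$, a point it does not argue. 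You instead pass through $\Spec$: conditions (i)--(iii) exhibit $\Spec(\mathcal K)$ as a closed submonoid scheme of the group scheme $G=\Spec(\mathcal H)$, and you invoke the fact that a closed submonoid scheme of a group scheme is a subgroup scheme, extended from the finite-type case to the quasi-compact case by Perrin approximation. This is a legitimate, and arguably more honest, route: it isolates precisely the nontrivial input that the paper's ``it is clear'' conceals. The price is that the reduction to finite type (compatibility of the inversions on the approximations with the transition maps, and passage to the limit in the spirit of Lemma \ref{lem:subproalg}) is exactly the step you leave as a sketch, so your write-up is not yet complete there. Note also that once you know the submonoid is stable under $\iota_G$, this translates back into $\sigma_{\mathcal H}(-\mathcal I)\subseteq\mathcal I$, which is all that is needed to descend the antipode directly as the paper does, without re-invoking Theorem \ref{thm:hopssheaf=affext}.
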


\proof
Assume that $\mathcal I$ is a sheaf of Hopf ideals, i.e.~$\mathcal I=\Ker(F)$
  for some morphism of Hopf sheaves  $F:\mathcal H\to \mathcal K$. From the flatness
  hypothesis it follows that  $\mathcal I$ is an
  ideal of the sheaf of algebras  $\mathcal H$. Also from the
  flatness it follows 
  that in the monoidal abelian category $(Q\sAalg,\wsboxtimes)$,
  \begin{equation}
    \label{eq:kerFotimesF}
    \Ker(F\wsboxtimes F)= \mathcal I \wsboxtimes \mathcal H+ \mathcal
    H\wsboxtimes \mathcal I,
  \end{equation}
  and therefore (ii) is verified (since $F$
  is a morphism of Hopf sheaves). The proof of assertion (iii) is trivial.

  For the converse, assume that $\mathcal I$ satisfies conditions
  (i)--(iii) and call $F:\mathcal H \to \mathcal H/\mathcal I=\mathcal
  K$. Then $\mathcal K$ is a sheaf of algebras, with product
  $\mu_{\mathcal K}$ and unit  $u_{\mathcal
    K}$ induced by $\mu_{\mathcal H}$ and $u_{\mathcal H}$.

  It follows from the
  equality 
  \eqref{eq:kerFotimesF} that the map $(F\wsboxtimes F)\smallcirc \Delta$ factors through
  $\mathcal K$ and induces 
  a morphism of sheaves $\Delta_{\mathcal K}:\mathcal
  K\to (\mathcal H\wsboxtimes\mathcal H)/(\mathcal I
  \wsboxtimes \mathcal H + \mathcal H \wsboxtimes \mathcal I )\cong
  \mathcal K\wsboxtimes \mathcal K$.
  
From condition (iii) we deduce that $\varepsilon_{\mathcal H}: \mathcal H \to
  \operatorname{skysc}_0(\Bbbk)$ induces
  a morphism $\varepsilon_{\mathcal K}: \mathcal K \to
  \operatorname{skysc}_0(\Bbbk)$.
 On the other hand, since $-\mathcal K=(-\mathcal H)/(-\mathcal I)$,
 it is clear that $\sigma_{\mathcal H}:-\mathcal H\to \mathcal H$ induces a morphism $\sigma_{\mathcal K} :-\mathcal K\to \mathcal K$. 

Finally, it is clear that, by construction, the morphisms
$\Delta_{\mathcal K}$, $\varepsilon_{\mathcal K}$ and
$\sigma_{\mathcal K}$ satisfy the required commutative diagrams for $(\mathcal K,\Delta_{\mathcal   
      K},\varepsilon_{\mathcal K},\mu_{\mathcal K},u_{\mathcal
      K},\sigma_{\mathcal K})$ to be a commutative Hopf sheaf.
  \qed

\begin{prop}
\label{prop:hopfideals}
Let $\mathcal S:$ $\ate$ be an  affine
extension  and let  $\mathcal H=q_*\bigl(\mathcal O_G\bigr)$ be the
(faithful, commutative) Hopf sheaf  associated  to $\mathcal S$. Then
the poset of  closed
sub-extensions 
of $\mathcal S$ is op-equivalent to the poset of faithful Hopf ideals of
  $\mathcal H$.
\end{prop}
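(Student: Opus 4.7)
The plan is to combine two ingredients already in the paper: the anti-equivalence $\PP \dashv \Spec$ between $\GextaffA$ and $HQ\fsAalg$ (Theorem \ref{thm:hopssheaf=affext}, with notation $\mathcal H_q=\PP(q)$), and the bijective correspondence between closed subschemes of an affine morphism and quasi-coherent ideals of the associated $\mathcal O_A$-algebra (Corollary \ref{cor:idealsclosedsub}). Together these produce the desired bijection once one tracks the additional ``Hopf'' structure under the dictionary.

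First I would go from closed sub-extensions to faithful Hopf ideals. Let $\mathcal T$: $1\to H'\to G'\to A\to 0$ be a closed sub-extension of $\mathcal S$, so the inclusion $\iota:G'\hookrightarrow G$ is a closed immersion and $q\circ \iota=q|_{G'}:G'\to A$ is surjective (faithfully flat). By Corollary \ref{cor:idealsclosedsub}, $\PP(\iota):\mathcal H\to \PP(q|_{G'})$ is a surjection of quasi-coherent $\mathcal O_A$-algebras, whose kernel $\mathcal I=\iota_*(\mathcal I_{G/G'})$ is a quasi-coherent ideal sheaf. Because $\iota$ is a morphism of group schemes over $A$ (equivalently, a morphism of bimonoids in $\Schasqc$), Proposition \ref{prop:bimontobimon} (plus the uniqueness of the antipode, Remark \ref{rem:goodantipode}) ensures that $\PP(\iota)$ is a morphism of Hopf sheaves. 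Hence $\mathcal I$ is a Hopf ideal in the sense of Definition \ref{defn:hopfideal}. Faithfulness of $\mathcal T$ as an affine extension corresponds, via Theorem \ref{thm:hopssheaf=affext}, to $\PP(q|_{G'})=\mathcal H/\mathcal I$ being a faithful Hopf sheaf, i.e.~$\mathcal I$ is a faithful Hopf ideal.

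Conversely, starting with a faithful Hopf ideal $\mathcal I\subset\mathcal H$, Proposition \ref{prop:equivhopfsheaf} gives a surjection of Hopf sheaves $F:\mathcal H\twoheadrightarrow\mathcal H/\mathcal I$, and by hypothesis $\mathcal H/\mathcal I$ is a faithful commutative Hopf sheaf. Applying $\Spec$ and Theorem \ref{thm:hopssheaf=affext}, $\Spec(\mathcal H/\mathcal I)$ is an affine extension $\mathcal T$: $1\to H'\to G'\to A\to 0$, and $\Spec(F):G'\to G$ is a morphism of affine extensions. Since $F$ is a surjection of $\mathcal O_A$-algebras, Corollary \ref{cor:idealsclosedsub} guarantees that $\Spec(F)$ is a closed immersion, so $\mathcal T$ is a closed sub-extension of $\mathcal S$.

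Finally, I would check that the two assignments are mutually inverse: this is immediate from the unit and counit of the adjunction $\PP\dashv\Spec$ being isomorphisms on the relevant subcategories (Proposition \ref{prop:affschqcs}), together with the bijection $\mathfrak I\leftrightarrow\mathfrak V$ of Corollary \ref{cor:idealsclosedsub}. The only mildly delicate point --- and the one I expect to be the main obstacle --- is the verification that the Hopf-ideal condition (ii) of Proposition \ref{prop:equivhopfsheaf} on $\mathcal I=\ker F$ translates back, under $\Spec$, precisely to the statement that the closed subscheme $G'\subset G$ is stable under the multiplication and inversion of $G$ (rather than merely being a closed subscheme compatible with the projection to $A$); this uses the strong monoidality of $\Spec$ with respect to $\wsboxtimes$ and $\wtimes$ established in Theorem \ref{thm:mainspec}, so that the comultiplication $\Delta_{\mathcal H}$ factoring through $\mathcal I\wsboxtimes\mathcal H+\mathcal H\wsboxtimes\mathcal I$ dualizes exactly to $m_G(G'\times G'\cup G\times G')\subset G'$ together with the antipode condition giving $\iota_G(G')\subset G'$, i.e.~to $G'$ being a closed subgroup scheme of $G$ over $A$.
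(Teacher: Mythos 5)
Your argument is correct and follows essentially the same route as the paper's proof: both directions rest on the $\PP\dashv\Spec$ dictionary of Theorem \ref{thm:hopssheaf=affext} together with the closed-subscheme/quasi-coherent-ideal correspondence of Corollary \ref{cor:idealsclosedsub}, the only real difference being that the paper verifies the comultiplication condition on $q_*(\mathcal I_{G'})$ by an explicit diagram chase with $m^\#$ and the decomposition $\mathcal I_{G'\times G'}=\mathcal I_{G'}\sboxtimes\mathcal O_G+\mathcal O_G\sboxtimes\mathcal I_{G'}$, whereas you delegate this to the functoriality of $\PP$ on bimonoids (Proposition \ref{prop:bimontobimon} and Remark \ref{rem:goodantipode}), which is a legitimate repackaging. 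One minor slip: the ideal sheaf of $G'$ lives on $G$, so the Hopf ideal is $q_*(\mathcal I_{G'})$, not $\iota_*(\mathcal I_{G/G'})$.
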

\pf
Let $\mathcal T:$ ${\xymatrixcolsep{1.5pc}
  \xymatrix{1\ar[r]&H'\ar[r]&G'\ar[r]^(.45)q&\ar[r] A&0}}$ be a closed
sub-extension of $\mathcal S$ and consider $\mathcal I_{G'}\subset
\mathcal O_G$, the subsheaf of ideals associated to $G'$. Clearly
$q_*(\mathcal I_{G'})\subset \mathcal H$ is a subsheaf of ideals.  On
the other hand, if we denote by $\operatorname{inc}: G'\to G$ the canonical inclusion, then we have a commutative diagram of sheaves of $\mathcal
O_G$--modules
\[
{\xymatrixcolsep{3.5pc}\xymatrix{
\mathcal O_G\ar[r]^-{m^\#}\ar[d]& m_*\bigl( \mathcal O_{G\times
  G}\bigr)=m_*\bigl(\mathcal O_G\sboxtimes \mathcal O_G\bigr)\ar[d]\\
\operatorname{inc}_*\bigl(\mathcal
O_{G'}\bigr)\ar[r]^-{\operatorname{inc}_*m_{G'}^\#}& \operatorname{inc}_*\bigl( (m_{G'})_*\bigl( \mathcal O_{G'\times
  G'}\bigr)\bigr)=\operatorname{inc}_*\bigl( (m_{G'})_*\bigl(\mathcal
O_{G'}\sboxtimes \mathcal O_{G'}\bigr)\bigr)
}}
\]
where the vertical arrows are the canonical projections induced by the
inclusions $G'\hookrightarrow G$ and $G'\times G'\hookrightarrow G\times
G$. Since $\mathcal I_{G'\times 
  G'}= \mathcal I_{G'} \sboxtimes \mathcal O_G+ \mathcal O_G\sboxtimes
\mathcal I_{G'}$, it follows that $\mathcal O_{G'}\sboxtimes \mathcal 
O_{G'} = (\mathcal O_{G}\sboxtimes \mathcal O_{G})/\bigl(\mathcal
I_{G'}\sboxtimes \mathcal O_{G} +\mathcal O_{G}\sboxtimes \mathcal
I_{G'}\bigr) $. Hence, $m_*(\mathcal I_{G'})\subset \mathcal
I_{G'}\sboxtimes \mathcal O_{G} +\mathcal O_{G}\sboxtimes \mathcal
I_{G'}$.

Also, it is easy to show that $\mathcal I_{G'}\subset
\operatorname{Ker}(\varepsilon_{G})$.  From the
functorial
properties of $q_*$, it follows that $q_*(\mathcal I_{G'})$ is a sheaf
of Hopf ideals and, $\mathcal T$ being a sub-extension, $q_*(\mathcal
I_{G'})$ is faithful.

Conversely, given a faithful sheaf of Hopf ideals  $\mathcal I\subset \mathcal
H$, let  
\[
\xymatrix{
\mathcal T:& 1\ar[r]& \Spec \bigl(\mathcal H/\mathcal I\bigr)_0\ar[r]&
\Spec \bigl(\mathcal H/\mathcal I\bigr)\ar[r]& A\ar[r]& 0
}
\]
be the affine extension associated to the Hopf sheaf $\mathcal H/\mathcal I$. 
If $U\subset A$ is an affine open subset, then  the canonical
projection $\mathcal H(U)\to (\mathcal H/\mathcal I)(U)$ induces a
closed immersion $\Spec \bigl((\mathcal H/\mathcal I)(U)\bigr)\to
\Spec \bigl(\mathcal H(U)\bigr)$. Therefore, $\mathcal T$ is a
closed sub-extension of $\mathcal S$.
\qed

\section{The category $\Rep(\mathcal S)$ as a category of sheaves}  
\label{sect:repsheaves}

If $G$ is an affine group scheme, it is well known that the category
of its (left) rational representations and the category of (right)
$\Bbbk[G]$--comodules are equivalent  --- in the usual notations:
$\Rep(G)\cong \Comod{\Bbbk[G]}$. One can also consider the
anti-equivalence between the category of vector spaces and the
category of symmetric algebras (given by $V\mapsto \mathcal
O_V(V)=S(V^\vee)$, the symmetric algebra generated by the dual
$V^\vee$) in order to  produce an anti-equivalence between   
$\Rep(G)$ and the category of $\Bbbk{G}$--comodule symmetric
algebras. 
On  the other hand, it is also well known that the
category of vector bundles over a scheme $T$ 
is equivalent to the category of locally free, coherent, sheaves of
$\mathcal O_T$--modules, see for example \cite[Chapter 13]{kn:raising}
(see also Proposition \ref{prop:equicomodrep0}). 

In view of the previous remarks, the objective of this section  
is two-fold:

In the light of Theorem
\ref{thm:hopssheaf=affext} (and in the nomenclature of Definition
\ref{defn:catrepaffext} and Remark \ref{rem:notationqS}), given an
affine extension $q:G\to A$ we want to establish an    equivalence
$\RepO(q) \cong \bigl( \Comod{\mathcal
  H_q}\bigr)_{\operatorname{fin}}$, the category of locally free,
  coherent sheaves that are $\mathcal H_q$--comodules, were $\mathcal
  H_q=q_*\bigl(\mathcal O_G\bigr)$   is the Hopf sheaf associated to
  $q$. Moreover, we want to extend this equivalence 
  to the graded setting --- which
involves the enlargement of the category $\Comod{\mathcal H_q}$ to an (enriched)
category with graded morphisms.

On the other hand, we also want to generalize
Mumford's equivalence between $G$--linearized line bundles and
$G$--linearized invertible sheaves to our context. Whereas the notion
of $G$--linearized sheaf is well established (see \cite[Tag
  03LE]{kn:stackproj} and \cite[page 30]{kn:GIT}), we need (again) to
develop the notions of graded morphisms between $G$--linearized
sheaves and of homogeneous sheaves, in order to construct a replacement
for $\HVBG(A)$ (see Definition \ref{defn:vbgraded}).

\subsection{The category of comodules of a Hopf  sheaf}\ %
\label{subsection:Hcomod}

In this section we consider the
usual morphisms of sheaves in $\sAmod$ that correspond with
$\RepO(q)$.  In the next Section \ref{sect:homogsheves}, we extend the
equivalence given below in Proposition \ref{prop:equicomodrep0}, to
categories with graded morphisms (see Lemma \ref{lem:vbspecgr}) in
order to obtain $\Rep(q)$.

We begin by writing  down the
definition of \emph{comodule algebra} for a sheaf of bimonoids in the duoidal category
$(Q\sAmod,\otimes_A,\uAotimes,\wsboxtimes, \uwsboxtimes)$ --- or more
particularly for a Hopf sheaf --- as considered in Section
\ref{sect:modcomodduoidal}, in particular definitions
\ref{defn:actionschqc} and \ref{defn:bicomdalg}.

\begin{rem}
\label{rem:comodhts1}

Consider a bimonoid $\mathcal B=(\mathcal B,\mu_{\mathcal B},
u_{\mathcal B}, \Delta_{\mathcal B},\varepsilon_{\mathcal B} )$ in the
duoidal category $(Q\sAmod,\otimes_A,\uAotimes,\wsboxtimes,
\uwsboxtimes)$.

\noindent  (1) A
\emph{left $\mathcal B$--comodule} is a pair $(\mathcal F,\chi)$, with
$\mathcal F\in
Q\sAmod$ and $\chi:\mathcal F \to \mathcal B\wsboxtimes \mathcal F$ a  morphism of 
sheaves, that   satisfies the corresponding commutative
diagrams  (as in Definition \ref{defn:actionschqc}).

\noindent (2) A \emph{morphism of left $\mathcal B$--comodules} is a morphism $\psi:
\mathcal M\to \mathcal M' \in Q\sAmod$ such that the diagram
\[
 \xymatrix{ \mathcal M\ar[rr]^{\psi}\ar[d]_{\chi}&
    &\mathcal M'\ar[d]^{\chi'}\\ \mathcal
    B\wsboxtimes \mathcal M\ar[rr]^-{\operatorname{id}_{\mathcal
        B}\wsboxtimes \psi }& & \mathcal B\wsboxtimes \mathcal M' }
\]
is commutative.

\noindent (3) The category \emph{$\Lcomod{\mathcal B}$ of left  $\mathcal
  B$--comodules} has as objects the $\mathcal B$--comodules and as
arrows $\Hom_{\Lcomod{\mathcal B}}(\mathcal M,\mathcal M')$ the
morphisms of $\mathcal B$--comodules. 

\noindent (4) If we take two $\mathcal B$--comodules (with respect to
  the $\wsboxtimes$ monoidal structure) $\mathcal M, \mathcal M'$
  their product $\mathcal M \otimes_A \mathcal M'$ is also a $\mathcal
  B$--comodule. In other words $\Acomodsf{\mathcal B}$ is a
  $\otimes_A$--monoidal category with unit $\mathcal O_A$, that is
  viewed as an object of $\Lcomod{\mathcal B}$ via the structure
  $\xymatrix{\mathcal O_A \ar[r]^-{s^\#} & \mathcal O_A \wsboxtimes
    \mathcal O_A \ar[r]^{u_{\mathcal B} \wsboxtimes \id}& \mathcal B
    \wsboxtimes \mathcal O_A}$ (see Proposition \ref{prop:fromaguiar}).

\noindent (5) A \emph{right $\mathcal B$--comodule algebra} is a
right  $\mathcal B$--comodule $(\mathcal F,\chi_{\mathcal F})$ such that
$(\mathcal F, \mu_{\mathcal F}, u_{\mathcal F}) \in Q\sAalg$ and
$\chi_{\mathcal F}\in \Hom_{\sAalg}(\mathcal F,\mathcal F \wsboxtimes \mathcal B)$
with the adequate algebra structure in $\mathcal F \wsboxtimes
\mathcal B$ (see  Proposition \ref{prop:fromaguiar} and
Definition \ref{defn:bicomdalg}).

In explicit terms, we ask the diagrams below to commute --- we are using
the notations of definitions \ref{defn:bicomdalg} and
 \ref{defn:othermonoidal2}:
\[
\xymatrix{\mathcal O_A \ar[r]^-{s^\#}\ar[d]_{u_{\mathcal F}} &\mathcal
  O_A \wsboxtimes \mathcal O_A \ar[d]^{u_{\mathcal F}\wsboxtimes
    u_{\mathcal B}}\\\mathcal F\ar[r]^-{\chi_{_\mathcal F}} &\mathcal F \wsboxtimes
  \mathcal B.} 
\]

\[
  \xymatrix{&(\mathcal F \wsboxtimes \mathcal
    B)\otimes_A(\mathcal F \wsboxtimes \mathcal
    B)\ar[r]^{\zeta_{\mathcal F,\mathcal B,\mathcal F,\mathcal
        B}}&(\mathcal F \otimes_A \mathcal F)\wsboxtimes\,(\mathcal B
    \otimes_A \mathcal B)\ar[rd]^{\id \wsboxtimes \mu_{\mathcal B}}&\\  
    \mathcal F {\otimes_A} \mathcal F \ar[ru]^{\chi_{_{\mathcal F}}
      {\otimes_A} \chi_{_{\mathcal F}}}\ar[d]_{\mu_\mathcal
      F}\ar[rrr]^{\chi_{_{\mathcal F \otimes_A \mathcal
          F}}}&&&(\mathcal F \otimes_A \mathcal F) \wsboxtimes
    \mathcal B\ar[d]^{\mu_\mathcal F\wsboxtimes \id}\\
    \mathcal
    F\ar[rrr]^-{\chi} &&&\mathcal F \wsboxtimes \mathcal B} 
\]

\noindent (6)  A \emph{morphism of left $\mathcal B$--comodule
  algebras} from $\mathcal F$ to $\mathcal F'$ is a morphism $f\in
 \Hom_{\Acomodsf{\mathcal
    B}}(\mathcal F, \mathcal F')$ that is also a morphism in $\Hom_{\sAalg}(\mathcal F,\mathcal F')$.

\noindent (7) We denote the category of left (resp.~right) $\mathcal B$--comodule algebras as
$\AQcoalgsf{\mathcal B}$ (resp.~$\RQAcoalgsf{\mathcal B}$).
\end{rem}

As expected, the adjunction between $\PP$ and $\Spec$  gives  a
correspondence between actions of bimonoids 
 $b:M\to A$ and structures of $\mathcal \PP(b)$-comodule algebras (see
 Theorem \ref{thm:mainspec} and Proposition \ref{prop:bimontobimon}). 

\begin{prop}
\label{prop:ratalg}
Let $b:M\to A\in \Schaqc$ be a bimonoid, $x:X\to A\in \Schaqc$ and
$a_X$ an action of $b$ on $x$ (see Definition
\ref{defn:actionschqc} and Example \ref{ej:qS}).  Then $\PP(a_X)$
endows $\PP(x)$ with a structure of $\PP(b)$--comodule algebra.

Conversely, let $\mathcal B\in Q\sAalg$ be a bimonoid,   $\mathcal
F\in Q\sAalg$ and $\chi:\mathcal F\to \mathcal B\wsboxtimes \mathcal F$  a
$\mathcal B$--comodule algebra. Then  $\Spec
(\chi)\smallcirc\eta_{\Spec(\mathcal B)\wtimes \Spec(\mathcal
  F)}:\Spec(\mathcal B)\wtimes \Spec(\mathcal F)\to \Spec(\mathcal F)$
is 
an $\Spec(\mathcal B)$--action --- recall that
$\operatorname{Aff}\bigl(\Spec(\mathcal B)\wtimes \Spec(\mathcal
F)\bigr)=\Spec(\mathcal B\wsboxtimes \mathcal F)$.

In particular, $\PP$ induces an (op)-equivalence between the following two
categories:

\noindent (i) $b-\Schaaff$, with objects the pairs $(x,a_X)$ where
$x:X\to A\in \Schaaff$ and $a_X$ is a $b$--action on $x$, and with
arrows the $b$--equivariant morphisms;

\noindent (ii) $\AQcoalgsf{\PP(b)}$, the category of quasi-coherent
$\PP(b)$--comodule algebras.

Under this equivalence, flat $\PP(b)$--comodules correspond to flat $b$--objects.
\end{prop}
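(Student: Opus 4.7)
The plan is to derive Proposition~\ref{prop:ratalg} directly from the op-equivalence $\Schaaff \cong Q\sAalg^{\op}$ of Proposition~\ref{prop:affschqcs}, combined with the strong monoidality of $\PP$ and $\Spec$ with respect to the pair $(\wtimes,\wsboxtimes)$ established in Theorem~\ref{thm:mainspec}. Both functors reverse arrows, so an action of $b$ on $x$ is automatically converted into a coaction of $\PP(b)$ on $\PP(x)$; since the equivalence is between categories of algebras, this coaction lives in $Q\sAalg$ and thus supplies the comodule algebra structure ``for free''.

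Explicitly, given an action $\varphi\colon b\wtimes x\to x$ satisfying the diagrams of Definition~\ref{defn:actionschqc}, I would define
\[
  \chi \;:=\; \tau_{b,x}\circ \PP(\varphi)\colon \PP(x)\longrightarrow \PP(b\wtimes x)\xrightarrow{\ \tau_{b,x}\ } \PP(b)\wsboxtimes\PP(x),
\]
where $\tau_{b,x}$ is the comparison isomorphism provided by strong monoidality. Applying the functor $\PP$ to the associativity and unitality diagrams of $\varphi$, together with the naturality and coherence of $\tau$ and the fact that $\PP$ converts the bimonoid $b$ into the bimonoid $\PP(b)$ (Proposition~\ref{prop:bimontobimon}), produces exactly the coaction diagrams~\eqref{eqn:comod}. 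Because $\PP(\varphi)$ is a morphism in $Q\sAalg$ and $\tau_{b,x}$ is an isomorphism of $\mathcal O_A$-algebras, the coaction $\chi$ is itself a morphism of algebras, which is the additional condition required by Definition~\ref{defi:comodhts2}.

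The converse is symmetric: applying $\Spec$ to a $\PP(b)$-comodule algebra structure $\chi$ on $\mathcal F$, together with the inverse of the comparison isomorphism for $\Spec$, yields an action $\Spec(\mathcal B)\wtimes\Spec(\mathcal F)\to\Spec(\mathcal F)$. That the two assignments are mutually inverse (and that equivariant morphisms correspond to comodule algebra morphisms) is then immediate from the fact that the unit and counit of the adjunction $\PP\dashv\Spec$ are isomorphisms on $\Schaaff$ and $Q\sAalg$ respectively (Proposition~\ref{prop:affschqcs}). The flatness assertion follows at once, since under this equivalence flatness of an affine morphism $x\colon X\to A$ coincides with flatness of $\PP(x)=x_*\mathcal O_X$ as a sheaf of $\mathcal O_A$-modules.

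The principal technical point to verify is the compatibility diagram between the coaction and the algebra multiplication in Definition~\ref{defi:comodhts2}, which invokes the interchange law $\zeta$ of the duoidal category $Q\sAmod$. Recognising this diagram as the $\PP$-image of the condition that $\varphi\colon M\times X\to X$ is a morphism of schemes over $A$ (that is, $x\circ\varphi=s\circ(b\times x)$) forces one to use simultaneously that $\PP$ is strong monoidal for $(\wtimes,\wsboxtimes)$ and colax monoidal for $(\times_A,\otimes_A)$, and that these comparison data intertwine the two interchange laws. Once this duoidal compatibility is in place, the remaining diagrams follow formally from the coherence of monoidal functors and the bimonoid axioms for $\PP(b)$.
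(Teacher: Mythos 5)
Your argument is correct and is precisely the ``straightforward'' derivation the paper has in mind: the authors omit the proof entirely, relying on exactly the ingredients you invoke, namely the op-equivalence $\Schaaff\cong Q\sAalg^{\op}$ of Proposition \ref{prop:affschqcs}, the strong/colax monoidality of $\PP$ and $\Spec$ from Theorem \ref{thm:mainspec}, and the transport of bimonoid structure from Proposition \ref{prop:bimontobimon}. Your identification of the $\zeta$-compatibility diagram as the image under $\PP$ of the condition that $\varphi$ is a morphism over $A$, and of the flatness correspondence with the flatness of $x_*\mathcal O_X$, fills in the only points the paper leaves implicit.
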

\proof The proof is straightforward and therefore, it is omitted.
\qed

Our  objective is to combine Proposition \ref{prop:ratalg} with the
well known (monoidal) equivalences  
between the category of vector bundles over $A$, the category  of
 locally free sheaves  of $\mathcal O_A$--mod, of finite rank,  and  the
category of the symmetric algebras generated by these sheaves, in
order to describe the category $\RepO(q)$, where $q:G\to A$ is an
affine extension, as a category of sheaves with additional structure.

\begin{rem}
  \label{rem:vvbandsalg}
(1) Recall that if $\mathcal F\in Q\sTmod$, then one can consider the Hadamard monoidal
structure $\otimes_T$ and construct $ S(\mathcal F)$, the \emph{symmetric algebra
  generated by $\mathcal F$}.   

\noindent (2) It is well known that if  $\mathcal F\in Q\sSmod$ is
moreover a   
locally free sheaf of finite rank $\operatorname{rk}\mathcal F=n$, then
$\mathcal F$ is a coherent sheaf  and $\Spec\bigl(S(\mathcal
F)\bigr) $ is a vector bundle over $T$ of rank $n$. Notice also that
$S(\mathcal F)$ is a flat sheaf.

\noindent (3) Conversely, if $\pi: E\to T$ is a vector bundle of rank $n$,  then
$\PP(E)$ is a 
symmetric algebra generated by a  locally free sheaf of
$\mathcal O_T$--modules, of rank $n$. Namely,
$\PP(\pi)=S\bigl(\Gamma_E^\vee\bigr)$, where $\Gamma_E$ is the
\emph{sheaf of sections} of the vector bundle $E$,
$\Gamma_E(U)=\Gamma(E,U)=\{s:U\to E\mathrel{:} \pi\smallcirc u=\id_U\}$.

In this way, the restriction of  the functors $\Spec$ and $\PP$ gives
a monoidal equivalence between $\VB_0(T)$ and $\operatorname{ST-alg}$, the category of
sheaves of $\mathcal O_T$--module symmetric algebras generated by locally free
sheaves of finite rank.

\noindent (4) Taking into account the previous remark, we can produce
another useful equivalence of categories:  if we denote by
$C_{\operatorname{lf}}\sTmod$ the category of (necessarily coherent)
  locally free sheaves of 
  $\mathcal O_T$--modules, of finite rank, then  the functor $\VBSpec:
C_{\operatorname{lf}}\sTmod\to \VB_0(T)$, given by  $\VBSpec(\mathcal F)=\Spec
\bigl(\mathcal S(\mathcal 
F^\vee)\bigr)$  and $\VBSpec(f: \mathcal F\to \mathcal F')=\Spec
\bigl(\mathcal S(f^\vee)\bigr): \mathcal S(\mathcal F'^\vee)\to \mathcal
S(\mathcal F^\vee)$ is an equivalence of categories.
\end{rem}

\begin{defn}
Let $A$ be an abelian variety.  If $\mathcal B$ is a bimonoid in $Q\sAmod$,  we denote
  $\Comodfin{\mathcal B}$ the category of left $\mathcal
  B$--comodules with support on locally free sheaves of $\mathcal
  O_A$--modules, of finite rank.

  We denote $\Comod{\mathcal B}_{\SimA}\subset \AQcoalg{\mathcal B}$  the full subcategory of
symmetric algebras generated by the locally free
 $\mathcal B$--comodules of  finite rank --- notice that if $\mathcal
 F\in \Comodfin{\mathcal B}$, then the $\mathcal
 B$--comodule structure on $\mathcal F$ induces a $\mathcal
 B$--comodule structure on $S(\mathcal F)$ in a natural way. 
\end{defn}

We are now in condition to present our first result concerning the
equivalence of the category of representations of an affine extension
$q:G\to A$
with some categories of $\mathcal H_q$--comodules.

\begin{prop}
\label{prop:equicomodrep0}
Let  $\mathcal S$: $q:G\to A$ be an affine extension. Then:

\noindent (1) the equivalence of
Proposition \ref{prop:ratalg} restricts to a monoidal (op)-equivalence between
$\RepO(\mathcal S)$ and  $\Comod{\mathcal H_q}_{\SimA}$, where $\mathcal
H_q$ denotes as usual  the sheaf of Hopf algebras associated to $q$;

\noindent (2) the equivalence $\VBSpec:
C_{\operatorname{lf}}\sAmod\to \VB_0(A)$ induces an equivalence of
  categories, that we also denote $\VBSpec: 
  \Comodfin{\mathcal H_q}\to \RepO(\mathcal S)$.
\end{prop}
\pf
The proof of (1) is a straightforward consequence of propositions
\ref{prop:charachb}  and \ref{prop:ratalg},  and hence it is  omitted.

In order to prove (2), we first observe that if $(\mathcal
F,\chi_{\mathcal F})\in \Comodfin{\mathcal H_q}$, 
then $\Spec\bigl(\mathcal S(\mathcal F)\bigr)=p_E:E\to A$ supports a
$\mathcal S$--module structure 
$a: q\wtimes p_E\to p_E$ --- notice that we \emph{are not}
applying the functor $\VBSpec$. It follows that  $p_E^\vee: E^\vee\to
A$ is a $\mathcal S$--module, and hence $\PP(E^\vee)$ is a $\mathcal
H_q$--comodule algebra. But by construction, $\PP(E^\vee)=\mathcal
S(\mathcal F^\vee)$ and the corresponding $\mathcal H_q$--coaction
being linear, it restricts to a
coaction $\chi_{\mathcal H_q}:\mathcal F^\vee\to \mathcal
H_q\wsboxtimes \mathcal F^\vee$.

It is easy to see that the
functor ``take dual'' induces an equivalence  $\cdot ^\vee:
\Comodfin{\mathcal B}\to \Comodfin{\mathcal B}$. The proof of  (2) is
now straightforward, since $\VBSpec(\mathcal F)= \Spec\bigl(S(\mathcal
F^\vee)\bigr)$. 
\qed

If $\mathcal S$: $q:G\to A$ is an affine extension, in view of
Proposition \ref{prop:equicomodrep0}, it makes sense to 
 define a  category of  ``infinite 
dimensional'' $\mathcal 
S$--modules either as a certain full subcategory of  $\AQcoalg{\mathcal
  H_q}$ containing  $\Comod{\mathcal H_q}_{\SimA}$ or as a full
subcategory of $\AQcomod{\mathcal H_q}$ containing $\Comodfin{\mathcal
  H_q}$. 

As proposed by V.~Drinfeld in \cite{kn:drinfeldfvid} (in the context
of the definition of an  infinite dimensional vector bundle''), we take the
second approach and consider 
the full subcategory of $\AQcomod{\mathcal H_q}$ with objects the  quasi-coherent,
flat sheaves  of $\mathcal H_q$--comodules, that we denote
$\AQpcomod{\mathcal H_q}$, as a
replacement for 
$\VB_0(A)$ -- recall that if $\mathcal F$ is a coherent flat sheaf of
$\mathcal O_A$--modules, then $\mathcal F$ is locally free (see
\cite[Proposition 2]{kn:serrefvid}).

\subsection{Homogeneous sheaves on an abelian variety}\ %
\label{sect:homogsheves}

In this section we define the category of homogeneous sheaves on an
abelian variety $A$.
We fix the following notation: if $T$
    is a $\Bbbk$--scheme and $\mathcal F \in \sAmod$, we define  $\mathcal F_T=\mathcal F\sboxtimes
    \mathcal O_T\in \sATmod$ --- recall that $A_T=A\times T$. Then
    $\mathcal F_T=p_1^*\mathcal
    F\otimes_{\mathcal O_{A\times T}} p_2^*(\mathcal
    O_T)=p_1^*\mathcal F\otimes_{\mathcal O_{A\times T}} \mathcal O_{A
      \times T}=p_1^*\mathcal F$,   where
    $p_1:A_T\to A$ and $p_2:A_T\to T$ are the canonical
    projections.

  \begin{defn}
    \label{defn:hsgraded1} \noindent (1) If  $A$ is an abelian
    variety and $\mathcal F,\mathcal G\in \sAmod$, we define the
    \emph{functor of graded morphisms} $\Homgr(\mathcal F,\mathcal
    G):\Sch^{\op}\to \Sets$ as follows: for $T \in \Sch^{\op}$ an element of the set
    $\Homgr(\mathcal F,\mathcal G)(T)$ is a pair $(f, \ell)$, with
    $\ell\in A(T)$ and $f\in \Hom_{\sATmod}( t_\ell^*\mathcal F_T ,
    \mathcal G_T)$ --- recall that the translation $t_\ell:A_T\to A_T$
    is a morphism of $T$--schemes and that $f$ is a morphism   
    of sheaves of $A_T$--modules.

For an arrow  $j:T'\to T$ the functor is defined as follows:
\[
 \Homgr(\mathcal F,\mathcal
G)(j): \Homgr(\mathcal F,\mathcal G)(T)\to \Homgr(\mathcal
F,\mathcal G) (T')
\]
 is given by $\Homgr(\mathcal F,\mathcal
G)(j)(f,\ell)= \bigl((\id_A\times j)^*f,\ell\smallcirc j\bigr)$ --- notice that
$\ell \smallcirc j\in A(T')$ and that since  $p_1\smallcirc t_\ell \smallcirc (\id_A \times
j)=p'_1\smallcirc t_{\ell \smallcirc j}$, then   $(\id_A\times j)^*f\in \Hom_{sATmod}(t_{\ell \smallcirc
  j}^*\mathcal F_{T'}, \mathcal G_{T'})$.

\noindent (2) If $(f,\ell)\in \Homgr(\mathcal F,\mathcal G)(T)$, then
the element $\ell \in A(T)$ is called the \emph{degree of}
$(f,\ell)$.

The \emph{degree maps} $d(T): \Homgr(\mathcal F,\mathcal G)(T)\to
A(T)$, $d(f,\ell)=\ell$, conform --- by definition --- a natural
transformation, that is  also called the \emph{degree map}.

\noindent (3) We call $\HomO(\mathcal F,\mathcal G)$ the subfunctor  of $\Homgr(\mathcal F,\mathcal G)$ given by the elements of
degree zero: i.e.~the set $\HomO(\mathcal F,\mathcal G)(T)=\{(f,0): (f,0) \in \Homgr(\mathcal F,\mathcal G)(T)\}$ with $f \in \Hom_{\sATmod}\bigl(\mathcal F_T , \mathcal
G_T\bigr)$.
\end{defn}

\begin{defn}
  \label{defn:variashaces}
  Let $A$ be an abelian variety and $\mathcal F,\mathcal G , \mathcal
  E\in \sAmod$. We define a natural transformation $\smallcirc:
  \Homgr(\mathcal G,\mathcal E)\times \Homgr(\mathcal F,\mathcal G)
  \Rightarrow \Homgr(\mathcal F,\mathcal E)$ as follows: given a
  $\Bbbk$--scheme $T$ and $(f,\ell)\in \Homgr(\mathcal F,\mathcal
  G)(T)$, $(g,b)\in \Homgr(\mathcal G,\mathcal E) $, then
  $f:t_\ell^*\mathcal F_T \to \mathcal G_T$ and $t_b^*f:
  t_{\ell+b}^*\mathcal F_T \to t_b^*\mathcal G_T$. Hence, we can
  define $(g, b) \smallcirc (f,\ell): = ( g\smallcirc t_b^* f, \ell+b)\in
  \Homgr(\mathcal F,\mathcal E)$.
\end{defn}
\begin{rem} It follows by definition that the degree map satisfies the
  following compatibility condition with the composition defined
  above. 
  \[
    \xymatrix{\Homgr(\mathcal G,\mathcal E)\times
    \Homgr(\mathcal F,\mathcal G) \ar[rr]^(.6)\smallcirc\ar[d]_{d \times d}&&\Homgr(\mathcal
    F,\mathcal E)\ar[d]^d\\A \times A\ar[rr]^s&&A}
\]
\end{rem}

  \begin{nota}\label{notahom} 
    (1) We denote  $\Endgr(\mathcal F):=\Homgr(\mathcal F,\mathcal
    F)$; notice that $\Endgr(\mathcal F)$ with the composition of
    graded morphisms is  a
monoid functor,  with unit $e_{\Endgr(\mathcal F)}(T)=(\id_{\mathcal
  F_T}, 0)$.

\noindent (2) The group functor $\Autgr(\mathcal F)$ is the unit subfunctor of
$\Endgr(\mathcal F)$;  that is, the subgroup functor of
$\Endgr(\mathcal F)$ given (for each scheme $T$) 
        by the pairs $(f,\ell)$ such that $f: t_\ell^*\mathcal F_T\to
        \mathcal F_T $ is an isomorphism.
        
\noindent (3) We also consider the functor on monoids $\EndO(\mathcal F)=\HomO(\mathcal
  F,\mathcal F)$, and the functor on groups $\AutO(\mathcal F)$ given
  by the pull back of the inclusions $\Autgr(\mathcal F)
  \subset \Endgr(\mathcal F)$ and $\EndO(\mathcal F)
  \subset \Endgr(\mathcal F)$ --- therefore, $\AutO(\mathcal F)$ is the
  subfunctor of $\Autgr(\mathcal F)$ given by the elements of degree zero.
        
    \end{nota}

  As in the case of vector bundles, it is easy to see that the
  morphism of degree zero between two sheaves  $\mathcal F,\mathcal G$
  are  in bijection
  with the morphisms   of sheaves of $\mathcal O_A$--modules between
  $\mathcal F$ and $\mathcal G$.

    \begin{rem}\label{rem:hom0OK}
Let $\mathcal F,\mathcal G\in \sAmod$. Then the functor
$\HomO(\mathcal F,\mathcal G)$ is represented by the $\Bbbk$-vector
space 
$\Hom_{\sAmod}(\mathcal F,\mathcal G)$.
In particular, $\EndO(\mathcal F)$ is a smooth scheme on monoids.
Indeed,    the $\Bbbk$--vector space
      $\Hom_{\sAmod}(\mathcal F,\mathcal G)$ 
      represents the functor  $T\mapsto
      \Hom_{\sATmod}(\mathcal F_T,\mathcal G_T)$.
\end{rem}

\begin{defn}
As in Definition \ref{defn:vbgraded}, we consider the monoidal
category $\mathcal
V=\operatorname{Func}(\Sch^{\op},\Sets)$ and we define the $\mathcal
V$--category $\sAmodgr$, that is called the
\emph{category of sheaves on $A$ with graded 
  morphisms},  with  objects the sheaves of $\mathcal O_A$--modules
and with hom-object  
$\Homgr(\mathcal F,\mathcal G) \in \mathcal V$, with  composition
defined as above. In other words,  for $\mathcal F$ and $\mathcal G$
sheaves on $A$, $\Hom_{\sAmodgr}(\mathcal F,\mathcal G)=
\Homgr(\mathcal F,\mathcal G)$.

    We define the full   subcategory  $Q\sAmodgr\subset \sAmodgr$ of \emph{quasi-coherent
  sheaves of $\mathcal O_A$--modules with graded morphisms} with 
objects  the quasi-coherent sheaves of $\mathcal O_A$--modules.

Similarly, we define the subcategory $Q\sAalggr$ of
\emph{quasi-coherent sheaves of $\mathcal O_A$--algebras with graded
  morphisms} by taking as objects the quasi--coherent sheaves of
$\mathcal O_A$--algebras, and if $\mathcal F,\mathcal G$ are two
objects, then $\Hom_{\sAalggr}(\mathcal F,\mathcal G)$ is the
subfunctor of $\Homgr(\mathcal F, \mathcal G)$ given by $(f,\ell)\in
\Hom_{\sAalggr}(\mathcal F,\mathcal G)$ if $f:
t_\ell^*\mathcal F_T\to \mathcal G_T$ is a morphism of
$\mathcal O_{A_T}$--algebras (over the $T$--scheme $A_T$).
 \end{defn}
   
 \begin{rem}
   Call $\sAmodz \subseteq \sAmodgr$ the wide subcategory of
   $\sAmodgr$ with morphisms between $\mathcal F, \mathcal G$ the
   functor $\HomO(\mathcal F,\mathcal G)$. It is clear that the usual
   category $\sAmod=\mathcal O_A-\mathrm{mod}$ is equivalent to
   $\sAmodz$ (see Remark \ref{rem:hom0OK}). 
   Similarly for the analogous situation but in $Q\sAmod$ and $Q\sAalg$ with respect to
   $Q\sAmodz$ and $Q\sAalgz$.

Clearly,  $\EndO(\mathcal F) 
$   is  the kernel of the morphism of
 functors on monoids $d:\Endgr(\mathcal F)\to A$, and $\AutO(\mathcal F)$
the kernel of  $d:\Autgr(\mathcal F)\to A$.
\end{rem}

 \begin{rem}
    \label{rem:hgomgrfpqc}
As in the case of  vector bundles, if $\mathcal F, \mathcal
F'\in \sAmodgr$, then $\Homgr(\mathcal F, \mathcal F')$ is a fpqc
sheaf. 
\end{rem}

In  particular,  when we deal with locally free sheaves of finite
rank we have the following equivalence result:

\begin{lem}
  \label{lem:vbspecgr0}
In the above situation, the equivalence  $\VBSpec:
C_{\operatorname{lf}}\sAmod\to \VB_0(A)$  extends to an
equivalence $\VBSpecgr: C_{\operatorname{lf}}\sAmodgr\to \VBG(A)$. 
\end{lem}
\pf
Fist, we observe that if $\mathcal F\in C_{\operatorname{lf}}\sAmod$ and $T$ is a
$\Bbbk$--scheme,  then $\VBSpec(\mathcal F_T)\cong \VBSpec(\mathcal
F)_T$. Indeed, 
\[
\Spec\bigr(\mathcal 
S\bigl((\mathcal F_T)^\vee\bigr)\bigr)=\Spec\bigl(\mathcal S
\bigl((p_1^*\mathcal F)^\vee\bigr)\bigr) = \Spec\bigl(\mathcal S
\bigl(p_1^*(\mathcal
F^\vee)\bigr)\bigr)=p_1^*\bigl(\Spec\bigl(\mathcal S (\mathcal
F^\vee)\bigr)\bigr),\]
 where the second of
  the above chain of equalities is due to the commutation of pullback
  with duals and the third by the commutation of base change with
  $\Spec$ (see \cite{kn:commute} and \cite{kn:raising}[Thm. 17.1.3,
    Ex. 17.1.F)] respectively). In other words,  
 \[
\VBSpec(\mathcal
  F_T) \cong\VBSpec(\mathcal F)_T=\bigl(\pi_{\mathcal F}\times \id:
  \VBSpec(\mathcal F) \times T \to A \times T\bigr).
\]

  Next we show that for all $\mathcal F \in C_{\operatorname{lf}}\sAmod$  and  $b \in
  A(T)$ we have the commutation relation
  $\VBSpec\bigl(t_b^*(\mathcal F_T)\bigr)\cong t_b^*\VBSpec(\mathcal 
  F)_T$:
  \[
  \begin{split}
    \VBSpec (t_b^*\mathcal F_T) = &\ 
\Spec\bigr(t_b^*\mathcal S\bigl(\mathcal F_T^\vee\bigr)\bigr)= t_b^*\bigl(\Spec\bigl(\mathcal S \bigl((p_1^*\mathcal
  F)^\vee\bigr)\bigr)\bigr)=\\
  & \ t_b^*\VBSpec(\mathcal F_T) = \bigl(t_b^*\VBSpec(\mathcal F)\bigr)_T, 
\end{split}
  \]
where we  used again the commutation of duals and pullback for locally
free shaves.

Once we have the identifications above, it is clear that  the
equivalence  $\VBSpec:
C_{\operatorname{lf}}\sAmod\to \VB_0(A)$  induces  a natural
isomorphism between the functors $\Homgr(\mathcal F, \mathcal G)$
and $\Homgr\bigl(\VBSpec(\mathcal F),\VBSpec(\mathcal G)\bigr)$.

Indeed, if  $T$ is a $\Bbbk$--scheme, then the family of bijections
(indexed on $\ell\in A(T)$) given by 
\[
  \Hom_{\sATmod}\bigl(t_\ell^*(\mathcal F_T),\mathcal
G_T\bigr)\to \Hom_0\bigl(\VBSpec\bigl(t_\ell^*\mathcal F_T\bigl),\VBSpec(\mathcal
G_T)\bigr)= \Hom_0\bigl(t_\ell^*\bigl(\VBSpec(\mathcal F)_T\bigl),\VBSpec(\mathcal
G)_T\bigr),
\]
combine into a bijection
\[
 \bigl\{ (f,\ell)\mathrel{:}
 f\in\Hom_{_{\sATmod}}\bigl( t_\ell^* (\mathcal F_T) , \mathcal
 G_T\bigr)\,,\ \ell\in A(T)\bigr\} \to \Homgr\bigl(\VBSpec(\mathcal
 F),\VBSpec(\mathcal G)\bigr).
\]

The family  of bijections above (indexed on $T$) conform the  seeked natural
 transformations.\qed
 
 \begin{rem}
   \label{rem:casolf}
   As an immediate consequence of Lemma \ref{lem:vbspecgr0}, we have
   that if $\mathcal F\in C_{\operatorname{lf}}\sAmod$, then
 $\Autgr(\mathcal F)$ is a smooth group scheme of finite type and the
 degree map $d:\Autgr(\mathcal F)\to A$ is an affine morphism of group
 schemes, with kernel $\AutO(\mathcal F)$, since $\Autgr(\mathcal
 F)\cong \Autgr\bigl(\VBSpec(\mathcal F)\bigr)$.

 It is an open question whether  $\Autgr(\mathcal F)$ is a  group
 scheme or not if $\mathcal F\in Q\sAmod$. However, we have the
 following description.
 \end{rem}

\begin{lem}\label{lem:autgrfpqcseq}
Let $A$ be an abelian variety and $\mathcal F\in \sAmod$. Then
$\AutO(\mathcal F)$ is a smooth affine group scheme,
and the 
degree map 
$d: \Autgr(\mathcal F)\to A$ is a morphism of fpqc sheaves, with kernel $
\AutO(\mathcal F)$.
  \end{lem}
  \pf
  Recall that   $\EndO(\mathcal F)$ is a smooth monoid (see Remark
  \ref{rem:hom0OK}),  and that 
  $\AutO(\mathcal F)$ is its unit group. It follows from
  \cite[II.3.7]{kn:demgab} that $\EndO(\mathcal F)$ is the 
  limit in the category of monoid schemes of a family $M_i$ of finite
  type; therefore, $\AutO(\mathcal F)$ is the  limit of the
  unit groups $G(M_i)$. Since $G(M_i) \subset M_i$ is open by
  \cite[II.3.6]{kn:demgab}, it follows that $\AutO(\mathcal F)$ is an
  open subscheme of $\EndO(\mathcal F)$.

It is clear that $\Autgr(\mathcal F)$ is a sheaf for the fpqc
topology (e.g.~by descent for morphisms). Moreover, $\AutO(\mathcal
F)$  is a subsheaf and the quotient morphism $\pi:\Autgr(\mathcal F)\to
\Autgr(\mathcal F)/\AutO(\mathcal F)$ is  a torsor under $\AutO(\mathcal F)$, in view of 
\cite[III.4.1.8]{kn:demgab}. Now is clear that $d$ factorizes through $\pi$.\qed

\begin{defn}
  \label{defn:homogsheaf}
  Let $A$ be an abelian variety. A sheaf $\mathcal F\in \sAmodgr$ is
  \emph{homogeneous} if  the degree map $d:\Autgr(\mathcal F)\to A$
  surjective in the fpqc  topology. 

In view of  Lemma \ref{lem:autgrfpqcseq}, it follows that the 
  $\Autgr(\mathcal F)$ is a quasi-compact group scheme, the   sequence 
  \[
    \xymatrix{
    \Authbext(\mathcal F):&      1\ar[r]&  \AutO(\mathcal F)\ar[r]& \Autgr(\mathcal F)\ar[r]^-d&
      A\ar[r]& 0
    }
\]
 is an affine extension. In particular $d$ is
  a quasi-compact, faithfully flat morphism of group schemes.
\end{defn}

\begin{ej} 
(1) Clearly, the structure sheaf $\mathcal O_A$ is homogeneous.

\noindent (2) It is clear that under the equivalence $\VBSpecgr:
C_{\operatorname{lf}}\sAmodgr\to \VBG(A)$, homogeneous sheaves
correspond to homogeneous vector bundles (see Remark
\ref{rem:casolf}). 
\end{ej}

\begin{defn}
\label{def:cathomogsheaves}
We define $\hsAmod$, \emph{the category of homogeneous sheaves on
  $A$}, as the full subcategory of $\sAmod$ with objects the
homogeneous sheaves on $A$.  Similarly, we define the category
$Q\hsAmod$ of \emph{homogeneous quasi-coherent sheaves of $\mathcal
  O_A$--modules} as the full subcategory of $Q\sAmod$ with objects the
homogeneous, quasi-coherent sheaves on $A$, and $C_{lf}\hsAmod\subset
Q\hsAmod$ the 
full subcategory of locally free, homogeneous, sheaves of finite rank. We denote
$Q\hsAalg$ the category of \emph{homogeneous quasi-coherent sheaves of
  $\mathcal O_A$--algebras}.

As in Definition \ref{defn:vbgraded} we define the
$\mathcal V$--categories $\hsAmodgr$, $Q\hsAmodgr$,
$C_{\operatorname{lf}}\hsAmodgr$, $\hsAalggr$ and 
$Q\hsAalggr$ as 
the full subcategories of $\sAmodgr$, $Q\sAmodgr$,
$C_{\operatorname{lf}}\sAmodgr$,  $\sAalggr$ and
$Q\sAalggr$ where on each situation the objects are limited to the
homogeneous sheaves.
\end{defn}

\begin{lem}
\label{lem:homgrhomogsheaves}
Let $\mathcal F,\mathcal F'\in \hsAmodgr$ be two homogeneous sheaves. Then the homogeneous vector bundle
$R_{\HomO(\mathcal F,\mathcal F')}=\Autgr(\mathcal F')\times^{ 
  \AutO(\mathcal F')} \HomO(\mathcal F,\mathcal F')$   represents
  $\Homgr(\mathcal F,\mathcal F')$. 

Moreover, $  R_{\HomO(\mathcal F,\mathcal F')}\cong L_{\HomO(\mathcal F,\mathcal F')}=
\Autgr(\mathcal F)\times^{\AutO(\mathcal F)}\HomO(
\mathcal F,\mathcal F')\in \HVB_0(A)$. 
  \end{lem}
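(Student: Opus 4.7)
\pf[Proof proposal]
The plan is to transplant verbatim the argument used for Lemma \ref{lem:homgrhomog} from the setting of homogeneous vector bundles to that of homogeneous sheaves, relying on the fact that the representability of $\Autgr$ and the associated affine extension $\Authbext(\mathcal F')$ (guaranteed by Definition \ref{defn:homogsheaf} and the preceding representability lemma) give us all the ingredients of Theorem \ref{thm:indesp1} at our disposal. First I would build the composition morphism
\[
\varphi\colon \Autgr(\mathcal F')\times \HomO(\mathcal F,\mathcal F')\longrightarrow \Homgr(\mathcal F,\mathcal F')
\]
of fpqc sheaves by sending a $T$-point $(g,t_a;\,f)$ to the graded morphism $g\circ (t_a\times\id_T)^*f$, whose degree is $a=d(g)\in A(T)$. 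Then I would check that $\varphi$ is invariant for the diagonal $\AutO(\mathcal F')$-action $(g,f)\mapsto (g\circ h^{-1},\,h\circ f)$; since by Theorem \ref{thm:indesp1} the induced space $E'_{\HomO(\mathcal F,\mathcal F')}=\Autgr(\mathcal F')\times^{\AutO(\mathcal F')}\HomO(\mathcal F,\mathcal F')$ is the fpqc categorical quotient, $\varphi$ factors through a morphism $\phi\colon E'_{\HomO(\mathcal F,\mathcal F')}\to \Homgr(\mathcal F,\mathcal F')$.

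Next I would show that $\phi$ is a monomorphism and an epimorphism of fpqc sheaves. For injectivity, given two $T$-points $y_1,y_2$ of $E'_{\HomO(\mathcal F,\mathcal F')}$ with $\phi(T)(y_1)=\phi(T)(y_2)$, I would pick local fpqc liftings $(g_i,f_i)$ and exploit the identity $g_1\circ f_1=g_2\circ f_2$ to write $f_2=g_2^{-1}g_1\circ f_1$ with $g_2^{-1}g_1\in \AutO(\mathcal F')$, deducing $y_1=y_2$ after descent. For surjectivity on $(f,t_a)\in\Homgr(\mathcal F,\mathcal F')(T)$, I would use that $d\colon \Autgr(\mathcal F')\to A$ is faithfully flat (the homogeneity of $\mathcal F'$) to pick, after an fpqc refinement $\sigma\colon T'\to T$, a point $g\in\Autgr(\mathcal F')(T')$ of degree $a\circ\sigma$, so that $(g,\,g^{-1}\circ f\circ\sigma)$ maps to $(f\circ\sigma,t_{a\circ\sigma})$; descent then yields the preimage over $T$.

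For the second statement, namely $E'_{\HomO(\mathcal F,\mathcal F')}\cong E_{\HomO(\mathcal F,\mathcal F')}$, I would follow the \cite{kn:bprit} argument that closes the proof of Lemma \ref{lem:homgrhomog}: apply the Rosenlicht decomposition of Theorem \ref{thm:rosenftgs2} to both affine extensions $\Authbext(\mathcal F)$ and $\Authbext(\mathcal F')$, so that each induced space can be rewritten using the anti-affine part, which is central. In the case $\mathcal F=\mathcal F'$, centrality of $\Autgr(\mathcal F)_{\ant}$ gives $E_{\EndO(\mathcal F)}\cong E'_{\EndO(\mathcal F)}$ directly; in general one reduces to this situation by considering $\mathcal F\oplus \mathcal F'$, splitting $\EndO(\mathcal F\oplus\mathcal F')$ into its four blocks and comparing the two associated induced spaces block by block.

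The main obstacle I expect is to ensure that all the sheaf-theoretic manipulations used implicitly in the vector bundle argument (pullbacks along translations, tensor products of graded morphisms, passage between $\Hom$ and composition of $T$-points) make sense at the level of arbitrary sheaves in $\sAmodgr$; in particular, the identification of a composition $g\circ(t_a\times\id)^*f$ as an element of $\Homgr$ of the right degree requires repeated use of base change compatibilities in the duoidal category, and the descent steps require controlling that fpqc coverings respect the structure of graded morphisms. Once this bookkeeping is in place, the structural argument is exactly the one already carried out for vector bundles. \qed
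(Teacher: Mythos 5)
Your proposal is correct and follows essentially the same route as the paper, which simply replicates the proof of Lemma \ref{lem:homgrhomog}: the composition morphism $\varphi$ descends through the $\AutO(\mathcal F')$--quotient, injectivity and surjectivity are checked on fpqc points exactly as you describe, and the final isomorphism is obtained via the Rosenlicht decomposition of $\Authbext(\mathcal F)$ together with the block decomposition of $\EndO(\mathcal F\oplus\mathcal F')$. No substantive difference.
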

    \pf
    We replicate the proof of Lemma \ref{lem:homgrhomog}.
    
Let $\varphi :\Autgr(\mathcal F')\times \HomO(\mathcal F,\mathcal
F')\to \Homgr(\mathcal F, \mathcal F')$ the
morphism of fpqc sheaves given by composition. Then clearly $\varphi$
is $\AutO(\mathcal F)$--invariant, and therefore induces a morphism of fpqc sheaves
$\phi: R_{\HomO(\mathcal F,\mathcal F')}\to \Homgr(\mathcal
F,\mathcal F')$.

Let  $y_1:T\to \Autgr(\mathcal F')\times \HomO(\mathcal F,\mathcal
F')$ and  $y_2:T\to \Autgr(\mathcal F')\times \HomO(\mathcal F,\mathcal
F')$ be   such that $\varphi(T)(y_1)=\varphi(T)(y_2)\in \Homgr(\mathcal F,\mathcal F')(T)$. 
Define 
$f_j:=p_1(T)(y_j) \in \Autgr(\mathcal F')(T)$,  $\ell_j:=d\smallcirc p_1(T) (y_j) \in A(T)$,
$ g_j:=p_2(T)(y_j)\in \HomO(\mathcal F,\mathcal F')(T) $ and $
\varphi(T)(y_j)= (f_j\smallcirc g_j, \ell_j)\in \Homgr(\mathcal F,\mathcal
F')(T)$, for $j=1,2$.

By hypothesis we have $\ell_1= \ell_2$ and  $ f_1\smallcirc g_1 = f_2\smallcirc g_2$, but since the $f_j$'s are invertible, we get
$g_2=f_2^{-1}\smallcirc f_1\smallcirc g_1$, and obviously $f_2= f_1\smallcirc (f_2^{-1}\smallcirc f_1)^{-1}$ with $f_2^{-1}\smallcirc f_1\in \AutO(\mathcal F')(T)$.

On the other hand, given $(f,\ell)\in \Homgr(\mathcal F, \mathcal F')(T)$, since the sequence
\[
1\to\AutO(\mathcal F')\to \Autgr(F')\to A \to 0
\]
is exact  there exist $h:T'\to T$ fpqc and
$g:T'\to \Autgr(\mathcal F')$ such that $d\smallcirc g=\ell\smallcirc h = \ell|_{_{T'}}$. In particular,
$g:t_{\ell|_{_{T'}}}^*\mathcal F_{T'}\to \mathcal F_{T'}$
is invertible, therefore $f\smallcirc g^{-1}\in\HomO(\mathcal F,\mathcal
F')(T')$, $g\in \Autgr(\mathcal F')(T')$ and $\varphi(g,f\smallcirc g^{-1})
= (f,\ell)|_{_{T'}}$, showing that $\varphi$ is locally surjective. Since
both sides are sheaves on the fpqc topology, they turn out to be
isomorphic and 
$R_{\HomO(\mathcal F,\mathcal F')}$ represents the functor
$\Homgr(\mathcal F,\mathcal F')$ (see Lemma \ref{lem:homgrhomog}). The claim for $L_{\HomO(\mathcal
  F,\mathcal F')}$ follows in a similar way. The isomorphism of
representing schemes is now obvious. 
\qed

\begin{rem}
Let $\mathcal F\hsAmod$ be a  homogeneous sheaf. Then $\Autgr(\mathcal
F)$ is a quasi compact group scheme and form the proof of Lemma
\ref{lem:homgrhomogsheaves} above we deduce that    $L_{\HomO(\mathcal
  F,\mathcal F')}$ represents the functor $\Homgr(\mathcal F,\mathcal
F')$, regardless on whether $\mathcal F'$ is homogeneous or
not. Similarly if $\mathcal F'$ is homogeneous, then
$R_{\HomO(\mathcal F,\mathcal F')}$ represents the functor
$\Homgr(\mathcal F,\mathcal F')$, regardless on whether $\mathcal F$
is homogeneous or not. The existence of a geometric structure for $\Homgr(\mathcal
F,\mathcal F')$ when neither of the sheaves is homogeneous (i.e.~the
representability of this functor) remains open.
\end{rem}

We finish this paragraph studying the relationship between homogeneous
vector bundles and homogeneous sheaves --- in particular, we present
the correspondence between 
homogeneous vector bundles and homogeneous locally free sheaves of
finite rank as an equivalence of categories.

\begin{lem}
  \label{lem:vbspecgr}
In the above situation, the equivalence  $\VBSpec:
C_{\operatorname{lf}}\sAmod\to \VB_0(A)$ restricts to an equivalence $\VBSpec|_{_{C_{\operatorname{lf}}}\hsAmod} :
C_{\operatorname{lf}}\hsAmod\to \HVB_0(A)$. Moreover, $\VBSpec|_{_{C_{\operatorname{lf}}\hsAmod}}$ extends (as an
equivalence) to $\VBSpecgr: C_{\operatorname{lf}}\hsAmodgr\to \HVBG(A)$. 
\end{lem}
\pf
The result follows immediately from Lemma \ref{lem:autgrfpqcseq}  and Remark
\ref{rem:casolf}.\qed

\begin{cor}
\label{coro:homogeneity}
A vector bundle $E\to A$ is homogeneous if and only if $\PP(E)\in
C\SimA$ is  
homogeneous, and this happens 
if and only if its corresponding coherent, locally free sheaf $\mathcal
F_E$ is homogeneous.

In particular, if $\mathcal H$ is a flat Hopf sheaf, then any coherent $\mathcal
H$--comodule is homogeneous.
\end{cor}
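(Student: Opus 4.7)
The plan is to deduce everything from the graded equivalence $\VBSpecgr:CFH\sAmodgr\to\HVBG(A)$ proved in Lemma \ref{lem:vbspecgr}, the adjunction $\Spec\dashv\PP$ restricted to graded morphisms, and Proposition \ref{prop:actandcomo1}.

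First I would prove the equivalence $E$ homogeneous $\Leftrightarrow$ $\mathcal F_E$ homogeneous. By Lemma \ref{lem:vbspecgr}, $\VBSpecgr$ is an equivalence of categories enriched over $\Sch|\Bbbk$, and in particular it induces a natural isomorphism of group functors $\Autgr(\mathcal F_E)\cong \Autgr(E)$ over $A$; thus one morphism $d:\Autgr(-)\to A$ is faithfully flat if and only if the other is, giving the equivalence. Note also that for $\mathcal F\in CF\sAmod$ there is a canonical isomorphism $\Autgr(\mathcal F)\cong\Autgr(\mathcal F^\vee)$ induced by taking duals, so homogeneity is preserved under the duality functor; this will be used in the last step.

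Next, for the equivalence involving $\PP(E)=\mathcal S(\mathcal F_E^\vee)$, I would argue as follows. On the one hand, the $\Spec/\PP$ adjunction of Section \ref{sect:quasicompandsheaves} restricts to the (op)-equivalence of Proposition \ref{prop:affschqcs} between $\Schaaff$ and $Q\sAalg$, and it is immediate from the construction that $\PP$ and $\Spec$ are compatible with base change by $(t_a,\id_T)$ and thus extend to equivalences of the corresponding categories with graded morphisms --- this can be justified using Proposition \ref{prop:basechange} exactly as in the proof of Lemma \ref{lem:vbspecgr}. On the other hand, graded automorphisms of the symmetric algebra $\mathcal S(\mathcal F_E^\vee)$ as a sheaf of $\mathcal O_A$-algebras are determined by their restriction to the degree--one summand $\mathcal F_E^\vee$, giving a canonical isomorphism $\Autgr(\PP(E))\cong\Autgr(\mathcal F_E^\vee)\cong\Autgr(\mathcal F_E)$. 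Combining this with the previous step yields $\PP(E)$ homogeneous $\Leftrightarrow$ $\mathcal F_E$ homogeneous $\Leftrightarrow$ $E$ homogeneous.

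Finally, for the ``In particular'' statement, let $\mathcal H$ be a flat Hopf sheaf. By Theorem \ref{thm:hopssheaf=affext}, $\mathcal H=\mathcal H_q$ for an affine extension $\mathcal S: q:G\to A$. If $\mathcal F$ is a coherent $\mathcal H$--comodule, then by Proposition \ref{prop:actandcomo1} the object $\VBSpec(\mathcal F)=\Spec\bigl(\mathcal S(\mathcal F^\vee)\bigr)\in\RepO(\mathcal S)$, so by definition $\VBSpec(\mathcal F)$ is a homogeneous vector bundle over $A$. Since $\VBSpec(\mathcal F)$ corresponds to $\mathcal F^\vee$ under the equivalence $\VBSpec$, the first part of the corollary applied to this vector bundle gives that $\mathcal F^\vee$ is a homogeneous sheaf, and then the duality isomorphism $\Autgr(\mathcal F)\cong\Autgr(\mathcal F^\vee)$ recorded above shows that $\mathcal F$ itself is homogeneous. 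The main technical point --- and the only place requiring care --- is the extension of the $\Spec/\PP$ adjunction to graded morphisms, since one must verify that an $\mathcal O_A$-algebra automorphism of $\PP(E)=\mathcal S(\mathcal F_E^\vee)$ compatible with translation by $a\in A(T)$ is determined by, and arises from, a graded automorphism of the generating subsheaf $\mathcal F_E^\vee$; this uses both the universal property of the symmetric algebra and the compatibility of $\PP$ with base change along $(t_a,\id_T)$.
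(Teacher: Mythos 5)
Your proposal is correct and follows essentially the same route as the paper, whose proof simply cites Lemma \ref{lem:vbspecgr} and Proposition \ref{prop:actandcomo1}; you have merely made explicit the details the paper leaves implicit (the identification of the $\Autgr$ functors under $\VBSpecgr$, the compatibility of homogeneity with dualization, and the reduction of graded algebra automorphisms of the symmetric algebra to its degree-one part).
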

\pf
This a direct consequence of Lemma
\ref{lem:vbspecgr} and    Proposition
\ref{prop:equicomodrep0}.
\qed

\subsection{Linearization of sheaves}\ %
\label{subsect:linerarishaeves}

We begin by recalling the definition of the category of
$G$--linearized sheaves (see \cite[page 30]{kn:GIT} and \cite[Tag
  03LE]{kn:stackproj}). We work over schemes defined over $\Bbbk$ and
the sheaves will be in general quasi--coherent $\mathcal O_X$--modules
but the definitions can be performed for general $\mathcal O_X$--modules.

 Let $G$ be a group scheme with $m,e_G$ the multiplication and the unit
 of the group, assume $X$ a $G$--scheme with an  action $a:G \times X \to
 X$.

 In order to simplify notations, we  display the following (rather
 obvious) commutation
 relations:

\begin{enumerate}

\item $ a\smallcirc (\id_G\times a)= a\smallcirc (m\times \id_X):G\times
  G\times X\to X$ and $a\smallcirc( e_G\times \id_X)=
  p_2:\Spec(\Bbbk)\times X\to X$ (that is, $a$ is an action).

\item $a\smallcirc p_{23}= p_2\smallcirc (\id_G\times a): G\times G\times X\to
  X$, where $p_{ij}:X_1\times X_2\times X_3\to X_i\times X_j$ denotes
  the canonical projection. 

  \item  $ p_2\smallcirc (\id_G\times p_2)= p_2\smallcirc (m\times \id_X)=p_3:G\times
  G\times X\to X$ and $p_2\smallcirc( e_G\times \id_X)=
  p_2:\Spec(\Bbbk)\times X\to X$ (that is, the canonical projection $p_2$ is trivial action).

\end{enumerate}

\begin{defn}
  \label{def:Gbundle}
Let $G$ be a group scheme,  $X$  a $G$--scheme and
  $\mathcal F$ a  sheaf of  $\mathcal O_X$-modules; denote the
  $G$--action of $G$ on $X$ by $a:G\times X\to X$.

  A \emph{$G$--linearization of the sheaf}  $\mathcal F$ (or a
  linearization {\em compatible with the action $a: G \times X \to X$}) is  an  isomorphism of
  sheaves of $\mathcal O_{G\times X}$--modules  $\Phi:
  a^*(\mathcal F)\to p_2^* (\mathcal F)$ such that:

\noindent (1) The diagram below is commutative:
  \[\resizebox{\displaywidth}{!}{%
  \xymatrix@C=-1.75pc{(\id_G \times a)^*a^*(\mathcal F) = (m \times
    \id_X)^*a^*(\mathcal F)\ar[rdd]_{\hspace{-1.5pc}(\id_G \times
      a)^*(\Phi)}\ar[rr]^(.52){(m \times \id_X)^*(\Phi)}&&(m\times
    \id_X)^*p_{2}^*(\mathcal F)=p_3^*(\mathcal F)= p_{23}^*p_2^*(\mathcal F)\\&&\\
    &(\id_G \times a)^*p_2^*(\mathcal F)=p_{23}^*a^*(\mathcal F)
    \ar[ruu]_{\hspace{1pc}p_{23}^*(\Phi)}&}}\]

  \noindent (2) The pullback by the morphism of schemes $e_G\times
  \id_X: \Speck \times X \to G \times X$ of the diagram $\Phi:
  a^*(\mathcal F)\to p_2^* (\mathcal F)$ yields $(e_G\times
  \id_X)^*(\Phi)=\id : \mathcal F \to \mathcal F$.
\end{defn}

\begin{rem}
  Conditions (1) and (2) of Definition
\ref{def:Gbundle} are the same than the ``cocycle condition'' stated originally in GIT
\cite[\S 1.3]{kn:GIT}.
\end{rem}

 Next we define the category of $G$--linearized sheaves for a
 $G$--scheme $X$ with a fixed action $a: G \times X \to X$. 

 \begin{defn} If $a:G\times X\to X$ is an action of the group scheme
   $G$ on the scheme $X$,  define $\operatorname{G-Q}\sXmod$, the \emph{category of $G$--linearized
   sheaves for the $G$--scheme $X$},  as having: 
   
 \noindent (1) {\tt as objects:} the  pairs $(\mathcal F, \Phi)$,
 where 
 $\mathcal F$ is a sheaf of quasi-coherent $\mathcal O_X$--modules and $\Phi$ a
 linearization of $\mathcal F$ as defined above;

\noindent (2) {\tt as morphisms:} the morphisms from 
 $(\mathcal F, \Phi)$ to $(\mathcal F',\Phi')$, 
 are the morphisms $f\in \Hom_{Q\sXmod}(\mathcal F,\mathcal F')$
 such that the following diagram is
commutative
\[
\xymatrix{ a^*(\mathcal F)\ar[r]^-{\Phi}\ar[d]_(.45){a^*(f)} &
  p_2^* (\mathcal F)\ar[d]^(.45){p_2^*(f)}\\ a^*(\mathcal
  F')\ar[r]^-{\Phi'} & p_2^* (\mathcal F'). }
\]
\end{defn}

\begin{defn}
  \label{defi:linsheaf}
  Let $\mathcal S$: $q:G\to A$ be an affine extension. Then $G$ acts on $A$
by $a_q=s\smallcirc (q\times id_A):G\times A\to A$. An \emph{$\mathcal
  S$--linearized sheaf} or \emph{$q$--linearized sheaf} is a pair $(\mathcal F, \Phi)$, where $\mathcal F$  a
quasi-coherent sheaf of $\mathcal O_A$--modules and $\Phi: a_q^*(\mathcal F)\to
p_2^* (\mathcal  F)$ is a 
 $G$--linearization compatible with $a_q$.

Given two $q$--linearized sheaves    $\mathcal F, \mathcal
F'$, then a \emph{morphism of $q$--linearized sheaves} is
a morphism $f:\mathcal F\to \mathcal F'$ of $G$--linearized sheaves
with respect to $a_q$.

The \emph{category $\operatorname{q-}Q\sAmod$
   of quasi-coherent $q$--linearized
  sheaves} has as
objects the quasi-coherent $q$--linearized sheaves,
as morphisms $\Hom_{\operatorname{q-}Q\sAmod}(\mathcal F,\mathcal F')$ the
morphisms of $q$--linearized sheaves. We denote
$\operatorname{q-}C_{\operatorname{lf}}\sAmod\subset
  \operatorname{q-}Q\sAmod$ the  full subcategory with objects the
  locally free sheaves of finite rank.
\end{defn}

\begin{rem}\label{rem:linearizations}
  Let $\mathcal S$: $q:G\to A$ be an affine extension.
  
   \noindent (1)  It is easy to see that  kernels, images, cokernels
   of homomorphisms of $q$-linearized sheaves as well as tensor
   products and 
symmetric powers (for the Hadamard monoidal structure $\otimes_A$) 
of $q$-linearized sheaves inherit $q$-linearizations in a natural way.

\noindent (2) Clearly, if $\mathcal F\in
C_{\operatorname{lf}\sAmod}$ is $q$--linearized, $\mathcal F^\vee$
inherits a $q$-linearization in a natural way.

\noindent (3) If $X$ is a $G$--scheme of finite type and $f:X\to A$ is
a $G$--equivariant morphism, then  a $q$--linearization on $\mathcal F\in
Q\sAmod $ induces a 
$G$--linearization on $f^*\mathcal F\in Q\sXmod$ (see
\cite[p.~94]{kn:huybrechtslehn}). 

\noindent (4) A direct generalization of the considerations in
\cite[\S 1.3]{kn:GIT} shows that a $q$--linearization in  $\mathcal
  F\in Q\SimA$ induces a $G$--action  $a:G\times \Spec(\mathcal F)\to \Spec(\mathcal F)$, such
  that the following diagram is commutative
  \[
    \xymatrix{
      G\times \Spec(\mathcal
      F)\ar[r]^-a\ar[d]_{q\times\pi}&\Spec(\mathcal F)\ar[d]^\pi\\
      A\times A \ar[r]^-s& A
      }
    \]
     In other words, a
    $q$--linearization  of $\mathcal F$ induces on $\Spec(\mathcal F)$
    a structure of $q$--module in the duoidal category    $(\Schaqc,
    \wtimes, \uwtimes, \times_A, \uAtimes)$ --- that is,  an action $a: q\wtimes
    \Spec(\mathcal F)\to \Spec(\mathcal F)$. 

    Conversely, if $x:X\to A$ is a $q$--module, then the action
    $a:q\wtimes x\to x$ induces a $q$--linearization on $\PP(x)$.

   In  \cite[p.~94]{kn:huybrechtslehn} the reader will find  a proof
   (where
   $G$ is assumed of finite 
    type) that  is valid in our context.

    \noindent (5) In particular, we deduce from (2) and (4) that if a sheaf
    $\mathcal F\in C_{\operatorname{lf}}\sAmod$  admits a
    $q$--linearization, then it induces a structure of $q$-module on
    $\VBSpec(\mathcal F)$. Conversely, if $(\pi:E\to
    A)\in \RepO(\mathcal S)$, then the action $a:q\wtimes \pi\to \pi$
    induces a $q$--linearization on $\PP(\pi)$. It is easy to show that
   if $f\in \Hom_{\operatorname{q-}C_{\operatorname{lf}}\sAmod}(\mathcal F,\mathcal G)$,
   then $\VBSpec(f)\in \HomO\bigl(\VBSpec(\mathcal F),\VBSpec(\mathcal
   G)\bigr)$, and that  we have
   an equivalence between $\operatorname{q-}C_{\operatorname{lf}}\sAmod$ and
   $\RepO(\mathcal S)$.
\end{rem}

 We proceed now to establish the notion
of graded morphisms of $\mathcal 
S$--linearized sheaves. Before doing so,  it is convenient to set some
equalities 
between different pull-backs involved it the mentioned definition. 

\begin{rem}
\label{rem:indlin}
  \noindent (1)  Let $\mathcal S$: $q:G\to A$ be an affine extension
 and $\mathcal F\in Q\sAmod$  a quasi-coherent sheaf of $\mathcal O_A$--modules. Let
 $T$ be a $\Bbbk$--scheme and 
 $\ell:T\to A$  a $T$--point. From  the
  equalities of morphisms $G\times A\times T\to A$:
  \begin{align*}
    p_1\smallcirc (a_q\times\id_T)  &  =a_q\smallcirc p_{12}\\
p_1\smallcirc p_{23} & = p_2\smallcirc p_{12}\\
   p_2\smallcirc p_{12}(\id_G \times t_{\ell}) &=p_1\smallcirc p_{23}\smallcirc (\id_G \times t_{\ell})= p_1\smallcirc t_{\ell}\smallcirc p_{23}
  \end{align*}
  and the equality
  \[
    t_{\ell}\smallcirc (a_q \times \id_T)  =(a_q \times \id_T) \smallcirc (\id_G \times t_\ell): G\times A\times T\to A_T=A\times T
  \]
       we deduce --- in the same order --- the following equalities of sheaves of $\mathcal
  O_{G\times A\times T}$--modules (recall that $\mathcal F_T=p_1^*\mathcal F$):
  \begin{equation}
    \label{eq:glinear}
    \begin{aligned} (a_q \times\id_T)^*(\mathcal F_T) & =
        p_{12}^* (a_q^* \mathcal F)= a_q^*(\mathcal
        F)_T\\
        p_{23}^*(\mathcal F_T) & = p_{12}^*\bigl(p_2^*(\mathcal
        F)\bigr)\\
        p_{23}^*(t_{\ell})^*(\mathcal F_T) &=(\id_G \times t_{\ell})^*p_{23}^*(\mathcal F_T)= (\id_G \times t_{\ell})^*p_{12}^*\bigl(p_2^*(\mathcal F)\bigr)\\
              (a_q\times\id_T)^*t_{\ell}^*(\mathcal F_T) & =
                 (\id_G\times t_{\ell})^* \bigl((a_q^*\mathcal
                 F)_T\bigr)
       \end{aligned}
  \end{equation}

  \noindent (2) In the situation above, suppose now that $\mathcal F$
  is  linearized with respect to $a_q$, and consider the  action
  $a_q\times \id_T: G\times A\times T\to A\times T$. Then the
  linearization map $\Phi: a_q^*\mathcal F \to 
  p_2^*\mathcal F \in \mathcal O_{G \times A}$ induces (by applying
  $p_{12}^*$) a   linearization map on $\mathcal F_T\in QA_T\mathrm{-mod}$ given as: $p_{12}^*\Phi: (a_q
  \times \id_T)^* \mathcal F_T \to p_{23}^* \mathcal F_T$. In what follows we will sometimes write $\Phi_{\mathcal F}$ instead of
  just $\Phi$, specially if there is more than one sheaf under consideration. 
  
  If we take a $T$--point $\ell: T \to A$, and apply to the above
  linearization for $\mathcal F_T$, the functor $(\id_G \times
  t_\ell)^*$ we obtain a linearization for $t_{\ell}^*\mathcal F_T\in Q\sATmod$ as:
  \[
    (\id_G \times t_\ell)^*p_{12}^*(\Phi_{\mathcal
    F}):(a_q\times\id_T)^*(t_{\ell})^*(\mathcal F_T) \to p_{23}^*
  (t_{\ell})^*(\mathcal F_T).
  \]

This is an easy consequence of the formul\ae\ established just above. 

\end{rem}

\begin{defn}
Let $\mathcal S$: $q:G\to A$ be an affine extension and      $\mathcal
F, \mathcal 
F'\in Q\sAmodgr$ two $q$--linearized sheaves. The \emph{functor of
  graded morphisms of $q$--linearized sheaves}   is the 
subfunctor $\Hom_{\operatorname{q-}Q\sAmodgr}(\mathcal  F,\mathcal F')\subset \Homgr(\mathcal  F,\mathcal F')$ given as follows:
$(f,\ell)\in  \Homgr (\mathcal  F,\mathcal F')(T)$ belongs to
$\Hom_{\operatorname{q-}Q\sAmodgr}(\mathcal  F,\mathcal F')(T)$ if the
following  diagram of sheaves on the $T$--scheme $G\times A\times T$ is
  commutative
  \[
 \xymatrix{
   (a_q\times\id_T)^*t_\ell^*(\mathcal F_T)
   \ar[rrr]^-{(\id_G\times t_\ell)^*p_{12}^*(\Phi_{\mathcal F})}
   \ar[d]_{(a_q\times
   \id_T)^*(f)}&&& p_{23}^* t_\ell^*(\mathcal F_T)\ar[d]^{p_{23}^*(f)}\\ 
   (a_q\times \id_T)^*(\mathcal
   F'_T)\ar[rrr]^-{p_{12}^*\Phi_{\mathcal F'}}&& &p_{23}^* (\mathcal F'_T)
 }
 \]
where we used the equalities \eqref{eq:glinear}.

\end{defn}

\begin{defn}
Let $\mathcal S$: $q:G\to A$ be an affine extension. The
\emph{(enriched) category $\operatorname{q-}Q\sAmodgr$ of $q$--linearized
(or \emph{$\mathcal S$--linearized})  sheaves with graded morphisms} 
has as objects the $\mathcal 
S$--linearized sheaves and as morphisms the functor of graded
morphisms of $\mathcal S$--linearized sheaves.
\end{defn}

We finish this section by showing the relationship between the
concepts of homogeneous  and $\mathcal S$--linearized shaves.

  \begin{lem}
    \label{lem:glinhomog}
  Let $\mathcal S$:  $q:G\to A$ be an affine  extension and $\mathcal
  F$  a $\mathcal S$--linearized sheaf. Then $\mathcal F$ is homogeneous.   

Conversely, let $\mathcal F\in \hsAmod$ be a homogeneous sheaf. Then
$\mathcal F$ admits an $\Authbext (\mathcal F)$--linearization.
\end{lem}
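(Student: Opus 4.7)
My plan is to interpret an $\mathcal S$--linearization of $\mathcal F$ as precisely the data of a morphism of group schemes $\rho\colon G\to \Autgr(\mathcal F)$ lifting $q$ through $d$; once this correspondence is set up, the two directions of the lemma become a matter of bookkeeping with the identifications involved in Definition \ref{defn:hsgraded1}.

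For the forward implication, given an $\mathcal S$--linearization $\phi\colon\varphi^*\mathcal F\to p_2^*\mathcal F$ on $G\times A$, I would construct a natural transformation $\rho\colon G\to\Autgr(\mathcal F)$ of group functors as follows. For each $\Bbbk$--scheme $T$ and each $T$--point $g\in G(T)$, pulling back $\phi$ along $(g\times\id_A)\colon T\times A\to G\times A$ and transposing via the swap $T\times A\cong A\times T$ yields an isomorphism
\[
\rho(T)(g)\colon (t_{q\circ g}\times\id_T)^*\mathcal F_T\longrightarrow \mathcal F_T
\]
of sheaves on $A_T$, because $\varphi\circ(g\times\id_A)$ factors as translation by the $T$--point $q\circ g\in A(T)$ on the $A$--factor. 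The pair $\bigl(\rho(T)(g),t_{q\circ g}\bigr)$ thus defines an element of $\Autgr(\mathcal F)(T)$, and conditions (1) and (2) of Definition \ref{def:Gbundle} translate directly into the assertions that $\rho$ preserves the group law and the unit element. Since $\Autgr(\mathcal F)$ is representable by a smooth quasi-compact group scheme (Section \ref{sect:homogsheves}), $\rho$ is a morphism of group schemes, and $d\circ\rho=q$ by construction. As $\mathcal S$ is an affine extension, $q$ is faithfully flat and hence surjective on geometric points, so $d$ must be surjective, which by Corollary \ref{cor:surimpliesflat} forces $d$ to be faithfully flat. Hence $\mathcal F$ is homogeneous.

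For the converse, set $H:=\Autgr(\mathcal F)$, which is now part of an affine extension $\Authbext(\mathcal F)$. I would exhibit the required linearization by evaluating the functor $\Autgr(\mathcal F)$ at its own universal point: the identity $\id_H\in H(H)$ corresponds by Definition \ref{defn:hsgraded1} to a pair $(\widetilde{\id_H},t_d)$, where $d\in A(H)$ is the structural morphism and
\[
\widetilde{\id_H}\colon (t_d\times\id_H)^*\mathcal F_H\longrightarrow \mathcal F_H
\]
is an isomorphism of sheaves on $A\times H$. Noting that the canonical action $\psi:=s\circ(d,\id_A)\colon H\times A\to A$ satisfies $\psi=p_A\circ(t_d\times\id_H)\circ\mathrm{swap}$, the transpose of $\widetilde{\id_H}$ via the swap $H\times A\cong A\times H$ is the candidate isomorphism $\phi\colon\psi^*\mathcal F\to p_2^*\mathcal F$. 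The unit axiom is immediate, since the identity of $H$ lies in degree $0$. The cocycle axiom on $H\times H\times A$ is a restatement of the equality $\mathrm{pr}_1\cdot\mathrm{pr}_2=m$ between $(H\times H)$--points of $H$: expanding the left-hand side through the composition rule $(f_1,t_{a_1})\circ(f_2,t_{a_2})=(f_1\circ t_{a_1}^*f_2,t_{a_1+a_2})$ of Definition \ref{defn:variashaces} produces exactly the pullback diagram describing the cocycle, which agrees with the pullback under $m$ by the very definition of the multiplication in $\Autgr(\mathcal F)$. The main subtlety—and the step that I expect will require the most care—is reconciling the two conventions involved (Definition \ref{def:Gbundle} puts the group on the left of the action, whereas Definitions \ref{defn:hsgraded1}--\ref{defn:variashaces} place $T$ on the right of $A$); once these swap and pullback identifications are consistently fixed, both halves of the lemma reduce to the tautology that, under the correspondence established in the forward direction, the natural transformation $\rho$ attached to this $\phi$ is $\id_H$.
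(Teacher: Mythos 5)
Your proposal is correct, but it takes a genuinely different route from the paper, and in one respect does more than the paper's own argument. For the forward direction the paper argues pointwise: given a geometric point $a\in A\bigl(\overline{\Bbbk}\bigr)$, it lifts $a$ to $g\in G\bigl(\overline{\Bbbk}\bigr)$ using surjectivity of $q$, restricts the linearization isomorphism $\varphi^*(\mathcal F)\cong p_2^*(\mathcal F)$ to $\{g\}\times A$ to obtain $t_a^*\mathcal F_{\overline{\Bbbk}}\cong \mathcal F_{\overline{\Bbbk}}$, and then invokes the geometric-point criterion for homogeneity (the remark following Definition \ref{defn:homogsheaf}, via Corollary \ref{cor:surimpliesflat}) --- a two-line verification. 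You instead upgrade the linearization to a full morphism of group functors $\rho:G\to\Autgr(\mathcal F)$ with $d\circ\rho=q$ and deduce surjectivity, hence faithful flatness, of $d$; this costs more functor-of-points bookkeeping (and uses the representability of $\Autgr(\mathcal F)$, which the paper does establish just before Definition \ref{defn:homogsheaf}), but it avoids base change to $\overline{\Bbbk}$ and yields as a by-product the morphism of affine extensions $\mathcal S\to\Authbext(\mathcal F)$, mirroring what the paper does for vector bundles in Lemma \ref{lem:homfibbun}. For the converse, the paper's displayed proof in fact says nothing at all --- only the forward direction is argued before the \textsc{qed} --- whereas your evaluation of $\Autgr(\mathcal F)$ at its universal point $\id_H\in H(H)$, with the cocycle condition read off from the composition law of Definition \ref{defn:variashaces}, is a correct and complete way to supply the missing half. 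The swap/pullback identifications you flag are indeed the only delicate point, and you handle them consistently.
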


\proof
If  $\ell\in A(T)$ then there exists a  fpqc morphism  $f:T'\to T$
and $g\in G(T')$ such that $q\smallcirc g= \ell\smallcirc f$.
Let  $a_{T}:G\times A\times T\to A\times T$ and $a_{T'}:G\times
A\times T'\to A\times T'$ be the actions induced by  
$a_q$. Then the linearization
$\Phi: a_q^*(\mathcal F)\cong p_2^*(\mathcal F) \in \mathcal O_{G
  \times A}$ induces a $G$--linearization
\[
\Psi: a_{T'}^*(\mathcal  F_{T'})\cong p_{23}^*(\mathcal F_{T'}) \in \mathcal O_{G
  \times A\times T'}
  \]
\emph{via} the $G$--equivariant morphism $p_2\smallcirc (\id_A\times
f): A\times
T'\to A$ --- here we use  remarks \ref{rem:indlin} (2) and
\ref{rem:linearizations}(3)).

Now consider the following commutative diagram of $T'$--schemes:
\begin{equation}
  \label{eqn:forlin}
\raisebox{8.5ex}{  {\xymatrixrowsep{1.5pc}\xymatrixcolsep{3pc}
  \xymatrix{
& & & A\times T'\ar[dd]^{t_{\ell\circ f}}\\
    A\times
    T'\ar[rr]^(0.52){(g \circ p_2, \id_{A\times T'})}
    \ar@/^0.75pc/[rrru]^{\id_{A\times T'}}\ar@/_0.75pc/[rrrd]_{t_{\ell\circ
       f}} &  & G\times
    A\times T'\ar[ru]^{p_{23}}\ar[rd]_{a_{T'}} & \\
   & &&A\times T'
}}}
  \end{equation}

  It follows from diagram \eqref{eqn:forlin} that
  $\Psi:a_{T'}^*\mathcal F_{T'}\to p_{23}^*\mathcal F_{T'}$ induces an
  isomorphism
  \[
      \psi: (g \smallcirc p_2, \id_{A\times T'})^*a_{T'}^*\mathcal F_{T'} \cong
  t_{a\circ\sigma}^* \mathcal F_{T'}\to (g \smallcirc p_2,
  \id_{A\times T'})^*p_{23}^*\mathcal F_{T'} \cong \mathcal F_{T'}.
\]

 It follows that 
  $(\psi, \ell \smallcirc f)\in \Autgr(\mathcal F)(T')$ and is such that 
  $d(\psi, \ell\smallcirc f)=\ell\smallcirc f$. We deduce that $d$ is
  surjective in the fpqc topology, and therefore $\mathcal F$ is
  homogeneous.

In order to prove the converse,   assume  that $\mathcal F\in Q\sAmod$
is homogeneous. Then $\Authbext (\mathcal F)$ is an affine extension
of $A$. In particular, $d:\Autgr(\mathcal F)\to A$ is a fpqc
morphism, and it follows from the homogeneity applied to the $\Autgr(\mathcal
  F)$--point $d$ that there exists an isomorphism $\varphi :
t_d^*\mathcal F_{\Autgr(\mathcal F)}\cong \mathcal F_{\Autgr(\mathcal
  F)}$.

Let $\sigma: \Autgr(\mathcal F)\times A  \to A\times \Autgr(\mathcal
F)$ be the switch morphism. Then
$a_d=p_1\smallcirc t_d\smallcirc \sigma:  \Autgr(\mathcal F)\times
A\to A$. Since  $\mathcal F_{\Autgr(\mathcal
  F)}=p_1^*\mathcal F\in QA_{\Autgr(\mathcal F)}\operatorname{-mod}$,
it follows that $\Psi= \sigma^*\varphi: 
a_d^*\mathcal F\to p_2^*\mathcal F\in Q\bigl(\Autgr(\mathcal F)\times
A)\operatorname{-mod}$ is the seeked $d$--linearization.\qed

\subsection{The category of sheaf representations of an affine  extension}\ %
\label{sect:repassheaves}

\begin{defn}
  \label{defi:Slinsheaf}
Let $\mathcal S$: $q:G\to A$ be an affine  extension of the abelian variety
$A$.

\noindent (1) The \emph{category $\ShRepgr{q}$ (or $\ShRepgr{\mathcal
    S}$)  of sheaf 
  representations of $\mathcal S$} is the full subcategory of
$Q\hsAmodgr(\mathcal S)$  with objects the  quasi-coherent, flat $\mathcal S$--linearized 
sheaves --- recall that,  by Lemma \ref{lem:glinhomog}), the $\mathcal
S$--linearized sheaves are homogeneous.

\noindent (2) We denote as $\ShRepgrfin{q} \subset \ShRepgr{\mathcal
  S}$ the full subcategory of coherent, flat (necessarily locally
  free)  sheaves --- in other words, the objects of
  $\ShRepgrfin{q}$ are the sheaf
representations that are locally free, of finite rank. We
will denote also $\ShRepgrfin{\mathcal S}=\ShRepgrfin{q}$.

  \noindent (3) The subcategory $\ShRepO{q}\subset \ShRepgr{\mathcal
    S}$ is defined by taking as  morphisms the pairs
$(f, 0)\in
 \Hom_{\ShRepgr{\mathcal S}}(\mathcal S)(\mathcal F,\mathcal G)$
  belonging to $\HomO(\mathcal
  F,\mathcal G)$. We will also denote $\ShRepO{\mathcal S}=\ShRepO{q}$.

  \noindent (4) The subcategory $
\ShRepOfin{q}=\ShRepOfin{\mathcal S}\subset \ShRepO{q}$ is defined as the
full subcategory of coherent sheaf representations (i.e.~locally free
of finite rank) of $\mathcal S$
with morphisms the pairs $(f, 0)$.   Notice that $
\ShRepOfin{q}$ can be seen also as a subcategory of
$\ShRepgrfin{q} $. 
\end{defn}

\begin{defn}
  \label {defi:rationalsheafrep}
  A $\mathcal S$--sheaf representation $\mathcal F$ is \emph{rational}
if there exists a filtered system of coherent subrepresentations
$\mathcal F_\alpha\subset \mathcal F_\beta\subset \mathcal F$, of
finite rank $n_\alpha$, such that $\mathcal F \cong
\operatorname{colim}_\alpha \mathcal F_\alpha$.
  In relation with the
categories considered in Definition \ref{defi:Slinsheaf} when we
restrict the objects to the rational sheaves, we add the prefix $r$ to the notations
e.g. we write $\ShRepgr{rq}$, etc.
\end{defn}

\begin{thm}
\label{thm:ratalg1}
Let $\mathcal S$:  $q:G\to A$ be an affine extension. 
Then the
equivalence of categories $\VBSpecgr : C_{\operatorname{lf}}\sAmodgr\to \HVBG(A)$ (see
Lemma \ref{lem:vbspecgr})
induces  an equivalence 
$\ShRepgrfin{\mathcal S} \cong \Rep(\mathcal S)$.
\end{thm}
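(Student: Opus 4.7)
\emph{Proof plan.} The strategy is to upgrade the equivalence of Lemma \ref{lem:vbspecgr} by keeping track of the extra structure. By Lemma \ref{lem:vbspecgr}, the functor $\VBSpecgr: CFH\sAmodgr \to \HVBG(A)$ is already an equivalence at the level of underlying (enriched) categories, so it suffices to check that under this equivalence:
\begin{itemize}
\item[(i)] an $\mathcal S$-linearization on $\mathcal F \in CF\sAmod$ corresponds bijectively to a structure of $\mathcal S$-module on $E:=\VBSpec(\mathcal F)$;
\item[(ii)] a graded morphism of $\mathcal S$-linearized sheaves (i.e.\ one satisfying the commutative diagram \eqref{eqn:morlshgr}) corresponds to a graded morphism of vector bundles that is $G$-equivariant in the sense of Definition \ref{defn:catrepaffext}.
\end{itemize}

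For (i), recall that $\varphi=s\circ(q,\id):G\times A\to A$ is the $G$-action on $A$ induced by $\mathcal S$. Applying $\VBSpec$ (which by Proposition \ref{prop:basechange} and the naturality of the adjunction $\PP \dashv \Spec$ commutes with pull-backs along base-change morphisms) to an isomorphism $\phi:\varphi^*\mathcal F \xrightarrow{\sim} p_2^*\mathcal F$, one obtains an isomorphism of vector bundles $p_2^*E \xrightarrow{\sim} \varphi^*E$ on $G\times A$, which via the universal property of pullbacks is the same as giving a morphism $\Phi:G\times E \to E$ linear on fibers such that $\pi_E\circ\Phi=\varphi\circ(\id_G\times\pi_E)$. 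The cocycle and unit conditions of the linearization translate, under this correspondence, to the associativity and unitality of $\Phi$ as an action, i.e.\ to the commutative diagrams of Remark \ref{rem:repsiact}(1); conversely, any such $\Phi$ yields an $\mathcal S$-linearization of $\mathcal F$. This gives the bijection of objects.

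For (ii), let $(f,t_a)\in \Homgr(\mathcal F,\mathcal F')(T)$ be a graded morphism of $\mathcal S$-linearized sheaves and write $(\widetilde f,t_a):=\VBSpecgr(f,t_a)\in\Homgr(E,E')(T)$. Applying $\VBSpec$ to the diagram \eqref{eqn:morlshgr} (and using the identifications of \eqref{eq:glinear}) yields a commutative diagram of morphisms of vector bundles over $G\times A\times T$ in which the two horizontal maps encode the actions $\Phi_E$, $\Phi_{E'}$ on $E_T$, $E'_T$, and the vertical ones are induced by $\widetilde f$ and $(t_a\times\id_T)^*\widetilde f$. Evaluating the resulting diagram along an arbitrary $T'$-point $g:T'\to G$ (after base change $T'\to T$) gives exactly the identity $\Phi_{E'}(g)\circ \widetilde f = \widetilde f\circ (t_a,\id)^*\Phi_E(g)$, which is the condition that $(\widetilde f,t_a)\in {}^G\Homgr(E,E')(T)=\Hom_{\Rep(\mathcal S)}(E,E')(T)$ (cf.\ Remark \ref{rem:Gequivariant} and Lemma \ref{lem:Smorfarebundle}). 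The reverse implication follows by fpqc descent: the diagram \eqref{eqn:morlshgr} is a morphism of sheaves on $G\times A\times T$, and its commutativity can be checked after pulling back along all $T'$-points of $G\times T$.

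Putting (i) and (ii) together, $\VBSpecgr$ sends objects and morphisms of $\ShRepgrfin(\mathcal S)$ bijectively to objects and morphisms of $\Rep(\mathcal S)$, with inverse given by $\PP$ combined with the symmetric-algebra construction, yielding the desired equivalence. The main technical obstacle is the diagram-chase of (ii): one must carefully identify the various pull-backs in \eqref{eqn:morlshgr} with the evaluations of the actions $\Phi_E,\Phi_{E'}$ after passing to vector bundles, which is eased by the identifications \eqref{eq:glinear} but requires tracking the shift by $t_a$ consistently on both sides.
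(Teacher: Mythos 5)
Your proposal is correct and follows essentially the same route as the paper: the paper also adapts Mumford's GIT \S 1.3 argument, identifying the linearization isomorphism $\phi:\varphi^*\mathcal F\to p_2^*\mathcal F$ with an action $G\times F\to F$ via the fibered products $(G\times A)\times_{A,\varphi}F$ and $(G\times A)\times_{A,p_2}F\cong G\times F$ (your step (i)), and then translating the $G$-equivariance diagram for a graded morphism $(f,t_a)$ into the sheaf-level diagram \eqref{eqn:morlshgr} using the identifications \eqref{eq:glinear} (your step (ii)). The only cosmetic difference is that you phrase the object-level correspondence as ``$\VBSpecgr$ commutes with pull-backs'' while the paper writes out the two fibered products and applies $\PP$ to them, with the duals $\mathcal F^\vee$ accounting for the contravariance.
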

\pf
By Remark \ref{rem:linearizations}, we have that $\VBSpec
C_{\operatorname{lf}}\sAmod\to \HVB_0(A)$ induces an equivalence
$\ShRepOfin{\mathcal S} \cong \RepO(\mathcal S)$. Hence, we only need
to show that if $\mathcal F,\mathcal F'\in \ShRepOfin{\mathcal S}$ are
$q$--linearized sheaves, then $\VBSpecgr :\Homgr(\mathcal F,\mathcal
F')\to \Homgr\bigl(\VBSpec(\mathcal F), \VBSpec(\mathcal F')\bigr)$
restricts to a bijection $\Hom_{\ShRepgrfin{\mathcal S}}(\mathcal F,\mathcal
F') \to \Hom_{\Rep(\mathcal S)} \bigl(\VBSpec(\mathcal F),
\VBSpec(\mathcal F')\bigr)$.

Denote $(\pi: F\to A):= \VBSpec(\mathcal F)$ and $(\pi': F'\to A):=
\VBSpec(\mathcal F')$. Let   $T$ be a $\Bbbk$--scheme and  $(f,\ell)\in 
\Homgr( F, F')(T)$. Let $a_F: G\wtimes F\to F$ and $a_{F'}:G\wtimes
F'\to F'$ be the $q$-actions
on $\VBSpec(\mathcal F)$ and $\VBSpec(\mathcal F')$ induced by
$\Phi_{\mathcal F}$ and $\Phi_{\mathcal F'}$, the
$q$--linearizations on $\mathcal F$ and $\mathcal F'$
respectively. Consider the  morphisms $a_{q,T}=a_q\times
\id_T:G\times A_T=G\times A\times T\to A_T=A\times T$, $a_{F,T}=a_1\times
\id_T:G\times F_T=G\times F\times T\to A_T$ and
$\pi_T=\pi\times\id_T: F_T\to A_T$.
Then, from the  cartesian diagram of $T$--schemes
\[
\xymatrix{
G\times F_T \ar[d]_{\id_G\times \pi_T}
\ar[rr]^-{a_{F,T}}& & F_T
\ar[d]^{\pi_T}
\\
G\times A_T \ar[rr]^-{a_{q,T}} & &A_T
}
\]
where $\pi_T=\pi\times \id_T$, we deduce that   $(f,\ell)$ is $G$--equivariant if
the following  diagram of $T$-schemes is commutative:
\begin{equation}
  \label{eqn:paralin}
\raisebox{5ex}{\xymatrixcolsep{3pc} 
\xymatrix{ 
G\times F_T
\ar[d]_-{a_{F,T}}
\ar[r]^-{\id_G\times f}&G\times F'_T
\ar[d]^{a_{F',T}}  
\\
F_T\ar[r]^-{f}\ar[d]_{\pi_T} &  F'_T\ar[d]^{\pi_T}\\
A_T\ar[r]^{t_\ell}&A_T}}
\end{equation}
where we consider the $T$-structure given by projection on the last coordinate.

Taking into account that $ t_\ell\smallcirc (\pi\times
\id_T)=(\pi'\times \id_T)\smallcirc f$,
 we deduce
that  the commutativity of Diagram \eqref{eqn:paralin} is equivalent
to the commutativity of the  diagram:
\[
  \xymatrix{
   (a_q\times\id_T)^*t_\ell^*(\mathcal F_T)
   \ar[rrr]^-{(\id_G\times t_\ell)^*p_{12}^*(\Phi_{\mathcal F}^\vee)}
   \ar[d]_{(a_q \times
   \id_T)^*(\widetilde{f})}&&& p_{23}^* t_\ell^*(\mathcal F_T)\ar[d]^{p_{23}^*(\widetilde{f})}\\ 
   (a_q\times \id_T)^*(\mathcal
   F'_T)\ar[rrr]^-{p_{12}^*(\Phi_{\mathcal F'}^\vee)}&& &p_{23}^* (\mathcal F'_T)
 }
\]
where $\VBSpec(\widetilde{f},\ell)=(f,\ell)$. In other words,
$(\widetilde{f},\ell)\in \Hom_{\ShRepgrfin{\mathcal S}}(\mathcal
F,\mathcal F')$.
\qed

Combining Proposition \ref{prop:equicomodrep0} and Theorem
\ref{thm:ratalg1}, we get the following:

\begin{cor}
\label{cor:differenteqiv}
  Let $\mathcal S$: $q:G\to A$ be an affine extension and $\mathcal H_q$
its associated Hopf sheaf. Then  the equivalences $\VBSpec: \Comodfin{\mathcal
  H_q}\to \RepO(\mathcal S)$ and $\VBSpecgr :
C_{\operatorname{lf}}\hsAmodgr\to \HVBG(A)$ induce equivalences
\[
  \Comodfin{\mathcal
  H_q}\cong \Rep(\mathcal S) \cong \ShRepOfin{\mathcal S}. \hfill  \qed
\]
\end{cor}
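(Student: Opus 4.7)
The plan is to deduce the corollary immediately by chaining together Proposition \ref{prop:actandcomo1} and Theorem \ref{thm:ratalg1}. The preceding results already do essentially all of the work; what remains is to identify subcategories and match notation. Concretely, Proposition \ref{prop:actandcomo1}(2) gives a monoidal equivalence $\VBSpec:\Comodfin{\mathcal H_q}\to \RepO(\mathcal S)$ at the level of degree-zero morphisms, while Theorem \ref{thm:ratalg1} gives $\VBSpecgr:\ShRepgrfin(\mathcal S)\cong\Rep(\mathcal S)$ with graded morphisms on both sides. Restricting Theorem \ref{thm:ratalg1} to the wide subcategory of morphisms of degree zero, one obtains $\ShRepOfin(\mathcal S)\cong\RepO(\mathcal S)$ as well. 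Composing these gives
\[
\Comodfin{\mathcal H_q}\;\xrightarrow{\;\VBSpec\;}\;\RepO(\mathcal S)\;\xrightarrow{\;\VBSpecgr^{-1}\;}\;\ShRepOfin(\mathcal S),
\]
and the full chain asserted in the corollary is obtained by reinserting $\Rep(\mathcal S)$ in the middle via Theorem \ref{thm:ratalg1}.

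To make this precise at the level of graded morphisms on the comodule side, I would introduce the evident enrichment $\Comodgrfinsf{\mathcal H_q}$ in which a $T$-point of $\Hom_{\Comodgrfinsf{\mathcal H_q}}(\mathcal F,\mathcal F')(T)$ is a pair $(f,t_a)$ with $a\in A(T)$ and $f\in\Homgr(\mathcal F,\mathcal F')(T)$ that is compatible with the $\mathcal H_q$-coactions after twisting by $t_a$; this is precisely the comodule analogue of Diagram~\eqref{eqn:morlshgr} from the definition of $\LShgr(\mathcal S)$. The extension $\VBSpecgr$ of Lemma \ref{lem:vbspecgr} is already fully faithful and essentially surjective on the underlying sheaves; what needs checking is only that, under this bijection on morphism sets, the graded comodule compatibility on the sheaf side corresponds to the $G$-equivariance of a graded morphism of vector bundles on the scheme side.

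The verification of this last point is the same diagram-chase performed in the proof of Theorem \ref{thm:ratalg1}: applying $\PP$ to the $G$-equivariance Diagram~\eqref{eqn:paralin} yields Diagram~\eqref{eqn:morlshgr} at the level of symmetric algebras, and restricting the latter to the linear generators recovers the comodule compatibility. The compatibility at the linear level is ensured by Theorem \ref{thm:mainspec}, which tells us that $\PP$ is colax monoidal with respect to $(\times_A,\otimes_A)$ and strong monoidal with respect to $(\wtimes,\wsboxtimes)$, so pull-backs by $\varphi$ and by $p_2$ and the twisting by $t_a$ translate correctly between the two sides. There is no substantive obstacle in this argument; the only delicate point is organizing notation so that the degree-zero and graded versions of the three categories are paired consistently, but once that is done the corollary is a formal consequence of the equivalences already proved.
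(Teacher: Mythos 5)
Your proposal is correct and follows the same route as the paper, which simply states that the corollary is obtained by ``combining Proposition \ref{prop:actandcomo1} and Theorem \ref{thm:ratalg1}'' and gives no further argument. Your extra discussion of a graded enrichment on the comodule side is more care than the paper takes; it addresses the fact that the displayed chain pairs the graded category $\Rep(\mathcal S)$ with two degree-zero categories ($\Comodfin{\mathcal H_q}$ and $\ShRepOfin(\mathcal S)$), a mismatch that the paper's statement leaves unexplained and that is most naturally resolved exactly as you suggest (or by reading the middle term as $\RepO(\mathcal S)$).
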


 We would like to extend now the equivalence  $\Comodfin{\mathcal
  H_q}\cong \ShRepOfin{q} $  to the category of rational
sheaf representations. First, we need to establish the 
notion of rational comodule.

\begin{defn}
\label {defi:rationalcomod} 
Let$\mathcal H$ be a Hopf sheaf on the abelian variety $A$. A
$\mathcal H$--comodule $\mathcal F$ is
\emph{rational}  if there exists a filtered system of coherent flat
sub-comodules  $\mathcal F_\alpha\subset \mathcal F_\beta\subset
\mathcal F$, of
finite rank $n_\alpha$, such that $\mathcal F$ is the filtered union
of the subsheaves $\mathcal F_\alpha$; that is, $\mathcal F=\operatorname{colim}_\alpha \mathcal
F_\alpha$.

Notice that since the limit of flat modules is flat, a rational
$\mathcal H$--comodule is necessarily flat.

We denote by $\RatAcomodgr{\mathcal H}\subset \Acomodgr{\mathcal H}$ the
full category of rational $\mathcal H$--comodules, and  by $\RatAcomodO{\mathcal H}\subset
\RatAcomodgr{\mathcal H}$ the wide subcategory with the same objects and
morphisms $\operatorname{Ker}d$, where $d: \Acomodgr{\mathcal H}\to A$ is the
degree map. In other words, 
\[
  \Hom_{\RatAcomodO{\mathcal
    H}}(\mathcal F,\mathcal G)(T)=\bigl\{(f,0)\mathrel{:} (f,0)\in \Hom_{\Acomodgr{\mathcal
    H}}(\mathcal F,\mathcal G)(T)\bigr\}.
\]
\end{defn}

\begin{prop}
\label{prop:ratshafpobre}
Let $\mathcal S$: $q:G\to A$ be an affine  extension and $\mathcal
H_q$ the associated Hopf  sheaf. If $(\mathcal F,\Phi_{\mathcal F})\in \ShRepO{q}$,
then $\mathcal F$ admits a structure of (rational) $\mathcal H_q$-comodule.

Conversely, if $(\mathcal F,\chi_{\mathcal F})\in \RatAcomodO{\mathcal
  H_q}$ then  $\mathcal F$ admits a structure of (rational) $\mathcal
S$-- sheaf representation.
\end{prop}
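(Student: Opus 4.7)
The plan is to bootstrap from the equivalence $\Comodfin{\mathcal H_q}\cong \ShRepOfin(\mathcal S)$ already established in Corollary \ref{cor:differenteqiv}, passing the correspondence through directed colimits on both sides. Since by definition a rational sheaf representation (resp.\ rational comodule) is a directed colimit of its coherent flat sub-representations (resp.\ sub-comodules), the correspondence should be assembled levelwise and then taken to the limit.

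\emph{Forward direction.} First I would fix a rational $\mathcal S$--sheaf representation $(\mathcal F,\phi_{\mathcal F})=\varinjlim_{\alpha}(\mathcal F_\alpha,\phi_{\mathcal F_\alpha})\in \RatShRepO(\mathcal S)$, with each $\mathcal F_\alpha\subset \mathcal F$ a coherent flat sub-representation. By Corollary \ref{cor:differenteqiv}, each inclusion $\mathcal F_\alpha\hookrightarrow \mathcal F_\beta$ in $\ShRepOfin(\mathcal S)$ is transported to a morphism of finite $\mathcal H_q$--comodules, so that each $\mathcal F_\alpha$ carries a unique coaction $\chi_{\mathcal F_\alpha}:\mathcal F_\alpha\to \mathcal H_q\wsboxtimes\mathcal F_\alpha$ that is compatible with these inclusions. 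I would then define $\chi_{\mathcal F}:=\varinjlim_\alpha \chi_{\mathcal F_\alpha}$ and promote it to a morphism $\mathcal F\to \mathcal H_q\wsboxtimes \mathcal F$ using the natural comparison map
\[
\varinjlim_\alpha (\mathcal H_q\wsboxtimes \mathcal F_\alpha)\longrightarrow \mathcal H_q\wsboxtimes \varinjlim_\alpha \mathcal F_\alpha.
\]
The coassociativity and counit diagrams \eqref{eqn:comod} for $\chi_{\mathcal F}$ then follow from the corresponding diagrams for each $\chi_{\mathcal F_\alpha}$, which already hold by construction.

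\emph{Converse direction.} For $(\mathcal F,\chi_{\mathcal F})\in \RatAcomodO{\mathcal H_q}$ with $\mathcal F=\varinjlim_\alpha \mathcal F_\alpha$ and each $\mathcal F_\alpha$ a coherent flat sub-comodule, I would apply the other half of Corollary \ref{cor:differenteqiv} to endow each $\mathcal F_\alpha$ with a compatible $\mathcal S$--linearization $\phi_{\mathcal F_\alpha}:\varphi^*\mathcal F_\alpha\to p_2^*\mathcal F_\alpha$. One then defines $\phi_{\mathcal F}=\varinjlim_\alpha \phi_{\mathcal F_\alpha}$, observing that both $\varphi^*$ and $p_2^*$ are left adjoints and thus commute with directed colimits, so this yields a well-defined isomorphism $\varphi^*\mathcal F\to p_2^*\mathcal F$. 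The cocycle condition is stable under filtered colimits, hence $(\mathcal F,\phi_{\mathcal F})$ is an $\mathcal S$--linearized sheaf; it is rational by construction.

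\emph{Main obstacle.} The delicate technical point is that $\mathcal H_q\wsboxtimes (-)=s_*\bigl(\mathcal H_q\sboxtimes(-)\bigr)$ involves the push-forward along $s:A\times A\to A$, which a priori does not commute with directed colimits as readily as a pullback. However, since $A$ is a N\oe therian scheme and $s$ is proper (in particular quasi-compact and quasi-separated), push-forward of quasi-coherent sheaves along $s$ does commute with filtered colimits; combined with the fact that $\sboxtimes$ involves only pullbacks and tensor products over $\mathcal O_{A\times A}$ (both of which preserve filtered colimits), the comparison map displayed above is an isomorphism. This is the step I would write out in greatest detail. Once this is secured, the naturality of all the structure morphisms and a standard check that colimits of commutative diagrams remain commutative reduce the verification of the coaction axioms to the coaction axioms for each $\chi_{\mathcal F_\alpha}$, and similarly the linearization axioms to those for each $\phi_{\mathcal F_\alpha}$.
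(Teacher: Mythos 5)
Your proposal is correct and follows essentially the same route as the paper: transport the coaction (resp.\ linearization) levelwise through the finite-rank equivalence of Corollary \ref{cor:differenteqiv}, check compatibility along the inclusions $\mathcal F_\alpha\subset\mathcal F_\beta$ by functoriality, and pass to the directed union. The only difference is presentational: the paper sidesteps your ``main obstacle'' by regarding each $\chi_{\mathcal F_\alpha}$ as landing in the fixed sheaf $\mathcal H_q\wsboxtimes\mathcal F$ (via $\mathcal H_q\wsboxtimes\mathcal F_\alpha\subset\mathcal H_q\wsboxtimes\mathcal F$) and gluing on the union, so the commutation of $\wsboxtimes$ with filtered colimits that you verify is not strictly needed there, though your justification of it is sound.
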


\pf
Let $(\mathcal F,\Phi_{\mathcal F})\in \ShRepO{q}$, and
$(\mathcal F_\alpha,\phi_\alpha=\phi|_{_{a_q^*{\mathcal F_\alpha}}})$
be a filtered system of 
coherent $\mathcal S$--sheaf subrepresentations with $\operatorname{colim}
\mathcal F_\alpha=\cup \mathcal
F_\alpha=\mathcal F$ and $(a_q)_\alpha: G\times \Spec\bigl(\mathcal
S(\mathcal F_\alpha^\vee)\bigr)\to \Spec\bigl(\mathcal
S(\mathcal F_\alpha^\vee)\bigr) $ the associated $\mathcal S$--action
(see Theorem \ref{thm:ratalg1}). By Proposition
\ref{prop:equicomodrep0}, $(a_q)_\alpha$ induces a structure of  $\mathcal H_q$--comodule   $\chi_{\mathcal
  F_\alpha} : 
F_\alpha\to \mathcal H_q\wsboxtimes \mathcal F_\alpha\subset \mathcal
H_q\wsboxtimes \mathcal F$. Since this 
association is of functorial nature, it follows that is $\mathcal 
F_\alpha\subset \mathcal F_\beta$, then $\chi_{\mathcal
  F_\alpha}= \chi_{\mathcal  F_\beta}|_{_{\mathcal F_\alpha}}$. It
follows that the family $\chi_{\mathcal F_\alpha}$ induces a structure
of $\mathcal H_q$--comodule  $\chi_{\mathcal F}:\mathcal F=\operatorname{colim}
\mathcal F_\alpha\to \mathcal H_q\wsboxtimes \mathcal F$, such that
$\chi_{\mathcal F}|_{_{\mathcal F_\alpha}}= \chi_{\mathcal F_\alpha}$.

The proof of the converse is similar and therefore is omitted.
\qed


\begin{thebibliography}{100}

\bibitem{kn:aguiarspecies} M.~Aguiar and S.~Mahajan,\emph{Monoidal
  functors, species and Hopf algebras}, volume 29 of CRM Monograph
  Series. American Mathematical Society, Providence, RI, 2010.

  
\bibitem{kn:commute} A.~Alldridge,
  \emph{https://math.stackexchange.com/questions/1115873/pullback-commutes-with-dual-for-locally-free-sheaf-of-finite-rank} 

  
\bibitem{kn:atiyahsh} M.F.~Atiyah,
 \emph{On the Krull-Schmidt theorem with application to sheaves},
 Bull. Soc. Math. France  84  (1956), 307--317.


 \bibitem{kn:atiyahconn} M.F.~Atiyah, \emph{Complex analytic connections
     in fibre bundles},  Trans. Amer. Math. Soc.  85  (1957),
  181--207.


 \bibitem{kn:atiyahvbec} M.F.~Atiyah, \emph{Vector bundles over an
     elliptic curve},  Proc. Lond. Math. Soc. (3)  7  (1957) 414--452.

\bibitem{kn:bb-induced}
A.~Bialynicki-Birula, \emph{On induced actions of algebraic groups},
Ann. Inst. Fourier (Grenoble)  43  (1993),  no. 2, 365--368.

\bibitem{kn:bohmlack} G. B\"ohm, and S. Lack, {\em Hopf comonads on
  naturally Frobenius map-monoidales}, Journal of Pure and Applied
  Algebra 220(6) (201), 2177--2213.



\bibitem{kn:bprit} L.~Brambila-Paz and A.~Rittatore, \emph{The
    endomorphisms monoid of a homogeneous vector bundle}, in Algebraic
  monoids, group embeddings, and algebraic combinatorics, 209–231,  
Fields Inst. Commun., 71, Springer, New York, 2014.

\bibitem{kn:bran} M. Brandenburg, {\em https://math.stackexchange.com/questions/551775/pushforward-commutes-with-external-tensor-product}

\bibitem{kn:ritbr} M.~Brion and  A.~Rittatore,
\emph{The structure of normal algebraic monoids}, Semigroup Forum  74
(2007),  no. 3, 410--422.


\bibitem{kn:briantaff} M.~Brion, \emph{Anti-affine algebraic groups},
J. Algebra 321 (2009), no. 3, 934–952. 

\bibitem{kn:brisamum} M.~Brion, P.~Samuel and V.~Uma, \emph{Lectures on
    the structure of algebraic groups and geometric applications}. CMI
  Lecture Series, in Mathematics 1, Hindustan Book Agency, New Delhi, Chenai Mathematical Institute,
  2013.

\bibitem{kn:brionoasam} M.~Brion \emph{On algebraic semigroups and
    monoids.} In Algebraic monoids, group embeddings, and algebraic
  combinatorics, 1–54,   Fields Inst. Commun., 71, Springer, New York, 2014.
  
\bibitem{kn:brionchev} M.~Brion, \emph{Some structure theorems for 
    algebraic groups}, extended notes of a course given at the 2015 
  Clifford Lectures, Tulane University, Proc. Symp. Pure Math. 94
  (2017), 53-125. {\tt 2015
    https://arxiv.org/pdf/1509.03059v1.pdf}. 

\bibitem{kn:brionSumihiro} M.~Brion, \emph{Algebraic group actions on
  normal varieties},  Trans. Moscow Math. Soc. 78 (2017), 85–107. 

\bibitem{kn:brionlinrev} M.~Brion, \emph{Linearization of algebraic
  groups actions.} Notes for a course. Available at {\tt
  https://www-fourier.ujf-grenoble.fr/\~{}mbrion/lin\_rev.pdf}

\bibitem{kn:brionfundgrp}  M.~Brion, \emph{On the fundamental groups of
    commutative algebraic group}, pre-print, available at {\tt
    https://arxiv.org/abs/1805.09525}. 

  \bibitem{kn:brionrepbund} M.~Brion, \emph{Homogeneous vector bundles
    over abelian varieties via representation theory}, pre-print,
  available at {\tt https://arxiv.org/abs/1805.09531}.   
    
\bibitem{kn:chetheo} C.~Chevalley, \emph{Une d\'emonstration d’un
    th\'eor\`eme sur les groupes alg\'ebriques}, J. Math. Pures 
Appl. (9) 39 (1960), 307-317.

\bibitem{kn:bible} C.~Chevalley,
\emph{Classification   des   groupes   alg\'ebriques   semi-simples}
(with Cartier, Grothendieck, Lazard), Collected Works, volume 3, Springer--Verlag, 2005.


  
\bibitem{kn:delmil} P.~Deligne and P.~Milne, \emph{Tannakian categories},
  in Hodge Cycles, Motives, and Shimura Varieties, LNM 900, 1982,
  pp. 101-228. A corrected version is found in
  {\tt https://www.jmilne.org/math/xnotes/tc2018.pdf}.


\bibitem{kn:delignetannaka90} P.~Deligne, \emph{Cat\'egories
    tannakiennes}, in The {G}rothendieck {F}estschrift, {V}ol.\ {II},
    Progr. Math. Vol. 87, pp. 111--195, Birkh\"auser Boston, Boston, MA,
      1990.

\bibitem{kn:demgab}  M.~Demazure and P.~Gabriel \emph{Groupes
    alg\'ebriques. Tome I: G\'eom\'etrie alg\'ebrique, g\'en\'eralit\'es, groupes
    commutatifs.} (French) Avec un appendice Corps de classes local
  par Michiel Hazewinkel. Masson \&\ Cie, \'Editeur, Paris; North-Holland
  Publishing Co., Amsterdam, 1970. xxvi+700 pp.

  
\bibitem{kn:SGA3-1} M.~Demazure and A.~Grothendieck, Ed.,
  \emph{Sch\'emas en Groupes I}. Lecture Notes in Mathematics 151,
  Spinger-Verlag, Berlin-Heidelbergh-New York (1970). xv+564 pp. ISBN
  978-3-319-71427-1. See {\tt
    https://webusers.imj-prg.fr/\~patrick.polo/SGA} for an annotated,
  \LaTeX\ version.

\bibitem{kn:drinfeldfvid} V.~Drinfeld, 
\emph{Infinite-dimensional vector bundles in algebraic geometry: an introduction.} The unity of mathematics, 263--304,
Progr. Math., 244, Birkhäuser Boston, Boston, MA, 2006. Available at
{\tt https://arxiv.org/pdf/math/0309155v4.pdf}
  


\bibitem{fer-ritt} W.~Ferrer Santos and A.~Rittatore, \emph{Actions and
    Invariants of Algebraic Groups. Second Edition.} Monographs and
  Research Notes in Mathematics. CRC Press, Boca Raton, FL,
  (2017). xx+459 pp. ISBN: 978-1-4822-3915-7. 

\bibitem{kn:garner} R. Garner,\emph{Understanding the small object
  argument}. Appl. Categ. Structures, 17(3):247--285 (2009).

\bibitem{kn:gortz} U.~G\"{o}rtz and T.~Wedhorn, \emph{Algebraic geometry, I.}
  Advanced Lectures in Mathematics. Vieweg + Teubner, Wiesbaden, (2010). viii+615 p.,
  ISBN: 978-3-8348-0676-5.


  
\bibitem{kn:SGA1} A.~Grothendieck and  M.~Raynaud, \emph{Rev\^etements
    \'etales et groupe fondamental ({SGA} 1)}. 
    Documents Math\'ematiques (Paris) [Mathematical Documents
              (Paris)], 3, S\'eminaire de g\'eom\'etrie alg\'ebrique du Bois Marie 1960--61.
              Directed by A. Grothendieck,
              With two papers by M. Raynaud,
              Updated and annotated reprint of the 1971 original [Lecture
              Notes in Math., 224, Springer, Berlin;  MR0354651 (50
              \#7129)], Soci\'et\'e Math\'ematique de France, Paris,
       {2003},
       xviii+327 pages.
       
\bibitem{kn:EGAI} A.~Grothendieck, \emph{\'El\'ements de g\'eom\'etrie
  alg\'ebrique: I. Le language des sch\'emas}, Publ. Math. IHES 4
  (1960), 5–228.



   \bibitem{kn:EGAII} A.~Grothendieck, \emph{\'El\'ements de g\'eom\'etrie
    alg\'ebrique: II. \'Etude globale \'el\'ementaire des quelques classes de morphismes}, Publ. Math. IHES 8  (1961), 5–222.  

        \bibitem{kn:EGAIV2} A.~Grothendieck, \emph{\'El\'ements de g\'eom\'etrie alg\'ebrique: IV. \'Etude locale des sch\'emas et des morphismes de sch\'emas, Seconde partie}, Publ. Math. IHES 24  (1965), 5–231.  


   \bibitem{kn:EGAIV3} A.~Grothendieck, \emph{\'El\'ements de g\'eom\'etrie alg\'ebrique: IV. \'Etude locale des sch\'emas et des morphismes de sch\'emas, Troisi\`eme  partie}, Publ. Math. IHES 28  (1966), 5–255.


  
\bibitem{kn:hartshorne} R.~Hartshorne, {\em Algebraic
Geometry}.   Graduate Texts in Mathematics, Vol  52,
Springer-Verlag, Berlin, 1977. 


\bibitem{kn:huybrechtslehn} D.~Huybrechts and M.~Lehn, \emph{The
    geometry of moduli spaces of sheaves.} Second edition. Cambridge
  Mathematical Library. Cambridge University Press, Cambridge, 2010.  


\bibitem{kn:jantzen1}  J.C.~Jantzen, \emph{Representations of algebraic groups.}
Pure and Applied Mathematics, 131. Academic Press, Inc., Boston, MA,
1987. xiv+443 pp. ISBN: 0-12-380245-8

\bibitem{kn:jantzen2}  J.C.~Jantzen, \emph{Representations of algebraic groups.}
  Second edition, Math. Surveys Monogr. 107, Amer. Math. Soc.,
  Providence, 2003. 

\bibitem{kn:joyalstreet}    A.~Joyal and R. ~Street.  \emph{An introduction to Tannaka duality
and quantum groups},  in Category Theory (Como, 1990),volume
1488 of Lecture Notes in Mathematics,  pages 413--492.  Springer,
Berlin, 1991


\bibitem{kn:transgrp} H.~Kraft et al. Eds., \emph{Algebraic Transformation
    Groups and Invariant Theory.} DMV Seminar, BV. 13,  Birkh\"auser,  1989.
  
\bibitem{kn:kelly} G.M.~Kelly,\emph{Doctrinal adjunction} In: Kelly G.M. (eds) Category Seminar. Lecture Notes in Mathematics, vol 420. Springer, Berlin, Heidelberg, 1974
  


  
\bibitem{kn:matsumuraoort} H.~Matsumura and F.~Oort,
  \emph{Representability of group functors, and automorphisms of
    algebraic schemes}, Invent. Math., 4, (1967), 1--25.

  \bibitem{kn:miy}
M.~Miyanishi, \emph{Some remarks on algebraic homogeneous vector
  bundles}, Number theory, algebraic geometry and commutative algebra,
in honor of Yasuo Akizuki, pp. 71--93. Kinokuniya, Tokyo, 1973.


\bibitem{kn:Mu78} S.~Mukai,
\emph{Semi-homogeneous vector bundles on abelian varieties},
J. Math. Kyoto Univ. {\bf 18} (1978), 239--272.


\bibitem{kn:GIT} D.~Mumford, D., J.~Fogarty and F.~Kirwan,
  \emph{Geometric invariant theory. Third edition}. Ergebnisse der
  Mathematik und ihrer Grenzgebiete (2) [Results in Mathematics and
  Related Areas (2)], 34. Springer-Verlag, Berlin, 1994. xiv+292
  pp. ISBN: 3-540-56963-4.


\bibitem{kn:dperrin1} D.~Perrin, \emph{Sch\'emas en groupes quasi-compacts sur un corps},
 in Sch\'emas en groupes quasi-compacts sur un corps et groupes 
              hens\'eliens, {U. E. R. Math., Univ. Paris XI, Orsay},
              1975,  1--75. Available at {\tt http://sites.mathdoc.fr/PMO/PDF/P\_PERRIN-109.pdf}

\bibitem{kn:dperrin2} D.~Perrin, \emph{Approximation des schémas en
    groupes, quasi compacts sur un corps}, 
Bull. Soc. Math. France 104 (1976), no. 3, 323–-335.  Available at
{\tt http://www.numdam.org/article/BSMF\_1976\_\_104\_\_323\_0.pdf}
    


\bibitem{kn:oam} A.~Rittatore,
\emph{Algebraic monoids with affine unit group are affine},
Transform. Groups, v. 12 3 (2007), 601--605, available at {\tt arXiv:math.AG/0602221}.



\bibitem{kn:rosdesc} M.~Rosenlicht, \emph{Some basic theorems on
    algebraic groups}, 
Amer. J. Math. 78 (1956), 401–443.

\bibitem{kn:rosextab} M.~Rosenlicht, \emph{Extensions of vector groups
    by abelian  varieties},
 Amer. J. Math. 80 (1958), 685–714.

\bibitem{kn:saav} N.~Saavedra Rivano, \emph{Cat\'egories Tannakiennes} (French).  
Lecture Notes in Mathematics, Vol. 265. Springer-Verlag, Berlin-New York, 1972. ii+418 pp. 


\bibitem{kn:sanchoesp} C.~Sancho de Salas, \emph{Grupos algebraicos y
    teoría de invariantes} (Spanish) (Algebraic groups and invariant
  theory). Aportaciones Matemáticas: Textos, 16. Sociedad Matemática
  Mexicana, México, 2001. xii+394 pp. ISBN: 968-36-3594-6. MR1926075.

\bibitem{kn:lossancho} C.~Sancho de Salas and F.~Sancho de Salas,
  \emph{Principal bundles, quasi-abelian varieties and structure of
    algebraic groups}, J. Algebra 322 (2009), no. 8,
  2751--2772. MR2560900. 



 \bibitem{kn:formaltan} D.~Sch\"appi, \emph{The formal theory of Tannaka
     Dualilty}, Ast\'erisque, 357, Soci\'et\'e Math\'ematique de France,
   2013, viii+140 pp.

    
\bibitem{kn:serrealb} J.-P.~Serre, \emph{Morphismes universels et
    vari\'et\'e d’Albanese}, in: S\'eminaire Chevalley (1958–1959), Expos\'e
  No. 10, in: Doc. Math., vol. 1, Soc. Math. France, Paris, 2001.  


\bibitem{kn:serreefa}  J.-P.~Serre, \emph{Espaces fibr\'es
    alg\'ebriques} in Anneaux de Chow et applications,  S\'eminaire
  Chevalley, E.N.S., Paris, 1958.

\bibitem{kn:serrefvid} J..-P. Serre. \emph{Modules projectifs et
  espaces fibr\'es \'a fibre vectorielle.} S\'eminaire P.~Dubreil,
M.-L.~Dubreil-Jacotin et C.~Pisot, 1957/58, Fasc. 2, Expos\'e 23. Secr\'etariat math\'ematique, Paris, 1958.
Available at {\tt http://www.numdam.org/article/SD\_1957-1958\_\_11\_2\_A9\_0.pdf}


\bibitem{kn:stackproj} \emph{The stacks project} Website:  {\tt
    https://stacks.math.columbia.edu}
  
\bibitem{kn:streetbook} R.~Street, \emph{Quantum groups. A path to current algebra}, Australian
   Mathematical Society Lecture Series, vol. 19, Cambridge University
   Press, Cambridge, 2007, xviii+141 pp. A path to current algebra.

  
\bibitem{kn:raising} R.~Vakil, \emph{The raising sea. Foundations of
    Algebraic Geometry},  math216.wordpress.com,
  http://math.stanford.edu/~vakil/216blog/FOAGnov1817public.pdf, draft
  (2017).  

\bibitem{kn:waterhouse} W.~Waterhouse,
\emph{Introduction to affine group schemes},  
Graduate Texts in Mathematics, 66. Springer-Verlag, New York-Berlin, 1979. xi+164 pp. 
\end{thebibliography}
\end{document}